\newtheorem{theorem}{Theorem}[section]
\newtheorem{lemma}[theorem]{Lemma}
\theoremstyle{definition}
\newtheorem{definition}[theorem]{Definition}
\newtheorem{remark}[theorem]{Remark}
\numberwithin{equation}{section}
\begin{document}

\title [The influence of noise for ISM]{{{\rmfamily  The influence of stochastic forcing on strong solutions to the Incompressible Slice Model in 2D bounded domain}}} \thanks{This work was partially supported by the National Natural Science Foundation of China (Projects \# 11701198, 11701161, 11901584 and 11971185), the Fundamental Research Funds for the Central Universities (Projects \# 5003011025 and WUT: 2020IA005).}

\author{Lei Zhang}
\address{School of Mathematics and Statistics, Hubei Key Laboratory of Engineering Modeling  and Scientific Computing, Huazhong University of Science and Technology  Wuhan 430074, Hubei, P.R. China.}
\email{lei\_zhang@hust.edu.cn}

\author{Yu Shi}
\address{School of Science, Wuhan University of Technology, Wuhan 430070, Hubei, P.R. China}
\email{shiyu87@whut.edu.cn}

\author{Chaozhu Hu}
\address{School of Science, Hubei University of Technology, Wuhan 430068, Hubei, P.R. China}
\email{huchaozhu0035@hbut.edu.cn}

\author{Weifeng Wang}
\address{School of Mathematics and Statistics, South-Central University for Nationalities, Wuhan, 430074,China}
\email{wwf87487643@163.com}

\author{Bin Liu}
\address{School of Mathematics and Statistics, Hubei Key Laboratory of Engineering Modeling  and Scientific Computing, Huazhong University of Science and Technology  Wuhan 430074, Hubei, P.R. China.}
\email{binliu@hust.edu.cn}

\keywords{Pathwise solutions; Incompressible Slice Model; Stochastic forcing; Global solutions.}

\date{\today}

\begin{abstract}
The Cotter-Holm Slice Model (CHSM) was introduced to study the behavior of whether and specifically the formulation of atmospheric fronts, whose prediction is fundamental in meteorology. Considered herein is the influence of stochastic forcing on the Incompressible Slice Model (ISM) in a
smooth 2D bounded domain, which can be derived by adapting the Lagrangian function in Hamilton's principle for CHSM to the Euler-Boussinesq
Eady incompressible case. First, we establish the existence and uniqueness of local pathwise solution (probability strong solution) to the
ISM perturbed by nonlinear multiplicative stochastic forcing in Banach spaces $W^{k,p}(D)$ with $k>1+1/p$ and $p\geq 2$. The solution
is obtained by introducing suitable cut-off operators applied to the $W^{1,\infty}$-norm of the velocity and temperature fields, using the
stochastic compactness method and the Yamada-Watanabe type argument based on the Gy\"{o}ngy-Krylov characterization of convergence in probability.
Then, when the ISM is perturbed by linear multiplicative stochastic forcing and the potential temperature does not vary linearly on the $y$-direction, we prove that the associated Cauchy problem admits a unique
global-in-time pathwise solution with high probability, provided that the initial data is sufficiently small or the diffusion parameter
is large enough. The results partially answer the problems left open in Alonso-Or{\'a}n et al. (Physica D 392:99--118, 2019, pp. 117).
\end{abstract}

\maketitle
 \newpage
 \tableofcontents
%----------------------------------------------------
\section{Introduction}
%----------------------------------------------------

In this paper, we consider the following stochastic Incompressible Slice Model (SISM) in a bounded domain $D\subset \mathbb{R}^2$ with smooth boundary $\partial D$ \cite{1,2}:
\begin{equation}
\left\{
\begin{aligned}\label{1.1}
&\mathrm{d}u_S+  (u_S\cdot \nabla )  u_S\mathrm{d}t+\nabla p  \mathrm{d}t -f u_T\widehat{x}\mathrm{d}t-\frac{g}{\theta_0}\theta _S\widehat{z} \mathrm{d}t=  \sigma_1(u_S,u_T,\theta_S)\mathrm{d}\mathcal {W},\\
&\nabla\cdot  u_S =0,\\
&\mathrm{d} u_T+ (u_S\cdot \nabla)   u_T\mathrm{d}t+fu_S\cdot\widehat{x} \mathrm{d}t +\frac{g}{\theta_0} zs\mathrm{d}t= \sigma_2(u_S,u_T,\theta_S)\mathrm{d}\mathcal {W}, \\
&\mathrm{d}\theta_S+ (u_S\cdot \nabla ) \theta_S\mathrm{d}t+u_Ts \mathrm{d}t  = \sigma_3(u_S,u_T,\theta_S)\mathrm{d}\mathcal {W},
\end{aligned}
\right.\quad  t>0, ~x\in D,
\end{equation}
 with the boundary condition
\begin{eqnarray}\label{1.2}
  u_S \cdot \vec{n}=0,\quad t>0, ~x\in \partial D,
\end{eqnarray}
and the initial conditions
\begin{eqnarray}\label{1.3}
u_S(0,x)=u_S^0(x),\quad  u_T(0,x)=u_T^0(x),\quad \theta_S(0,x)=\theta_T^0(x), \quad\quad x\in D.
\end{eqnarray}
Here, $\vec{n}$ stands for  the outward unit normal vector to the boundary $\partial D$, $g$ is the acceleration due to gravity, $\theta_0$ is the reference temperature, $f$ is the Coriolis parameter, which is assumed to be a constant, and $s$ is a constant which measures the variation of the potential temperature in the transverse direction. In \eqref{1.1}, the operator $\nabla$ denotes the 2D gradient in the slice, $p$ is the pressure obtained from incompressibility of the flow in the slice (div $u_S=0$), while $\widehat{x}$ and  $\widehat{z}$ denote horizontal and vertical unit vectors in the slice.
The process $\mathcal {W}$ is a cylindrical Brownian motion defined on a separable Hilbert space $\mathfrak{A}$, and $\sigma_i(\cdot)$, $i=1,2,3$ are diffusion coefficients satisfying proper boundedness and growth conditions, we refer to Subsection 1.1 for more details. The boundary condition \eqref{1.2} means that the fluid velocity component $u_S$ is taken to be tangent to the boundary, and the initial datum in \eqref{1.3} are random variables with suitable regularity.

In atmosphere and ocean science, the slice models are frequently used to describe the formation of fronts \cite{3,4,5}, and the slice framework is proved to be very useful in the study of whether fronts \cite{6}. These fronts arise when there is a strong nort-south temperature gradient (maintained by heating at the Equator and cooling at the Pole), which maintains a vertical shear flow in the east-west direction through geostrophic balance.  Recently, the Cotter-Holm Slice Model (CHSM) for oceanic and atmospheric fluid motions taking place in vertical slice domain $D\subset \mathbb{R}^2$ was introduced by using the Hamiltonian's variational principle \cite{2}. The fluid motion in the vertical slice is coupled dynamically to the flow velocity transverse to the slice, which is assumed to vary linearly with distance normal to the slice. The assumption about the transverse flow through the vertical slice in the CHSM simplifies its mathematics while still capturing important aspects of the 3D flow.

The deterministic Incompressible  Slice Model (ISM) (i.e., $\sigma_i(\cdot)\equiv 0$ in \eqref{1.1}, $i=1,2,3$) is a modified version of CHSM,  which can be obtained by adapting the Lagrangian function
$$
L(u_S,u_T,\theta_S,d,p)=\int_D\left(\frac{d}{2}(|u_S|^2+|u_T|^2))+dfu_Tx+\frac{g}{\theta_0}d\theta_Sz+(1-d)p\right)dV
$$
in Hamiltonian's principle for CHSM to the Euler-Boussinesq Eady incompressible case; see \cite{2,1} for more details. It should be emphasised that the ISM is potentially useful in numerical simulations of fronts and hence useful in the parameter-studies for numerical weather predictions, since the domain consists of two-dimensional slice, computer simulations of ISM take much less time to run a full three-dimensional model.  We refer the readers to the references \cite{4,6,7,8,9} for applications of this type of idealized models to predict and examine the formulation and evolution of weather fronts. The ISM enjoys some conservation laws due to its variational character.  For example, the ISM conserves the circulation of $v_S:=su_S-(u_T+fx)\nabla \theta _S$ on loops $c(u_S)$ carried by $u_S$:
$$
\frac{\mathrm{d}}{\mathrm{d} t} \oint_{c(u_S)} v_s\cdot \mathrm{d} s= \oint_{c(u_S)}  \mathrm{d} \pi =0.
$$
The ISM potential vorticity defined by $q:=\mbox{curl}~ (v_S)\cdot \hat{y}$ is also conserved on fluid parcels:
$$
\frac{\mathrm{D}q}{\mathrm{d} t} = \partial_t q +u_S \cdot \nabla q =0.
$$
It is worth pointing out that the potential vorticity above differs from the usual ones in fluid dynamics, since we have to take the transverse velocity $u_T$, the Coriocity force $f$ and the potential temperature $\theta_S$ into consideration. For any smooth function $\Phi$ of the potential vorticity $q$, the ISM conserves the generalized enstrophy
$$
C_\Phi= \int_D \Phi(q)\mathrm{d} V.
$$
Moreover, the ISM also conserves the energy
$$
\frac{\mathrm{d}}{\mathrm{d} t}E(t) = \frac{\mathrm{d}}{\mathrm{d} t}\int_D  \bigg(\frac{1}{2}(|u_S|^2+ u_T ^2)-\frac{g}{\theta_0} z \theta_S\bigg) \mathrm{d} V=0.
$$
As far as we know, few works are available concerning the qualitative analysis for the ISM. In \cite{1}, by virtue of the conserved quantities for ISM listed above, Alonso-Or{\'a}n and de Le{\'o}n studied the Arnold's stability (cf. \cite{10}) around a restricted class of equilibrium solutions by the Energy-Casimir method \cite{11}. Besides, they established the local well-posedness of solutions to the ISM in Hilbert spaces $H^s(D)$ (for integers $s>2)$ in spirit of the works by Temam \cite{12} and Kato and Lai \cite{13}. Moreover, they also constructed a blow-up criterion based on the analysis of $L^p$ norms for the gradients of $u_T$ and $\theta_S$, which is different with the well-known Beale-Kato-Majda criterion for the 3D Euler equation \cite{14}, that is, if $\int_0^{T^*}\|\mbox{curl}~ u_S(t)\|_{L^\infty} \mathrm{d} t<\infty$, then the corresponding solution stays regular on $[0,T^*]$. The main difficulty stems from the fact that it seems to be hard to control properly $u_T$ and $\theta_S$ in terms of the vorticity only; cf. Remark 6.3 in \cite{1}.

We also mention that the SISM resembles the 2D Boussinesq equations without viscosity, i.e., by taking $f=0$, $u_T\equiv0$ and $\sigma_i(\cdot)\equiv 0$ ($i=1,2,3$) in \eqref{1.1}, which has a wide range of applications in geophysics and fluid
mechanics, such as the modeling of large scale atmospheric and oceanic flows that are responsible for cold fronts and jet stream and the study of Rayleigh-B\'{e}nard convection; cf. \cite{15,16,17,18}. Nevertheless, the variable $u_T$ representing the transversal velocity to the slice, which naturally appears when deriving the ISM and gives rise to a more complex structure in the coupled system of equations. During past decades, the Boussinesq equations have attracted much attention and abundant references can be found concerning the well-posedness and global regularity \cite{19,20,21,23}. However, whether the classical solutions to the Boussinesq equations blow up in finite time is still a fundamental open problem \cite{22}, and we would like to refer the readers to the references \cite{24,25,26} for the recent important advances in the study of Boussinesq equations.

To our best knowledge, the influence of noises on dynamic behavior of solutions to the ISM and the existence of global solutions have not been studied, which were leaved as open questions in recent work \cite{1}. Taking into account the random environment surrounding the velocity field and the temperature filed, it is natural to consider the ISM perturbed by certain stochastic forcing. The purpose of this paper is to seek for probabilistically the local and global pathwise solution for the SISM with multiplicative noises. Notice that the introduction of the stochasticity in the ISM can help account for the uncertainty coming from the numerical methods used to resolve these equations. In fact, the dynamic behavior of fluid models perturbed by different kinds of noises has been widely studied in past decades, we refer to the stochastic fluid models \cite{27,28,29,30,31,32,33,34,52,53} and the stochastic dispersive PDEs \cite{35,36,37,54} to just mention a few. We also mention that the presence of stochastic perturbation in fluid equations can lead to new phenomena. For instance, while uniqueness may fail for the deterministic transport equation, Flandoli et al. \cite{39} proved that a multiplicative stochastic perturbation of Brownian type is enough to render the equation well-posed; see also \cite{38}. In \cite{40}, Brze{\'z}niak et al. proved that the 2D Navier-Stokes system driven by degenerate noise has a unique invariant measure and hence exhibits ergodic behavior in the sense that the time average of a solution is equal to the average over all possible initial data, which is quite different with the deterministic case.

The primary goal of this paper is to investigate the influence of noises on strong solutions to the ISM. First, we establish the local existence and uniqueness of pathwise solutions to the ISM perturbed by general multiplicative noise (cf. Theorem \ref{thm:1.2}), which improves the existed results in Hilbert spaces $H^s(D)$ for the deterministic counterpart \cite{1}. The main challenging we encounter in the proof is that it is impossible to prove our result by applying the classic energy compactness method, since the stochastic forcing  $\sigma_i(\cdot)\mathrm{d}\mathcal {W}$, $i=1,2,3$ have a bad influence on the $t$-regularity of solutions. Thereby some new techniques have to be introduced to overcome these difficulties. Second, we are looking for sufficient conditions which lead to global pathwise solutions to the ISM. In particular, we shall consider the ISM perturbed by linear multiplicative noise such that the associated SISM can be transformed into random PDEs. It is interesting to prove that, if the potential temperature do not vary linearly on the $y$-direction and the absolute value of the diffusion parameter $|\alpha|$ is large enough, then the local pathwise solutions to the SISM become global ones with high probability (cf. Theorem \ref{thm:1.3}). This fact shows that the linear multiplicative noise has a regularization effect on the pathwise solutions with respect to time variable.

\subsection{Notations and assumptions}

Now we recall some deterministic and stochastic integrants which will be widely used in the following paper. Given a smooth simply-connected bounded domain $D\subset \mathbb{R}^2$, we denote by $L^p(D;\mathbb{R}^d)$ be the usual Lebesgue space, and by $W^{k,p}(D;\mathbb{R}^d)$ the usual Sobolev space with the norm
$$
\|u\|_{W^{k,p}}^p =\sum_{|\alpha|\leq k}\|\partial^\alpha u\|_{L^p}^p.
$$
To treat the fluid velocity component, for any integers $d\geq1$, $k\geq0$ and $p\geq 2$, we introduce the following working spaces
$$
X^{k,p}(D;\mathbb{R}^d):=\{u\in W^{k,p}(D;\mathbb{R}^d);~~ \div~ u=0,~u|_{\partial D}\cdot \vec{n}=0\},
$$
which is endowed with the standard norm in $W^{k,p}$. When $p=2$, the corresponding spaces turn out to be Hilbert and are denoted by $X^{k}(D;\mathbb{R}^d)$, while the interior product is given by
$$
(u,v)_{X^k}=\sum_{|\alpha|\leq k}(\partial^\alpha u,\partial^\alpha v)_{L^2}.
$$
As the solutions in present paper is not first-order differentiable in time, we shall consider the time-fractional Sobolev spaces. Let $B$ be a Banach space, we define
$$
W^{\vartheta,p}([0,T];B)=\bigg\{f\in L^p([0,T];B);~[f]_{\vartheta,p,T}^p:=\int_0^T\int_0^T\frac{\|f(t)-f(\bar{t})\|_{B}^p}{|t-\bar{t}|^{1+\vartheta p}}\mathrm{d}t\mathrm{d}\bar{t}<\infty\bigg\},
$$
for any $\vartheta\in (0,1)$, $p\geq2$, which is endowed with the norm
$$
\|f\|_{W^{\vartheta,p}([0,T];B)}^p= \|f\|_{L^p([0,T];B)}^p+[f]_{\vartheta,p,T}^p.
$$
Moreover, in order to characterize the velocity fields $u_S$, $u_T$ and the temperature field $\theta_S$, we introduce the following  Cartesian product
$$
\mathcal {Z}^{k,p}(D):=X^{k,p}(D;\mathbb{R}^2)\times W^{k,p}(D;\mathbb{R})\times W^{k,p}(D;\mathbb{R}).
$$

Recalling the Helmholtz-Hodge decomposition theorem (cf. Section 3.3 in \cite{42}), we can decompose any function $v\in L^2(D;\mathbb{R}^d)$ as
$$
v=w+\nabla p,
$$
where $(w,\nabla p)_{L^2}=0$,  $w\in X^0(D;\mathbb{R}^d)$ and $p\in H^1 (D;\mathbb{R})$. The linear operator $P:v\mapsto w$ is called the Leray projector, and we have $Pv= (I- Q)v$ for any $v\in L^2(D;\mathbb{R}^d)$, where $Qv=-\nabla p$ and $p$ solves the following Neumann problem
\begin{equation} \label{1.4}
\begin{cases}
     -\Delta p= \div ~v,& x\in D,\\
     \frac{\partial p}{\partial \vec{n}}=v\cdot \vec{n},& x\in \partial D.
\end{cases}	
\end{equation}
If $v\in W^{k,p}(D;\mathbb{R}^d)$, then $v|_{\partial D}\cdot \vec{n}\in W^{k-1/p,p}(\partial D;\mathbb{R}^d)$, and the classic theory for elliptic Neumann problem \cite{51} gives
$$
\|Pv\|_{X^{k,p}}\leq C\|v\|_{W^{k,p}},
$$
which implies that $P:W^{k,p}(D;\mathbb{R}^d)\mapsto X^{k,p}(D;\mathbb{R}^d)$ is a linear continuous operator.

To make sense of the stochastic forcing, let $\mathcal {S}:=(\Omega,\mathcal {F},\mathbb{P},(\mathcal {F}_t)_{t\geq0})$ be a complete filtration probability space, and $(\beta_j )_{j\geq1}$ be mutually independent real-valued standard Wiener processes relative to $(\mathcal {F}_t )_{t\geq0}$. Let $(e_j )_{j\geq1}$ be a complete orthonormal system in a separate Hilbert space $\mathfrak{A}$, one can formally define the cylindrical Wiener process $\mathcal {W}$ in $\mathfrak{A}$ by
\begin{eqnarray}
\mathcal {W}(t,\omega) =\sum_{j\geq 1}  e_j\beta_j (t,\omega).\nonumber
\end{eqnarray}
To ensue the convergence of the last series, one need to introduce an auxiliary space
$$
\mathfrak{A}_0:=\Bigg\{u=\sum_{j\geq1} a_je_j;~\sum_{j\geq1} \frac{a_j^2}{j^2}<\infty\Bigg\}\supset \mathfrak{A},
$$
which is endowed with the norm $\|u\|_\mathfrak{A}^2=\sum_{j\geq1} a_j^2 j^{-2}$, for any $u=\sum_{j\geq1} a_je_j \in \mathfrak{A}$. Note that the canonical injection $\mathfrak{A}\hookrightarrow\mathfrak{A}_0$ is Hilbert-Schmidt, which implies that for any $T>0$  we have $\mathcal {W} \in C([0, T];\mathfrak{A}_0)$ almost surely.

Given another separate Hilbert space $X$, we denote by $L_2(\mathfrak{A},X)$ the collection of Hilbert-Schmidt operators from $\mathfrak{A}$ into $X$, namely,
$\|H\|_{L_2(\mathfrak{A};X)}^2=\sum_{j\geq 1} \|H e_j\|_{X}^2<\infty$ for any $H\in L_2(\mathfrak{A};X)$. Let $H$ be a $X$-valued predictable process in $ L^2(\Omega;L_{loc}^2([0,\infty);L_2(\mathfrak{A},X)))$, one can define the It\^{o}-type stochastic integration
\begin{eqnarray} \label{1.6}
\int_0^tH(r)\mathrm{d}\mathcal {W}_r=\sum_{j\geq 1} \int_0^tH(r)e_j\mathrm{d}\mathcal \beta_j(r) ,
\end{eqnarray}
which is indeed a $X$-valued square integrable martingale. We also remark that above definition of the stochastic integration does not depend on the choice of the space $\mathfrak{A}_0$ (cf. \cite{43}). Moreover, for any $p\geq1$ and $t>0$, there exists a positive constant depend only on $p$ such that the following Burkholder-Davis-Gundy (BDG) inequality holds:
\begin{eqnarray}\label{1.7}
\mathbb{E}\Bigg(\sup_{s\in [0,t]}\bigg\|\int_0^s H(r)\mathrm{d}\mathcal {W}_r\bigg\|_X^p\Bigg)\leq C\mathbb{E}\bigg( \int_0^t \|H(r)\|_{L_2(\mathfrak{A},X)}^2\mathrm{d} r \bigg)^{p/2},
\end{eqnarray}
which can be equivalently rewritten as
\begin{eqnarray*}
\mathbb{E}\Bigg(\sup_{s\in [0,t]}\bigg\|\sum_{j\geq 1} \int_0^tH_j(r) \mathrm{d}\mathcal \beta_j(r)\bigg\|_X^p\Bigg)\leq C\mathbb{E}\Bigg( \sum_{j\geq 1} \int_0^t \|H_j(r) \|_{X}^2\mathrm{d} r \Bigg)^{p/2},
\end{eqnarray*}
with $H_j(r)=H(r)e_j$, $j=1,2,...$.

By applying the Leray projector $P$ to the system \eqref{1.1}, one can delete the pressure term $\nabla p$ and absorbing the boundary condition \eqref{1.2}. In the following we shall mainly focus on the following system of PDEs in $\mathcal {Z}^{k,p}(D)$:
\begin{equation} \label{1.5}
\left\{
\begin{aligned}
&\mathrm{d}u_S+  P(u_S\cdot \nabla)   u_S\mathrm{d}t-P(f u_T\widehat{x}) \mathrm{d}t- P(\frac{g}{\theta_0}\theta _S\widehat{z}) \mathrm{d}t= P\sigma_1(u_S,u_T,\theta_S)\mathrm{d}\mathcal {W},\\
&Pu_S=u_S,\\
&\mathrm{d} u_T+ (u_S\cdot \nabla)   u_T\mathrm{d}t+fu_S\cdot\widehat{x} \mathrm{d}t +\frac{g}{\theta_0} zs\mathrm{d}t=\sigma_2(u_S,u_T,\theta_S)\mathrm{d}\mathcal {W}, \\
&\mathrm{d}\theta_S+ (u_S\cdot \nabla ) \theta_S\mathrm{d}t+u_Ts \mathrm{d}t  = \sigma_3(u_S,u_T,\theta_S)\mathrm{d}\mathcal {W},\\
&u_S(0,\cdot)=u_S^0,\quad  u_T(0,\cdot)=u_T^0,\quad \theta_S(0,\cdot)=\theta_T^0 .
\end{aligned}
\right.
\end{equation}

In  this paper, we shall consider the solutions evolving on $W^{k,p}(D;\mathbb{R}^d)$ for any $p\geq2$ and $k>1+\frac{2}{p}$, for any $k\geq0$, $p\geq2$ and $d\geq1$, we introduce the following spaces as \cite{48}:
\begin{equation*}
\begin{split}
 \mathbb{W}^{k,p} :=\Bigg\{\rho: D\mapsto L_2 ; ~\rho_j(\cdot)=\rho(\cdot)e_j\in W^{k,p}(D;\mathbb{R}^d),~\sum_{|\alpha|\leq k}\|\partial^\alpha\rho \|_{L^p(D;L_2(\mathfrak{A};\mathbb{R}^d))}^p<\infty\Bigg\},
 \end{split}
\end{equation*}
which is a Banach space endowed with the norm
$$
\|\rho\|_{\mathbb{W}^{k,p}}^p:=\sum_{|\alpha|\leq k}\int_D \Bigg(\sum_{j\geq1}|\partial^\alpha\rho_j(x)|^2\Bigg)^{p/2}dx.
$$
Let $X,Y,Z$ and $V$ be Banach spaces with $X,Y,Z$ continuously embedded into $ L^\infty(D;\mathbb{R}^d)$, we denote the space of locally bounded functionals by
\begin{equation*}
\begin{split}
\mbox{Bnd}_{loc}(X\times Y \times Z;V):=&\Big\{f\in C(X\times Y \times Z;V);\\
& \|f(x,y,z)\|_{V}\leq \kappa(\|(x,y,z)\|_{\mathcal {Z}^{0,\infty}})(1+\|x\|_{X}+\|y\|_{Y}+\|z\|_{Z} ),\\
&  \mbox{for all}~\forall x\in X,y\in Y,z\in Z\Big\},
 \end{split}
\end{equation*}
where $\kappa:\mathbb{R}^+\mapsto [1,\infty)$ is a locally bounded nondecreasing function. We also denote the space of locally Lipchitz functionals by
\begin{equation*}
\begin{split}
\mbox{Lip}_{loc}(X\times Y \times Z;V):=&\Bigg\{f\in \mbox{Bnd}_{loc}(X\times Y \times Z;V); ~~ \|f(x_1,y_1,z_1)-f(x_2,y_2,z_2)\|_{V}\\
& \leq \varsigma\Bigg(\sum_{i=1,2}\|(x_i,y_i,z_i)\|_{\mathcal {Z}^{0,\infty}}\Bigg)
(\|x_1-x_2\|_{X}+\|y_1-y_2\|_{Y}+\|z_1-z_2\|_{Z}),\\
&~ \mbox{for all}~ (x_i,y_i,z_i)\in X\times Y\times Z,~~i=1,2\Bigg\},
 \end{split}
\end{equation*}
where $\varsigma:\mathbb{R}^+\mapsto\mathbb{R}^+$ is a locally bounded and nondecreasing function.

With above nations, we provided the following assumptions for the stochastic forcing:
\begin{itemize} [leftmargin=1.42cm]
\item [\textbf{(A1)}]  For any fixed $p\geq2$ and an integer $k>1+\frac{2}{p}$, we assume that
$$
\sigma_i(\cdot)\in \mbox{Lip}_{loc}(\mathcal {Z}^{k-1,p}(D); \mathbb{W}^{k-1,p} )\cap \mbox{Lip}_{loc}(\mathcal {Z}^{k,p}(D); \mathbb{W}^{k,p} ),\quad i=1,2,3.
$$
and
$$
\sigma_i(\cdot)\in \mbox{Bnd}_{loc}(\mathcal {Z}^{k+1,p}(D);\mathbb{W}^{k+1,p} ),\quad i=1,2,3.
$$

\item [\textbf{(A2)}] In addition, in order to justify the construction of Galerkin approximations in sufficiently regular spaces, we assume that
$$
\sigma_i(\cdot)\in \mbox{Bnd}_{loc}(\mathcal {Z}^{k+4,p}(D);\mathbb{W}^{k+4,p} ),\quad i=1,2,3.
$$
\end{itemize}
 By virtue of the Sobolev embedding theorem, we infer that $\mathcal {Z}^{k+2,2}(D)$ is densely embedded into $ \mathcal {Z}^{k+1,p}(D)$, which plays an important role in constructing the local pathwise solutions in $\mathcal {Z}^{k,p}(D)$ via a density and stability argument (cf. Lemma \ref{lem:4.1}).

\subsection{Main results}

To formulate the main results in the paper, let us first provide the definition of pathwise solutions to the SISM \eqref{1.1}-\eqref{1.3}.
\begin{definition} \label{def:1.1}
Fix a stochastic basis $\mathcal {S}:= (\Omega,\mathcal {F},\mathbb{P},\{\mathcal {F}_t\}_{t\geq0},\mathcal {W})$. Assume that $k>1+\frac{2}{p}$, $p\geq2$, the triple $(u_S^0,u_T^0,\theta_S^0)$ are $\mathcal {Z}^{k,p}(D)$-valued $\mathcal {F}_0$-measurable random variables, and  $\sigma_i (\cdot)(i=1,2,3)$ satisfies the conditions (A1)-(A2).

(1) A quadruple $(u_S,u_T,\theta_S,\mathbbm{t})$ is called a \emph{local pathwise solution} provided
\begin{itemize}[leftmargin=1.45cm]
\item [$\bullet$] $\mathbbm{t}$ is a strictly positive stopping time almost surely, i.e., $\mathbb{P}\{\mathbbm{t}>0\}=1$.

\item [$\bullet$] $(u_S,u_T,\theta_S):D\times \mathbb{R}^+\mapsto X^{k,p}(D;\mathbb{R}^2)\times (W^{k,p}(D;\mathbb{R}))^2$ is a $\mathcal {F}_t$-progressively measurable process satisfying
    $$
    u_S(\cdot\wedge\mathbbm{t})\in C([0,\infty);X^{k,p}(D;\mathbb{R}^2)),\quad \mathbb{P}\mbox{-a.s.},
    $$
    $$
    u_T(\cdot\wedge\mathbbm{t}),~~~\theta_S(\cdot\wedge\mathbbm{t})\in C([0,\infty);W^{k,p}(D;\mathbb{R})),\quad \mathbb{P}\mbox{-a.s.};
    $$

\item [$\bullet$] for all $t\geq0$, there holds   $\mathbb{P}$-a.s.
\begin{equation*}
\begin{split}
\quad \quad\quad u_S(t\wedge\mathbbm{t})=& u_S^0-\int_0^{t\wedge\mathbbm{t}} \Big(P(u_S\cdot \nabla)   u_S+P(f u_T\widehat{x})-P(\frac{g}{\theta_0}\theta _S\widehat{z})\Big) \mathrm{d}r \\
      &+ \int_0^{t\wedge\mathbbm{t}}P\sigma_1(u_S,u_T,\theta_S)\mathrm{d}\mathcal {W},\\
u_T(t\wedge\mathbbm{t}) =& u_T^0-\int_0^{t\wedge\mathbbm{t}} \Big((u_S\cdot \nabla)   u_T + fu_S\cdot\widehat{x} - \frac{g}{\theta_0} zs\Big)\mathrm{d}r + \int_0^{t\wedge\mathbbm{t}}\sigma_2(u_S,u_T,\theta_S)\mathrm{d}\mathcal {W},\\
\theta_S(t\wedge\mathbbm{t})= & \theta_S^0-\int_0^{t\wedge\mathbbm{t}} \Big((u_S\cdot \nabla)  \theta_S + u_Ts\Big) \mathrm{d}r+ \int_0^{t\wedge\mathbbm{t}}\sigma_3(u_S,u_T,\theta_S)\mathrm{d}\mathcal {W}.
\end{split}
\end{equation*}
\end{itemize}

(2) The \emph{pathwise uniqueness} of the solution holds in the following sense: if $(u_S,u_T,\theta_S,\mathbbm{t})$, $(\tilde{u}_S,\tilde{u}_T,\tilde{\theta}_S,\tilde{\mathbbm{t}})$ are two local pathwise solutions with
    $$
    \mathbb{P}\{(u_S^0,u_T^0,\theta_S^0)
    =(\tilde{u}_S^0,\tilde{u}_T^0,\tilde{\theta}_S^0)\}=1,
    $$
then
    $$
    \mathbb{P}\{ (u_S(t),u_T(t),\theta_S(t))
    =(\tilde{u}_S(t),\tilde{u}_T(t),\tilde{\theta}_S(t)) ;~\forall t\in [0,\mathbbm{t}\wedge \tilde{\mathbbm{t}}]\}=1.
    $$

(3) A quintuple $(u_S,u_T,\theta_S, \{\mathbbm{t}_n\}_{n\geq1},\mathbbm{t})$ is called a \emph{maximal pathwise solution} to the system \eqref{1.1}-\eqref{1.3}, if each quadruple $(u_S,u_T,\theta_S ,\mathbbm{t}_n)$ is a local pathwise solution, $\mathbbm{t}^n$ increases with $\lim_{n\rightarrow\infty}\mathbbm{t}_n=\mathbbm{t}$ such that
$$
\lim_{n\rightarrow\infty}\sup_{t\in[0,\mathbbm{t}_n]}\|(u_S,u_T,\theta_S)\|_{\mathcal {Z}^{1,\infty}}=\infty \quad \mbox{on the set}~\{\mathbbm{t}<\infty\}.
$$
Moreover, if $\mathbbm{t}=\infty$ almost surely, then the solution is said to be \emph{global}.
\end{definition}

The first main result concerning  local well-posedness for the Cauchy problem \eqref{1.1}-\eqref{1.3} may now be enunciated by the following theorem.

\begin{theorem} [Local-in-time solutions] \label{thm:1.2}
 Assume that  $k>1+\frac{2}{p}$, $p\geq2$, $(u_S^0,u_T^0,\theta_S^0)$ is a  $\mathcal {Z}^{k,p}(D)$-valued $\mathcal {F}_0$ measurable random variables, and the diffusions $\sigma_i(\cdot)$, $i=1,2,3$  satisfy the conditions (A1)-(A2). Then the Cauchy problem \eqref{1.1}-\eqref{1.3} has a unique maximum pathwise solution $(u_S,u_T,\theta_S, \{\mathbbm{t}_n\}_{n\geq1},\mathbbm{t})$ in the sense of Definitions \ref{def:1.1}.
\end{theorem}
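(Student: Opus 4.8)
The plan is to produce the maximal pathwise solution in four stages: (i) truncate the system by a cut-off acting on the $\mathcal{Z}^{1,\infty}$-norm; (ii) build \emph{martingale} (probabilistically weak) solutions of the truncated system by a Galerkin scheme combined with the stochastic compactness method; (iii) upgrade these to pathwise solutions on the prescribed basis $\mathcal{S}$ via pathwise uniqueness and the Gy\"{o}ngy--Krylov characterization of convergence in probability; (iv) remove the truncation by a stopping-time patching argument, which simultaneously yields the blow-up alternative in Definition \ref{def:1.1}(3). Throughout one uses that $k>1+\frac2p$ forces $W^{k,p}(D)\hookrightarrow W^{1,\infty}(D)$ and that $W^{k,p}(D)$ is a Banach algebra.

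\textbf{Stage 1 (truncation and uniform estimates).} Fix $\theta_R\in C_c^\infty([0,\infty))$ with $\theta_R\equiv 1$ on $[0,R]$ and $\mathrm{supp}\,\theta_R\subset[0,2R]$, and multiply every nonlinear term and every diffusion coefficient in \eqref{1.5} by $\theta_R\big(\|(u_S,u_T,\theta_S)\|_{\mathcal{Z}^{1,\infty}}\big)$. Since $\kappa,\varsigma$ in (A1) are locally bounded and nondecreasing, the truncated drift and diffusion are globally Lipschitz and of linear growth from $\mathcal{Z}^{k,p}(D)$ into $\mathcal{Z}^{k-1,p}(D)$, resp.\ $\mathbb{W}^{k,p}$. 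For the Galerkin system (project $u_S$ onto the span of the first $N$ Stokes eigenfunctions and $u_T,\theta_S$ onto the first $N$ Neumann-Laplacian eigenfunctions), the It\^o formula applied to $\|(u_S^N,u_T^N,\theta_S^N)\|_{\mathcal{Z}^{k,p}}^p$, together with Kato--Ponce commutator estimates for $\partial^\alpha\big((u_S\cdot\nabla)u_S\big)$, $\partial^\alpha\big((u_S\cdot\nabla)u_T\big)$, $\partial^\alpha\big((u_S\cdot\nabla)\theta_S\big)$ (where $\mathrm{div}\,u_S=0$ cancels the top-order contribution), the continuity of the Leray projector $P$ on $W^{k,p}$, the fact that the coupling terms $-fu_T\widehat x$, $-\tfrac{g}{\theta_0}\theta_S\widehat z$, $fu_S\cdot\widehat x$, $\tfrac{g}{\theta_0}zs$, $u_Ts$ are lower order and linear, and the BDG inequality \eqref{1.7}, gives
$\mathbb{E}\sup_{[0,T]}\|(u_S^N,u_T^N,\theta_S^N)\|_{\mathcal{Z}^{k,p}}^p\le C\big(R,T,p,\mathbb{E}\|(u_S^0,u_T^0,\theta_S^0)\|_{\mathcal{Z}^{k,p}}^p\big)$ uniformly in $N$; the regularity cushions in (A2) are precisely what legitimize these computations at the smooth Galerkin level. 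I would first run this in the Hilbert scale $\mathcal{Z}^{k,2}$ for data in $\mathcal{Z}^{k+2,2}(D)$, where the It\^o formula is classical, and then reach general $\mathcal{Z}^{k,p}(D)$ data by density, using a stability estimate for differences of solutions measured in a weaker norm together with a Bona--Smith-type interpolation argument (the content of Lemma \ref{lem:4.1}).

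\textbf{Stages 2--3 (compactness, martingale solutions, uniqueness, Gy\"ongy--Krylov).} Because the noise destroys time differentiability, rather than the classical energy-compactness method I would pair the uniform $\mathcal{Z}^{k,p}$-bound with a fractional-in-time bound: the drift is bounded in $\mathcal{Z}^{k-1,p}(D)$, hence Lipschitz in $t$ with values there, while the stochastic convolution lies in $W^{\vartheta,p}\big([0,T];\mathcal{Z}^{k,p}\big)$ for $\vartheta<\tfrac12$ by \eqref{1.7}. Using the compact embedding $\mathcal{Z}^{k,p}\hookrightarrow\hookrightarrow\mathcal{Z}^{k-\delta,p}$ and Aubin--Lions--Simon, the laws of $(u_S^N,u_T^N,\theta_S^N)$ are tight in $C([0,T];\mathcal{Z}^{k-\delta,p}(D))\cap C_w([0,T];\mathcal{Z}^{k,p}(D))\cap W^{\vartheta',p}([0,T];\mathcal{Z}^{k-1,p}(D))$; Prokhorov and the Skorokhod representation theorem then furnish a new probability space with a.s.-convergent copies, and the limit is identified as a martingale solution of the truncated system in the usual way, with $L^p(\Omega;L^\infty(0,T;\mathcal{Z}^{k,p}))$-membership from weak lower semicontinuity and strong $\mathcal{Z}^{k,p}$-continuity recovered afterwards from the equation. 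Pathwise uniqueness of the truncated system follows by applying It\^o's formula to the $\mathcal{Z}^{0,2}$-norm (or $\mathcal{Z}^{k-1,p}$-norm) of the difference of two solutions on the same basis: the cut-off and (A1) bound all terms by $C_R\int_0^t\|\cdot\|^2$, so Gronwall closes it. Weak existence plus pathwise uniqueness, through the Gy\"{o}ngy--Krylov lemma, then delivers a pathwise solution $(u_S^R,u_T^R,\theta_S^R)$ of the truncated system on the original $\mathcal{S}$.

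\textbf{Stage 4 (de-truncation and the maximal solution).} After the standard localization on the events $\{\|(u_S^0,u_T^0,\theta_S^0)\|_{\mathcal{Z}^{k,p}}\le M\}$, set $\mathbbm{t}_R:=\inf\{t\ge0:\|(u_S^R,u_T^R,\theta_S^R)(t)\|_{\mathcal{Z}^{1,\infty}}\ge R\}$; on $[0,\mathbbm{t}_R]$ the cut-off is inactive, so $(u_S^R,u_T^R,\theta_S^R,\mathbbm{t}_R)$ solves \eqref{1.1}--\eqref{1.3}, path continuity gives $\mathbb{P}\{\mathbbm{t}_R>0\}=1$, and pathwise uniqueness makes the family consistent as $R\uparrow\infty$. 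Gluing yields a maximal pathwise solution $(u_S,u_T,\theta_S,\{\mathbbm{t}_n\}_{n\ge1},\mathbbm{t})$ with $\mathbbm{t}=\lim_n\mathbbm{t}_n$ and $\sup_{[0,\mathbbm{t}_n]}\|(u_S,u_T,\theta_S)\|_{\mathcal{Z}^{1,\infty}}\to\infty$ on $\{\mathbbm{t}<\infty\}$, and uniqueness of this maximal object again follows from pathwise uniqueness of local solutions. I expect the principal obstacle to be the uniform a priori estimate in the $W^{k,p}$-scale in the presence of the one-derivative loss in the quasilinear transport nonlinearities: this is exactly why a direct Banach fixed point fails and why one must route through the Hilbert space $\mathcal{Z}^{k,2}$, exploit the regularity assumptions (A1)--(A2), and invoke a density/stability argument, all while keeping every bound uniform in $R$ so that $\mathbbm{t}_R>0$ is guaranteed; a secondary difficulty is extracting the sharp fractional-in-time regularity that makes the compactness step work despite the noise-induced loss of $t$-smoothness.
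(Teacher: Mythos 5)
Your proposal follows the paper's architecture step for step: truncation via a $W^{1,\infty}$-level cut-off, Galerkin approximation plus stochastic compactness (uniform bounds in $L^q(\Omega;L^\infty\mathcal{Z}^{k',2})$ and fractional-in-time Sobolev spaces, Prokhorov, Skorokhod) in a high-regularity Hilbert scale to obtain martingale solutions of the truncated system (Lemma \ref{lem:2.1}), pathwise uniqueness combined with the Gy\"ongy--Krylov lemma to upgrade to pathwise solutions in the smooth scale (Lemmas \ref{lem:3.1}--\ref{lem:3.3}), de-truncation via stopping times, and a density/stability argument through the abstract Cauchy theorem (Lemma \ref{lem:4.1}) with mollified data to reach the sharp $\mathcal{Z}^{k,p}(D)$ setting and the maximal solution. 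The only divergences are cosmetic: the paper runs pathwise uniqueness of the truncated system directly in the $\mathcal{Z}^{k,p}$-norm rather than a weaker one, and the truncated drift is only \emph{locally} (not globally) Lipschitz in $\mathcal{Z}^{k,p}$ since the $W^{1,\infty}$-level cut-off does not control the top-order norm, which is harmless because the Galerkin ODE is finite-dimensional and the cut-off supplies the a priori bound that prevents blow-up.
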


As we alluded to earlier, the stochastic forcing have a bad influence on the time regularity of solutions, which makes the a priori energy estimates and standard Arzel\`{a}-Ascoli type compactness results applied in \cite{1} to be invalid in present case. To overcome this, we first establish the existence of martingale solutions to the SISM following the approach in \cite{43}. However, the a priori estimates  (cf. Lemma \ref{lem:2.2}) hold only up to a stopping time, which allows us to derive some unform estimates for approximate solutions in the Galerkin scheme only up to a sequence of stopping times $\{\mathbbm{t}_n\}_{n\geq1}$. To our knowledge, there has no effective method to derive an uniform bound for the stopping times $\{\mathbbm{t}_n\}_{n\geq1}$ from below. We shall deal with this problem by properly introducing some new cut-off functions depending on the $W^{1,\infty}$-norm of approximations for the nonlinear terms in the system. Due to the appearance of the cut-off functions and the excessive number of derivatives in estimations, we have to first establish the (global) pathwise solutions in sufficiently regular Hilbert spaces $\mathcal {Z}^{k+2,2}(D)$, which is dense in the sharp spaces $\mathcal {Z}^{k,p}(D)$ with $k> 1+\frac{1}{p}$ and $p\geq2$. Here, we construct the pathwise solution in spirit of the classical Yamada-Watanabe theorem \cite{49,50}, which tells us that pathwise solutions exist whenever martingale solutions may be found, and pathwise uniqueness holds. Recently, a different proof of such result was developed in [23] which leans on an elementary characterization of convergence in probability (cf. Lemma 2.10 below). Due to the continuously embedding  $\mathcal {Z}^{k+2,2}(D)\hookrightarrow\mathcal {Z}^{k+1,p}(D)$, one can finally apply a density-stability argument to prove the existence of maximum pathwise solution in the shape cases. It should be pointed out that the coupling nature of the system and the $L^p$ estimates in the working space  $\mathcal {Z}^{k,p}(D)$ make the derivation of a serious of uniform bounds for approximations to be very complicated.

Our second goal in this paper is to seek for sufficient conditions which lead to global solutions. In this work, we focus on the case that the potential temperature do not vary linearly on the $y$-direction, i.e., $s=0$ (cf. \cite{1}), and consider the ISM with linear multiplicative stochastic forcing:
\begin{equation}\label{1.8}
\begin{split}
&\sigma_1(u_S,u_T,\theta_S) \mathrm{d} \mathcal {W}= \alpha u_S \mathrm{d} W, \\
& \sigma_2(u_S,u_T,\theta_S)  \mathrm{d} \mathcal {W}= \alpha u_T\mathrm{d} W,\\
&  \sigma_3(u_S,u_T,\theta_S)  \mathrm{d} \mathcal {W}= \alpha \theta_S\mathrm{d} W,
 \end{split}
\end{equation}
where $\alpha\in\mathbb{R}$ denotes the diffusion parameter, and $W$ is a single one-dimension Brownian motion. Our main result in this topic can be described by the following theorem.

\begin{theorem} [Global-in-time solutions] \label{thm:1.3}
Assume that $s=0$ in \eqref{1.1}, $k> 1+\frac{2}{p}$, $p\geq2$, $r\geq1$, and  $(u_S^0,u_T^0,\theta_S^0)$ is a $\mathcal {Z}^{k,p}(D)$-valued $\mathcal {F}_0$ measurable random variable. For $\alpha \in \mathbb{R}$, let $(u_S,u_T,\theta_S, \mathbbm{t})$
be the maximum pathwise solution to the SISM with linear multiplicative noises \eqref{1.8}.
If the initial data satisfies
\begin{equation}\label{1.9}
\begin{split}
\|(u_S^0, u_T^0, \theta_S^0) \|_{\mathcal {Z}^{k,p}}\leq \tilde{A}(|\alpha|,r),
 \end{split}
\end{equation}
for sufficiently large $|\alpha|>1 $, then the local pathwise solution exists globally in time with high probability. More precisely, we have
\begin{equation*}
\begin{split}
\mathbb{P}\{\mathbbm{t}=\infty\} \geq 1- r^{-\frac{1}{16}},
 \end{split}
\end{equation*}
where the upper bound $\tilde{A}(|\alpha|,r)$ in \eqref{1.9} depends only on the parameters $\alpha$ and $r$ such that, for any fixed $r\geq 1$,
$$
\lim_{|\alpha|\rightarrow \infty}\tilde{A}(|\alpha|,r)=\infty, \quad \lim_{|\alpha|\rightarrow0} \tilde{A}(|\alpha|,r)=0.
$$
\end{theorem}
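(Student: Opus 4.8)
The plan is to remove the randomness by a pathwise exponential change of unknowns and then run a Riccati-type energy argument in $\mathcal{Z}^{k,p}(D)$, in which the dissipation generated by the linear multiplicative noise beats the advection nonlinearity once $|\alpha|$ is large. Set $\Gamma(t):=\exp\!\big(-\alpha W(t)-\tfrac{\alpha^2}{2}t\big)$, so that $\mathrm{d}\Gamma=-\alpha\Gamma\,\mathrm{d}W$, $\Gamma(0)=1$, and put $v_S:=\Gamma u_S$, $v_T:=\Gamma u_T$, $\vartheta_S:=\Gamma\theta_S$. Since $\Gamma$ is spatially constant it commutes with $P$ (so $Pv_S=v_S$), and applying It\^{o}'s product rule to the linear-noise system \eqref{1.8} with $s=0$ makes the $\mathrm{d}W$-terms cancel while a cross-variation term $-\alpha^2 v\,\mathrm{d}t$ survives; hence $(v_S,v_T,\vartheta_S)$ solves, on $[0,\mathbbm{t})$, the random PDE system
\[
\partial_t v_S=-\Gamma^{-1}P\big((v_S\cdot\nabla)v_S\big)+P\big(fv_T\widehat{x}\big)+P\big(\tfrac{g}{\theta_0}\vartheta_S\widehat{z}\big)-\alpha^2 v_S,
\]
\[
\partial_t v_T=-\Gamma^{-1}(v_S\cdot\nabla)v_T-fv_S\cdot\widehat{x}-\alpha^2 v_T,\qquad \partial_t\vartheta_S=-\Gamma^{-1}(v_S\cdot\nabla)\vartheta_S-\alpha^2\vartheta_S,
\]
where $\Gamma^{-1}(t)=\exp\!\big(\alpha W(t)+\tfrac{\alpha^2}{2}t\big)$. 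One first records the routine equivalence between pathwise solutions of the SISM and of this random system; the maximal time $\mathbbm{t}$ and the blow-up alternative of Definition \ref{def:1.1} are unchanged, because $\Gamma^{\pm1}$ are continuous and strictly positive and $\|(u_S,u_T,\theta_S)\|_{\mathcal Z^{1,\infty}}=\Gamma^{-1}\|(v_S,v_T,\vartheta_S)\|_{\mathcal Z^{1,\infty}}$.

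Next I would derive the a priori estimate. Writing $Y(t):=\|(v_S,v_T,\vartheta_S)(t)\|_{\mathcal Z^{k,p}}$, applying $\partial^\beta$ for $|\beta|\le k$, pairing with $p|\partial^\beta v|^{p-2}\partial^\beta v$, summing over $\beta$, and using $\div v_S=0$, the Kato--Ponce/Moser commutator estimates (exactly as in the proof of Theorem \ref{thm:1.2}), continuity of the Leray projector on $W^{k,p}$, and the embedding $W^{k,p}\hookrightarrow W^{1,\infty}$ valid since $k>1+\tfrac2p$, one obtains on $[0,\mathbbm{t})$ an inequality of the form
\[
\frac{\mathrm{d}}{\mathrm{d}t}Y\le \widetilde C_1\,\Gamma^{-1}(t)\,Y^2-\big(\alpha^2-C_0\big)Y ,
\]
with $\widetilde C_1,C_0>0$ depending only on $k,p,f,g,\theta_0,D$: the quadratic term carries the factor $\Gamma^{-1}$ and comes from the three advection terms, while the remaining linear couplings are absorbed into $C_0 Y$ and dominated by the dissipation once $\alpha^2>C_0$. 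Setting $\mu:=\alpha^2-C_0$ and substituting $h:=1/Y$ turns this into the linear differential inequality $h'\ge \mu h-\widetilde C_1\Gamma^{-1}$, hence
\[
h(t)\ge e^{\mu t}\Big(h(0)-\widetilde C_1\!\int_0^t e^{-\mu s}\Gamma^{-1}(s)\,\mathrm{d}s\Big),\qquad t\in[0,\mathbbm{t}),
\]
so that $Y$ is \emph{uniformly} bounded on $[0,\mathbbm{t})$ — whence, by the blow-up alternative, $\mathbbm{t}=\infty$ — as soon as $h(0)^{-1}=\|(u_S^0,u_T^0,\theta_S^0)\|_{\mathcal Z^{k,p}}$ satisfies $\|(u_S^0,u_T^0,\theta_S^0)\|_{\mathcal Z^{k,p}}\le \big(\widetilde C_1\int_0^\infty e^{-\mu s}\Gamma^{-1}(s)\,\mathrm{d}s\big)^{-1}$, using $Y(0)=\|(u_S^0,u_T^0,\theta_S^0)\|_{\mathcal Z^{k,p}}$ because $\Gamma(0)=1$.

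It remains to control $I:=\int_0^\infty e^{-\mu s}\Gamma^{-1}(s)\,\mathrm{d}s=\int_0^\infty \exp\!\big((\tfrac{\alpha^2}{2}-\mu)s+\alpha W(s)\big)\mathrm{d}s$ on an event of probability at least $1-r^{-1/16}$. Since $\widetilde W:=\mathrm{sign}(\alpha)\,W$ is a standard Brownian motion and $\sup_{s\ge0}(\widetilde W(s)-\delta s)$ is $\mathrm{Exp}(2\delta)$-distributed, the choice $\delta=|\alpha|/4$ gives $\mathbb P(G)=1-r^{-1/16}$ for $G:=\{\sup_{s\ge0}(\widetilde W(s)-\tfrac{|\alpha|}{4}s)\le \tfrac{\log r}{8|\alpha|}\}$; on $G$ we have $\alpha W(s)\le \tfrac18\log r+\tfrac{\alpha^2}{4}s$, whence, provided $\mu>\tfrac34\alpha^2$, i.e. $\alpha^2>4C_0$,
\[
I\le r^{1/8}\!\int_0^\infty \exp\!\Big(\big(\tfrac{3\alpha^2}{4}-\mu\big)s\Big)\mathrm{d}s=\frac{r^{1/8}}{\mu-\tfrac34\alpha^2}=\frac{r^{1/8}}{\tfrac14\alpha^2-C_0}.
\]
Defining $\widetilde A(|\alpha|,r):=(\tfrac14\alpha^2-C_0)\big/(\widetilde C_1 r^{1/8})$ for $|\alpha|>2\sqrt{C_0}$ (and $\widetilde A:=0$ otherwise, which forces $\lim_{|\alpha|\to0}\widetilde A=0$, while plainly $\lim_{|\alpha|\to\infty}\widetilde A=\infty$), the hypothesis \eqref{1.9} yields $h(0)\ge \widetilde C_1 I$ on $G$, hence $\mathbbm{t}=\infty$ on $G$, and therefore $\mathbb P\{\mathbbm{t}=\infty\}\ge\mathbb P(G)=1-r^{-1/16}$, as claimed.

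I expect the main obstacle to be the second step: turning the formal $\mathcal Z^{k,p}$-energy identity into a rigorous Riccati inequality in which the $-\alpha^2$ dissipation survives with the correct sign. The delicate points are (i) controlling the Leray projector together with the coupled transport structure in the $L^p$-based spaces so that no term of order worse than $\Gamma^{-1}Y^2$ or $Y$ is produced, (ii) justifying the $L^p$ energy estimate at the (critical) regularity $\mathcal Z^{k,p}$, which is where the regularization/density argument behind Theorem \ref{thm:1.2} must be reused, and (iii) tuning the drift $\delta$ in the maximal inequality so that the exponent lands exactly on $\tfrac1{16}$ while keeping $I$ integrable (elementary, but it must be done jointly with the size of $\mu$).
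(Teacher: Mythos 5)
Your proposal is essentially correct and it reaches the theorem by a genuinely different route from the paper's. Both begin by eliminating the noise through a scalar exponential change of unknowns (you use $\Gamma=\exp(-\alpha W-\tfrac{\alpha^2}{2}t)$, the paper uses $\tilde\alpha=\exp(-\alpha W)$; the only consequence is that your damping coefficient is $\alpha^2$ while the paper's is $\tfrac{\alpha^2}{2}$, compensated by $\Gamma^{-1}$ carrying an extra deterministic exponential factor). From there the strategies diverge. The paper keeps the energy inequality \emph{linear} in $\|(\tilde u_S,\tilde u_T,\tilde\theta_S)\|_{\mathcal Z^{k,p}}$ at the cost of an extra factor $\tilde\alpha^{-1}\|\nabla\tilde u_S\|_{L^\infty}$, then invokes the Beale--Kato--Majda logarithmic inequality together with separate $L^\infty$ estimates for the vorticity $\omega_S$, $\nabla\tilde u_T$, $\nabla\tilde\theta_S$ (so that these gradients decay), introduces two stopping times (one governing the size of the solution, one governing the geometric Brownian motion $\exp\{\alpha W_t-\tfrac{\alpha^2}{32}t\}$), and finally estimates $\mathbb P\{\tilde{\mathbbm t}_r=\infty\}$ by applying It\^o's formula to $\Lambda^{1/16}$ and using that it is a martingale on $[0,\tilde{\mathbbm t}_r]$. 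You instead collapse the whole gradient hierarchy into one Riccati-type inequality via the (cruder) embedding $W^{k,p}\hookrightarrow W^{1,\infty}$, solve it explicitly through the reciprocal substitution $h=1/Y$, and replace the martingale-plus-Chebyshev step by the classical fact that $\sup_{s\ge0}(B_s-\delta s)$ is $\mathrm{Exp}(2\delta)$-distributed, tuned so that the probability exponent lands on $1/16$. Your argument is more elementary (no BKM logarithm, no hierarchy of stopping times), while the paper's BKM refinement would be needed if one wanted to exploit sharper decay of the vorticity than the Sobolev embedding provides; for the smallness/probability statement being proved, both roads arrive at a threshold $\tilde A(|\alpha|,r)$ with the required limits in $|\alpha|$. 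Two small points worth pinning down if you flesh this out: (i) the passage from boundedness of $Y=\|(v_S,v_T,\vartheta_S)\|_{\mathcal Z^{k,p}}$ to non-blow-up of the original solution needs the remark you already made, namely that on $\{\mathbbm t<\infty\}$, $\Gamma^{\pm1}$ is continuous on the compact interval $[0,\mathbbm t]$, so the $\mathcal Z^{1,\infty}$ blow-up criteria for the original and transformed systems are equivalent; (ii) the $L^p$ energy identity at critical regularity $\mathcal Z^{k,p}$ must be justified by the density/stability machinery already built for Theorem \ref{thm:1.2}, exactly as in the paper.
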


Note that the forcing regime \eqref{1.8} is covered by the theory developed in Theorem \ref{thm:1.2}. Theorem \ref{thm:1.3} implies that the linear multiplicative noise in the type of \eqref{1.8} have a regularization effect at the pathwise level,
so that the local solutions to the ISM with large noise are global in time with high probability $1- r^{-1/16}$. The advantage of the noises \eqref{1.8} lies in the fact that one can reformulated  the SISM as random partial differential equations (PDEs) by using an exponential transformation, which provides a damping effect on the pathwise behavior of solutions. Due to the complex coupling nature of the system, it
turns out that the dynamic behavior of solutions to the SISM \eqref{5.1} is quite different with the 2D stochastic Euler equations \cite{41}, and hence it is challenging to prove the existence of global pathwise solution
in full probability, i.e., $\mathbbm{t}=\infty$ almost surely. Fortunately, for sufficiently large $|\alpha|>1$, one can prove by carefully introducing suitable stopping times that the vorticity term
$\omega= \mbox{curl}~u_S$ and the gradients for $u_T$ and $\theta_S$ on the R.H.S. of \eqref{5.9} can be controlled by the damping terms arising from the stochastic forcing. By applying the Beale-Kato-Majda
inequality, one can derive a bound for $\|(u_S, u_T, \theta_S)\|_{\mathcal {Z}^{k,p}}$ within a stochastic interval depending on $r$, provided the initial data
 $\|(u_S^0, u_T^0, \theta_S^0)\|_{\mathcal {Z}^{k,p}}$ is sufficiently small (which is determined by the parameters $\alpha$ and $r$). Moreover, it reveals that the global existence of
 solutions is closely related to the growth of a geometric Brownian motion $\exp\{\alpha W_t-\frac{\alpha^2}{32}t\}$. Finally, we shall prove that the solution is global in
 time on the event that the geometric Brownian motion stays always below the value $r$.

\begin{remark} Due to the complex coupling nature of system, the global existence problem with the linear multiplicative noises \eqref{1.8} in the case of $s\neq0$ becomes more challenging. Also, whether the ISM perturbed by additive noise admits global-in-time pathwise solution is still an open question. Moreover,
the influence of the viscosity on dynamic behavior of solutions to the SISM system is also a natural and interesting problem.
\end{remark}

The rest of the paper is organized as follows. In Section 2, we prove the existence of global martingale solutions to the truncated SISM with nonlinear multiplicative noise.
To this end, we first derive some a priori uniform bounds in Banach spaces $\mathcal {Z}^{k,p}(D)$ by introducing suitable stopping times. Then we introduce the Galerkin scheme and establish
the existence of martingale solutions by stochastic compactness method. In Section 3, we prove the pathwise uniqueness of solutions which together with the existence result lead to the existence of pathwise solution in smooth Hilbert
space $\mathcal {Z}^{k+4,2}(D)$. In Section 4, we establish the existence of solution in sharp case $\mathcal {Z}^{k,p}(D)$ with $k>1+1/p$ and $p\geq2$ by applying an abstract Cauchy theorem.
The Section 5 is devoted to the proof of Theorem \ref{thm:1.3} for the ISM perturbed by linear multiplicative stochastic forcing.

\section{Martingale solutions for truncated SISM}

The main task of this section is to prove the existence of global martingale solutions (probability weak solutions) for the truncated SISM  in sufficiently regular  spaces $\mathcal {Z}^{k',2}(D)$ with $k'=k+4$, $k\geq 1+\frac{2}{p}$ and  $p\geq2$. More precisely, the main result in this section can be formulated by the following lemma, whose proof will be provided in Subsection 2.3.

\begin{lemma} [Martingale solutions] \label{lem:2.1}
Fix $k'=k+4$, $k>1+\frac{2}{p}$, $\vartheta\in (0,\frac{1}{2})$. Assume that $\mu_0\in \mathcal {P}_r(\mathcal {Z}^{k',2}(D))$ is an initial  measure such that $\int_{\mathcal {Z}^{k',2}(D)}\|u\|_{\mathcal {Z}^{k',2}}\mu_0( d u)<\infty$. Then there exists a stochastic basis $\mathcal {S}=(\overline{\Omega},\overline{\mathcal {F}},\overline{\mathbb{P}},\{\overline{\mathcal {F}}_t\},\overline{\mathcal {W}})$ and a triple  $\mathcal {Z}^{k',2}(D)$ valued predictable process
\begin{equation*}
\begin{split}
(\overline{u}_S,\overline{u}_T,\overline{\theta}_S)\in L^q(\Omega; L^\infty_{loc}([0,\infty);\mathcal {Z}^{k',2}(D))) \bigcap L^q(\Omega; C([0,\infty);\mathcal {Z}^{k'-2,2}(D)))
 \end{split}
\end{equation*}
with $\mathbb{P}\circ(u_S^0,u_T^0,\theta_S^0)^{-1}
=\overline{\mathbb{P}}\circ(\overline{u}_S(0),\overline{u}_T(0),\overline{\theta}_S(0))^{-1}$ such that
\begin{equation}
\left\{
\begin{aligned}
& \overline{u}_S(t)+  \int_0^t\varpi_R(\|\overline{u}_S\|_{X^{1,\infty}})P(\overline{u}_S\cdot \nabla)   \overline{u}_S\mathrm{d}s-\int_0^tP(  \overline{u}_T\widehat{x}) \mathrm{d}s- \int_0^tP( \overline{\theta} _S\widehat{z}) \mathrm{d}s\\
&\quad\quad=  \overline{u}_S(0)+\int_0^t\mathcal {D}_{cut}^1(s)\mathrm{d}\mathcal {W}_s,\\
& \overline{u}_T(t)+ \int_0^t\varpi_R(\|(\overline{u}_S,\overline{u}_T)\|_{X^{1,\infty}\times W^{1,\infty}})(\overline{u}_S\cdot \nabla)   u_T\mathrm{d}s+ \int_0^tu_S\cdot\widehat{x} \mathrm{d}s +   zt \\
&\quad\quad=\overline{u}_T(0)+\int_0^t\mathcal {D}_{cut}^2(s)\mathrm{d}\mathcal {W}_s, \\
& \overline{\theta}_S(t)+\int_0^t\varpi_R(\|(\overline{u}_S,\overline{\theta}_S)\|_{X^{1,\infty}\times W^{1,\infty}})(\overline{u}_S\cdot \nabla ) \overline{\theta}_S\mathrm{d}s+\int_0^t\overline{u}_T \mathrm{d}s = \overline{\theta}_S(0)+\int_0^t\mathcal {D}_{cut}^3(s)\mathrm{d}\mathcal {W}_s,
\end{aligned}
\right.
\end{equation}
for all $t>0$, where the truncated diffusions are defined by
\begin{equation*}
\begin{split}
\mathcal {D}_{cut}^1(t)=\varpi_R(\|(u_S, u_T,\theta_S)\|_{\mathcal {Z}^{1,\infty}})
P\sigma_1(u_S,u_T,\theta_S),
 \end{split}
\end{equation*}
and
\begin{equation*}
\begin{split}
\mathcal {D}_{cut}^j(t)=\varpi_R(\|(u_S, u_T,\theta_S)\|_{\mathcal {Z}^{1,\infty}})
\sigma_j(u_S,u_T,\theta_S),\quad j=2,3.
 \end{split}
\end{equation*}
\end{lemma}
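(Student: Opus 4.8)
The plan is to construct the solution by a Faedo--Galerkin scheme combined with the stochastic compactness method and a Skorokhod representation argument, carried out at the regularity level $k'=k+4$ precisely so that hypothesis (A2) and the Galerkin projections can be used without a net loss of derivatives; the cut-off $\varpi_R$ renders all nonlinear drift terms bounded, which is what makes the construction global in time. \textbf{Step 1 (Galerkin approximation).} I would fix an orthonormal basis $\{\mathbf e_m\}_{m\ge1}$ of $\mathcal Z^{0,2}(D)$ built from the eigenfunctions of the Stokes operator (in the $u_S$-slot) and of the Neumann Laplacian (in the $u_T$- and $\theta_S$-slots); these lie in $\mathcal Z^{\ell,2}(D)$ for every $\ell$. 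Projecting the truncated system onto $\mathrm{span}\{\mathbf e_1,\dots,\mathbf e_m\}$ via the associated orthogonal projection $\Pi_m$ produces a finite-dimensional It\^o system whose coefficients are locally Lipschitz (by (A1) and the Lipschitz continuity of $\varpi_R$) and, since $\varpi_R$ vanishes for large norms and all norms are equivalent on the finite-dimensional space, in fact globally bounded; hence there is a unique global solution $(u_S^m,u_T^m,\theta_S^m)$ on the given basis $(\Omega,\mathcal F,\mathbb P,\{\mathcal F_t\},\mathcal W)$ with paths in $C([0,\infty);\mathcal Z^{\ell,2})$ for every $\ell$.

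\textbf{Step 2 (Uniform estimates).} Next I would apply the It\^o formula to $\|(u_S^m,u_T^m,\theta_S^m)\|_{\mathcal Z^{k',2}}^q$. Differentiating the transport terms up to order $k'$, the top-order contributions cancel after integration by parts (using that $u_S^m$ is divergence-free, for all three equations), while the remaining commutators are estimated by Moser--Kato--Ponce-type inequalities in $\mathcal Z^{k',2}$; on the support of $\varpi_R(\|(u_S^m,u_T^m,\theta_S^m)\|_{\mathcal Z^{1,\infty}})$ these, together with the linear coupling terms $fu_T\widehat{x}$, $\frac{g}{\theta_0}\theta_S\widehat{z}$, $fu_S\cdot\widehat{x}$, $u_Ts$ and the source, are bounded by $C_R\big(1+\|(u_S^m,u_T^m,\theta_S^m)\|_{\mathcal Z^{k',2}}^2\big)$. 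The It\^o correction and the martingale term are controlled via (A2) and the BDG inequality \eqref{1.7}, so Gronwall's lemma yields, uniformly in $m$,
\[
\mathbb E\Big(\sup_{t\in[0,T]}\|(u_S^m,u_T^m,\theta_S^m)(t)\|_{\mathcal Z^{k',2}}^q\Big)\le C(R,T,q,\mu_0).
\]
Feeding this back into \eqref{1.5}, the drift is Lipschitz-in-time with values in $\mathcal Z^{k'-1,2}$ and the stochastic integral lies in $W^{\vartheta,q}([0,T];\mathcal Z^{k'-1,2})$ for $\vartheta<1/2$ (BDG applied to time increments), giving a uniform bound of $(u_S^m,u_T^m,\theta_S^m)$ in $L^q(\Omega;W^{\vartheta,q}([0,T];\mathcal Z^{k'-2,2}))$.

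\textbf{Step 3 (Compactness, Skorokhod, passage to the limit).} Since $\mathcal Z^{k',2}(D)\hookrightarrow\hookrightarrow\mathcal Z^{k'-2,2}(D)$ is compact, an Aubin--Lions--Simon argument gives $L^\infty([0,T];\mathcal Z^{k',2})\cap W^{\vartheta,q}([0,T];\mathcal Z^{k'-2,2})\hookrightarrow\hookrightarrow C([0,T];\mathcal Z^{k'-2,2})$, so by Step 2 the laws of $\{(u_S^m,u_T^m,\theta_S^m)\}_m$ are tight on $C([0,T];\mathcal Z^{k'-2,2})$, while the law of $\mathcal W$ is tight on $C([0,T];\mathfrak A_0)$. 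By Prokhorov and a Skorokhod-type representation theorem there is a new stochastic basis $\overline{\mathcal S}=(\overline\Omega,\overline{\mathcal F},\overline{\mathbb P},\{\overline{\mathcal F}_t\},\overline{\mathcal W})$ carrying copies $(\overline u_S^m,\overline u_T^m,\overline\theta_S^m,\overline{\mathcal W}^m)$, equal in law to the originals, converging $\overline{\mathbb P}$-a.s. to a limit $(\overline u_S,\overline u_T,\overline\theta_S,\overline{\mathcal W})$ in these topologies. As $k'-2=k+2>2$ in dimension two, the embedding $\mathcal Z^{k'-2,2}(D)\hookrightarrow\mathcal Z^{1,\infty}(D)$ forces $\|(\overline u_S^m,\overline u_T^m,\overline\theta_S^m)\|_{\mathcal Z^{1,\infty}}\to\|(\overline u_S,\overline u_T,\overline\theta_S)\|_{\mathcal Z^{1,\infty}}$ locally uniformly in $t$, so by continuity of $\varpi_R$ and of the $\sigma_i$ (A1), together with the uniform $\mathcal Z^{k',2}$ bound and interpolation, every term of the truncated system converges (the transport nonlinearities in $L^2_{loc}([0,\infty);\mathcal Z^{k+1,2})$, the stochastic integral by the standard martingale-identification argument as in \cite{43}). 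Hence the limit solves the truncated system on $\overline{\mathcal S}$; Fatou's lemma (lower semicontinuity of the $\mathcal Z^{k',2}$-norm under $\mathcal Z^{k'-2,2}$-convergence) promotes the bound to $(\overline u_S,\overline u_T,\overline\theta_S)\in L^q(\Omega;L^\infty_{loc}([0,\infty);\mathcal Z^{k',2}))$, an interpolation argument (weak continuity in $\mathcal Z^{k',2}$, strong continuity in $\mathcal Z^{k'-2,2}$) gives the stated path regularity, and $\mathrm{Law}\big((\overline u_S^m,\overline u_T^m,\overline\theta_S^m)(0)\big)=\mu_0$ passes to the limit to match the initial laws.

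\textbf{Main obstacle.} I expect the heart of the matter to be Step 2: running the classical hyperbolic energy method at the high level $k'=k+4$ for a system with \emph{no} smoothing, and verifying that the single cut-off $\varpi_R$ --- which only monitors the $\mathcal Z^{1,\infty}$-norm --- suffices to close a differential inequality that is linear in $\|\cdot\|_{\mathcal Z^{k',2}}^2$ for all three coupled equations simultaneously (the coupling terms must be absorbed as well), while the two extra derivatives over the target space $\mathcal Z^{k,p}$ supply exactly the room needed to invoke (A2) and \eqref{1.7} without a net derivative loss. A secondary difficulty is producing, in spite of the time-irregularity created by $\sigma_i\,\mathrm{d}\mathcal W$, the fractional-in-time estimate that drives the Aubin--Lions--Simon compactness of Step 3.
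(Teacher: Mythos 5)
Your proposal follows essentially the same route as the paper: Galerkin approximation with cut-offs, high-order $\mathcal{Z}^{k',2}$ energy estimates via It\^o's formula and Gronwall, fractional-in-time $W^{\vartheta,q}$ bounds for the stochastic integral together with $W^{1,q}$ bounds for the drift, tightness of the laws in $C([0,T];\mathcal{Z}^{k'-2,2}(D))$ via the compact embedding lemma (the paper's Lemma~\ref{lem:2.4}, which plays the role of your Aubin--Lions--Simon step), Prokhorov, Skorokhod representation to a new basis, and limit passage using $\mathcal{Z}^{k'-2,2}(D)\hookrightarrow \mathcal{Z}^{1,\infty}(D)$ to handle the cut-off $\varpi_R$. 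One small imprecision in Step~1: the finite-dimensional drift is not \emph{globally bounded} --- the cut-off $\varpi_R$ multiplies only the transport nonlinearities, while the uncut linear coupling terms $P_nP(u_T^n\widehat{x})$, $P_nu_S^n\cdot\widehat{x}$ and the forcing $z$ still give linear growth --- but local Lipschitzness plus linear growth (or, as the paper argues, the energy estimate preventing $W^{1,\infty}$-blow-up) still yields global existence of the Galerkin approximations.
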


We shall construct the martingale solutions to above truncated system following the method in Chapter 8 of \cite{43}, where the main mathematical tools are the Prokhorov theorem and the Skorohod's representation theorem. Here and in the following, to simplify the exposition, we assume the parameters $f=g=\theta_0=s=1$. The general case is entirely similar.

\subsection{A priori estimates}

We shall derive some a priori estimates for solutions evolving in $\mathcal {Z}^{k,p}(D)$ with $k>1+\frac{2}{p}$ and $p\geq2$ by utilizing the stopping times techniques, which play an important role in  estimating approximate solutions in the next sections.

\begin{lemma} \label{lem:2.2}
Let $k\geq 1+\frac{2}{p}$,  $p\geq2$, and assume that  $(u_S,u_T,\theta_S)$ is a solution to the SISM system \eqref{1.1}-\eqref{1.3} with initial data $(u_S^0,u_T^0,\theta_S^0)$,
which is defined up to a maximum existence time $\mathbbm{t}>0$ almost surely. Then for any fixed $T>0$,
\begin{equation} \label{2.1}
\begin{split}
\mathbb{E}\left(\sup_{t\in[0, \mathbbm{t}_R \wedge T]} \|(u_S ,u_T,\theta_S)(t)\|_{\mathcal {Z}^{k,p}}^p\right) \leq Ce^{C(1+R+\kappa(R)^2)}\mathbb{E} \left(\|(u_S^0 ,u_T^0,\theta_S^0)\|_{\mathcal {Z}^{k,p}}^p\right),
 \end{split}
\end{equation}
with
\begin{equation} \label{2.2}
\begin{split}\mathbbm{t}_R:=\inf\left\{t\geq0;~ \|(u_S,u_T,\theta_S)(t)\|_{\mathcal {Z}^{1,\infty}}\geq R\right\},\quad \forall R>0,
\end{split}
\end{equation}
where the positive constant $C$ depends only on $k,T,\kappa$ and the domain $D$.
\end{lemma}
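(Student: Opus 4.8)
The plan is to apply the operator $\partial^\alpha$ for each multi-index $|\alpha|\le k$ to each of the three equations in \eqref{1.1}, pair the resulting identity in $L^p(D)$ with $|\partial^\alpha u_S|^{p-2}\partial^\alpha u_S$ (resp. the analogous quantities for $u_T$ and $\theta_S$), and apply the infinite-dimensional It\^o formula to the functional $v\mapsto\|v\|_{L^p}^p$. Summing over $|\alpha|\le k$ and over the three components then yields a stochastic differential inequality for $Y(t):=\|(u_S,u_T,\theta_S)(t)\|_{\mathcal Z^{k,p}}^p$. The drift contributions to estimate are: (i) the transport terms $\partial^\alpha((u_S\cdot\nabla)u_S)$ etc.\ — here one commutes $\partial^\alpha$ past $u_S\cdot\nabla$, the top-order term $u_S\cdot\nabla\partial^\alpha u_S$ is handled by integration by parts using $\div u_S=0$ and the tangency boundary condition $u_S\cdot\vec n=0$ (which kills the boundary term), and the commutator $[\partial^\alpha,u_S\cdot\nabla]u_S$ is controlled by the Moser/Kato--Ponce type estimate $\|[\partial^\alpha,u_S\cdot\nabla]v\|_{L^p}\lesssim \|\nabla u_S\|_{L^\infty}\|v\|_{W^{k,p}}+\|v\|_{L^\infty}\|u_S\|_{W^{k,p}}$; (ii) the linear zeroth/first order coupling terms $fu_T\widehat x$, $\frac g{\theta_0}\theta_S\widehat z$, $fu_S\cdot\widehat x$, the source $\frac g{\theta_0}zs$ and $u_Ts$ — these are bounded linearly in $Y$ plus a constant (the $zs$ term is smooth on the bounded domain $D$); (iii) the Leray projector $P$, which is bounded on $W^{k,p}$ by the elliptic estimate quoted before \eqref{1.5}. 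Crucially, because we evaluate everything on $[0,\mathbbm t_R\wedge T]$, every occurrence of $\|\nabla u_S\|_{L^\infty}$, $\|u_T\|_{W^{1,\infty}}$, $\|\theta_S\|_{W^{1,\infty}}$ in the commutator estimates is replaced by the constant $R$; the drift is thus bounded by $C(1+R)Y$.

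Next I would treat the stochastic terms. For each $|\alpha|\le k$ the It\^o correction term coming from $\|\cdot\|_{L^p}^p$ produces $\frac{p(p-1)}2\int_D|\partial^\alpha u_S|^{p-2}\sum_j|\partial^\alpha(P\sigma_1 e_j)|^2\,dx$, which after summing is bounded by $C\|u_S\|_{W^{k,p}}^{p-2}\|\sigma_1(u_S,u_T,\theta_S)\|_{\mathbb W^{k,p}}^2$; invoking assumption (A1) (the $\mathrm{Lip}_{loc}$, hence $\mathrm{Bnd}_{loc}$, property into $\mathbb W^{k,p}$) and evaluating the nondecreasing $\kappa$ at the stopped $\mathcal Z^{0,\infty}$-norm $\le R$, this is bounded by $C\kappa(R)^2(1+Y)$. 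The martingale part $M(t)=\sum_j\int_0^t\sum_{|\alpha|\le k}\big(p|\partial^\alpha u_S|^{p-2}\partial^\alpha u_S,\partial^\alpha(P\sigma_1 e_j)\big)_{L^2}\,d\beta_j+(\cdots)$ has quadratic variation controlled, by Cauchy--Schwarz and (A1), by $C\kappa(R)^2\int_0^t (1+Y(s))^2\,ds$.

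Then I would collect estimates into
$$
Y(t\wedge\mathbbm t_R)\le Y(0)+C(1+R+\kappa(R)^2)\int_0^{t\wedge\mathbbm t_R}(1+Y(s))\,ds + M(t\wedge\mathbbm t_R),
$$
take $\sup_{t\in[0,\tau\wedge\mathbbm t_R\wedge T]}$, apply expectation, and bound $\mathbb E\sup|M|$ via the Burkholder--Davis--Gundy inequality \eqref{1.7} by $\tfrac12\mathbb E\sup_{[0,\tau\wedge\mathbbm t_R\wedge T]}Y + C\kappa(R)^2\mathbb E\int_0^{\tau\wedge\mathbbm t_R\wedge T}(1+Y(s))^2\,ds$, absorbing the first term into the left side. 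To make the BDG term usable one first runs the argument with an extra localization stopping time $\tau_N=\inf\{t:Y(t)\ge N\}$ so that all expectations are finite, obtaining $g_N(t):=\mathbb E\sup_{[0,t\wedge\mathbbm t_R\wedge\tau_N\wedge T]}Y \le \mathbb E Y(0)+C(1+R+\kappa(R)^2)\int_0^t(1+g_N(s))\,ds$ (using that under the $\sup$ one has $(1+Y)^2\le (1+\sup Y)$ after noting $Y\le R$-dependent bound is \emph{not} available, so instead use $\int_0^{t}(1+Y(s))^2ds\le (1+\sup_{[0,t]}Y)\int_0^t(1+Y(s))ds$ and a standard absorption/Gr\"onwall bootstrap). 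Gr\"onwall's lemma then gives $g_N(T)\le Ce^{C(1+R+\kappa(R)^2)T}\mathbb E Y(0)$ uniformly in $N$, and Fatou (letting $N\to\infty$) yields \eqref{2.1}; the constant $C$ depends only on $k,T,\kappa,D$ as claimed, the $\kappa$-dependence entering only through the fixed function, with the $R$-dependence made explicit.

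The main obstacle is the bookkeeping of the top-order commutators together with the $L^p$ (rather than $L^2$) structure: unlike the Hilbert case one cannot simply integrate by parts to cancel $\int|\partial^\alpha u_S|^{p-2}\partial^\alpha u_S\cdot(u_S\cdot\nabla\partial^\alpha u_S)$ against a symmetric counterpart, so one must write it as $\frac1p\int u_S\cdot\nabla|\partial^\alpha u_S|^p = -\frac1p\int(\div u_S)|\partial^\alpha u_S|^p + \text{bdry} = 0$, carefully checking the boundary term vanishes via $u_S\cdot\vec n|_{\partial D}=0$ — and similarly for the $u_T,\theta_S$ equations where the transport velocity is still the divergence-free, tangent $u_S$. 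A secondary delicate point is that the cut-off $\varpi_R$ does \emph{not} appear in Lemma~\ref{lem:2.2} (it is stated for a genuine solution of \eqref{1.1} stopped at $\mathbbm t_R$), so all gains come purely from the deterministic stopping time \eqref{2.2}; one must verify that every nonlinear estimate truly only needs the $\mathcal Z^{1,\infty}$-norm (hence $R$) of the solution as its low-order factor, which is exactly what the Moser estimates and assumption (A1)'s dependence on $\|\cdot\|_{\mathcal Z^{0,\infty}}$ provide.
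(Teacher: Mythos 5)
Your proposal follows essentially the same route as the paper: It\^o in $L^p$ for $\|\partial^\alpha\cdot\|_{L^p}^p$, commutator (Moser/Kato--Ponce) estimates, the cancellation $\int u_S\cdot\nabla|\partial^\alpha u_S|^p\,dV=0$ from incompressibility and tangency of $u_S$, the stopping time $\mathbbm{t}_R$ replacing all $\mathcal Z^{1,\infty}$ factors by $R$, BDG with the factor-out-the-$\sup$ trick, and Gr\"onwall with a localization-plus-Fatou finish. All of that is correct.

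There is, however, one concrete gap in your treatment of the convection term. You write the top-order piece as $\partial^\alpha\bigl((u_S\cdot\nabla)u_S\bigr)$ and cancel $(u_S\cdot\nabla)\partial^\alpha u_S$ by parts, while treating the Leray projector $P$ separately as ``a bounded operator on $W^{k,p}$''. But the equation \eqref{1.5} actually carries $\partial^\alpha\bigl[P(u_S\cdot\nabla)u_S\bigr]$, and once you split off the part you can integrate by parts you are left with
\begin{equation*}
(I-P)\,(u_S\cdot\nabla)\partial^\alpha u_S,
\end{equation*}
which you can no longer drop: $(I-P)v=\nabla q$ is a gradient, but your test function $|\partial^\alpha u_S|^{p-2}\partial^\alpha u_S$ is not divergence-free, so the pairing does not vanish. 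Bounding this term naively by the $W^{k,p}$-boundedness of $I-P$ gives $\|u_S\|_{L^\infty}\|\nabla\partial^\alpha u_S\|_{L^p}$, i.e.\ it costs $k+1$ derivatives of $u_S$, which is exactly what the energy method cannot afford. The paper closes this via the decomposition \eqref{2.4} together with the structural identity \eqref{2.5},
\begin{equation*}
(I-P)\,(u_S\cdot\nabla)\partial^\alpha u_S=(I-P)\,(\partial^\alpha u_S\cdot\nabla)u_S,
\end{equation*}
which follows from the observation that both arguments have the same divergence (and Neumann data), so they share the same non-solenoidal part. The right-hand side costs only $k$ derivatives of $u_S$ and a $\|\nabla u_S\|_{L^\infty}\le R$ factor, closing the estimate. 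Without noticing this identity (or an equivalent device, e.g.\ working with vorticity) your step (iii) does not carry the top-order term, and the claimed bound does not follow. Apart from this missing ingredient, the rest of your argument tracks the paper's proof.
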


\begin{proof}
The proof of Lemma \ref{lem:2.2} will be divided into two parts.  In the first part we establish the a priori estimates in the Hilbert spaces $\mathcal {Z}^{k,2}(D)$. In the second part, we extend these a priori estimates to the sharp Banach spaces $\mathcal {Z}^{k,p}(D)$ with $k\geq 1+\frac{2}{p}$, $p\geq 2$.

Let $\alpha=(\alpha_1,\alpha_2)\in \mathbb{N}^+$ be the multi-index such that $|\alpha|=|\alpha_1|+|\alpha_2|\leq k$, by applying the It\^{o}'s formula to
 $\mathrm{d}\|u_S\|_{X^{k,2}}^2=\Sigma_{|\alpha|\leq k}\mathrm{d} \|\partial^\alpha u_S\|_{L^2}^2$, we get that for each multi-index $\alpha$
\begin{equation*}
\begin{split}
\mathrm{d} \|\partial^\alpha u_S\|_{L^2}^2=& -2( \partial^\alpha  u_S, \partial^\alpha [P(u_S\cdot \nabla)   u_S] )_{L^2}\mathrm{d}t +2( \partial^\alpha  u_S,  \partial^\alpha  P(u_T\widehat{x}) )_{L^2}\mathrm{d}t\\
&+2( \partial^\alpha  u_S,  \partial^\alpha  P(\theta _S\widehat{z}) )_{L^2}\mathrm{d}t  +  \| \partial^\alpha  P\sigma_1(u_S,u_T,\theta_S)\|_{\mathbb{X}^{0,2}}\mathrm{d}t\\
&+2( \partial^\alpha  u_S, \partial^\alpha P\sigma_1(u_S,u_T,\theta_S))_{L^2}\mathrm{d}\mathcal {W}.
 \end{split}
\end{equation*}
For any $T>0$ and stopping time $\tau\leq \mathbbm{t}\wedge T$, after integrating the above equality on $[0,t]$ with $t\in [0,\tau]$ and summering up the resulted equations with respect to $\alpha$, we find
\begin{equation} \label{2.3}
\begin{split}
 \|u_S(t)\|_{X^{k,2}}^2\leq& \|u_S^0\|_{X^{k,2}}^2 +2\sum_{|\alpha|\leq k}\int_0^t|( \partial^\alpha  u_S, \partial^\alpha [P(u_S\cdot \nabla)   u_S] )_{L^2}|\mathrm{d}s \\
&+2\sum_{|\alpha|\leq k}\bigg(\int_0^t|( \partial^\alpha  u_S,  \partial^\alpha  P(u_T\widehat{x}) )_{L^2}|\mathrm{d}s+ \int_0^t|( \partial^\alpha  u_S,  \partial^\alpha  P(\theta _S\widehat{z}) )_{L^2}|\mathrm{d}s\bigg)\\
&  + \sum_{|\alpha|\leq k}\int_0^t \| \partial^\alpha  P\sigma_1(u_S,u_T,\theta_S)\|_{\mathbb{X}^{0,2}}^2\mathrm{d}s\\
&+2\sum_{|\alpha|\leq k}\bigg|\int_0^t( \partial^\alpha  u_S,  \partial^\alpha  P\sigma_1(u_S,u_T,\theta_S))_{L^2}\mathrm{d}\mathcal {W}\bigg|\\
:=&\|u_S^0\|_{X^{k,2}}^2+\sum_{|\alpha|\leq k}(I_1^\alpha(t)+I_2^\alpha(t)+I_3^\alpha(t)+I_4^\alpha(t)).
 \end{split}
\end{equation}
Let us estimates the integral terms on the right hand side of \eqref{2.3} one by one. For $I_1^\alpha(t)$, we decompose the convection term as follows
\begin{equation}\label{2.4}
\begin{split}
 \partial^\alpha [P(u_S\cdot \nabla)   u_S]
= P([ \partial^\alpha ,u_S]\cdot \nabla u_S)+(u_S\cdot\nabla) \partial^\alpha  u_S-   (I-P)(u_S\cdot\nabla)  \partial^\alpha  u_S,
 \end{split}
\end{equation}
where $[ \partial^\alpha ,u_S]= \partial^\alpha u_S-u_S \partial^\alpha $ denotes the commutator. Direct calculation shows that
$\div (( \partial^\alpha  u_S\cdot \nabla)   u_S-( u_S\cdot \nabla)  \partial^\alpha u_S)=0$,
and thereby the uniqueness of solution to the Neumann problem \eqref{1.4} implies that
\begin{equation}\label{2.5}
\begin{split}
(I-P)( u_S\cdot \nabla)  \partial^\alpha   u_S=(I-P)( \partial^\alpha  u_S\cdot \nabla)   u_S.
 \end{split}
\end{equation}
Integration by parts leads to the cancelation property $( \partial^\alpha  u_S,(u_S \cdot \nabla) \partial^\alpha  u_S)_{L^2}$=0 since div$u_S=0$. By using the H\"{o}lder inequality, one can estimate $I_1^\alpha(t)$ as
\begin{equation}\label{2.6}
\begin{split}
I_1^\alpha(t)\leq&2\int_0^t |( \partial^\alpha  u_S, P([ \partial^\alpha ,u_S]\cdot \nabla u_S))_{L^2}|\mathrm{d}s+2\int_0^t|( \partial^\alpha  u_S,(u_S\cdot\nabla) \partial^\alpha  u_S)_{L^2}|\mathrm{d}s\\
&+2\int_0^t |( \partial^\alpha  u_S,(I-P)(u_S\cdot\nabla)  \partial^\alpha  u_S)_{L^2}|\mathrm{d}s    \\
\leq  &C\int_0^t\| u_S\|_{X^{k,2}} \|[ \partial^\alpha ,u_S]\cdot\nabla  u_S\|_{L^2} \mathrm{d}s
+ C\int_0^t\| u_S\|_{X^{k,2}}\|( \partial^\alpha  u_S\cdot \nabla)   u_S\|_{L^2}\mathrm{d}s \\
\leq  &C\int_0^t\| u_S\|_{X^{k,2}} (\|\nabla  u_S\|_{L^\infty}\|\nabla u_S\|_{X^{|\alpha|-1,2}}
+ \| u_S\|_{X^{|\alpha|,2}}\|\nabla u_S\|_{L^\infty}) \mathrm{d}s  \\
&+ C\int_0^t\| u_S\|_{X^{k,2}}\| \partial^\alpha  u_S\|_{L^2}\|\nabla u_S\|_{L^\infty}\mathrm{d}s \\
\leq & C\int_0^t\|\nabla  u_S\|_{L^\infty}\| u_S\|_{X^{k,2}}^2\mathrm{d}s ,
 \end{split}
\end{equation}
where  the last inequality used the following fundamental commutator estimate (cf. \cite{41}), which will be frequently used in the paper:
\begin{equation}\label{2.7}
\begin{split}
\sum_{0\leq|\alpha|\leq k}\|[\partial^\alpha,f]\cdot \nabla g\|_{L^p}\leq C(\|\nabla  f\|_{L^\infty}\| g\|_{W^{k,p} }+ \|f\|_{W^{k,p}}\| \nabla g\|_{L^\infty}),\quad 1<p<\infty,~~k\in \mathbb{N}.
 \end{split}
\end{equation}
For $I_2^\alpha(t)$, since $|\widehat{x}|=|\widehat{z}|=1$, it follows from the Cauchy-Schwartz inequality that
\begin{equation}\label{2.8}
\begin{split}
 I_2^\alpha(t) &\leq  C\int_0^t\| u_S\|_{X^{k,2}}(\| u_T\|_{W^{k,2}}+\| \theta_S\|_{W^{k,2}})\mathrm{d}s\\
 &\leq C\int_0^t(\| u_S\|_{X^{k,2}}^2+\| u_T\|_{W^{k,2}}^2+\| \theta_S\|_{W^{k,2}}^2)\mathrm{d}s.
 \end{split}
\end{equation}
In view of the condition (A1), the term $I_4^\alpha(t)$ can be estimated as
\begin{equation}\label{2.9}
\begin{split}
I_3^\alpha(t)\leq C\int_0^t \kappa(\|u_S,u_T,\theta_S\|_{L^\infty})^2(1+\|u_S\|_{X^{k,2}}^2+\|u_T\|_{W^{k,2}}^2+\|\theta_S\|_{W^{k,2}}^2 ) \mathrm{d}s.
 \end{split}
\end{equation}
For the stochastic term $I_4^\alpha(t)$, by using the BDG inequality (cf. \eqref{1.7}) and the condition (A1), we deduce that
\begin{equation}\label{2.10}
\begin{split}
\mathbb{E}\sup_{t\in[0,\tau]}|I_4^\alpha(t)|\leq &\mathbb{E} \left(\int_0^\tau\| u_S\|_{X^{k,2}}^2\|P\sigma_1(u_S,u_T,\theta_S)\|_{\mathbb{X}^{k,2}}^2\mathrm{d}t \right)^{1/2}\\
\leq &\mathbb{E} \bigg(\int_0^\tau\kappa(\|u_S,u_T,\theta_S\|_{L^\infty})^2\\
&\times\| u_S\|_{X^{k,2}}^2(1+\|u_S\|_{X^{k,2}}^2+\|u_T\|_{W^{k,2}}^2+\|\theta_S\|_{W^{k,2}}^2 )\mathrm{d}t \bigg)^{1/2}\\
\leq &\mathbb{E}\Bigg(\sup_{t\in[0,\tau]}\| u_S(t)\|_{X^{k,2}}\\
&   \times\int_0^\tau\kappa(\|u_S,u_T,\theta_S\|_{L^\infty})^2 (1+\|u_S\|_{X^{k,2}}^2+\|u_T\|_{W^{k,2}}^2+\|\theta_S\|_{W^{k,2}}^2 )\mathrm{d}t \Bigg)^{1/2} \\
\leq &\frac{1}{2}\mathbb{E}\sup_{t\in[0,\tau]}\| u_S(t)\|_{X^{k,2}}^2+C\mathbb{E} \int_0^t\kappa(\|u_S,u_T,\theta_S\|_{L^\infty})^2\\
&  \times \left(1+\|u_S\|_{X^{k,2}}^2+\|u_T\|_{W^{k,2}}^2+\|\theta_S\|_{W^{k,2}}^2\right )\mathrm{d}t  .
 \end{split}
\end{equation}
Combining the estimates \eqref{2.6}-\eqref{2.10}, taking a supremum over $t\in [0,\tau]$ on both sides of \eqref{2.3} and then taking the expected value, we deduce  that
\begin{equation}\label{2.11}
\begin{split}
 \mathbb{E}\sup_{t\in[0,\tau]}\|u_S(t)\|_{X^{k,2}}^2\leq& 2\mathbb{E}\|u_S^0\|_{X^{k,2}}^2 +C\mathbb{E} \int_0^t\left(1+\|\nabla  u_S\|_{L^\infty}+\kappa(\|u_S,u_T,\theta_S\|_{L^\infty})^2\right)\\
&  \times\left(1+\|u_S\|_{X^{k,2}}^2+\|u_T\|_{W^{k,2}}^2+\|\theta_S\|_{W^{k,2}}^2 \right)\mathrm{d}t.
 \end{split}
\end{equation}
To estimate $\|u_T(t)\|_{W^{k,2}}$, we apply the Ito's formula to the second equation of \eqref{1.5} and obtain
\begin{equation}\label{2.12}
\begin{split}
 \|u_T(t)\|_{W^{k,2}}^2 =&\|u_T^0\|_{W^{k,2}}^2-2\sum_{|\alpha|\leq k}\int_0^t( \partial^\alpha u_T,  \partial^\alpha (u_S\cdot \nabla)   u_T )_{L^2} \mathrm{d}s \\
 &-2\sum_{|\alpha|\leq k}\int_0^t( \partial^\alpha u_T,  \partial^\alpha (u_S\cdot\widehat{x}) )_{L^2}\mathrm{d}t-2\sum_{|\alpha|\leq k}\int_0^t( \partial^\alpha u_T,   \partial^\alpha  z )_{L^2}\mathrm{d}s\\
&+ \sum_{|\alpha|\leq k}\int_0^t\| \partial^\alpha \sigma_2(u_S,u_T,\theta_S)\|_{\mathbb{W}^{0,2}}^2\mathrm{d}s\\
&+2\sum_{|\alpha|\leq k}\int_0^t( \partial^\alpha u_T,  \partial^\alpha \sigma_2(u_S,u_T,\theta_S))_{L^2}\mathrm{d}\mathcal {W}_s\\
:=& \|u_T^0\|_{W^{k,2}}^2+\sum_{|\alpha|\leq k}(J_1^\alpha(t)+J_2^\alpha(t)+J_3^\alpha(t)+J_4^\alpha(t)+J_5^\alpha(t)).
 \end{split}
\end{equation}
For the second terms involved on the R.H.S of \eqref{2.12}, it follows from the cancelation property and the commutator estimate that
\begin{equation}\label{2.13}
\begin{split}
J_1^\alpha(t)&=-2\int_0^t( \partial^\alpha u_T, [ \partial^\alpha ,u_S]\cdot \nabla u_T )_{L^2} \mathrm{d}s-2\int_0^t( \partial^\alpha u_T, u_S\cdot\nabla \partial^\alpha    u_T )_{L^2} \mathrm{d}s\\
&\leq \int_0^t\| \partial^\alpha u_T\|_{L^2} \|[ \partial^\alpha ,u_S]\cdot \nabla u_T \|_{L^2} \mathrm{d}s\\
&\leq \int_0^t\|u_T\|_{W^{k,2}}  \left(\|\nabla u_S\|_{L^\infty}\| u_T\|_{W^{|\alpha|,2}} + \| \partial^\alpha u_S\|_{L^2}\|\nabla u_T\|_{L^\infty} \right) \mathrm{d}s\\
&\leq \int_0^t  (\|\nabla u_S\|_{L^\infty}+\|\nabla u_T\|_{L^\infty})\left(\|u_T\|_{W^{k,2}}^2 + \| u_S\|_{W^{k,2}}^2 \right) \mathrm{d}s.
 \end{split}
\end{equation}
Since $|\widehat{x}|=1$ and the domain $D\subset \mathbb{R}^2$ is bounded, the third and forth terms on  the R.H.S of \eqref{2.12} can be estimated as
\begin{equation}\label{2.14}
\begin{split}
J_2^\alpha(t)+J_3^\alpha(t)\leq  2\int_0^t|( \partial^\alpha u_T,  \partial^\alpha (u_S\cdot\widehat{x}) )_{L^2}|\mathrm{d}t\leq C \int_0^t\|u_S\|_{X^{k,2}}\|u_T\|_{W^{k,2}}\mathrm{d}t.
 \end{split}
\end{equation}
By condition (A1), we have
\begin{equation}\label{2.15}
\begin{split}
J_4^\alpha(t)\leq C\int_0^t \kappa(\|u_S,u_T,\theta_S\|_{L^\infty})^2(1+\|u_S\|_{X^{k,2}}^2+\|u_T\|_{W^{k,2}}^2+\|\theta_S\|_{W^{k,2}}^2 ) \mathrm{d}s.
 \end{split}
\end{equation}
For $J_5^\alpha(t)$, after taking the supremum on $[0,\tau]$ for any stopping time $\tau\leq \mathbbm{t}\wedge T$, we deduce from the BDG inequality and the Young inequality that
\begin{equation}\label{2.16}
\begin{split}
\mathbb{E}\sup_{t\in[0,\tau]}|J_5^\alpha(t)|\leq& C\mathbb{E} \Bigg(\sup_{t\in [0,\tau]}\| u_T(t)\|_{W^{k,2}}^2\int_0^\tau\| \sigma_2(u_S,u_T,\theta_S)\|_{\mathbb{W}^{k,2}}^2\mathrm{d}t \Bigg)^{1/2}\\
\leq& \frac{1}{2}\mathbb{E} \sup_{t\in [0,\tau]}\| u_T(t)\|_{W^{k,2}}^2 \\
&+C\mathbb{E}\int_0^t \kappa(\|u_S,u_T,\theta_S\|_{L^\infty})^2\left(1+\|u_S\|_{X^{k,2}}^2+\|u_T\|_{W^{k,2}}^2
+\|\theta_S\|_{W^{k,2}}^2 \right) \mathrm{d}s.
 \end{split}
\end{equation}
Plugging the estimates \eqref{2.13}-\eqref{2.16} into \eqref{2.12}, we get that for any stopping time $\tau\leq \mathbbm{t}\wedge T$
\begin{equation}\label{2.17}
\begin{split}
&\mathbb{E}\sup_{t\in[0,\tau]} \|u_T(t)\|_{W^{k,2}}^2 \leq  2\mathbb{E}\|u_T^0\|_{W^{k,2}}^2\\
&\quad +C\mathbb{E}\int_0^t \left(1+\|\nabla u_S\|_{L^\infty}+\|\nabla u_T\|_{L^\infty}+\kappa(\|u_S,u_T,\theta_S\|_{L^\infty})^2 \right)\\
&\quad \times \left(1+\|u_S\|_{X^{k,2}}^2+\|u_T\|_{W^{k,2}}^2+\|\theta_S\|_{W^{k,2}}^2\right ) \mathrm{d}s.
 \end{split}
\end{equation}
To estimate the norm $\|\theta_S(t)\|_{W^{k,2}}$ with respect to the third equation of \eqref{1.5}, we apply the It\^{o}'s formula to $\mathrm{d}\|\partial^\alpha\theta_S(t)\|_{L^2}^2$ and derive that
\begin{equation*}
\begin{split}
\mathbb{E}\sup_{t\in[0,\tau]}\|\theta_S(t)\|_{W^{k,2}}^2\leq& \mathbb{E}\|\theta_S^0\|_{W^{k,2}}^2+ \sum_{|\alpha|\leq k}\mathbb{E}\int_0^\tau |( \partial^\alpha  \theta_S,  \partial^\alpha (u_S\cdot \nabla ) \theta_S )_{L^2}|\mathrm{d}t\\
&+\sum_{|\alpha|\leq k}\mathbb{E}\int_0^\tau |( \partial^\alpha  \theta_S,  \partial^\alpha  u_T )_{L^2}|\mathrm{d}t\\
&+\sum_{|\alpha|\leq k}\mathbb{E}\sup_{t\in[0,\tau]}\left|\int_0^t( \partial^\alpha  \theta_S,  \partial^\alpha \sigma_3(u_S,u_T,\theta_S)  )_{L^2}\mathrm{d}\mathcal {W}\right|\\
 \leq& \mathbb{E}\|\theta_S^0\|_{W^{k,2}}^2+ \sum_{|\alpha|\leq k}\mathbb{E}\int_0^\tau \| \theta_S\|_{W^{k,2}}  \|[ \partial^\alpha ,u_S]\cdot \nabla \theta_S \|_{L^2}\mathrm{d}t\\
 &+\sum_{|\alpha|\leq k}\mathbb{E}\int_0^\tau \| \theta_S\|_{W^{k,2}} \| u_T\|_{W^{k,2}} \mathrm{d}t\\
&+\sum_{|\alpha|\leq k}\mathbb{E} \left(\int_0^\tau\| \theta_S\|_{W^{k,2}}^2\| \partial^\alpha \sigma_3(u_S,u_T,\theta_S)  \|_{\mathbb{W}^{0,2}}^2\mathrm{d}t\right)^{1/2}.
 \end{split}
\end{equation*}
By using the commutator estimate \eqref{2.7}, the condition (A1) and the similar manner as we did in the estimation for $\|u_T(t)\|_{W^{k,2}}$, one can deduce from the last inequality that
\begin{equation}\label{2.18}
\begin{split}
&\mathbb{E}\sup_{t\in[0,\tau]}\|\theta_S(t)\|_{W^{k,2}}^2 \\
&\quad \leq 2\mathbb{E}\|\theta_S^0\|_{W^{k,2}}^2+C\mathbb{E}\int_0^\tau \left(1+\|\nabla \theta_S\|_{L^\infty}+\|\nabla u_S\|_{L^\infty}+\kappa(\|u_S,u_T,\theta_S\|_{L^\infty})^2 \right)\\
&\quad \quad \times \left(1+\|u_S\|_{X^{k,2}}^2+\|u_T\|_{W^{k,2}}^2+\|\theta_S\|_{W^{k,2}}^2\right ) \mathrm{d}s.
 \end{split}
\end{equation}
To deal with the $L^\infty$-norm involved in above estimates, we use the definition of the stopping time $\mathbbm{t}_R$ to find
$$
 1+\|\nabla u_S\|_{L^\infty}+\|\nabla u_T\|_{L^\infty}+\|\nabla \theta_S\|_{L^\infty}+\kappa(\|u_S,u_T,\theta_S\|_{L^\infty})^2  \leq 1+R+\kappa(R)^2.
$$
Define
$$
F(t):=\|u_S(t)\|_{X^{k,2}}^2+\|u_T(t)\|_{W^{k,2}}^2+\|\theta_S(t)\|_{W^{k,2}}^2.
$$
Combining the estimates \eqref{2.11}, \eqref{2.17}, \eqref{2.18}, and taking $\tau=\mathbbm{t}_R \wedge t$ for any $t>0$, we get
\begin{equation}\label{2.19}
\begin{split}
\mathbb{E}\sup_{t\in[0, \mathbbm{t}_R \wedge t]}F(t)\leq&  \mathbb{E} F(0)+C\mathbb{E}\int_0^{ t} (1+R+\kappa(R)^2) \left(1+F(\mathbbm{t}_R \wedge s)\right ) \mathrm{d}s \\ \leq&  \mathbb{E} F(0)+C \int_0^{t}(1+R+\kappa(R)^2) \Bigg(1+\mathbb{E}\sup_{r\in[0, \mathbbm{t}_R \wedge s]}F(r)\Bigg) \mathrm{d}s.
 \end{split}
\end{equation}
By  using the Gronwall inequality to \eqref{2.19}, we get
\begin{equation}\label{2.20}
\begin{split}
\mathbb{E}\sup_{t\in[0, \mathbbm{t}_R \wedge T]}F(t)\leq e^{C(1+R+\kappa(R)^2) T}(1+\mathbb{E} F(0)),
 \end{split}
\end{equation}
which proves the desired estimate in the case of $p=2$.

Now let us establish the estimates in the case of $p>2$. To this end, we first apply the It\^{o}'s formula in $L^p$ space to $\mathrm{d}\| \partial^\alpha u_S \|_{L^p}^p=\mathrm{d}(\| \partial^\alpha u_S \|_{L^p}^2)^{p/2}$ related to the first equation in \eqref{1.5},
and then integrate the resulted identity on $D\times [0,t]$ for any $t\in [0,\tau]$, it follows from  the stochastic Fubini theorem (cf. Theorem 4.33 in \cite{43}) that
\begin{equation}\label{2.21}
\begin{split}
 \|u_S(t)\|_{X^{k,p}} ^p=& \|u_S^0\|_{X^{k,p}} ^p- p\sum_{|\alpha|\leq k}\int_0^t\int_D| \partial^\alpha u_S |^{p-2} \partial^\alpha u_S \cdot \partial^\alpha P(u_S\cdot \nabla)   u_S\mathrm{d}V\mathrm{d}s\\
 &+p\sum_{|\alpha|\leq k}\int_0^t\int_D | \partial^\alpha u_S |^{p-2}  \partial^\alpha u_S \cdot   \partial^\alpha P( u_T\widehat{x}  +   \theta _S\widehat{z})  \mathrm{d}V\mathrm{d}s\\
 &+  \sum_{|\alpha|\leq k} \sum_{j\geq1}\int_0^t\int_D\bigg(\frac{p}{2}| \partial^\alpha u_S |^{p-2}( \partial^\alpha P\sigma_1(u_S,u_T,\theta_S)e_j)^2  \\
 &\quad \quad\quad\quad\quad+ \frac{p(p-2)}{2}| \partial^\alpha u_S |^{p-4}( \partial^\alpha u_S \cdot \partial^\alpha P\sigma_1(u_S,u_T,\theta_S)e_j)^2\bigg)\mathrm{d}V\mathrm{d}s\\
&+ p\sum_{|\alpha|\leq k}\sum_{j\geq1} \int_0^t\int_D| \partial^\alpha u_S |^{p-2} \partial^\alpha u_S \cdot \partial^\alpha P\sigma_1(u_S,u_T,\theta_S)e_j \mathrm{d}V\mathrm{d}\beta_j \\
:=& \|u_S^0\|_{X^{k,p}} ^p+\sum_{|\alpha|\leq k}(L_1^\alpha(t)+L_2^\alpha(t)+L_3^\alpha(t)+L_4^\alpha(t)).
 \end{split}
\end{equation}
By virtue of the incompressible condition $\nabla\cdot u_S=0$, we deduce from the divergence theorem that
\begin{equation}\label{2.22}
\begin{split}
 &\int_0^t\int_D| \partial^\alpha u_S |^{p-2} \partial^\alpha u_S \cdot(u_S\cdot\nabla) \partial^\alpha  u_S\mathrm{d}V\mathrm{d}t  \\
  &\quad=\frac{1}{2}\sum_{i}\int_0^t\int_D| \partial^\alpha u_S |^{p-2} u_S^i\partial_i| \partial^\alpha  u_S^j|^2 \mathrm{d}V\mathrm{d}t\\
 &\quad=\frac{1}{p} \int_0^t\int_D u_S \cdot \nabla | \partial^\alpha  u_S|^p \mathrm{d}V\mathrm{d}t=\frac{1}{p} \int_0^t\int_D | \partial^\alpha  u_S|^p\nabla \cdot u_S \mathrm{d}V\mathrm{d}t=0.
 \end{split}
\end{equation}
Using \eqref{2.22}, the commutator estimate and the H\"{o}lder inequality, the term $L_1^\alpha(t)$ can be treated as
\begin{equation}\label{2.23}
\begin{split}
 L_1^\alpha(t) =&p\int_0^t\int_D| \partial^\alpha u_S |^{p-2} \partial^\alpha u_S \cdot\Big(P([ \partial^\alpha ,u_S]\cdot \nabla u_S)+(u_S\cdot\nabla) \partial^\alpha  u_S\\
 &-   (I-P)( \partial^\alpha  u_S\cdot\nabla) u_S\Big)\mathrm{d}V\mathrm{d}t \\
 \leq&C\int_0^t\| \partial^\alpha u_S \|_{L^p}^{ p-1 }\left(\|P([ \partial^\alpha ,u_S]\cdot \nabla u_S)\| _{L^p} + \|(I-P)( \partial^\alpha  u_S\cdot\nabla) u_S\| _{L^p}\right)\mathrm{d}s \\
 \leq& C\int_0^t\| u_S \|_{W^{k,p}}^{ p-1 }(\| [ \partial^\alpha ,u_S]\cdot \nabla u_S\| _{L^p}+\|( \partial^\alpha  u_S\cdot\nabla) u_S\| _{L^p}) \mathrm{d}t \\
 \leq& C\int_0^t\|\nabla u_S\| _{L^\infty}\| u_S \|_{X^{k,p}}^{ p}\mathrm{d}t.
 \end{split}
\end{equation}
For $L_2^\alpha(t)$, one deduce from the definition of Leray projection and the Young inequality that
\begin{equation}\label{2.24}
\begin{split}
 L_2^\alpha(t) &\leq C\sup_{s\in [0,t]}\|u_S(s) \|_{X^{k,p}}^{p-1}\int_0^t (\|u_T \|_{W^{k,p}}+\|\theta_S \|_{W^{k,p}})\mathrm{d}s\\
 &\leq \frac{1}{4}\sup_{s\in [0,t]}\|u_S \|_{X^{k,p}}^{p}+C\int_0^t (\|u_T \|_{W^{k,p}}^p+\|\theta_S \|_{W^{k,p}}^p)\mathrm{d}s.
 \end{split}
\end{equation}
By using the condition (A1) and the H\"{o}lder inequality, one can estimate the third term as
\begin{equation}\label{2.25}
\begin{split}
 L_3^\alpha(t)  \leq & C\int_0^t\int_D| \partial^\alpha u_S |^{p-2}\sum_{j\geq1}| \partial^\alpha P\sigma_1(u_S,u_T,\theta_S)e_j|^2  \mathrm{d}V\mathrm{d}s\\
 \leq& C \int_0^t\| \partial^\alpha u_S\|_{L^p}^{p-2} \| \partial^\alpha P\sigma_1(u_S,u_T,\theta_S)\|_{\mathbb{W}^{0,p}}^2\mathrm{d}s\\
\leq& C \int_0^t \kappa(\|u_S,u_T,\theta_S\|_{L^\infty})^2\| u_S\|_{X^{k,p}}^{p-2} \left(1+\|u_S\|_{X^{k,p}}^2+\|u_T\|_{W^{k,p}}^2+\|\theta_S\|_{W^{k,p}}^2 \right)\mathrm{d}s.
 \end{split}
\end{equation}
For the stochastic term $L_3^\alpha(t)$, by applying the BDG inequality, the Minkowski inequality, the H\"{o}lder inequality as well as the condition (A1), we get for any stopping time $\tau\leq \mathbbm{t}\wedge T$ that
\begin{equation}\label{2.26}
\begin{split}
\mathbb{E}\sup_{t\in [0,\tau]}|L_4^\alpha(t)|\leq&C\mathbb{E} \Bigg( \int_0^\tau\bigg\|\bigg(\int_D| \partial^\alpha u_S |^{p-2} \partial^\alpha u_S \cdot \partial^\alpha P\sigma_1(u_S,u_T,\theta_S)e_j \mathrm{d}V\bigg)_{j\geq 1}\bigg\|_{l^2}^2\mathrm{d}s\Bigg)^{1/2}\\
\leq & C \mathbb{E} \Bigg( \int_0^\tau\bigg(\int_D| \partial^\alpha u_S |^{ p-1}\| (  \partial^\alpha P\sigma_1(u_S,u_T,\theta_S)e_j )_{j\geq 1}\|_{l^2} \mathrm{d}V\bigg)^2\mathrm{d}s\Bigg)^{1/2}\\
\leq & C \mathbb{E} \Bigg( \int_0^\tau\| \partial^\alpha u_S \|^{2p-2}_{L^p}\Big\|\|  (  \partial^\alpha P\sigma_1(u_S,u_T,\theta_S)e_j  )_{j\geq 1}\|_{l^2}\Big\|_{L^p}^2\mathrm{d}s\Bigg)^{1/2}\\
\leq & C \mathbb{E} \Bigg( \int_0^\tau\| u_S \|^{2p-2}_{X^{k,p}}\bigg(\int_D \| \partial^\alpha P\sigma_1(u_S,u_T,\theta_S)\|_{L_2(\mathfrak{A},\mathbb{R}^2)}^p\mathrm{d}V\bigg)^{2/p}   \mathrm{d}s\Bigg)^{1/2}\\
\leq & C \mathbb{E} \Bigg( \sup_{t\in [0,\tau]}\| u_S \|^{ p }_{X^{k,p}}\int_0^\tau\| u_S \|^{p-2}_{X^{k,p}}\| \partial^\alpha P\sigma_1(u_S,u_T,\theta_S)\|_{\mathbb{X}^{0,p}}^2 \mathrm{d}s\Bigg)^{1/2}\\
\leq & \frac{1}{4} \mathbb{E} \sup_{t\in [0,\tau]}\| u_S \|^{p}_{X^{k,p}}\\
&  +C\mathbb{E} \int_0^\tau \kappa(\|u_S,u_T,\theta_S\|_{L^\infty})^2 \| u_S\|_{X^{k,p}}^{p-2} \left(1+\|u_S\|_{X^{k,p}}^2+\|u_T\|_{W^{k,p}}^2+\|\theta_S\|_{W^{k,p}}^2 \right)\mathrm{d}s.
 \end{split}
\end{equation}
Define
\begin{equation*}
\begin{split}
\Xi^{k,p}(t):=\|u_S(t)\|_{X^{k,p}}^p+\|u_T(t)\|_{W^{k,p}}^p+\|\theta_S(t)\|_{W^{k,p}}^p.
 \end{split}
\end{equation*}
Combining the estimates \eqref{2.23}-\eqref{2.26}, taking the supremum over $t\in [0,\tau]$ on both sides of \eqref{2.22} and then taking the expectation, we obtain
\begin{equation}\label{2.27}
\begin{split}
&\mathbb{E}\sup_{t\in [0,\tau]}\|u_S(t)\|_{X^{k,p}} ^p\leq 2\mathbb{E}\|u_S^0\|_{X^{k,p}} ^p \\
&\quad \quad\quad+C\mathbb{E} \int_0^\tau \left(1+\|\nabla u_S\| _{L^\infty}+\kappa(\|u_S,u_T,\theta_S\|_{L^\infty})^2\right) (1+\Xi^{k,p}(s))\mathrm{d}s.
 \end{split}
\end{equation}
 We apply the It\^{o}'s formula to
$\mathrm{d}\| \partial^\alpha u_T \|_{L^p}^p=\mathrm{d}(\| \partial^\alpha u_T \|_{L^p}^2)^{p/2}$ with respect to
the second equation in \eqref{1.5}. After integration by parts, we get
\begin{equation}\label{2.28}
\begin{split}
\mathrm{d}\| \partial^\alpha u_T \|_{L^p}^p\leq&p\int_D| \partial^\alpha u_T |^{p-1}|[ \partial^\alpha ,u_S]\cdot \nabla   u_T|\mathrm{d}V\mathrm{d}t+p\int_D| \partial^\alpha u_T |^{p-1}| \partial^\alpha u_S| \mathrm{d}V\mathrm{d}t\\
&+ \sum_{j\geq1}\frac{p (p-1)}{2}\int_D| \partial^\alpha u_T |^{p-2} ( \partial^\alpha \sigma_2(u_S,u_T,\theta_S)e_j)^2\mathrm{d}V\mathrm{d}t\\
 &+ p\sum_{j\geq1}\bigg|\int_D| \partial^\alpha u_T |^{p-2}
  \partial^\alpha u_T \partial^\alpha \sigma_2(u_S,u_T,\theta_S)e_j\mathrm{d}V\mathrm{d}\beta_j\bigg|,
 \end{split}
\end{equation}
where we have used the fact of
$$
\int_D| \partial^\alpha u_T |^{p-2} \partial^\alpha u_T (u_S\cdot \nabla ) \partial^\alpha  u_T\mathrm{d}V=-\frac{1}{2}\int_D| \partial^\alpha  u_T|^p \div u_S\mathrm{d}V=0.
$$
Integrating the both sides of \eqref{2.28} over $[0,t]$, and then taking the supremum over $t\in[0,\tau]$ for any stopping time $\tau$. After taking the expected values and using the BDG inequality, we get
\begin{equation}\label{2.29}
\begin{split}
 &\mathbb{E}\sup_{t\in [0,\tau]}\|u_T(t) \|_{W^{k,p}}^p\\
 &\leq \mathbb{E}\| u_T^0 \|_{W^{k,p}}^p+C\sum_{|\alpha|\leq k}\mathbb{E}\int_0^\tau\| u_T \|_{W^{k,p}}^{p-1}(\|[ \partial^\alpha ,u_S]\cdot \nabla   u_T\|_{L^p}+\| u_S\|_{W^{k,p}}) \mathrm{d}s\\
&\quad + C\sum_{|\alpha|\leq k}\mathbb{E}\int_0^\tau\int_D| \partial^\alpha u_T |^{p-2} \sum_{j\geq1}( \partial^\alpha \sigma_2(u_S,u_T,\theta_S)e_j)^2\mathrm{d}V\mathrm{d}s\\
 &\quad + C\sum_{|\alpha|\leq k}\mathbb{E}\Bigg(\int_0^\tau \sum_{j\geq1}\bigg(\int_D| \partial^\alpha u_T |^{p-2} \partial^\alpha u_T \partial^\alpha \sigma_2(u_S,u_T,\theta_S)e_j\mathrm{d}V\bigg)^2\mathrm{d}s\Bigg)^{1/2}\\
& :=\mathbb{E}\| u_T^0 \|_{W^{k,p}}^p+\sum_{|\alpha|\leq k}( M_1^\alpha(\tau)+ M_2^\alpha(\tau)+ M_3^\alpha(\tau)).
 \end{split}
\end{equation}
The term $M_1^\alpha(\tau)$ can be treated by using the commutator estimate as
\begin{equation}\label{2.30}
\begin{split}
M_1^\alpha(\tau)\leq& C\mathbb{E}\int_0^\tau\| u_T \|_{W^{k,p}}^{p-1}( \|\nabla  u_S\|_{L^\infty}\| u_T\|_{W^{k,p}}+ \| u_S\|_{X^{k,p}}\|\nabla u_T\|_{L^\infty}+\| u_S\|_{W^{k,p}}) \mathrm{d}s\\
\leq& C\mathbb{E}\int_0^\tau (1+\|\nabla  u_S\|_{L^\infty}+\|\nabla u_T\|_{L^\infty})(\| u_S\|_{X^{k,p}}^p+\| u_T\|_{W^{k,p}}^p) \mathrm{d}s.
 \end{split}
\end{equation}
For $M_2^\alpha(\tau)$, we deduce from the condition (A1) and the Young inequality that
\begin{equation}\label{2.31}
\begin{split}
M_2^\alpha(\tau)\leq& C\mathbb{E}\int_0^\tau\| \partial^\alpha u_T \|^{p-2}_{L^p}
\bigg( \int_D| \partial^\alpha  \sigma_2(u_S,u_T,\theta_S)|_{L_2(\mathfrak{A};\mathbb{R})}^pdV\bigg)^{2/p}\mathrm{d}s\\
\leq& C\mathbb{E}\int_0^\tau\| \partial^\alpha u_T \|^{p-2}_{L^p} \kappa(\|(u_S,u_T,\theta_S)\|_{L^\infty})^2 (1+\|(u_S,u_T,\theta_S)\|_{\mathcal {Z}^{k,p}}^2)\mathrm{d}s \\
\leq& C\mathbb{E}\int_0^\tau\kappa(\|u_S,u_T,\theta_S\|_{L^\infty})^2 (1+\Xi^{k,p}(s))\mathrm{d}s.
 \end{split}
\end{equation}
For $M_3^\alpha(\tau)$, we have
\begin{equation}\label{2.32}
\begin{split}
M_3^\alpha(\tau)\leq&  C\mathbb{E}\Bigg(\int_0^\tau\bigg(\int_D| \partial^\alpha u_T |^{p-1}\bigg(\sum_{j\geq1}( \partial^\alpha \sigma_2(u_S,u_T,\theta_S)e_j)^2\bigg)^{1/2}\mathrm{d}V\bigg)^2\mathrm{d}s\Bigg)^{1/2}\\
\leq& C\mathbb{E}\Bigg(\int_0^\tau\bigg(\int_D| \partial^\alpha u_T |^{ p }\mathrm{d}V\bigg)^{(2p-2)/p} \bigg(\int_D \bigg(\sum_{j\geq1}( \partial^\alpha \sigma_2(u_S,u_T,\theta_S)e_j)^2\bigg)^{p/2}\mathrm{d}V\bigg)^{2}\mathrm{d}s\Bigg)^{1/2}\\
\leq& C\mathbb{E}\Bigg(\sup_{t\in [0,\tau]}\| u_T \|_{W^{k,p}}^{p/2} \bigg(\int_0^\tau \| u_T \|_{W^{k,p}}^{p-2}\| \partial^\alpha  \sigma_2\|_{\mathbb{W}^{0,p}}^2 \mathrm{d}s\bigg)^{1/2}\Bigg)\\
\leq& \frac{1}{2}\mathbb{E}\sup_{t\in [0,\tau]}\| u_T \|_{W^{k,p}}^p+C\mathbb{E}\int_0^\tau \kappa(\|u_S,u_T,\theta_S\|_{L^\infty})^2 (1+\Xi^{k,p}(s))\mathrm{d}s.
 \end{split}
\end{equation}
Plugging the estimates \eqref{2.30}-\eqref{2.32} into \eqref{2.29} yields that
\begin{equation}\label{2.33}
\begin{split}
&\mathbb{E}\sup_{t\in [0,\tau]}\|u_T(t) \|_{W^{k,p}}^p\leq 2\mathbb{E}\|u_T^0\|_{X^{k,p}} ^p \\
&\quad +C\mathbb{E} \int_0^\tau (1+\|\nabla u_S\| _{L^\infty}+\|\nabla u_T\|_{L^\infty}+\kappa(\|u_S,u_T,\theta_S\|_{L^\infty})^2) (1+\Xi^{k,p}(s))\mathrm{d}s.
 \end{split}
\end{equation}
By applying the It\^{o}'s formula to  $\mathrm{d}\| \partial^\alpha \theta_S\|_{L^{p}}^p$ with respect to the scalar equation in \eqref{1.5}, one can arrive at
\begin{equation}\label{2.34}
\begin{split}
 \| \partial^\alpha \theta_S(t)\|^p_{L^p}&=\| \partial^\alpha \theta_S^0\|^p_{L^p}-p\int_0^t\int_D| \partial^\alpha \theta_S|^{p-2} \partial^\alpha \theta_S  \partial^\alpha  (u_S\cdot \nabla )  \theta_S\mathrm{d} V\mathrm{d}s\\
 &\quad-p\int_0^t\int_D| \partial^\alpha \theta_S|^{p-2} \partial^\alpha \theta_S  \partial^\alpha u_T\mathrm{d} V\mathrm{d}s\\
&\quad+ \frac{p(p-1)}{2}\sum_{j\geq 1} \int_0^t\int_D| \partial^\alpha \theta_S|^{p-2}(  \partial^\alpha \sigma_3(u_S,u_T,\theta_S)e_j)^2\mathrm{d}V\mathrm{d} s\\
 &\quad+ p\sum_{j\geq 1}\int_0^t\int_D| \partial^\alpha \theta_S|^{p-2} \partial^\alpha \theta_S  \partial^\alpha \sigma_3(u_S,u_T,\theta_S)e_j\mathrm{d} V\mathrm{d}\beta_j(s)\\
 &=\| \partial^\alpha \theta_S^0\|^p_{L^p}+\int_0^t(N_1^\alpha(s)+N_2^\alpha(s)+N_3^\alpha(s))\mathrm{d} s+\int_0^t N_4^\alpha(s) \mathrm{d}\mathcal {W}.
 \end{split}
\end{equation}
After integration by parts and using the free divergence condition $\div u_S=0$, we have
\begin{equation}\label{2.35}
\begin{split}
&\int_0^t\int_D| \partial^\alpha \theta_S|^{p-2} \partial^\alpha \theta_S  \partial^\alpha  (u_S\cdot \nabla )  \theta_S\mathrm{d} V\mathrm{d}t\\
&\quad =\int_0^t\int_D| \partial^\alpha \theta_S|^{p-2} \partial^\alpha \theta_S [ \partial^\alpha ,u_S]\cdot\nabla  \theta_S\mathrm{d} V\mathrm{d}t+\frac{1}{p}\int_0^t\int_Du_S\cdot \nabla| \partial^\alpha \theta_S|^{p}\mathrm{d} V\mathrm{d}t\\
&\quad =\int_0^t\int_D| \partial^\alpha \theta_S|^{p-2} \partial^\alpha \theta_S [ \partial^\alpha ,u_S]\cdot\nabla  \theta_S\mathrm{d} V\mathrm{d}t.
 \end{split}
\end{equation}
For any stopping time $\tau$, one can estimate the integration for $N_1^\alpha$ as
\begin{equation}\label{2.36}
\begin{split}
\mathbb{E}\sup_{t\in [0,\tau]}\bigg |\int_0^tN_1^\alpha(s)\mathrm{d}s\bigg| &\leq C\mathbb{E} \int_0^\tau\int_D| \partial^\alpha \theta_S|^{p-1}|[ \partial^\alpha ,u_S]\cdot\nabla  \theta_S|\mathrm{d} V\mathrm{d}s\\
& \leq C\mathbb{E} \int_0^\tau \| \partial^\alpha \theta_S\|_p^{p-1}\|[ \partial^\alpha ,u_S]\cdot\nabla  \theta_S\|_p\mathrm{d}s\\
&  \leq C\mathbb{E} \int_0^\tau\| \partial^\alpha \theta_S\|_p^{p-1}( \|\nabla u_S\|_{L^\infty} \| \nabla \theta_S\|_{W^{|\alpha|-1,p}}\\
&\quad + \| u_S\|_{W^{|\alpha|,p}}\|\nabla \theta_S\|_{L^\infty}) \mathrm{d}s\\
&  \leq C\mathbb{E} \int_0^\tau (\|\nabla u_S\|_{L^\infty}+\|\nabla \theta_S\|_{L^\infty})(\|u_S\|_{W^{|\alpha|,p}}^{p}+\|\theta_S\|_{W^{|\alpha|,p}}^{p}) \mathrm{d}s.
 \end{split}
\end{equation}
Using the H\"{o}lder inequality we have
\begin{equation}\label{2.37}
\begin{split}
\mathbb{E}\sup_{t\in [0,\tau]}\bigg |\int_0^tN_2^\alpha(s)\mathrm{d}s\bigg|
&\leq C\mathbb{E} \int_0^\tau\int_D| \partial^\alpha \theta_S|^{p-1}| \partial^\alpha u_T|\mathrm{d} V\mathrm{d}s\\
& \leq C\mathbb{E} \int_0^\tau (\| \partial^\alpha \theta_S\|_p^p+\| \partial^\alpha u_T\|_p^p)\mathrm{d}s.
 \end{split}
\end{equation}
By applying the Minkowski inequality and the condition (A1), the integral with respect to $N_3^\alpha$ can be estimated as
\begin{equation}\label{2.38}
\begin{split}
\mathbb{E}\sup_{t\in [0,\tau]}\bigg |\int_0^tN_3^\alpha(s)\mathrm{d}s\bigg|
&\leq C\mathbb{E}\sum_{j\geq 1} \int_0^\tau\int_D
| \partial^\alpha \theta_S|^{p-2}(  \partial^\alpha \sigma_3(u_S,u_T,\theta_S)e_j)^2\mathrm{d}V\mathrm{d} s\\
&\leq C\mathbb{E} \int_0^\tau
\| \partial^\alpha \theta_S\|^{p-2}_{L^p}\Bigg(\int_D\bigg(\sum_{j\geq 1}
(  \partial^\alpha \sigma_3(u_S,u_T,\theta_S)e_j)^2\bigg)^{p/2}\Bigg)^{2/p}\mathrm{d}V\mathrm{d} s\\
&\leq C\mathbb{E} \int_0^\tau
\| \partial^\alpha \theta_S\|^{p-2}_{L^p} \| \partial^\alpha \sigma_3(u_S,u_T,\theta_S))\|^{2}_{\mathbb{W}^{0,p}}\mathrm{d} s\\
&\leq C\mathbb{E} \int_0^\tau \kappa(\|u_S,u_T,\theta_S\|_{L^\infty})^2
\| \partial^\alpha \theta_S\|^{p-2}_{L^p}(1+\Xi^{k,p-2}(s))\mathrm{d} s\\
&\leq C\mathbb{E} \int_0^\tau \kappa(\|u_S,u_T,\theta_S\|_{L^\infty})^2
(1+\Xi^{k,p}(s))\mathrm{d} s.
 \end{split}
\end{equation}
For the stochastic term, one can deduce from the BDG inequality and \eqref{2.38} that
\begin{equation}\label{2.39}
\begin{split}
&\mathbb{E}\sup_{t\in [0,\tau]}\bigg |\int_0^t N_4^\alpha(s)\mathrm{d}\mathcal {W}\bigg|\\
&\quad\leq C\mathbb{E}\Bigg(\int_0^\tau\bigg(\sum_{j\geq 1}
\int_D| \partial^\alpha \theta_S|^{p-2} \partial^\alpha \theta_S  \partial^\alpha \sigma_3(u_S,u_T,\theta_S)e_j\mathrm{d} x\bigg)^2
\mathrm{d}s\Bigg)^{1/2}\\
&\quad\leq C\mathbb{E}\Bigg(\sup_{s\in [0,\tau]}\| \partial^\alpha \theta_S\|_{L^p}^{p/2}
\int_0^\tau \| \partial^\alpha \theta_S\|_{L^p}^{p-2} \kappa(\|u_S,u_T,\theta_S\|_{L^\infty})^2
(1+\Xi^{k,2}(s))    \mathrm{d}s\Bigg)^{1/2}\\
&\quad\leq \frac{1}{2}\mathbb{E} \sup_{t\in [0,\tau]} \| \partial^\alpha \theta_S\|_{L^p}^p+C\mathbb{E}\int_0^\tau \kappa(\|u_S,u_T,\theta_S\|_{L^\infty})^2
(1+\Xi^{k,p}(s))\mathrm{d} s.
 \end{split}
\end{equation}
Plugging  above estimates \eqref{2.35}-\eqref{2.39} into \eqref{2.34} and then  summing up with respect to the index $|\alpha|\leq k$, we deduce that
\begin{equation}\label{2.40}
\begin{split}
&\mathbb{E}\sup_{t\in [0,\tau]}\|\theta_S(t) \|_{W^{k,p}}^p\leq  2\mathbb{E}\|\theta_S^0\|_{X^{k,p}} ^p \\
&\quad+C\mathbb{E} \int_0^\tau \left(1+\|\nabla u_S\| _{L^\infty}+\|\nabla \theta_S\|_{L^\infty}+\kappa(\|u_S,u_T,\theta_S\|_{L^\infty})^2\right) (1+\Xi^{k,p}(s))\mathrm{d}s.
 \end{split}
\end{equation}
Combining the estimates \eqref{2.27}, \eqref{2.29} and \eqref{2.40}, and utilizing the stopping time $\mathbbm{t}_R$ defined in \eqref{2.2}, we obtain
\begin{equation*}
\begin{split}
1+\mathbb{E}\sup_{s\in [0,\mathbbm{t}_R \wedge t]}\Xi^{k,p}(s)\leq& 2\mathbb{E}\Xi^{k,p}(0) +C\mathbb{E} \int_0^{\mathbbm{t}_R \wedge t} \Big(1+\|\nabla u_S\| _{L^\infty}\\
&+\|\nabla u_T\|_{L^\infty}+\|\nabla \theta_S\| _{L^\infty}+\kappa(\|u_S,u_T,\theta_S\|_{L^\infty})^2\Big)(1+\Xi^{k,p}(s))\mathrm{d}s\\
\leq&2\mathbb{E}\Xi^{k,p}(0) +C\mathbb{E} \int_0^{t} (1+R +\kappa(R)^2)(1+\Xi^{k,p}(s))\mathrm{d}s\\
\leq&2\mathbb{E}\Xi^{k,p}(0) +C \int_0^{t}(1+R +\kappa(R)^2 )\Bigg(1+\mathbb{E}\sup_{r\in [0,\mathbbm{t}_R \wedge s]}\Xi^{k,p}(r)\Bigg)\mathrm{d}s.
 \end{split}
\end{equation*}
Applying the Gronwall inequality  to above inequality leads to the desired inequality \eqref{2.1}. This completes the proof of Lemma \ref{lem:2.2}.
\end{proof}

\subsection{Galerkin approximation and uniform bounds}

Let $(\phi_j)_{j\geq1}$ be a $C^\infty$ complete orthonormal system of $X^0(D;\mathbb{R}^d)$ made up of eigenfunctions of the Stokes operator $P(-\Delta)$ (cf. \cite{42}). For all $n\geq1$, we consider the orthogonal projection operator $P_n$ from $X^0(D;\mathbb{R}^d)$ onto span$\{\phi_1,\phi_2,...,\phi_n\}$, given by
$$
P_n: v\longmapsto v_n=\sum_{j=1}^n(v,\phi_j)\phi_j,\quad \mbox{for all}~   v\in X^0(D;\mathbb{R}^d).
$$
As we mentioned before, in order to obtain some uniform a priori estimates for the approximate solutions, we would like to multiply the nonlinear terms by some cut-off function $\varpi_R(\cdot)$ depending only on the size of $\|u_S\|_{X^{1,\infty}} $, $\|u_T\|_{W^{1,\infty}} $ and $\|\theta_S\|_{W^{1,\infty}} $, where $\omega:[0,\infty)\mapsto [0,1]$ is a smooth non-increasing truncation function defined by
\begin{equation*}
\varpi (x):=
\begin{cases}
1,&0\leq x \leq 1,\cr
0,& x > 2,
\end{cases}	\quad \mbox{with}\quad \sup_{x\in [0,\infty)}|\varpi'(x)| \leq C<\infty.
\end{equation*}
For each  $R>0$, we define the cut-off function
$$
\varpi_R (x):=\varpi \left(\frac{x}{R}\right),\quad \mbox{for }~\forall x\in [0,\infty).
$$
By introducing appropriate cut-off functions to nonlinear terms, the SISM \eqref{1.5} can be approximated by the following truncated system taking values in $P_n\mathcal {Z}^{k',2}(D):=P_nX^{m',2}\times P_nW^{m',2}\times P_nW^{m',2}$:
\begin{equation}\label{2.41}
\left\{
\begin{aligned}
&\mathrm{d}u_S^n+  \varpi_R(\|u_S^n\|_{W^{1,\infty}})P_nP(u_S^n\cdot \nabla)   u_S^n\mathrm{d}t-P_nP(  u_T^n\widehat{x}) \mathrm{d}t\\
&\quad - P_nP( \theta _S^n\widehat{z}) \mathrm{d}t=  \mathcal {D}_{cut,n}^1(t)\mathrm{d}\mathcal {W}_t,\\
&\mathrm{d} u_T^n+ \varpi_R(\|(u_S^n,u_T^n)\|_{X^{1,\infty}\times W^{1,\infty}})P_n(u_S^n\cdot \nabla)   u_T^n\mathrm{d}t+ P_nu_S^n\cdot\widehat{x} \mathrm{d}t +  z \mathrm{d}t =\mathcal {D}_{cut,n}^2(t)\mathrm{d}\mathcal {W}_t, \\
&\mathrm{d}\theta_S^n+\varpi_R(\|(u_S^n,\theta_S^n)\|_{X^{1,\infty}\times W^{1,\infty}})P_n (u_S^n\cdot \nabla ) \theta_S^n\mathrm{d}t+P_nu_T^n  \mathrm{d}t = \mathcal {D}_{cut,n}^3(t)\mathrm{d}\mathcal {W}_t,\\
&u_S^n|_{t=0}=P_n u_S^0,\quad  u_T^n|_{t=0}=P_nu_T^0,\quad \theta_S^n|_{t=0}=P_n\theta_T^0,
\end{aligned}
\right.
\end{equation}
where the diffusions are defined by
\begin{equation*}
\begin{split}
\mathcal {D}_{cut,n}^1(t)=&\varpi_R(\|(u_S^n, u_T^n,\theta_S^n)\|_{\mathcal {Z}^{1,\infty}})
P_nP\sigma_1(u_S^n,u_T^n,\theta_S^n),
 \end{split}
\end{equation*}
and
\begin{equation*}
\begin{split}
\mathcal {D}_{cut,n}^j(t)=&\varpi_R(\|(u_S^n, u_T^n,\theta_S^n)\|_{\mathcal {Z}^{1,\infty}})
P_n\sigma_j(u_S^n,u_T^n,\theta_S^n),\quad j=2,3.
 \end{split}
\end{equation*}
In view of the condition (A1) on $\sigma_i$, $i=1,2,3$, \eqref{2.41} can be regarded as stochastic differential equations (SDEs) in finite dimensional spaces with locally Lipschitz drift and globally Lipschitz diffusion. According to the classic existence theory for SDEs (cf. \cite{43}), there exists a time $T_n>0$ and a triple
$$
(u_S^n,u_T^n,\theta_S^n)\in C([0,T_n); P_n\mathcal {Z}^{k',2}(D))
$$
satisfying the system \eqref{2.41} for a.e. $t\in [0,T_n]$. Since $u_S$ is a divergence free vector with boundary condition $u_S|_{\partial D}\cdot \vec{n}=0$, by using the symmetry property for the operators $P$ and $P_n$, we get by integration by parts that
$$
(P_nP(u_S^n\cdot \nabla)   u_S^n,u_S^n)_{L^2}=(P_nP(u_S^n\cdot \nabla)   u_T^n,u_T^n)_{L^2}=(P_nP(u_S^n\cdot \nabla)   \theta_S^n,\theta_S^n)_{L^2}=0.
$$
Moreover, similar to the proof of Lemma 1.2, we see that for each fixed $n$,
$$
T_n<\infty ~\Longrightarrow~\limsup_{t\rightarrow T_n}\|(u_S^n, u_T^n,\theta_S^n)(t)\|_{\mathcal {Z}^{1,\infty}}=\infty, \quad \mathbb{P}\mbox{-a.s.}
$$
Due to the appearance of cut-off functions, the norm $\|(u_S^n, u_T^n,\theta_S^n)\|_{\mathcal {Z}^{1,\infty}}$ is uniformly bounded by a positive constant depending only on $R$, and hence one can infer that the approximations $(u_S^n,u_T^n,\theta_S^n)$ are exactly global-in-time solutions, i.e., $(u_S^n,u_T^n,\theta_S^n)\in C([0,\infty); P_n\mathcal {Z}^{k',2}(D))$, $\mathbb{P}$-a.s., for each $n\geq 1$.

In order to prove the tightness of the probability measures with respect to the approximations $(u_S^n,u_T^n,\theta_S^n)$, one need to establish some a priori uniform bounds for approximations in sufficiently regular spaces.

\begin{lemma}\label{lem:2.3}
Fix $k'=k+4$, $k>1+\frac{2}{p}$ and  $\vartheta\in (0,\frac{1}{2})$. Assume that $(u^0_S,u^0_T,\theta^0_S)\in L^q(\Omega;X^{k',2}\times W^{k',2}\times W^{k',2})$ is a $\mathcal {F}_0$-measurable random variable for some $q\geq2$, and the diffusion coefficients satisfy the conditions (A1)-(A3). Then we have
\begin{equation*}
\begin{split}
(u_S^n,u_T^n,\theta_S^n)_{n\geq1}&\in L^q(\Omega; L^\infty_{loc}([0,\infty);\mathcal {Z}^{k',2}(D))) \bigcap L^q(\Omega; W^{\vartheta,q}_{loc}([0,\infty);\mathcal {Z}^{k'-1,2}(D))).
 \end{split}
\end{equation*}
Moreover, there hold
\begin{equation*}
\begin{split}
&\sup_{n\geq 1}\mathbb{E} \bigg\|u_S^n(\cdot)-\int_0^\cdot\mathcal {D}_{cut,n}^1\mathrm{d}\mathcal {W} \bigg\|_{W^{1,q}([0,T];X^{k'-1,2})}^q  <\infty,\\
&\sup_{n\geq 1}\mathbb{E} \bigg\|u_T^n(\cdot)-\int_0^\cdot\mathcal {D}_{cut,n}^2\mathrm{d}\mathcal {W} \bigg\|_{W^{1,q}([0,T];W^{k'-1,2})}^q  <\infty,\\
&\sup_{n\geq 1}\mathbb{E} \bigg\|\theta_S^n(\cdot)-\int_0^\cdot\mathcal {D}_{cut,n}^3\mathrm{d}\mathcal {W} \bigg\|_{W^{1,q}([0,T];W^{k'-1,2})}^q  <\infty,
 \end{split}
\end{equation*}
and
\begin{equation*}
\begin{split}
\sup_{n\geq 1}\mathbb{E} \bigg\|\int_0^\cdot\mathcal {D}_{cut,n}^1\mathrm{d}\mathcal {W} \bigg\|_{W^{\vartheta,q}([0,T];X^{k'-1,2})}^q +\sum_{j\in\{2,3\}}\sup_{n\geq 1}\mathbb{E}\bigg \|\int_0^\cdot\mathcal {D}_{cut,n}^j\mathrm{d}\mathcal {W} \bigg\|_{W^{\vartheta,q}([0,T];W^{k'-1,2})}^q  <\infty.
 \end{split}
\end{equation*}
\end{lemma}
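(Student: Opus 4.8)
The whole point of the cut-off factors $\varpi_R$ is to turn the conditional a priori estimates of Lemma \ref{lem:2.2} into bounds that are \emph{uniform in the Galerkin level} $n$ and \emph{global in time}. First I would establish the $L^q(\Omega;L^\infty_{loc}([0,\infty);\mathcal {Z}^{k',2}(D)))$ bound. One applies the It\^{o} formula to $t\mapsto\|(u_S^n,u_T^n,\theta_S^n)(t)\|_{\mathcal {Z}^{k',2}}^q=\big(\sum_{|\alpha|\le k'}\|\partial^\alpha(\cdot)\|_{L^2}^2\big)^{q/2}$ along the truncated system \eqref{2.41}, exactly as in the derivations of \eqref{2.3}, \eqref{2.12}, \eqref{2.34}, but now raised to the $q/2$-th power. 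The cancellations $(P_nP(u_S^n\cdot\nabla)u_S^n,u_S^n)_{L^2}=0$ etc.\ recorded just before the lemma survive because $P_n$ is self-adjoint on $X^0$, and the spectral projections $P_n$ are bounded on each $X^{m,2}$ and $W^{m,2}$ uniformly in $n$. The crucial gain is that wherever a factor $\|\nabla u_S^n\|_{L^\infty}$, $\|\nabla u_T^n\|_{L^\infty}$, $\|\nabla\theta_S^n\|_{L^\infty}$ or $\kappa(\|(u_S^n,u_T^n,\theta_S^n)\|_{\mathcal {Z}^{0,\infty}})$ arose in Lemma \ref{lem:2.2}, it is now multiplied by a $\varpi_R(\cdot)$ that forces the relevant $W^{1,\infty}$-norm to be $\le 2R$, so all these factors are $\le C(R)$. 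Using the commutator estimate \eqref{2.7} at order $k'$, and assumption (A2) (which gives $\sigma_i\in\mbox{Bnd}_{loc}(\mathcal {Z}^{k',p};\mathbb{W}^{k',p})$ with $k'=k+4$, so the truncated diffusions $\mathcal {D}_{cut,n}^j$ are bounded by $C(R)$ in $\mathbb{W}^{k',2}$ with \emph{no loss of derivatives}), the It\^{o}-correction terms, the convection terms and the linear terms (note $\|z\|_{W^{k',2}(D)}\le C$ since $D$ is bounded) are all dominated by $C(R)\big(1+\|(u_S^n,u_T^n,\theta_S^n)\|_{\mathcal {Z}^{k',2}}^q\big)$. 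Taking a supremum over $[0,T]$, bounding the martingale term by the BDG inequality \eqref{1.7} with exponent $q$ and absorbing as in \eqref{2.26} and \eqref{2.39}, then invoking the Gronwall inequality, yields $\sup_{n\ge1}\mathbb{E}\sup_{t\in[0,T]}\|(u_S^n,u_T^n,\theta_S^n)(t)\|_{\mathcal {Z}^{k',2}}^q\le C(R,T,q)\big(1+\mathbb{E}\|(u_S^0,u_T^0,\theta_S^0)\|_{\mathcal {Z}^{k',2}}^q\big)$ for every $T>0$.

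\textbf{Time regularity.} Set $v_S^n:=u_S^n-\int_0^\cdot\mathcal {D}_{cut,n}^1\,\mathrm{d}\mathcal {W}$ and analogously $v_T^n$, $v_\theta^n$; by \eqref{2.41} these have a genuine time derivative, e.g.\ $\partial_tv_S^n=-\varpi_R(\cdots)P_nP(u_S^n\cdot\nabla)u_S^n+P_nP(u_T^n\widehat{x})+P_nP(\theta_S^n\widehat{z})$. Since the transport nonlinearity maps $\mathcal {Z}^{k',2}$ into $\mathcal {Z}^{k'-1,2}$ and, once the cut-off is non-zero, $\|\varpi_R(\|u_S^n\|_{W^{1,\infty}})(u_S^n\cdot\nabla)u_S^n\|_{W^{k'-1,2}}\le C(R)\|u_S^n\|_{X^{k',2}}$ (the needed first-derivative control of $u_S^n$ being supplied by the cut-off itself), one obtains the pathwise bound $\|\partial_t(v_S^n,v_T^n,v_\theta^n)(t)\|_{\mathcal {Z}^{k'-1,2}}\le C(R)\big(1+\|(u_S^n,u_T^n,\theta_S^n)(t)\|_{\mathcal {Z}^{k',2}}\big)$. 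Raising to the $q$-th power, integrating over $[0,T]$, taking expectations and using the first step, I get $\sup_{n\ge1}\mathbb{E}\|(v_S^n,v_T^n,v_\theta^n)\|_{W^{1,q}([0,T];\mathcal {Z}^{k'-1,2})}^q<\infty$, which is the second group of displayed estimates. For the stochastic convolutions I would invoke the classical Flandoli--Gatarek/Kolmogorov lemma: if $G$ is progressively measurable with $\mathbb{E}\int_0^T\|G(t)\|_{L_2(\mathfrak{A};H)}^q\,\mathrm{d}t<\infty$, then $t\mapsto\int_0^tG\,\mathrm{d}\mathcal {W}\in L^q(\Omega;W^{\vartheta,q}([0,T];H))$ for every $\vartheta\in(0,\tfrac12)$, with norm controlled by that integral. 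Because $\|\mathcal {D}_{cut,n}^j(t)\|_{L_2(\mathfrak{A};\mathcal {Z}^{k'-1,2})}\le\|\mathcal {D}_{cut,n}^j(t)\|_{\mathbb{W}^{k'-1,2}}\le C(R)$ uniformly in $n,t,\omega$ (again by (A2) and the cut-off), this gives the third displayed estimate. Finally $u_S^n=v_S^n+\int_0^\cdot\mathcal {D}_{cut,n}^1\,\mathrm{d}\mathcal {W}$ together with the embedding $W^{1,q}([0,T];\cdot)\hookrightarrow W^{\vartheta,q}([0,T];\cdot)$ yields $\sup_{n\ge1}\mathbb{E}\|(u_S^n,u_T^n,\theta_S^n)\|_{W^{\vartheta,q}([0,T];\mathcal {Z}^{k'-1,2})}^q<\infty$, which combined with the first step is the claimed membership.

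\textbf{Main obstacle.} There is no conceptual difficulty; the cost is the bookkeeping. The laborious part is re-running the high-order energy estimate of Lemma \ref{lem:2.2} at the elevated regularity $k'=k+4$ and with $q$-th moments rather than second moments: one must carry the Leray projector $P$ and the Galerkin projector $P_n$ through every commutator and product estimate, check that the divergence-free cancellations are not spoiled, and handle the It\^{o}-correction and BDG terms with exponent $q$ so that the Young-inequality absorptions in the spirit of \eqref{2.26} and \eqref{2.39} still close. The one genuinely new ingredient relative to Lemma \ref{lem:2.2} is the fractional-in-time bound for the stochastic integral, which is exactly where the Flandoli--Gatarek lemma enters and is the reason the time-regularity estimates are stated one spatial derivative below the $L^\infty_t$ bound. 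Uniformity in $n$ is then automatic: the cut-offs make every otherwise-dangerous coefficient a function of $R$ alone, and the $P_n$ are bounded on the relevant higher Sobolev spaces uniformly in $n$.
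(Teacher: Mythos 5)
Your proof takes essentially the same route as the paper's: Itô's formula at regularity $k'$ with the cut-offs killing the dangerous $W^{1,\infty}$- and $\kappa$-factors, a Gronwall argument for the uniform $L^q(\Omega;L^\infty_t;\mathcal{Z}^{k',2})$ bound, and then the split of $u^n$ into a $W^{1,q}$ ``drift'' part plus a fractionally regular stochastic convolution. The one place where you genuinely diverge from the paper is the estimate of $\int_0^\cdot\mathcal{D}_{cut,n}^j\,\mathrm{d}\mathcal{W}$ in $W^{\vartheta,q}([0,T];\cdot)$: the paper does this by hand, estimating the $[\cdot]_{\vartheta,q,T}$ seminorm directly via BDG inequalities (their estimate \eqref{2.48}), whereas you invoke the Flandoli--Gatarek fractional Sobolev regularity lemma for stochastic convolutions. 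Your route is cleaner and more standard; the paper's hand computation is equivalent but more awkward. Also note that the paper rigorously interposes stopping times $\mathbbm{t}_K$ before Gronwall and removes them by monotone convergence as $K\to\infty$; your sketch elides this localization, which is needed to justify the absorption of the BDG term, but that is a standard step.

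There is one factual error in your write-up, though it does not invalidate the argument: you claim $\|\mathcal{D}_{cut,n}^j(t)\|_{\mathbb{W}^{k'-1,2}}\le C(R)$ uniformly in $n$, $t$, $\omega$. That is not what the cut-off gives. The cut-off controls only the $\mathcal{Z}^{1,\infty}$-norm, and the definition of $\mathrm{Bnd}_{loc}$ in (A2) produces the bound $\|\sigma_j(\cdot)\|_{\mathbb{W}^{k',2}}\le \kappa(\|\cdot\|_{\mathcal{Z}^{0,\infty}})\,(1+\|\cdot\|_{\mathcal{Z}^{k',2}})$, so the cut-off tames $\kappa$ but leaves an uncontrolled linear factor $1+\|(u^n_S,u^n_T,\theta^n_S)(t)\|_{\mathcal{Z}^{k',2}}$. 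The correct statement is $\|\mathcal{D}_{cut,n}^j(t)\|_{\mathbb{W}^{k'-1,2}}\le C(R)\,(1+\|(u^n_S,u^n_T,\theta^n_S)(t)\|_{\mathcal{Z}^{k',2}})$, which is enough for the Flandoli--Gatarek hypothesis $\mathbb{E}\int_0^T\|\mathcal{D}^j_{cut,n}\|^q_{L_2}\,\mathrm{d}t<\infty$ only after you feed in the moment bound from your Step 1. So the conclusion you draw is correct, but the uniform-in-$\omega$ claim is false; this is exactly the same structure as the drift term in your $W^{1,q}$ estimate, where you correctly retained the $(1+\|u^n\|_{\mathcal{Z}^{k',2}})$ factor.
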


\begin{proof}
By  applying the It\^{o}'s formula in Hilbert space $X^{k',2}$ to $\mathrm{d}\|u_S^n(t)\|_{X^{k',2}}^q=\sum_{|\alpha|\leq k'}\mathrm{d}\|\partial^ \alpha u_S^n(t)\|_{L^2}^q$ and then using the fact that the projection operator $P_n$ is self-adjoint, we find
\begin{equation*}
\begin{split}
 &\|\partial^ \alpha u_S^n(t)\|_{L^2}^q=\|P_n \partial^ \alpha u_S^0\|_{L^2}^q\\
 &\quad-q\int_0^t \varpi_R(\|u_S^n\|_{W^{1,\infty}})\|u_S^n\|_{X^{k',2}}^{q-2} ( \partial^ \alpha  u_S^n, \partial^ \alpha P(u_S^n\cdot \nabla)   u_S^n )_{L^2}\mathrm{d}s \\
 &\quad+q\int_0^t \|u_S^n\|_{X^{k',2}}^{q-2} ( \partial^ \alpha  u_S^n,  \partial^ \alpha P(u_T^n\widehat{x}) + \partial^ \alpha  P(\theta _S^n\widehat{z}) )_{L^2}\mathrm{d}s\\
&\quad+  \int_0^t \bigg(\frac{q}{2}\|u_S^n\|_{X^{k',2}}^{q-2}\| \partial^ \alpha  \mathcal {D}_{cut,n}^1\|_{\mathbb{X}^{0,2}}^2 + \frac{q(q-2)}{2}\|u_S^n\|_{X^{k',2}}^{q-4}( \partial^ \alpha  u_S^n,  \partial^ \alpha  \mathcal {D}_{cut,n}^1)_{L^2}^2\bigg)\mathrm{d}s\\
&\quad+q \int_0^t  \|u_S^n\|_{X^{k',2}}^{q-2}( \partial^ \alpha  u_S^n,  \partial^ \alpha  \mathcal {D}_{cut,n}^1)_{L^2}\mathrm{d}\mathcal {W}_s.
 \end{split}
\end{equation*}
For any $K>0$, we introduce the stopping time
$$
\mathbbm{t}_K:= \inf\bigg\{t>0;~ \sup_{s\in [0,t]} (\|u_S^n(t)\|_{X^{k',2}}+\|u_T^n(t)\|_{X^{k',2}}+\|\theta_S^n(t)\|_{X^{k',2}} )\geq K \bigg \}.
$$
One can use the similar calculations as we did for the a priori estimates in Subsection 2.1, and it follows from the definition of the cut-off function $\varpi_R(\cdot)$ that
\begin{equation}\label{2.42}
\begin{split}
\mathbb{E}\sup_{s\in [0,\mathbbm{t}_K \wedge t]} \|u_S^n(s)\|_{X^{k',2}}^q\leq&\mathbb{E}\| u_S^0\|_{X^{k',2}}^q+C\int_0^{\mathbbm{t}_K \wedge t}\varpi_R(\|u_S^n\|_{W^{1,\infty}})\|\nabla u_S\| _{L^\infty}\| u_S^n  \|_{W^{k',2}}^{ q}\mathrm{d}s\\
&+\frac{1}{2}\mathbb{E}\sup_{s\in [0,\mathbbm{t}_K \wedge t]}\|u_S \|_{X^{k',2}}^{q}+C\mathbb{E}\int_0^{\mathbbm{t}_K \wedge t} (\|u_T^n \|_{W^{k',2}}^q+\|\theta_S^n \|_{W^{k',2}}^q)\mathrm{d}s\\
 &+C\mathbb{E} \int_0^\tau \varpi_R(\|u_S^n, u_T^n,\theta_S^n\|_{W^{1,\infty}})^2\kappa(\|u_S^n,u_T^n,\theta_S^n\|_{L^\infty})^2  \\
&\quad\quad \times \left(1+\|u_S^n\|_{X^{k',2}}^q+\|u_T^n\|_{W^{k',2}}^q+\|\theta_S\|_{W^{k',2}}^q \right)\mathrm{d}s\\
\leq& \mathbb{E}\| u_S^0\|_{X^{k',2}}^q+\frac{1}{2}\mathbb{E}\sup_{s\in [0,\mathbbm{t}_K \wedge t]}\|u_S (s)\|_{X^{k',2}}^{q}\\
&+C\int_0^t (1+R+\kappa(R)^2 )\mathbb{E}\sup_{r\in[0,\mathbbm{t}_K \wedge s]}\Xi^{n,k',2}(r)\mathrm{d}s,
 \end{split}
\end{equation}
where $C$ is a positive constant depends only on $D,k'$ and $q$, and
\begin{equation*}
\begin{split}
\Xi^{n,k',2}(t):=\|u_S^n(t)\|_{X^{k',2}}^q+\|u_T^n(t)\|_{W^{k',2}}^q+\|\theta_S^n(t)\|_{W^{k',2}}^q.
 \end{split}
\end{equation*}
After rearranging the terms, we get from \eqref{2.42} that
\begin{equation}\label{2.43}
\begin{split}
\mathbb{E}\sup_{s\in [0,\mathbbm{t}_K \wedge t]} \|u_S^n(s)\|_{X^{k',2}}^q
\leq& 2\mathbb{E}\| u_S^0\|_{X^{k',2}}^q+C(1+R+\kappa(R)^2 )\int_0^t\mathbb{E}\sup_{r\in[0,\mathbbm{t}_K \wedge s]}\Xi^{n,k',2}(r)\mathrm{d}s.
 \end{split}
\end{equation}
In order to close the estimate in \eqref{2.43}, we shall apply the It\^{o}'s formula to the functions $\|u_T^n(s)\|_{W^{k',2}}^q$ and $\|\theta_S^n(s)\|_{W^{k',2}}^q$ with respect to the second and third equation in \eqref{2.41}, respectively. By using the property of cut-off functions, we can deduce that
\begin{equation}\label{2.44}
\begin{split}
\mathbb{E}\sup_{s\in [0,\mathbbm{t}_K \wedge t]} \|u_T^n(s)\|_{X^{k',2}}^q
\leq& 2\mathbb{E}\| u_T^0\|_{X^{k',2}}^q+C(1+R+\kappa(R)^2 )\int_0^t\mathbb{E}\sup_{r\in[0,\mathbbm{t}_K \wedge s]}\Xi^{n,k',2}(r)\mathrm{d}s.
 \end{split}
\end{equation}
and
\begin{equation}\label{2.45}
\begin{split}
\mathbb{E}\sup_{s\in [0,\mathbbm{t}_K \wedge t]} \|\theta_S^n(s)\|_{X^{k',2}}^q
\leq& 2\mathbb{E}\| \theta_S^0\|_{X^{k',2}}^q+C(1+R+\kappa(R)^2 )\int_0^t\mathbb{E}\sup_{r\in[0,\mathbbm{t}_K \wedge s]}\Xi^{n,k',2}(r)\mathrm{d}s.
 \end{split}
\end{equation}
Summing up the estimates \eqref{2.43}-\eqref{2.45} leads to
 \begin{equation*}
\begin{split}
\mathbb{E}\sup_{s\in [0,\mathbbm{t}_K \wedge t]} \Xi^{n,k',2}(s)
\leq& 2\mathbb{E}\Xi^{n,k',2}(0)+C\int_0^t\mathbb{E}\sup_{r\in[0,\mathbbm{t}_K \wedge s]}\Xi^{n,k',2}(r)\mathrm{d}s,
 \end{split}
\end{equation*}
which combined with the Gronwall inequality yield that
\begin{equation*}
\begin{split}
\mathbb{E}\sup_{s\in [0,\mathbbm{t}_K \wedge t]} \Xi^{n,k',2}(s)
\leq C,
 \end{split}
\end{equation*}
for some positive constant $C$ depending only on $D,k',q,R$ and the initial data $(u_S^0,u_T^0,\theta_S^0)$. As the stopping time $\mathbbm{t}_K$ is increasing with respect to the parameter $K$, by taking $K\rightarrow\infty$, we deduce from the monotone convergence theorem and the definition of $\Xi^{n,k',2}(t) $ that, for any $T>0$,
\begin{equation}\label{2.46}
\begin{split}
\sup_{n\geq1}\Bigg(\mathbb{E}\sup_{s\in [0,  T]} \left(\|u_S^n(s)\|_{X^{k',2}}^q+\|u_T^n(s)\|_{W^{k',2}}^q+\|\theta_S^n(s)\|_{W^{k',2}}^q\right)\Bigg)
\leq C,
 \end{split}
\end{equation}
which implies that $(u_S^n,u_T^n,\theta_S^n)\in L^{q}(\Omega;L^\infty([0,T];X^{k',2}\times W^{k',2}\times W^{k',2}))$ with the uniform bound $C>0$ independent of $n$.

Note that the Sobolev space $W^{1,q}([0,T];X^{k',2}) $ is continuously embedded into $W^{\vartheta,q}([0,T];X^{k',2})$, for any $\vartheta\in (0,1)$. By virtue of the first equation with respect to  $u_S$ and the inequality $(a+b+c)^p\leq 3^{q-1}(a^p+b^p+c^p)$, we have for any $\eta\in (0,1)$
\begin{equation}\label{2.47}
\begin{split}
&\mathbb{E}\|u_S^n \|_{W^{\vartheta,q}([0,T];X^{k'-1,2})}^q\\
&\quad \leq 3^{q-1}\mathbb{E}\left\|P_nu_S^0+\int_0^\cdot\varpi_R(\|u_S^n\|_{W^{1,\infty}})P_nP(u_S^n\cdot \nabla)   u_S^n\mathrm{d}s\right\|_{W^{1,q}([0,T];X^{k'-1,2})}^q\\
&\quad\quad +3^{q-1}\mathbb{E}\left\|\int_0^\cdot(P_nP(  u_T^n\widehat{x})  + P_nP( \theta _S^n\widehat{z}) )\mathrm{d}s\right\|_{W^{1,q}([0,T];X^{k'-1,2})}^q\\
& \quad\quad +3^{q-1}\mathbb{E}\left\|\int_0^\cdot\mathcal {D}_{cut,n}^1(s)\mathrm{d}\mathcal {W}_s\right\|_{W^{\vartheta,q}([0,T];X^{k'-1,2})}^q\\
& \quad :=N_1+ N_2+ N_3.
 \end{split}
\end{equation}
Let us verify that the terms $N_i,i=1,2,3$ are uniform bounded. From the definition of the Sobolev space $W^{1,q}([0,T];X^{k'-\eta,2})$ and the Moser type estimate (cf. \cite{1,13}), we have
\begin{equation*}
\begin{split}
 N_1 \leq & C\mathbb{E}\left\|P_nu_S^0\right\|_{X^{k',2}}^q+  C \mathbb{E}\left\|\int_0^t\varpi_R(\|u_S^n\|_{W^{1,\infty}})P_nP(u_S^n\cdot \nabla)   u_S^n\mathrm{d}s\right\|_{L^q([0,T];X^{k'-1,2})}^q\\
 & + C\mathbb{E}\left\| \varpi_R(\|u_S^n\|_{W^{1,\infty}})P_nP(u_S^n\cdot \nabla)   u_S^n \right\|_{L^q([0,T];X^{k'-1,2})}^q \\
 \leq& C \mathbb{E}\left\|P_nu_S^0\right\|_{X^{k',2}}^q+ C \mathbb{E}\int_0^t\varpi_R(\|u_S^n\|_{W^{1,\infty}})^q\left\|P_nP(u_S^n\cdot \nabla)   u_S^n\right\|_{X^{k'-1,2}}^q\mathrm{d}s \\
\leq& C \mathbb{E}\left\|P_nu_S^0\right\|_{X^{k',2}}^q+ C \mathbb{E}\int_0^t\varpi_R(\|u_S^n\|_{W^{1,\infty}})^q\\
&\quad \quad \times
(\|u_S^n\|_{L^{\infty}}\|\nabla u_S^n\|_{X^{k'-1,2}}+\|u_S^n\|_{X^{k'-1,2}}\|\nabla u_S^n\|_{L^{\infty}})^q\mathrm{d}s\\
\leq& C \mathbb{E}\left\| u_S^0\right\|_{X^{k',2}}^q+ C \mathbb{E}\sup_{s\in[0,T]}
\|u_S^n(s)\|_{X^{k',2}}^q <\infty,
 \end{split}
\end{equation*}
where the last inequality used the uniform bound \eqref{2.46}.

For $N_2$, we have
\begin{equation*}
\begin{split}
 N_2\leq& C\mathbb{E}\left\|\int_0^t(P_nP(  u_T^n\widehat{x})  + P_nP( \theta _S^n\widehat{z}) )\mathrm{d}s\right\|_{L^q([0,T];X^{k'-1,2})}^q\\
 &+C\mathbb{E}\left\|P_nP(  u_T^n\widehat{x})  + P_nP( \theta _S^n\widehat{z})\right\|_{L^q([0,T];X^{k'-1,2})}^q \\
\leq &C\mathbb{E}\Big( \left\|P(  u_T^n\widehat{x})\right\|_{L^q([0,T];X^{k',2})}^q  + \left\|P( \theta _S^n\widehat{z})\right\|_{L^q([0,T];X^{k',2})}^q\Big) \\
\leq&  C\mathbb{E}\sup_{s\in[0,T]} \Big( \left\| u_T^n(s)\right\|_{X^{k',2}}^q  + \left\| \theta _S^n(s)\right\|_{W^{k',2}}^q \Big)<\infty.
 \end{split}
\end{equation*}
For any given function $f\in W^{\vartheta,q}([0,T];X^{k',2})$ and any $\delta>0$, there exits a pair $(t_\delta,\overline{t_\delta})\in [0,T]\times [0,T]$ with $t_\delta\neq\overline{t_\delta}$ such that
\begin{equation*}
\begin{split}
&\sup_{t\neq\bar{t}\in [0,T]}
|t-\bar{t}|^{-\frac{1}{q}-\vartheta}\left\|f(t)-f(\bar{t})\right\|_{X^{k',2}} \leq |t_\delta-\bar{t}_\delta|^{-\frac{1}{q}-\vartheta}\left\|f(t)-f(\bar{t})\right\|_{X^{k',2}}+\delta.
 \end{split}
\end{equation*}
By applying the condition (A2) and the uniform bound \eqref{2.46}, we deduce that
\begin{equation}\label{2.48}
\begin{split}
 &\mathbb{E}\int_0^T\int_0^T
|t-\bar{t}|^{-1-q\vartheta}\bigg\|\int_t^{\bar{t}}\mathcal {D}_{cut,n}^1(s)\mathrm{d}\mathcal {W}_s\bigg\|_{X^{k',2}} ^qdtd\bar{t}\\
&\quad \leq C |t_\delta-\bar{t}_\delta|^{-1-q\vartheta}\mathbb{E}\left(\int_{t_\delta}^{\bar{t}_\delta} \|
\mathcal {D}_{cut,n}^1(s)\|_{\mathbb{X}^{k',2}} ^2 \mathrm{d} s\right)^{q/2} +C\delta^q\\
&\quad \leq C\kappa(R) |t_\delta-\bar{t}_\delta|^{-1-q\vartheta+\frac{q}{2}}\mathbb{E} \sup_{s\in[t_\delta,\bar{t}_\delta]}\left(1+\|u_S^n\|_{X^{k',2}}^q+\|u_T^n\|_{W^{k',2}}^q+\|\theta_S\|_{W^{k',2}} ^q\right)+C\delta^q\\
&\quad \leq C |t_\delta-\bar{t}_\delta|^{-1-q\vartheta+\frac{q}{2}}+C\delta^q  \\
&\quad \leq C T^{-1-q\vartheta+\frac{q}{2}}+C\delta^q .
 \end{split}
\end{equation}
By assumption of $q-2q\vartheta-2\geq0$, the estimate \eqref{2.48} implies that $[\int_t^{\bar{t}}\mathcal {D}_{cut,n}^j(s)\mathrm{d}\mathcal {W}_s]_{\vartheta,q,T}<\infty$. Moreover, an application of the BDG inequality leads to
\begin{equation*}
\begin{split}
&\mathbb{E}\left\|\int_0^t\mathcal {D}_{cut,n}^1(s)\mathrm{d}\mathcal {W}_s\right\|_{L^q([0,T];X^{k'-1,2})}^q\\
&\quad\leq \mathbb{E}\sup_{t\in [0,T]}\left\|\int_0^t\mathcal {D}_{cut,n}^1(s)\mathrm{d}\mathcal {W}_s\right\|_{X^{k'-1,2}}^q\\
&\quad \leq \mathbb{E}\bigg(\int_0^T\varpi_R(\|u_S^n, u_T^n,\theta_S^n\|_{W^{1,\infty}})^2\kappa(\|u_S^n,u_T^n,\theta_S^n\|_{L^\infty})^2 \\
&\quad\quad  \times \left(1+\|u_S^n\|_{X^{k',2}}^2+\|u_T^n\|_{W^{k',2}}^2+\|\theta_S\|_{W^{k',2}}^2 \right)\mathrm{d}t\bigg)^{q/2}\\
&\quad\leq C\mathbb{E} \sup_{t\in [0,T]} \left(1+\|u_S^n\|_{X^{k',2}}^q+\|u_T^n\|_{W^{k',2}}^q+\|\theta_S\|_{W^{k',2}}^q \right) <\infty,
 \end{split}
\end{equation*}
which combined with \eqref{2.48} yields that
$$
 \left\{\int_0^t\mathcal {D}_{cut,n}^1(s)\mathrm{d}\mathcal {W}_s \right\}_{n\geq1}\quad \mbox{is uniformly bounded in} \quad W^{\vartheta,q}([0,T];X^{k'-1,2}).
$$
Moreover,  it follows from \eqref{2.46} and the uniform bounds for $N_j,j=1,2,3 $ that the sequence $(u_S^n)_{n\geq1}$ is uniformly bounded in $W^{\vartheta,q}([0,T];X^{k'-1,2})$, which also implies that $u_S^n-\int_0^t\mathcal {D}_{cut,n}^1(s)\mathrm{d}\mathcal {W}_s$ is unifirmly bounded in $W^{1,q}([0,T];X^{k'-1,2})$.

By applying the same argument, we can also prove that the sequences $(u_T^n)_{n\geq1}$ and $(\theta_S^n)_{n\geq1}$ are both uniformly bounded in $W^{\vartheta,q}([0,T];W^{k'-1,2})$, and we shall omit the details here for simplicity. This completes the proof of Lemma \ref{lem:2.3}.
\end{proof}

\subsection{Global martingale solutions}

Based on the uniform bounds in Subsection 2.2, we are ready to prove the tightness of the measures with resect to the approximations and construct the global martingales by applying the stochastic compactness method.

To begin with, let us define the path space
$$
\mathscr{X}= \mathscr{X}_\mathbb{S} \times \mathscr{X}_\mathcal {W}, \quad
\mathscr{X}_\mathbb{S}=C([0,T]; \mathcal {Z}^{k'-2,2}(D)), \quad\mathscr{X}_\mathcal {W}=C([0,T]; \mathfrak{A}_0).
$$
And we define the probability measures on $\mathscr{X}$ as
\begin{equation}\label{2.49}
\begin{split}
\mu^n=\mu^n_\mathbb{S}\times \mu^n_\mathcal {W},\quad\mu^n_\mathbb{S}=\mathbb{P}\circ (u_S,u_T,\theta_S)^{-1},\quad \mu^n_\mathcal {W}=\mathbb{P}\circ \mathcal {W}^{-1}.
 \end{split}
\end{equation}

Recalling that a sequence $(\nu_n)_{n\geq1} \subset \mathscr{X}$ is said to converge weakly to an element $\nu\in\mathcal {P}_r(\mathscr{X}) $, if $\int_\mathscr{X} f(x)  \mathrm{d}\nu_n \rightarrow \int_\mathscr{X} f(x)  \mathrm{d}\nu$ for all bounded continuous function $f$ on $\mathscr{X}$ as $n\rightarrow\infty$. A set $\Gamma\in \mathcal {P}_r(\mathscr{X}) $ is said to be weakly compact, if every sequence of $\Gamma$ has a weakly convergent subsequence. Moreover, a collection $\mathscr{O}\subset\mathcal {P}_r(\mathscr{X})$ (which is the set of all Borel probability measure on $\mathscr{X}$) is tight if, for every $\gamma>0$, there exists a compact set $K_\gamma\subset \mathscr{X}$ such that, $\nu(K_\gamma)\geq 1-\gamma$ for all $\nu\in \mathscr{O}$. The following compactness result for fractional Sobolev spaces is useful.

\begin{lemma} [\cite{44}] \label{lem:2.4}
Suppose that $B_0, B$ are Banach spaces, and the embedding from $B_0$ into $  B$ is compact. Let $p \in(1,\infty)$ and $\vartheta\in (0, 1]$ be such that $\vartheta p>1$. Then $W^{\vartheta,p}(0, T;B_0) \subset  C ([0,T];B)$ and the embedding is compact.
\end{lemma}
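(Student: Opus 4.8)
The plan is to split the statement into (a) the fractional Morrey embedding $W^{\vartheta,p}(0,T;B_0)\hookrightarrow C^{0,\vartheta-\frac1p}([0,T];B_0)$, which is valid for an arbitrary Banach space target as soon as $\vartheta p>1$, and (b) an Arzel\`{a}-Ascoli argument that upgrades a bounded subset of $C^{0,\vartheta-\frac1p}([0,T];B_0)$ to a relatively compact subset of $C([0,T];B)$, using that $B_0\hookrightarrow B$ is compact.

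For (a) I would follow the standard route for (fractional) Sobolev spaces. The core is the pointwise H\"{o}lder estimate
\[
\|f(t)-f(\bar{t})\|_{B_0}\le C\,[f]_{\vartheta,p,T}\,|t-\bar{t}|^{\vartheta-\frac1p},
\]
proved, exactly as in the scalar Di Nezza--Palatucci--Valdinoci argument, by telescoping over a dyadically shrinking family of intervals $I_j$ with $|I_j|\sim|t-\bar{t}|\,2^{-j}$: each increment $\|f_{I_{j+1}}-f_{I_j}\|_{B_0}$ of the averages $f_{I_j}=|I_j|^{-1}\int_{I_j}f(\tau)\,\mathrm{d}\tau$ is controlled by a one-dimensional fractional Poincar\'{e} inequality $|I_j|^{-1}\int_{I_j}\|f(\tau)-f_{I_j}\|_{B_0}\,\mathrm{d}\tau\le C|I_j|^{\vartheta-\frac1p}\big(\int_{I_j}\!\int_{I_j}\|f(\tau)-f(\sigma)\|_{B_0}^p|\tau-\sigma|^{-1-\vartheta p}\,\mathrm{d}\tau\,\mathrm{d}\sigma\big)^{1/p}$, and one then sums the resulting geometric series and passes from averages back to values by the triangle inequality; none of these steps feels the difference between $\mathbb{R}$ and a Banach space. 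Together with the elementary bound $\|f\|_{L^\infty(0,T;B_0)}\le C\|f\|_{W^{\vartheta,p}(0,T;B_0)}$ (obtained by averaging the triangle inequality and using the $L^p$-part of the norm), this estimates the full $C^{0,\vartheta-\frac1p}([0,T];B_0)$-norm of $f$ by its $W^{\vartheta,p}$-norm, and in particular gives the continuous inclusions $W^{\vartheta,p}(0,T;B_0)\hookrightarrow C([0,T];B_0)\hookrightarrow C([0,T];B)$. The hypothesis $\vartheta p>1$ is used precisely here, to make the H\"{o}lder exponent $\vartheta-\frac1p$ strictly positive.

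For (b), let $(f_n)$ be bounded in $W^{\vartheta,p}(0,T;B_0)$; by (a) it is bounded in $C^{0,\vartheta-\frac1p}([0,T];B_0)$. Consequently, since $\|\cdot\|_B\le C\|\cdot\|_{B_0}$, the family $(f_n)$ is uniformly equicontinuous as a family of $B$-valued maps, and for each fixed $t$ the set $\{f_n(t)\}_{n\ge1}$ is bounded in $B_0$, hence precompact in $B$ because $B_0\hookrightarrow B$ is compact. The vector-valued Arzel\`{a}-Ascoli theorem then extracts a subsequence converging in $C([0,T];B)$, which is exactly the claimed compactness.

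The only genuinely technical ingredient is the fractional Morrey embedding of step (a); the rest is soft functional analysis. As that embedding is classical, in practice it suffices to quote it (cf. \cite{44}) and to carry out only the short Arzel\`{a}-Ascoli reduction, which is all that the tightness argument below actually invokes.
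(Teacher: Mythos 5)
The paper offers no proof of Lemma~\ref{lem:2.4}; it is quoted from the literature (reference~\cite{44}), so there is nothing internal to compare against. Your two-step argument --- the fractional Morrey embedding into $C^{0,\vartheta-1/p}([0,T];B_0)$ when $\vartheta p>1$, followed by a vector-valued Arzel\`{a}--Ascoli reduction combining pointwise precompactness (from the compact embedding of $B_0$ into $B$) with uniform equicontinuity (from the H\"{o}lder bound) --- is precisely the standard proof of this result and is correct.
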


Based on the Lemma \ref{lem:2.4} and the a priori estimates established in Subsection 2.1-2.2, we can now prove Lemma \ref{lem:2.1}.

\begin{proof} [\bf{\emph{Proof of Lemma \ref{lem:2.1} (Martingale solutions)}}.]

We first claim that the set of probability measures $\{\mu^n;~ n\geq1\} $ defined by \eqref{2.49} is tight on $\mathscr{X}= \mathscr{X}_\mathbb{S} \times \mathscr{X}_\mathcal {W}$.

Indeed, since $D$ is a bounded domain with smooth boundary, the Sobolev embedding from $\mathcal {Z}^{k'-1,2}(D)$ into
$\mathcal {Z}^{k'-2,2}(D)$ is compact. Fix a $\vartheta\in (0,1)$ such that $\vartheta q>1$,
it follows from the Lemma \ref{lem:2.3} that $W^{1,q}([0,T];\mathcal {Z}^{k'-1,2}(D))$
and $W^{\vartheta,q}([0,T];\mathcal {Z}^{k'-1,2}(D))$ are both compactly embedded
into $\mathscr{X}_\mathbb{S}=C([0,T]; \mathcal {Z}^{k'-2,2}(D))$, which implies that the following ball with radius $r>0$
\begin{equation*}
\begin{split}
B_n(r):=&\left\{(u_S,u_T,\theta_S)\in W^{\vartheta,q}([0,T];\mathcal {Z}^{k'-1,2});
~  \|(u_S,u_T,\theta_S)\|_{W^{\vartheta,q}([0,T];\mathcal {Z}^{k'-1,2})}\leq r\right\}\\
&\quad \bigcap \left\{(u_S,u_T,\theta_S)\in W^{1,q}([0,T];\mathcal {Z}^{k'-1,2}); ~ \|(u_S,u_T,\theta_S)\|_{W^{1,q}([0,T];\mathcal {Z}^{k'-1,2})}\leq r\right\}
\end{split}
\end{equation*}
is compact in $\mathscr{X}_\mathbb{S}$. Define
\begin{equation*}
\begin{split}
B_{1,n}(r):=\Bigg\{(u_S,u_T,\theta_S);~~&\bigg\|\int_0^t\mathcal {D}_{cut,n}^1(s)\mathrm{d}\mathcal {W}_s \bigg\|_{W^{\vartheta,q}([0,T];X^{k'-1,2})}^q \\
&  +\sum_{j=2}^3 \bigg\|\int_0^t\mathcal {D}_{cut,n}^j(s)\mathrm{d}\mathcal {W}_s \bigg\|_{W^{\vartheta,q}([0,T];W^{k'-1,2})}^q \leq r   \Bigg\},
\end{split}
\end{equation*}
and
\begin{equation*}
\begin{split}
B_{2,n}(r):=\Bigg\{(u_S,u_T,\theta_S);~~&\bigg\|u_S^n(\cdot)-\int_0^\cdot\mathcal {D}_{cut,n}^1(s)\mathrm{d}\mathcal {W}_s \bigg\|_{W^{1,q}([0,T];X^{k'-1,2})}^q\\
+&\bigg\|u_T^n(\cdot)-\int_0^\cdot\mathcal {D}_{cut,n}^2(s)\mathrm{d}\mathcal {W}_s \bigg\|_{W^{1,q}([0,T];W^{k'-1,2})}^q \\
+&\bigg\|\theta_S^n(\cdot)-\int_0^\cdot\mathcal {D}_{cut,n}^3(s)\mathrm{d}\mathcal {W}_s \bigg\|_{W^{1,q}([0,T];W^{k'-1,2})}^q \leq r \Bigg\}.
\end{split}
\end{equation*}
It is clear that $B_{1,n}(r) \cap B_{2,n}(r)\subset B_{n}(r)$,  by using uniform bounds obtained in Lemma \ref{lem:2.2} and the Chebyshev inequality, we have
\begin{equation*}
\begin{split}
\mu^n_\mathbb{S} (B_n(r)^c) &\leq\mathbb{P} \{(u_S^n,u_T^n,\theta_S^n)\in B_{1,n}(r)^c\}+\mathbb{P} \{(u_S^n,u_T^n,\theta_S^n)\in B_{2,n}(r)^c \}\\
&\leq \frac{C}{r} \sum_{j=1}^3 \sup_{n\geq1}\mathbb{E} \bigg\|\int_0^t\mathcal {D}_{cut,n}^j(s)\mathrm{d}\mathcal {W}_s \bigg\|_{W^{\vartheta,q}([0,T];X^{k'-1,2})}^q\\
&\quad+ \frac{C}{r} \sup_{n\geq1}\mathbb{E}\bigg\|u_S^n(t)-\int_0^t\mathcal {D}_{cut,n}^1(s)\mathrm{d}\mathcal {W}_s \bigg\|_{W^{1,q}([0,T];X^{k'-1,2})}^q\\
 &\quad +\frac{C}{r}\sup_{n\geq1}\mathbb{E}\bigg\|u_T^n(t)-\int_0^t\mathcal {D}_{cut,n}^2(s)\mathrm{d}\mathcal {W}_s \bigg\|_{W^{1,q}([0,T];W^{k'-1,2})}^q \\
& \quad+\frac{C}{r}\sup_{n\geq1}\mathbb{E}\bigg\|\theta_S^n(t)-\int_0^t\mathcal {D}_{cut,n}^3(s)\mathrm{d}\mathcal {W}_s \bigg\|_{W^{1,q}([0,T];W^{k'-1,2})}^q\\
&\leq \frac{C}{r},
\end{split}
\end{equation*}
where $C$ is a positive constant independent of $n$ and $r$. In other words, for any $\gamma>0$, there exists $r>0$ large enough such that
$$
\overline{B_n(r)}\in \mathscr{X}_\mathbb{S} \quad \mbox{and}\quad  \mu^n_\mathbb{S} (\overline{B_n(r)})>1-\gamma,\quad \forall n \in \mathbb{N},
$$
which shows that the sequence of measures $(\mu^n_\mathbb{S} )_{n\geq 1}$ is tight on $\mathscr{X}_\mathbb{S}$. Moreover,  $\mu_\mathcal {W}$ is trivially weakly compact and so is tight on $\mathscr{X}_\mathcal {W}$. As a consequence, the sequence $(\mu^n)_{n\geq 1}$ is tight on $\mathscr{X}$. Since the path space $\mathscr{X}$ is a polish space, the Prokhorov theorem (cf. \cite{45}) asserts that the sequence $(\mu^n )_{n\geq 1}$ is indeed weakly compact on  $\mathscr{X}$.

With the weakly compactness in hand, one can apply the Skorokhod representation theorem (cf. \cite{43}) to a weakly convergent subsequence of $(\mu^n)_{n\geq 1}$. There is a sequence of random elements $\{(\overline{u}_S^n,\overline{u}_T^n,\overline{\theta}_S^n,\overline{\mathcal {W}}^n)\}_{n\geq1}$ defined on a new probability space $(\overline{\Omega},\overline{\mathcal {F}},\overline{\mathbb{P}})$, which converges almost surely in $\mathscr{X}$ to an element $(\overline{u}_S,\overline{u}_T,\overline{\theta}_S,\overline{\mathcal {W}})$, that is,
\begin{equation}\label{2.50}
\begin{split}
\overline{\mathcal {W}}^n\rightarrow \overline{\mathcal {W}} \quad \mbox{in}~~C([0,T]; \mathfrak{A}_0)~~ \mbox{almost surely},
\end{split}
\end{equation}
and
\begin{equation}\label{2.51}
\begin{split}
 (\overline{u}_S^n,\overline{u}_T^n,\overline{\theta}_S^n)\rightarrow (\overline{u}_S,\overline{u}_T,\overline{\theta}_S) \quad \mbox{in}~~C([0,T]; \mathcal {Z}^{k'-2,2}(D))~~ \mbox{almost surely}.
\end{split}
\end{equation}
Following the similar idea in \cite{46}, one can verify that the triple satisfies the Garlekin approximation relative
to the stochastic basis $\mathcal {S}^n:=(\overline{\Omega},\overline{\mathcal {F}},\overline{\mathbb{P}},
\{\overline{\mathcal {F}}_t^n\},\overline{\mathcal {W}}^n)$.
By the assumption $k'=k+4>5$ we get $\mathcal {Z}^{k'-2,2}(D)\hookrightarrow (W^{1,\infty}(D))^3$,
which implies that, as $n\rightarrow\infty$,
\begin{equation}\label{2.52}
\begin{split}
\varpi_R(\|\overline{u}_S^n\|_{X^{1,\infty}})&\longrightarrow \varpi_R(\|\overline{u}_S\|_{X^{1,\infty}}),\\
 \varpi_R(\|(\overline{u}_S^n,\overline{u}_T^n)\|_{X^{1,\infty}\times W^{1,\infty}})&\longrightarrow
\varpi_R(\|(\overline{u}_S,\overline{u}_T)\|_{X^{1,\infty}\times W^{1,\infty}}),\\
 \varpi_R(\|(\overline{u}_S^n,\bar{\theta}_T^n)\|_{X^{1,\infty}\times W^{1,\infty}})&\longrightarrow
\varpi_R(\|(\overline{u}_S,\bar{\theta}_T)\|_{X^{1,\infty}\times W^{1,\infty}}).
\end{split}
\end{equation}
Using the uniform bounds in Lemma \ref{2.2}, the almost sure convergence \eqref{2.50}-\eqref{2.51} and \eqref{2.52}, we can infer that $(\overline{u}_S,\overline{u}_T,\overline{\theta}_S,\overline{\mathcal {W}})$ solves the truncated SISM system. The proof of Lemma \ref{lem:2.1} is now completed.
\end{proof}

\section{Smooth pathwise solutions}
Being inspired by the classic Yamada-Wannabe theorem, we shall show that the local pathwise solution to the SISM system \eqref{1.1}-\eqref{1.3} exists once the global martingale solution $(\overline{u}_S,\overline{u}_T,\overline{\theta}_S)$ in Lemma \ref{lem:2.1} is proved to be ``pathwise uniqueness", which can be stated by the following result.

\begin{lemma} [Pathwise uniqueness] \label{lem:3.1}
Assume that the conditions of the  Lemma \ref{lem:2.1} hold, and $\Phi^{(j)}:=(u_S^{(j)},u_T^{(j)},\theta_S^{(j)})$, $j=1,2$, are two global martingale solutions to the truncated SISM associated to the same stochastic basis $\mathbb{S}=(\Omega,\mathcal {F},\mathbb{P},\{\mathcal {F}_t\}_{t\geq0},\mathcal {W})$. If
$$
\mathbb{P} \left\{\Phi^{(1)}(0)
=\Phi^{(2)}(0)=(u_S^0,u_T^0,\theta_S^0) \right\}=1
$$
with $\mathbb{E}\|(u_S^0,u_T^0,\theta_S^0)\|_{\mathcal {Z}^{k',2}}^r<\infty$ for some  $r>2$,
then the solutions $\Phi^{(1)}$ and $\Phi^{(2)}$ are indistinguishable, that is,
$
\mathbb{P} \{\Phi^{(1)}(t)=\Phi^{(2)}(t);\forall t \geq0 \}=1.
$
\end{lemma}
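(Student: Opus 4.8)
The plan is to run an $L^p$-energy estimate on the difference of the two solutions, exploiting the fact that both are a priori highly regular (they take values in $\mathcal {Z}^{k',2}(D)$ with $k'=k+4$), so that every coefficient appearing in the difference equation is $\mathbb{P}$-a.s. finite on compact time intervals. Write $v_S:=u_S^{(1)}-u_S^{(2)}$, $v_T:=u_T^{(1)}-u_T^{(2)}$, $v_\theta:=\theta_S^{(1)}-\theta_S^{(2)}$. Since $\Phi^{(1)}$ and $\Phi^{(2)}$ obey the same integral identities relative to the common basis $\mathbb{S}$ and $v_S(0)=v_T(0)=v_\theta(0)=0$ almost surely, the triple $(v_S,v_T,v_\theta)$ solves a coupled system driven by $\mathcal {W}$. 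I will estimate it in the space $\mathcal {Z}^{k,p}(D)$: this is the right choice because (i) $\mathcal {Z}^{k,p}\hookrightarrow\mathcal {Z}^{1,\infty}$ by the standing assumption $k>1+\frac{2}{p}$, which is exactly what is needed to control the cut-off terms; (ii) the local Lipschitz condition (A1) on $\sigma_i$ is available at this level; and (iii) $\mathcal {Z}^{k',2}\hookrightarrow\mathcal {Z}^{k,p}$, so the $\mathcal {Z}^{k',2}$-regularity of the two solutions transfers to $\mathcal {Z}^{k,p}$-bounds. Applying the $L^p$-It\^o formula to $\mathrm{d}\|\partial^\alpha v_S\|_{L^p}^p$, $\mathrm{d}\|\partial^\alpha v_T\|_{L^p}^p$ and $\mathrm{d}\|\partial^\alpha v_\theta\|_{L^p}^p$ for $|\alpha|\le k$ (exactly as in the $p>2$ part of the proof of Lemma~\ref{lem:2.2}) and summing gives an identity for $\|(v_S,v_T,v_\theta)(t)\|_{\mathcal {Z}^{k,p}}^p$.

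The difference of the transport terms splits, for the $u_S$-component, as $P(v_S\cdot\nabla)u_S^{(1)}+P(u_S^{(2)}\cdot\nabla)v_S$, and analogously in the $v_T$- and $v_\theta$-equations. The $(u_S^{(2)}\cdot\nabla)v_S$ piece is treated by the commutator decomposition \eqref{2.4}--\eqref{2.5}, the cancellation coming from $\nabla\cdot u_S^{(2)}=0$, and the commutator estimate \eqref{2.7}; the $(v_S\cdot\nabla)u_S^{(1)}$ piece is bounded via \eqref{2.7} together with $\|\nabla v_S\|_{L^\infty}\lesssim\|v_S\|_{W^{k,p}}$. In both cases the prefactors $\|\nabla u_S^{(j)}\|_{L^\infty}+\|u_S^{(j)}\|_{W^{k,p}}$ are a.s. finite because $u_S^{(j)}\in\mathcal {Z}^{k',2}$. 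The linear coupling terms ($u_T\widehat{x}$, $\theta_S\widehat{z}$, $u_S\cdot\widehat{x}$, $u_T$) are directly dominated by $\|(v_S,v_T,v_\theta)\|_{\mathcal {Z}^{k,p}}^p$, while the inhomogeneous $z$-terms drop out of the difference. The noise difference $\mathcal {D}_{cut}^i(\Phi^{(1)})-\mathcal {D}_{cut}^i(\Phi^{(2)})$ is split as $\varpi_R(\cdot)[\sigma_i(\Phi^{(1)})-\sigma_i(\Phi^{(2)})]$ plus $[\varpi_R(\cdot_1)-\varpi_R(\cdot_2)]\sigma_i(\Phi^{(2)})$, the first piece being handled by (A1) and the second as described below; after the BDG inequality and Young's inequality (to absorb a small fraction of $\mathbb{E}\sup_{[0,t]}\|(v_S,v_T,v_\theta)\|_{\mathcal {Z}^{k,p}}^p$), these again contribute terms of the form (a.s. finite coefficient)$\times\|(v_S,v_T,v_\theta)\|_{\mathcal {Z}^{k,p}}^p$.

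The delicate point, and the main obstacle, is the cut-off difference term. Using that $\varpi_R$ is Lipschitz with constant at most $C/R$ and the reverse triangle inequality, $|\varpi_R(\|\Phi^{(1)}\|_{\mathcal {Z}^{1,\infty}})-\varpi_R(\|\Phi^{(2)}\|_{\mathcal {Z}^{1,\infty}})|\le\frac{C}{R}\|(v_S,v_T,v_\theta)\|_{\mathcal {Z}^{1,\infty}}\le\frac{C}{R}\|(v_S,v_T,v_\theta)\|_{\mathcal {Z}^{k,p}}$, where the final embedding is precisely where $k>1+\frac{2}{p}$ enters; multiplying by the $\mathcal {Z}^{k,p}$-norm of the $\Phi^{(2)}$-nonlinearity, which is $\lesssim\|u_S^{(2)}\|_{W^{k,p}}\|u_S^{(2)}\|_{W^{1,\infty}}$ and hence a.s. finite, keeps this term linear in $\|(v_S,v_T,v_\theta)\|_{\mathcal {Z}^{k,p}}^p$ (the same works for the cut-off in the $v_T$- and $v_\theta$-equations, whose arguments involve $\|(u_S,u_T)\|_{X^{1,\infty}\times W^{1,\infty}}$ and $\|(u_S,\theta_S)\|_{X^{1,\infty}\times W^{1,\infty}}$). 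Because the resulting Gronwall coefficient is only a.s. finite and not uniformly bounded, I localize by the stopping time $\tau_N:=\inf\{t\ge0:\ \|\Phi^{(1)}(t)\|_{\mathcal {Z}^{k'-2,2}}+\|\Phi^{(2)}(t)\|_{\mathcal {Z}^{k'-2,2}}\ge N\}$, which is a bona fide stopping time since both processes belong to $C([0,\infty);\mathcal {Z}^{k'-2,2})$ by Lemma~\ref{lem:2.1}; on $[0,\tau_N]$ all the above prefactors are bounded by a constant depending only on $N$, $R$ and $\kappa$, using the embeddings $\mathcal {Z}^{k'-2,2}\hookrightarrow\mathcal {Z}^{k,p}$ and $\mathcal {Z}^{k'-2,2}\hookrightarrow\mathcal {Z}^{1,\infty}$ (both valid since $k>1$).

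Putting the estimates together and applying Gronwall's inequality to $t\mapsto\mathbb{E}\sup_{s\in[0,\tau_N\wedge t]}\|(v_S,v_T,v_\theta)(s)\|_{\mathcal {Z}^{k,p}}^p$, whose initial value is $0$, forces this quantity to vanish for every $t>0$. Hence $(v_S,v_T,v_\theta)\equiv0$ on $[0,\tau_N\wedge t]$ $\mathbb{P}$-a.s.; letting $N\uparrow\infty$ (so that $\tau_N\uparrow\infty$ a.s., since $\Phi^{(j)}\in C([0,\infty);\mathcal {Z}^{k'-2,2})$ a.s.) and then $t\uparrow\infty$ yields $\Phi^{(1)}(t)=\Phi^{(2)}(t)$ for all $t\ge0$, $\mathbb{P}$-a.s., i.e. the two solutions are indistinguishable. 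A minor technical check along the way is that $(v_S,v_T,v_\theta)$ genuinely lies in $C([0,\infty);\mathcal {Z}^{k,p})$, so that the $L^p$-It\^o formula applies; this follows from the continuous embedding $\mathcal {Z}^{k'-2,2}\hookrightarrow\mathcal {Z}^{k,p}$ and Lemma~\ref{lem:2.1}.
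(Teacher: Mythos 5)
Your proposal is correct and follows essentially the same route as the paper's proof: an $L^p$-energy estimate in $\mathcal{Z}^{k,p}(D)$ on the difference $(v_S,v_T,v_\theta)$, with the convective terms split into $(v\cdot\nabla)\Phi^{(1)}$ and $(\Phi^{(2)}\cdot\nabla)v$ pieces handled by the commutator estimate and divergence-free cancellation, the cut-off differences treated via Lipschitz continuity of $\varpi_R$ together with the embedding $\mathcal{Z}^{k,p}\hookrightarrow\mathcal{Z}^{1,\infty}$, the noise differences handled by (A1) and BDG, localization by a stopping time to make the Gronwall coefficient bounded, and finally Gronwall plus $N\to\infty$. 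The only cosmetic difference is that the paper truncates by $\|\Phi^{(1)}\|_{\mathcal{Z}^{k+1,p}}+\|\Phi^{(2)}\|_{\mathcal{Z}^{k+1,p}}\geq K$ whereas you truncate by the $\mathcal{Z}^{k'-2,2}$-norm; since $\mathcal{Z}^{k'-2,2}\hookrightarrow\mathcal{Z}^{k+1,p}$ and continuity of paths holds in both, the two choices are interchangeable.
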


\begin{proof}
For every $T>0$, it follows from Lemma \ref{lem:2.2} that
\begin{equation}\label{3.1}
\begin{split}
\sup_{n\geq1}\mathbb{E}\sup_{s\in [0,  T]} \left(\|\Phi^{(1)}(s)\|_{\mathcal {Z}^{k'-2,2}(D)}+\|\Phi^{(2)}(s)\|_{\mathcal {Z}^{k'-2,2}(D)}  \right)
\leq C<\infty,
 \end{split}
\end{equation}
where $C>0$ is an universal constant independent of $n$. Notice that the continuity in time is only ensured for $\mathcal {Z}^{k'-2,2}(D)$-norm of $\Phi^{(1)}$ and $ \Phi^{(2)}$, we define the following collection of stopping times
\begin{eqnarray*}
\mathbbm{t}_K=\inf\left\{t>0;~\|\Phi^{(1)}(s)\|_{\mathcal {Z}^{k+1,p}(D)}+\|\Phi^{(2)}(s)\|_{\mathcal {Z}^{k+1,p}(D)} \geq K \right\},
\end{eqnarray*}
for any $K>0$. Since $\mathcal {Z}^{k',2}(D)$ is continuously embedded into $\mathcal {Z}^{k+1,p}(D)$, we see that $\mathbbm{t}_K\rightarrow\infty$ almost surely as $K\rightarrow\infty$.

Setting
\begin{equation*}
U=u_S^{(1)}-u_S^{(2)}, \quad V=u_T^{(1)}-u_T^{(2)},\quad Z=\theta_S^{(1)}-\theta_S^{(2)}.
\end{equation*}
By using the equations with respect to $u_S^{(1)}$ and $u_S^{(2)}$ in \eqref{1.5}, we deduce that
\begin{equation}\label{3.2}
\begin{split}
\mathrm{d}  \partial^\alpha U +  \partial^\alpha \triangle (u_S^{(1)},u_S^{(2)})\mathrm{d}t -  \partial^\alpha P( V\widehat{x}) \mathrm{d}t-   \partial^\alpha P( Z\widehat{z}) \mathrm{d}t = \partial^\alpha \triangle^1 (\Phi^{(1)},\Phi^{(2)})\mathrm{d}\mathcal {W },
 \end{split}
\end{equation}
where
\begin{equation*}
\begin{split}
\triangle (u_S^{(1)},u_S^{(2)})=&\varpi_R(\|u_S^{(1)}\|_{X^{1,\infty}})P(u_S^{(1)}\cdot \nabla)   u_S^{(1)}-\varpi_R(\|u_S^{(2)}\|_{X^{1,\infty}})P(u_S^{(2)}\cdot \nabla)   u_S^{(2)},\\
\triangle^{1} (\Phi^{(1)},\Phi^{(2)})=&\varpi_R(\|\Phi^{(1)}\|_{\mathcal {Z}^{1,\infty}})
P\sigma_1(\Phi^{(1)})-\varpi_R(\|\Phi^{(2)}\|_{\mathcal {Z}^{1,\infty}})
P\sigma_1(\Phi^{(2)}).
 \end{split}
\end{equation*}
Define $\triangle^1_j (\Phi^{(1)},\Phi^{(2)}):=\triangle^1  (\Phi^{(1)},\Phi^{(2)})e_j$, for all $j\geq 1$. After applying the It\^{o}'s formula to $\mathrm{d}| \partial^\alpha U(t)|^p=\mathrm{d}(| \partial^\alpha U(t)|^2)^{p/2}$ and then integrating the resulted equality on $D$, we find
\begin{equation}\label{3.3}
\begin{split}
\mathrm{d}\|U(t)\|^{p} _{X^{k,p}}=&- p\sum_{|\alpha|\leq k}\int_D| \partial^\alpha U(t)|^{p-2} \partial^\alpha U(t)\cdot \partial^\alpha \triangle   (u_S^{(1)},u_S^{(2)})\mathrm{d}V\mathrm{d}t\\
&  + p\sum_{|\alpha|\leq k}\int_D| \partial^\alpha U(t)|^{p-2} \partial^\alpha U(t)\cdot \partial^\alpha (P( V\widehat{x}) + P( Z\widehat{z}) )\mathrm{d}V \mathrm{d}t\\
& +\sum_{|\alpha|\leq k}\sum_{j\geq 1}\bigg(\frac{p}{2}\int_D| \partial^\alpha U(t)|^{p-2}\left( \partial^\alpha \triangle_j ^1 (\Phi^{(1)},\Phi^{(2)})\right)^2\mathrm{d}V
 \\
 &\quad\quad \quad\quad \quad+ \frac{p(p-2)}{2} | \partial^\alpha U(t)|^{p-4} \left( \partial^\alpha U(t)\cdot \partial^\alpha \triangle_j ^1  (\Phi^{(1)},\Phi^{(2)})\right)^2 \mathrm{d}V\bigg)\mathrm{d}t\\
& +p\sum_{|\alpha|\leq k}\sum_{j\geq 1}\int_D| \partial^\alpha U(t)|^{p-2} \partial^\alpha U(t)\cdot \partial^\alpha \triangle _j ^1 (\Phi^{(1)},\Phi^{(2)})\mathrm{d}V\mathrm{d}\beta_j \\
:= & \sum_{|\alpha|\leq k}(\mathcal {J}^{V,\alpha}_1(t)+\mathcal {J}^{V,\alpha}_2(t)+\mathcal {J}^{V,\alpha}_3(t))\mathrm{d} t+\sum_{|\alpha|\leq k}\mathcal {J}^{V,\alpha}_4(t)\mathrm{d}\mathcal {W}.
 \end{split}
\end{equation}
For $\mathcal {J}^{V,\alpha}_1(t)$, we first use the mean value theorem and the property of $\varpi_R(\cdot)$ to derive that
\begin{equation*}
\begin{split}
\left|\varpi_R(\|u_S^{(1)}\|_{X^{1,\infty}}) -\varpi_R(\|u_S^{(2)}\|_{X^{1,\infty}})\right|\leq C\|U \|_{X^{1,\infty}}\leq C\|U \|_{X^{k,p}},
 \end{split}
\end{equation*}
where we used the fact of $\mathcal {Z}^{k,p}(D)\hookrightarrow \mathcal {Z}^{1,\infty}(D)$. It then follows from the Moser-type estimates, the H\"{o}lder inequality and the definition of the operator $P$ that
\begin{equation}\label{3.4}
\begin{split}
|\mathcal {J}^{V,\alpha}_1(t)|\leq& C\|U\|_{W^{1,\infty}}\int_D| \partial^\alpha U|^{p-1}|  \partial^\alpha P(u_S^{(1)}\cdot \nabla)   u_S^{(1)}|\mathrm{d}V\\
& + p\varpi_R(\|u_S^{(2)}\|_{X^{1,\infty}})\int_D| \partial^\alpha U|^{p-2}| \partial^\alpha U\cdot  \partial^\alpha P(U\cdot \nabla)   u_S^{(1)}|\mathrm{d}V\\
& +p\varpi_R(\|u_S^{(2)}\|_{X^{1,\infty}})\int_D| \partial^\alpha U|^{p-2}| \partial^\alpha U\cdot  \partial^\alpha P(u_S^{(2)}\cdot \nabla)U|\mathrm{d}V\\
\leq& C\|U\|_{X^{1,\infty}}\| \partial^\alpha U\|_{L^p}^{p-1}\|  \partial^\alpha P(u_S^{(1)}\cdot \nabla)   u_S^{(1)}\|_{L^p}\\
& + p\varpi_R(\|u_S^{(2)}\|_{X^{1,\infty}})\| \partial^\alpha U\|_{L^p}^{p-1}\|  \partial^\alpha P(U\cdot \nabla)   u_S^{(1)}\|_{L^p}\\
& + p\varpi_R(\|u_S^{(2)}\|_{X^{1,\infty}})\| \partial^\alpha U\|_{L^p}^{p-1}\|  \partial^\alpha P(u_S^{(2)}\cdot \nabla)U\|_{L^p}\\
\leq& C \| U\|_{X^{k,p}}^{p}(\|u_S^{(1)}\|_{L^{\infty}}\| u_S^{(1)}\|_{X^{k+1,p}}+\| u_S^{(1)}\|_{X^{1,\infty}}\| u_S^{(1)}\|_{X^{k,p}})\\
& + C\| U\|_{X^{k,p}}^{p-1}(\|U\|_{L^{\infty}}\| u_S^{(1)}\|_{X^{k+1,p}}+\| u_S^{(1)}\|_{X^{1,\infty}}\| U\|_{X^{k,p}})\\
& + p\varpi_R(\|u_S^{(2)}\|_{X^{1,\infty}})\| U\|_{X^{k,p}}^{p-1}(\|u_S^{(2)}\|_{W^{1,\infty}}\|U\|_{X^{k,p}}+\|\nabla U\|_{L^{\infty}}\| u_S^{(2)}\|_{X^{k,p}})\\
\leq & C \| U\|_{X^{k,p}}^{p} \Big(1+\| u_S^{(1)}\|_{X^{k+1,p}} \| u_S^{(1)}\|_{X^{k,p}}+ \| u_S^{(1)}\|_{X^{k+1,p}}\\
& +\| u_S^{(1)}\|_{X^{k,p}}+\| u_S^{(2)}\|_{X^{k,p}}\Big).
 \end{split}
\end{equation}
Using the Young inequality, we can estimate $\mathcal {J}^{V,\alpha}_2(s)$ as
\begin{equation}\label{3.5}
\begin{split}
&\mathbb{E}\sup_{s\in [0,t\wedge \mathbbm{t}_K]}\int_0^s|\mathcal {J}^{V,\alpha}_2(r)|dr\\
&\quad\leq C\mathbb{E} \int_0^{t\wedge \mathbbm{t}_K}\| U\|_{W^{k,p}}^{p-1} \| \partial^\alpha (P( V\widehat{x}) + P( Z\widehat{z}) )\|_{L^p}\mathrm{d}s\\
&\quad\leq \frac{1}{4}\mathbb{E}\sup_{s\in [0,t]}\| U(s)\|_{X^{k,p}}^p+C\mathbb{E}\int_0^{t\wedge \mathbbm{t}_K}(\| V(t)\|_{W^{k,p}}^p+\| Z(t)\|_{W^{k,p}}^p)\mathrm{d}s.
 \end{split}
\end{equation}
Due to the locally boundedness and Lipschitz conditions for diffusions, we can estimate the third term as
\begin{equation}\label{3.6}
\begin{split}
|\mathcal {J}^{V,\alpha}_3(t)| \leq & C \| \partial^\alpha U(t)\|_{L^p}^{p-2}  \sum_{j\geq 1}\| \partial^\alpha \triangle_j ^1 (\Phi^{(1)},\Phi^{(2)})\|_{\mathbb{W}^{0,p}}^2\\
\leq& C \| \partial^\alpha U(t)\|_{L^p}^{p-2}\Big( \|\Phi^{(1)}-\Phi^{(2)}\|_{W^{1,\infty}} ^2\|
P\sigma_1(\Phi^{(1)})\|_{\mathbb{X}^{k,p}}^2\\
&+ \varpi_R(\|\Phi^{(2)}\|_{W^{1,\infty}}) \|
P\sigma_1(\Phi^{(1)})-P\sigma_1(\Phi^{(2)})\|_{\mathbb{X}^{k,p}}^2\Big)\\
\leq& C \| U\|_{W^{k,p}}^{p-2}\|
\Phi^{(1)}-\Phi^{(2)}\|_{\mathcal {Z}^{k,p}}^2\Big( \kappa(\|\Phi^{(1)}\|_{L^\infty})^2 (1+\|\Phi^{(1)}\|_{\mathcal {Z}^{k,p}}^2)\\
& +  \varsigma(\|\Phi^{(1)}\|_{L^\infty}+\|\Phi^{(2)}\|_{L^\infty})^2\Big).
 \end{split}
\end{equation}
By utilizing the BDG inequality and the definition of the stopping time $\mathbbm{t}_K$, we can estimate the  stochastic term as
\begin{equation}\label{3.7}
\begin{split}
&\mathbb{E}\sup_{s\in [0,t\wedge \mathbbm{t}_K]}\int_0^s|\mathcal {J}^{V,\alpha}_4(r)|\mathrm{d}\mathcal {W}_r\\
 &\quad \leq C\mathbb{E}\Bigg(\int_0^{t\wedge \mathbbm{t}_K}\sum_{j\geq 1}\bigg(\int_D| \partial^\alpha U(t)|^{p-2} \partial^\alpha U(t)\cdot \partial^\alpha \triangle _j  ^1 (\Phi^{(1)},\Phi^{(2)})\mathrm{d}V\bigg)^2\mathrm{d}  s\Bigg)^{1/2}\\
 &\quad \leq C\mathbb{E}\bigg(\int_0^{t\wedge \mathbbm{t}_K}\| \partial^\alpha U(s)\|_{L^p}^{2p-2} \|  \partial^\alpha \triangle ^1  (\Phi^{(1)},\Phi^{(2)})\|_{\mathbb{X}^{0,p}}^2 \mathrm{d}  s\bigg)^{1/2}\\
 &\quad \leq C\mathbb{E}\Bigg(\sup_{s\in [0,t]}(\|U(s)\|_{X^{k,p}}^{p-1}\|
\Phi^{(1)}-\Phi^{(2)}\|_{\mathcal {Z}^{k,p}})\int_0^{t\wedge \mathbbm{t}_K}  \|U(s)\|_{X^{k,p}}^{p-1}\|
\Phi^{(1)}-\Phi^{(2)}\|_{\mathcal {Z}^{k,p}} \\
& \quad\quad \times\Big( \kappa(\|\Phi^{(1)}\|_{L^\infty})^2 (1+\|\Phi^{(1)}\|_{\mathcal {Z}^{k,p}}^2)+  \varsigma(\|\Phi^{(1)}\|_{L^\infty}+\|\Phi^{(2)}\|_{L^\infty})^2\Big) \mathrm{d}  s\Bigg)^{1/2}\\
 &\quad \leq \frac{1}{4}\mathbb{E} \sup_{s\in [0,t\wedge \mathbbm{t}_K]}(\|U(s)\|_{X^{k,p}}^{p-1}\|
\Phi^{(1)}-\Phi^{(2)}\|_{\mathcal {Z}^{k,p}})\\
&\quad\quad+ C\mathbb{E}\int_0^{t\wedge \mathbbm{t}_K}  \|U(s)\|_{X^{k,p}}^{p-1}\|
\Phi^{(1)}-\Phi^{(2)}\|_{\mathcal {Z}^{k,p}} \mathrm{d}  s.
 \end{split}
\end{equation}
Plugging the estimates \eqref{3.4}-\eqref{3.7} into \eqref{3.3}, we find that for any fixed $K>0$
\begin{equation}\label{3.8}
\begin{split}
&\frac{3}{4}\mathbb{E}\sup_{s\in [0,t\wedge \mathbbm{t}_K]}\|U(s)\|^{p} _{X^{k,p}}\\
&\quad \leq  \frac{1}{4}\mathbb{E} \sup_{s\in [0,t\wedge \mathbbm{t}_K]}\|
\Phi^{(1)}-\Phi^{(2)}\|_{\mathcal {Z}^{k,p}}^p + C\mathbb{E}\int_0^{t\wedge \mathbbm{t}_K}\|
\Phi^{(1)}-\Phi^{(2)}\|_{\mathcal {Z}^{k,p}}^p \mathrm{d}  s.
 \end{split}
\end{equation}
In order to close above inequality, we have to derive the $L^p$ bounds  for $V(s)$ and $Z(s)$, respectively. Indeed, the component $V(s)$ satisfies the following equation
\begin{equation}\label{3.9}
\begin{split}
V(t)=-\int_0^t\triangle (u_S^{(1)},u_S^{(2)},u_T^{(1)},u_T^{(2)})\mathrm{d}s- \int_0^tU(s)\cdot\widehat{x} \mathrm{d}s +\int_0^t\triangle ^2(\Phi^{(1)},\Phi^{(2)})\mathrm{d}\mathcal {W}_s,
 \end{split}
\end{equation}
where
\begin{equation*}
\begin{split}
\triangle (u_S^{(1)},u_S^{(2)},u_T^{(1)},u_T^{(2)})=&\left(\varpi_R(\|(u_S^{(1)},u_T^{(1)})\|_{X^{1,\infty}\times W^{1,\infty}})
-\varpi_R(\|(u_S^{(2)},u_T^{(2)})\|_{X^{1,\infty}\times W^{1,\infty}})\right)(u_S^{(1)}\cdot \nabla)   u_T^{(1)}\\
& +\varpi_R(\|(u_S^{(2)},u_T^{(2)})\|_{X^{1,\infty}\times W^{1,\infty}})(U\cdot \nabla) u_T^{(1)} \\ &+\varpi_R(\|(u_S^{(2)},u_T^{(2)})\|_{X^{1,\infty}\times W^{1,\infty}})(u_S^{(2)}\cdot \nabla) V,\\
\triangle ^2(\Phi^{(1)},\Phi^{(2)})=&\left(\varpi_R(\|\Phi^{(1)}\|_{W^{1,\infty}})
 -\varpi_R(\|\Phi^{(2)}\|_{\mathcal {Z}^{1,\infty}})\right)
\sigma_2(\Phi^{(1)})\\
& +\varpi_R(\|\Phi^{(2)}\|_{\mathcal {Z}^{1,\infty}})\left(
\sigma_2(\Phi^{(1)})-\sigma_2(\Phi^{(2)})\right).
 \end{split}
\end{equation*}
Define $\triangle^2_j (\Phi^{(1)},\Phi^{(2)}):=\triangle^2  (\Phi^{(1)},\Phi^{(2)})e_j$, for all $j\geq 1$. Applying the It\^{o}'s formula in $L^p$ space to  \eqref{3.9}, we get
\begin{equation}\label{3.10}
\begin{split}
\|V(t) \|_{W^{k,p}}^p=& -p \sum_{|\alpha|\leq k}\int_0^t\int_D| \partial^\alpha V|^{p-2} \partial^\alpha V \partial^\alpha \triangle (u_S^{(1)},u_S^{(2)},u_T^{(1)},u_T^{(2)})\mathrm{d} V\mathrm{d} s\\
&-p\sum_{|\alpha|\leq k} \int_0^t\int_D| \partial^\alpha V|^{p-2} \partial^\alpha V \partial^\alpha U(s)\cdot\widehat{x}\mathrm{d} V\mathrm{d} s\\
&+\frac{p(p-2)}{2} \sum_{|\alpha|\leq k}\sum_{j\geq 1}\int_0^t\int_D| \partial^\alpha V|^{p-2}( \partial^\alpha \triangle_j ^2(\Phi^{(1)},\Phi^{(2)}))^2\mathrm{d} V\mathrm{d} s\\
&+p \sum_{|\alpha|\leq k}\sum_{j\geq 1}\int_0^t\int_D| \partial^\alpha V|^{p-2} \partial^\alpha V \partial^\alpha \triangle_j ^2(\Phi^{(1)},\Phi^{(2)})\mathrm{d} V\mathrm{d}\mathcal \beta_j\\
&:=\sum_{|\alpha|\leq k}(\mathcal {J}^{U,\alpha}_1(t)+\mathcal {J}^{U,\alpha}_2(t)+\mathcal {J}^{U,\alpha}_3(t)+\mathcal {J}^{U,\alpha}_4(t)).
 \end{split}
\end{equation}
An application of the Lagrange's mean value theorem to the functions $\varpi_R(\|u_S^{(i)},u_T^{(i)}\|_{W^{1,\infty}})$, $i=1,2,$ we deduce from the property of the cut-off function that
\begin{equation*}
\begin{split}
\left|\varpi_R(\|(u_S^{(1)},u_T^{(1)})\|_{X^{1,\infty}\times W^{1,\infty}})
-\varpi_R(\|(u_S^{(2)},u_T^{(2)})\|_{X^{1,\infty}\times W^{1,\infty}})\right|&\leq \sup_{x\in [0,\infty)}|\varpi'(x)|\|(U,V)\|_{X^{1,\infty}\times W^{1,\infty}}\\
&\leq C(\|U\|_{X^{k,p}}+\|V\|_{W^{k,p}}),
 \end{split}
\end{equation*}
\begin{equation*}
\begin{split}
\left|\varpi_R(\|\Phi^{(1)}\|_{\mathcal {Z}^{1,\infty}})
 -\varpi_R(\|\Phi^{(2)}\|_{\mathcal {Z}^{1,\infty}})\right|\leq C\|\Phi^{(1)}-\Phi^{(2)}\|_{\mathcal {Z}^{1,\infty}}\leq C\|\Phi^{(1)}-\Phi^{(2)}\|_{\mathcal {Z}^{k,p}}.
 \end{split}
\end{equation*}
It follows from the Young inequality and the Moser-type estimate  that
\begin{equation}\label{3.11}
\begin{split}
\mathcal {J}^{U,\alpha}_1({t\wedge \mathbbm{t}_K} ) \leq &C\int_0^{t\wedge \mathbbm{t}_K} \| V\|_{W^{k,p}}^{p-1}(\|U\|_{X^{k,p}}+\|V\|_{W^{k,p}})\|(u_S^{(1)}\cdot \nabla)   u_T^{(1)}\|_{W^{k,p}}\mathrm{d} s\\
&+C\int_0^{t\wedge \mathbbm{t}_K} \| V\|_{W^{k,p}}^{p-1}\|(U\cdot \nabla) u_T^{(1)}\|_{W^{k,p}}\mathrm{d} s\\
&+C\int_0^{t\wedge \mathbbm{t}_K} \varpi_R(\|u_S^{(2)},u_T^{(2)}\|_{W^{1,\infty}})\| V\|_{W^{k,p}}^{p-1}\|(u_S^{(2)}\cdot \nabla) V\|_{W^{k,p}}\mathrm{d} s\\
\leq &C\int_0^{t\wedge \mathbbm{t}_K}  (\|U\|_{X^{k,p}}^p+\|V\|_{W^{k,p}}^p)(\| u_S^{(1)} \|_{L^\infty}\| u_T^{(1)} \|_{W^{k+1,p}}+\| \nabla u_T^{(1)} \|_{L^\infty}\| u_S^{(1)} \|_{W^{k,p}})\mathrm{d} s\\
&+C\int_0^{t\wedge \mathbbm{t}_K}  \| V\|_{W^{k,p}}^{p-1}( \| V\|_{W^{k,p}}+\|V \|_{L^\infty}\| u_S^{(2)} \|_{W^{k,p}})\mathrm{d} s\\
&+C\int_0^{t\wedge \mathbbm{t}_K} \| V\|_{W^{k,p}}^{p-1}(\| U \|_{L^\infty}\| u_T^{(1)} \|_{W^{k+1,p}}+\| \nabla u_T^{(1)} \|_{L^\infty}\| U \|_{W^{k,p}})\mathrm{d} s\\
\leq& C\int_0^{t\wedge \mathbbm{t}_K}  (\|U(s)\|_{X^{k,p}}^p+\|V(s)\|_{W^{k,p}}^p) \mathrm{d} s.
 \end{split}
\end{equation}
The term $\mathcal {J}^{U,\alpha}_2(t)$ can be treated as $\mathcal {J}^{V,\alpha}_2(t)$, and we obtain
\begin{equation}\label{3.12}
\begin{split}
\mathcal {J}^{U,\alpha}_2(t)\leq&C \int_0^t\| V\|_{W^{k,p}}^{p-1}\|U\|_{W^{k,p}} \mathrm{d} s
\leq \frac{1}{4} \sup_{s\in [0,t]}\| V(s)\|_{W^{k,p}} +\int_0^t\|U(s)\|_{W^{k,p}}^p \mathrm{d} s.
 \end{split}
\end{equation}
By using the locally boundness and the locally Lipschitz condition (A1), we can estimate the third term  $\mathcal {J}^{U,\alpha}_3$ as
\begin{equation}\label{3.13}
\begin{split}
\mathcal {J}^{U,\alpha}_3(t\wedge \mathbbm{t}_K) \leq& C \int_0^{t\wedge \mathbbm{t}_K}\|  \partial^\alpha V\|_{L^p}^{p-2}\Bigg(\int_D\bigg(\sum_{j\geq 1}( \partial^\alpha \triangle_j ^2(\Phi^{(1)},\Phi^{(2)}))^2\bigg)^{p/2}\mathrm{d} V\Bigg)^{2/p}\mathrm{d} s\\
\leq & C \int_0^{t\wedge \mathbbm{t}_K}\| V\|_{W^{k,p}}^{p-2}\|\Phi^{(1)}-\Phi^{(2)}\|_{W^{k,p}}^2\| \sigma_2(\Phi^{(1)})\|^{2}_{\mathbb{W}^{k,p}}\mathrm{d} s\\
&+C \int_0^{t\wedge \mathbbm{t}_K} \| V\|_{W^{k,p}}^{p-2}\|
\sigma_2(\Phi^{(1)})-\sigma_2(\Phi^{(2)})\|^{2}_{\mathbb{W}^{k,p}} \mathrm{d} s\\
\leq & C \int_0^{t\wedge \mathbbm{t}_K}\| V\|_{W^{k,p}}^{p-2}\|\Phi^{(1)}-\Phi^{(2)}\|_{\mathcal {Z}^{k,p}} ^2 \kappa(\|\Phi^{(1)}\|_{\mathcal {Z}^{0,\infty}})^{2}(1+\|\Phi^{(1)}\|_{\mathcal {Z}^{k,p}})\mathrm{d} s\\
&+C \int_0^{t\wedge \mathbbm{t}_K}\| V\|_{W^{k,p}}^{p-2}\varsigma(\|\Phi^{(1)}\|_{\mathcal {Z}^{0,\infty}}+\|\Phi^{(2)}\|_{\mathcal {Z}^{0,\infty}})^2
\|\Phi^{(1)}-\Phi^{(2)}\|^{2}_{\mathcal {Z}^{k,p}} \mathrm{d} s\\
\leq & C \int_0^{t\wedge \mathbbm{t}_K}\| V\|_{W^{k,p}}^{p-2}
\|\Phi^{(1)}-\Phi^{(2)}\|^{2}_{\mathcal {Z}^{k,p}} \mathrm{d} s.
 \end{split}
\end{equation}
For the stochastic term, we make use of the BDG inequality and argue as in \eqref{3.13} to deduce that
\begin{equation}\label{3.14}
\begin{split}
\mathbb{E}\sup_{s\in [0,t\wedge \mathbbm{t}_K]}|\mathcal {J}^{U,\alpha}_4(s)|\leq & C\mathbb{E}\bigg(\int_0^{t\wedge \mathbbm{t}_K}\| \partial^\alpha V(s)\|_{L^p}^{2p-2} \|  \partial^\alpha \triangle ^2  (\Phi^{(1)},\Phi^{(2)})\|_{\mathbb{W}^{0,p}}^2 \mathrm{d}  s\bigg)^{1/2}\\
\leq & C \mathbb{E}\int_0^{t\wedge \mathbbm{t}_K}\| V\|_{W^{k,p}}^{2p-2}
\|\Phi^{(1)}-\Phi^{(2)}\|^{2}_{\mathcal {Z}^{k,p}} \mathrm{d} s\\
\leq & \frac{1}{4} \mathbb{E}\sup_{s\in [0,t\wedge \mathbbm{t}_K]}(\| V\|_{W^{k,p}}^{p-1}
\|\Phi^{(1)}-\Phi^{(2)}\|_{\mathcal {Z}^{k,p}} ) \\
 &+    C \mathbb{E}\int_0^{t\wedge \mathbbm{t}_K}\| V\|_{W^{k,p}}^{p-1}
\|\Phi^{(1)}-\Phi^{(2)}\|_{\mathcal {Z}^{k,p}} \mathrm{d} s.
 \end{split}
\end{equation}
Combining the estimates obtained in \eqref{3.11}-\eqref{3.14}, taking the supremum on $[0,t\wedge \mathbbm{t}_K]$ for \eqref{3.10} and then taking the expectation, we find that for any $K>0$,
\begin{equation}\label{3.15}
\begin{split}
\frac{3}{4}\mathbb{E}\sup_{s\in [0,t\wedge \mathbbm{t}_K]}\|V(t) \|_{W^{k,p}}^p\leq& C\int_0^{t\wedge \mathbbm{t}_K}\|\Phi^{(1)}-\Phi^{(2)}\|^{p}_{\mathcal {Z}^{k,p}} \mathrm{d} s\\
&+\frac{1}{4} \mathbb{E}\sup_{s\in [0,t\wedge \mathbbm{t}_K]}\left(\| V(s)\|_{W^{k,p}}^{p-1}
\|\Phi^{(1)}-\Phi^{(2)}\|_{\mathcal {Z}^{k,p}} \right) .
 \end{split}
\end{equation}
The scalar component $\theta_S(t)$ can be treated similar to the estimate of $u_T(t)$. In this case, we shall use the following estimate for cut-off function
\begin{equation*}
\begin{split}
\left|\varpi_R(\|(u_S^{(1)},\theta_S^{(1)})\|_{X^{1,\infty}\times W^{1,\infty}})
-\varpi_R(\|u_S^{(2)},\theta_S^{(2)}\|_{X^{1,\infty}\times W^{1,\infty}})\right|\leq  C\|\Phi^{(1)}-\Phi^{(2)}\|_{\mathcal {Z}^{k,p}}.
 \end{split}
\end{equation*}
After some calculations, we can finally obtain  for any $K>0$,
\begin{equation}\label{3.16}
\begin{split}
&\frac{3}{4}\mathbb{E}\sup_{s\in [0,t\wedge \mathbbm{t}_K]}\|Z(t) \|_{W^{k,p}}^p\\
&\quad \leq  C\mathbb{E}\int_0^{t\wedge \mathbbm{t}_K}\|\Phi^{(1)}-\Phi^{(2)}\|^{p}_{\mathcal {Z}^{k,p}} \mathrm{d} s +\frac{1}{4} \mathbb{E}\sup_{s\in [0,t\wedge \mathbbm{t}_K]}
\|\Phi^{(1)}-\Phi^{(2)}\|_{\mathcal {Z}^{k,p}}^p .
 \end{split}
\end{equation}
Putting the estimates \eqref{3.8}, \eqref{3.15} and \eqref{3.16} together, we get
\begin{equation*}
\begin{split}
 \mathbb{E}\sup_{s\in [0,t\wedge \mathbbm{t}_K]}\|\Phi^{(1)}(s)-\Phi^{(2)}(s)\|_{\mathcal {Z}^{k,p}}^p\leq 2C\mathbb{E}\int_0^{t }\mathbb{E}\sup_{r\in [0,s\wedge \mathbbm{t}_K]}\|\Phi^{(1)}(r)-\Phi^{(2)}(r)\|_{\mathcal {Z}^{k,p}}^p \mathrm{d} s,
 \end{split}
\end{equation*}
which combined with the Gronwall inequality lead to
\begin{equation*}
\begin{split}
\mathbb{E}\sup_{s\in [0,t\wedge \mathbbm{t}_K]}\|\Phi^{(1)}(s)-\Phi^{(2)}(s)\|_{\mathcal {Z}^{k,p}}^p\leq 0.
 \end{split}
\end{equation*}
By taking $K\rightarrow\infty$, we infer  that for any $T>0$,
\begin{equation*}
\begin{split}
\mathbb{E}\sup_{s\in [0,T]}\|\Phi^{(1)}(s)-\Phi^{(2)}(s)\|_{\mathcal {Z}^{k,p}}^p=0,
 \end{split}
\end{equation*}
which implies the pathwise uniqueness of the martingale solutions, and this completes the proof of Lemma \ref{3.1}.
\end{proof}

With the pathwise uniqueness result at hand, we could now establish the existence of local pathwise solutions in sufficient regular spaces, whose proof is based on the following lemma.

\begin{lemma} [\cite{47}] \label{lem:3.2}
Let $X$ be a Polish space, and $\{Y_j \}_{j\geq0}$ be a
sequence of $X$-valued random elements. We define the collection of joint
laws $\{\nu_{j,l}\}_{j,l\geq1}$ of $\{Y_j \}_{j\geq1}$ by
$$
\nu_{j,l}(E)=\mathbb{P}\{(Y_j,Y_l)\in E\},~~~E\in \mathcal {B}(X\times X).
$$
Then $\{Y_j \}_{j\geq0}$ converges in probability if and only if for every subsequence $\{\nu_{j_k,l_k}\}_{k\geq0}$, there exists a further subsequence which converges
weakly to a probability measure $\nu$ such that
$$
\nu(\{(u,v)\in X \times X:u=v\})=1.
$$
\end{lemma}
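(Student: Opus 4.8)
The plan is to prove this characterization (the Gy\"{o}ngy--Krylov lemma) by treating the two implications separately: the forward one is essentially immediate, while the converse rests on the Portmanteau theorem together with the completeness of the space of $X$-valued random elements under convergence in probability. Throughout, ``a subsequence of $\{\nu_{j,l}\}$'' is read as a choice of two strictly increasing index sequences $(j_k)$, $(l_k)$, both tending to infinity, and $X\times X$ is equipped with the product topology, which is again Polish.

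For necessity, suppose $Y_j\to Y$ in probability for some $X$-valued random element $Y$, and fix any subsequence $\{\nu_{j_k,l_k}\}_{k\ge1}$. Then $Y_{j_k}\to Y$ and $Y_{l_k}\to Y$ in probability, and since the product metric is dominated by the sum of the coordinate metrics, $(Y_{j_k},Y_{l_k})\to (Y,Y)$ in probability on $X\times X$, hence in law. Thus $\nu_{j_k,l_k}\Rightarrow\nu:=\mathrm{Law}(Y,Y)$, and $\nu(\{(u,v):u=v\})=\mathbb{P}\{Y=Y\}=1$. Since the whole subsequence already converges to such a $\nu$, so does every further subsequence, which is what is required.

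For sufficiency, I would argue by contradiction. Fix a complete metric $d$ compatible with the Polish topology of $X$, and recall that $L^0(\Omega;X)$, the space of $X$-valued random elements modulo $\mathbb{P}$-a.s. equality, equipped with $\rho(Y,Y'):=\mathbb{E}\big[d(Y,Y')\wedge1\big]$, is a complete metric space in which convergence of sequences is equivalent to convergence in probability; hence it suffices to show that $\{Y_j\}$ is Cauchy in probability. If it were not, there would exist $\varepsilon>0$, $\delta>0$ and strictly increasing $(j_k)$, $(l_k)$ with $\mathbb{P}\{d(Y_{j_k},Y_{l_k})\ge\varepsilon\}\ge\delta$ for every $k$. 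Applying the hypothesis to $\{\nu_{j_k,l_k}\}_k$ produces a further subsequence --- which I relabel $\{\nu_{j_k,l_k}\}_k$ again --- converging weakly to a probability measure $\nu$ concentrated on the diagonal $\{(u,v):u=v\}$. The set $A_\varepsilon:=\{(u,v)\in X\times X:\ d(u,v)\ge\varepsilon\}$ is closed and disjoint from the diagonal, so $\nu(A_\varepsilon)=0$; by the Portmanteau theorem, $\limsup_{k\to\infty}\nu_{j_k,l_k}(A_\varepsilon)\le\nu(A_\varepsilon)=0$. This contradicts $\nu_{j_k,l_k}(A_\varepsilon)=\mathbb{P}\{d(Y_{j_k},Y_{l_k})\ge\varepsilon\}\ge\delta>0$. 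Hence $\{Y_j\}$ is Cauchy, and therefore convergent, in probability.

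Since the result is classical, I expect no genuine obstacle; the only points that need care are the bookkeeping with (sub)subsequences in the converse --- where the weakly convergent further subsequence is handed to us directly by the hypothesis, so no separate tightness argument is needed --- and the choice of a \emph{complete} metric compatible with the Polish topology of $X$, which is exactly what lets ``Cauchy in probability'' be upgraded to ``convergent in probability''. Alternative routes (for instance through the Skorokhod representation theorem) are available, but the contradiction argument above is the most economical.
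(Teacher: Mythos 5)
Your proof is correct. Note, however, that the paper does not actually prove Lemma~\ref{lem:3.2}: it is stated with a citation to the Gy\"{o}ngy--Krylov paper \cite{47} and used as a black box, so there is no ``paper's proof'' to compare against. Your argument is the standard one for this characterization and is complete as written: the forward implication reduces to the fact that convergence in probability passes to the product space and implies convergence in law, while the converse runs by contradiction against Cauchy-in-probability, using Portmanteau on the closed set $A_\varepsilon=\{(u,v):d(u,v)\ge\varepsilon\}$ together with the hypothesis that the limit measure sits on the diagonal. You correctly flag the only two points that genuinely need care --- that the hypothesis is quantified over every subsequence of pairs, so you may extract the weakly convergent further subsequence directly without a separate tightness argument, and that the metric $d$ must be chosen \emph{complete} (available since $X$ is Polish) so that Cauchy in probability upgrades to convergence in probability. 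No gaps.
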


The main result in this section can be stated as follows.
\begin{lemma} [Smooth pathwise solutions] \label{lem:3.3}
Fix $k'=k+4$, $k>1+\frac{2}{p}$ and $p\geq2$.  Assume that the conditions (A1)-(A2) hold, and $(u_S^0,u_T^0,\theta_S^0)$ is a $\mathcal {Z}^{k',2}(D)$-valued $\mathcal {F}_0$-measurable random processes. Then the Cauchy problem \eqref{1.1}-\eqref{1.3} has a unique local maximum pathwise solution $(u_S,u_T,\theta_S, \{\mathbbm{t}_n\}_{n\geq1},\mathbbm{t})$ in the sense of Definition \ref{def:1.1}.
\end{lemma}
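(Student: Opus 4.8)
The plan is to upgrade the global martingale solutions produced by Lemma~\ref{lem:2.1} to pathwise solutions of the truncated SISM by a Yamada--Watanabe type argument resting on the pathwise uniqueness of Lemma~\ref{lem:3.1} and the Gy\"{o}ngy--Krylov characterization of Lemma~\ref{lem:3.2}, and then to remove the cut-offs via a stopping-time localization and patch the resulting local solutions into a maximal one. Since Lemma~\ref{lem:2.1} requires only $\int\|u\|_{\mathcal{Z}^{k',2}}\mu_0(\mathrm{d}u)<\infty$ whereas Lemma~\ref{lem:3.1} asks for a moment of order $r>2$, I would first reduce to bounded initial data by splitting $\Omega$ into the $\mathcal{F}_0$-measurable sets $\Omega_M:=\{M-1\le\|(u_S^0,u_T^0,\theta_S^0)\|_{\mathcal{Z}^{k',2}}<M\}$, solving with the bounded datum $\mathbf{1}_{\Omega_M}(u_S^0,u_T^0,\theta_S^0)$, and patching at the very end; so in what follows I assume $(u_S^0,u_T^0,\theta_S^0)\in L^r(\Omega;\mathcal{Z}^{k',2}(D))$ for all $r\ge1$.

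For each fixed $R>0$, I would take the Galerkin sequence $(u_S^n,u_T^n,\theta_S^n)_{n\ge1}$ of \eqref{2.41} on the fixed basis $\mathbb{S}$ driven by $\mathcal{W}$, and consider the joint laws $\nu_{n,m}$ of $\big((u_S^n,u_T^n,\theta_S^n),(u_S^m,u_T^m,\theta_S^m),\mathcal{W}\big)$ on $\mathscr{X}_\mathbb{S}\times\mathscr{X}_\mathbb{S}\times\mathscr{X}_\mathcal{W}$. Tightness of $\{\nu_{n,m}\}$ follows from the uniform bounds of Lemma~\ref{lem:2.3} together with the compact embedding of Lemma~\ref{lem:2.4}, exactly as in the proof of Lemma~\ref{lem:2.1}; along an arbitrary subsequence, Prokhorov and Skorokhod then produce, on a new probability space, two limiting triples which, by the identification argument of Lemma~\ref{lem:2.1}, are each global martingale solutions of the truncated SISM relative to one common stochastic basis, with the common initial law and the common driving noise. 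Pathwise uniqueness (Lemma~\ref{lem:3.1}) then forces the subsequential limit of $\nu_{n,m}$ to be concentrated on the diagonal $\{(u,v):u=v\}$, whence, by Lemma~\ref{lem:3.2}, the whole sequence $(u_S^n,u_T^n,\theta_S^n)_{n\ge1}$ converges in probability in $C([0,T];\mathcal{Z}^{k'-2,2}(D))$ for every $T$. The limit $\Phi_R:=(u_S^R,u_T^R,\theta_S^R)$ is $(\mathcal{F}_t)$-progressively measurable (a limit in probability of adapted processes), inherits the bounds of Lemma~\ref{lem:2.3}, solves the truncated system of Lemma~\ref{lem:2.1} on $\mathbb{S}$, and, since $D$ is a bounded $2$D domain and $k'-2=k+2>2$, the embeddings $\mathcal{Z}^{k',2}(D)\hookrightarrow\mathcal{Z}^{k'-2,2}(D)\hookrightarrow\mathcal{Z}^{k,p}(D)$ give it the continuity demanded by Definition~\ref{def:1.1}. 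This yields, for each $R>0$, a global pathwise solution of the $R$-truncated SISM on $\mathbb{S}$.

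Next I would remove the cut-offs by introducing $\tau_R:=\inf\{t\ge0:\|\Phi_R(t)\|_{\mathcal{Z}^{1,\infty}}\ge R\}$; this is a stopping time (the map $t\mapsto\|\Phi_R(t)\|_{\mathcal{Z}^{1,\infty}}$ is continuous via $\mathcal{Z}^{k'-2,2}(D)\hookrightarrow\mathcal{Z}^{1,\infty}(D)$) and is strictly positive a.s. once $R>\|(u_S^0,u_T^0,\theta_S^0)\|_{\mathcal{Z}^{1,\infty}}$. On $[0,\tau_R]$ every cut-off $\varpi_R(\cdot)$ appearing in Lemma~\ref{lem:2.1} equals $1$ (its argument is $\le R$), so $\Phi_R$ restricted to $[0,\tau_R]$ solves the genuine system \eqref{1.5}, i.e. $(u_S^R,u_T^R,\theta_S^R,\tau_R)$ is a local pathwise solution. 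For $R'\ge R$, on the stochastic interval where $\|\Phi_{R'}(t)\|_{\mathcal{Z}^{1,\infty}}\le R$ the process $\Phi_{R'}$ also satisfies the $R$-truncated system, so pathwise uniqueness at level $R$ forces $\Phi_R$ and $\Phi_{R'}$ to coincide there and the corresponding hitting times of $R$ to be equal; hence the family $\{(\Phi_R,\tau_R)\}_{R>0}$ is consistent. Setting $\mathbbm{t}_n:=\tau_n$ (nondecreasing in $n$), $\mathbbm{t}:=\lim_{n\to\infty}\mathbbm{t}_n$, and defining $(u_S,u_T,\theta_S)$ on $[0,\mathbbm{t})$ by gluing the $\Phi_n$, one has $\|(u_S,u_T,\theta_S)(\mathbbm{t}_n)\|_{\mathcal{Z}^{1,\infty}}=n$ on $\{\mathbbm{t}_n<\infty\}$, hence $\lim_{n\to\infty}\sup_{t\in[0,\mathbbm{t}_n]}\|(u_S,u_T,\theta_S)\|_{\mathcal{Z}^{1,\infty}}=\infty$ on $\{\mathbbm{t}<\infty\}$, so $(u_S,u_T,\theta_S,\{\mathbbm{t}_n\}_{n\ge1},\mathbbm{t})$ is a maximal pathwise solution. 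Uniqueness of the maximal solution reduces to pathwise uniqueness of the untruncated system: given two maximal solutions with the same initial datum, for each $R$ both solve the $R$-truncated system up to the minimum of their $\mathcal{Z}^{1,\infty}$-hitting times of $R$, so Lemma~\ref{lem:3.1} makes them coincide there; letting $R\to\infty$ together with the blow-up criterion gives indistinguishability on $[0,\mathbbm{t}\wedge\tilde{\mathbbm{t}})$ and $\mathbbm{t}=\tilde{\mathbbm{t}}$, and patching over the sets $\Omega_M$ then finishes the general case.

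The hard part will be the first step, i.e. the passage from martingale to pathwise solutions. Two things need care: first, after the Skorokhod extraction one must genuinely verify that the two limiting triples are martingale solutions of the \emph{same} truncated system, driven by the \emph{same} Wiener process on a \emph{common} basis and with the \emph{same} initial law -- only then does Lemma~\ref{lem:3.1} apply to force diagonal support; second, one must carry out the measurability bookkeeping needed to invoke Lemma~\ref{lem:3.2} and transfer convergence back to the original basis $\mathbb{S}$, so that $\Phi_R$ is $(\mathcal{F}_t)$-adapted. Passing to the limit in the nonlinear convection terms and in the stochastic integrals -- handling the cut-offs $\varpi_R$ through the convergences \eqref{2.51}--\eqref{2.52} and the coupling of the three equations -- is the most laborious computation, but it introduces no ideas beyond those already used to prove Lemma~\ref{lem:2.1}.
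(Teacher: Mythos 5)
Your proposal follows essentially the same approach as the paper: upgrade the truncated martingale solutions to pathwise solutions on the fixed basis $\mathbb{S}$ via the Gy\"ongy--Krylov criterion (Lemma \ref{lem:3.2}) combined with pathwise uniqueness (Lemma \ref{lem:3.1}), then remove the cut-offs with a stopping time and patch over an $\mathcal{F}_0$-measurable decomposition of $\Omega$ to handle unbounded initial data. The only cosmetic difference is that you let the truncation level run over $R=n$ and glue the $\Phi_n$ along $\mathcal{Z}^{1,\infty}$-hitting times to exhibit the maximal solution explicitly, whereas the paper fixes a single $R$, stops via the $\mathcal{Z}^{k',2}$-norm (relying on the embedding into $\mathcal{Z}^{1,\infty}$ to deactivate the cut-off), and then defers the maximal extension to a cited standard argument.
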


\begin{proof}
The proof of Lemma \ref{3.3} will be divided into two steps.

\textbf{\emph{Step 1.}} We shall prove that the truncated SISM admits a global pathwise solution. To this end, let $\Psi_n:=(u_S^n,u_T^n,\theta_S^n)$ be the sequence of solutions to the system in $\mathcal {Z}^{k',2}(D)$ corresponding to the stochastic basis $\mathbb{S}$ fixed in advance.

Define a sequence of measures
\begin{equation*}
\begin{split}
&\nu_{j,l}(E)=\mathbb{P}\{(\Psi_{j},\Psi_{l})\in E\},\quad \pi_{j,l}(E')=\mathbb{P}\{(\Psi_{j},\Psi_{l},\mathcal {W})\in E'\},
\end{split}
\end{equation*}
for any $E\in \mathcal {B}(\mathscr{X}_\mathbb{S}\times\mathscr{X}_\mathbb{S})$ and $E'\in \mathcal {B}(\mathscr{X}_\mathbb{S}\times\mathscr{X}_\mathbb{S}\times\mathscr{X}_\mathcal {W} )$, where
$$
\mathscr{X}_\mathbb{S}=C([0,T]; \mathcal {Z}^{k'-2,2}(D)), \quad \mathscr{X}_\mathcal {W}=C([0,T]; \mathfrak{A}_0).
$$
Applying arguments in Lemma \ref{2.2}, one can verify  that the collection $\{\pi_{j,l}\}_{j,l\geq1}$ is  weakly compact, and hence there is a subsequence (still denoted by itself) such that
$$
\pi_{j,l}\rightarrow \pi\in \mathcal {P}_r(\mathscr{X}_\mathbb{S}\times\mathscr{X}_\mathbb{S}\times\mathscr{X}_\mathcal {W}).
$$
By the Skorokhod's representation theorem, one can choose a probability space $(\widetilde{\Omega},\widetilde{\mathcal {F}},\widetilde{\mathbb{P}})$ on which the random elements $(\widetilde{\Psi}_{j},\widetilde{\Psi}_{l},\widetilde{\mathcal {W}}^{j,l})$ are defined such that $\pi_{j,l}=\widetilde{\mathbb{P}}\circ (\widetilde{\Psi}_{j},\widetilde{\Psi}_{l},\widetilde{\mathcal {W}}^{j,l})^{-1}$, and
$$
(\widetilde{\Psi}_{j},\widetilde{\Psi}_{l},\widetilde{\mathcal {W}}^{j,l})\rightarrow (\widetilde{\Psi},\widetilde{\Psi}^*,\widetilde{\mathcal {W}})\quad \mbox{in}~~~ \mathscr{X}_\mathbb{S}\times\mathscr{X}_\mathbb{S}\times\mathscr{X}_\mathcal {W},\quad\mathbb{P}\mbox{-a.s.}
$$
As a result we get the weakly convergence $\nu_{j,l}\rightharpoonup \nu=\widetilde{\mathbb{P}}\circ (\widetilde{\Psi},\widetilde{\Psi}^*)^{-1}$. Using the argument as before, we find that $\widetilde{\Psi}$ and $\widetilde{\Psi}^*$ both are global martingale solutions to the truncated SISM associated to the same stochastic basis. Since $\widetilde{\Psi}(0)=\widetilde{\Psi}^*(0)$, it follows from the pathwise uniqueness result (cf. Lemma \ref{lem:3.1}) that
$$
(\widetilde{\Psi},\widetilde{\Psi}^*)\in \mathscr{X}_\mathbb{S}\times\mathscr{X}_\mathbb{S} \quad\mbox{and}\quad \widetilde{\Psi}=\widetilde{\Psi}^*,\quad \nu\mbox{-a.s.}
$$
Thanks to the  Lemma \ref{lem:3.2}, we see that, on the original probability space $(\Omega,\mathcal {F},\mathbb{P})$,
$$
(u_S^n,u_T^n,\theta_S^n)\rightarrow (u_S,u_T,\theta_S) \quad  \mbox{in the topology of}~ \mathscr{X}_\mathbb{S},\quad \mathbb{P}\mbox{-a.s.},
$$
which together with the uniform bound in Lemma \ref{lem:2.3} indicate that $(u_S,u_T,\theta_S)$ is a pathwise solution to \eqref{1.5}.

\textbf{\emph{Step 2.}} By introducing certain stopping time and removing the cut-off function, we prove that the original SISM system has a local pathwise solution. Let us consider the following stopping time
$$
\mathbbm{t}_M=\inf\{t\geq 0;~\|(u_S,u_T,\theta_S)\|_{\mathcal {Z}^{k',2}}> M\},
$$
for any $M>0$. Let $\widehat{c}>0$ be a constant such that $\|(u_S,u_T,\theta_S)\|_{\mathcal {Z}^{1,\infty}}\leq \widehat{c}\|(u_S,u_T,\theta_S)\|_{\mathcal {Z}^{k',2}}$.

$\bullet$ If $\|(u_S,u_T,\theta_S)\|_{\mathcal {Z}^{k',2}}\leq M$ uniformly for some deterministic constant $M>0$, then $\mathbbm{t}_M$ is strictly positive almost surely, and $\|(u_S,u_T,\theta_S)\|_{\mathcal {Z}^{1,\infty}}\leq R$ for any $[0,\mathbbm{t}_M]$ with some $R>\widehat{c} M$, which indicates that
\begin{equation*}
\begin{split}
\varpi_R(\|u_S\|_{W^{1,\infty}})&=\varpi_R(\|(u_S,u_T)\|_{X^{1,\infty}\times W^{1,\infty}})
\\
&=\varpi_R(\|(u_S,\theta_S)\|_{X^{1,\infty}\times W^{1,\infty}})=\varpi_R(\|(u_S,u_T,\theta_S)\|_{\mathcal {Z}^{1,\infty}})=1,
\end{split}
\end{equation*}
for any $t\in [0,\mathbbm{t}_M]$. Thereby  the quadruple $(u_S,u_T,\theta_S,\mathbbm{t}_M)$ is a local pathwise solution to the SISM system \eqref{1.1}-\eqref{1.3}.

$\bullet$ For the general case, we define the decomposition function
$$
\chi_k(\omega):= \mathbb{I}_{\{\|(u_S^0,u_T^0,\theta_S^0)\|_{\mathcal {Z}^{k',2}}\in [k,k+1)\}}(\omega),\quad  k=0,1,2,....
$$
It follows from
previous conclusion that, for given truncated initial data $(u_S^0\chi_k,u_T^0\chi_k,\theta_S^0\chi_k)$, the SISM  \eqref{1.1}-\eqref{1.3} admits  a unique local pathwise solution $(u_S^k,u_T^k,\theta_S^k,\mathbbm{t}_M^k)$. Let us construct the following processes
$$
u_S=\sum_{k\geq 1}u_S^k\chi_k,~~u_T=\sum_{k\geq 1}u_T^k\chi_k,~~\theta_S=\sum_{k\geq 1}\theta_S^k\chi_k,~~\mathbbm{t}_M=\sum_{k\geq 1}\mathbbm{t}_M^k\chi_k.
$$
Using the similar argument as \cite{37}, we can verify that the quadruple $(u_S,u_T,\theta_S,\mathbbm{t}_M)$ is indeed a local pathwise solution to  the SISM \eqref{1.1}-\eqref{1.3} with sufficiently regular initial data in the sense of Definition \ref{def:1.1}. This completes the proof of Lemma \ref{lem:3.3}.
\end{proof}

\section{Local solutions in sharp spaces}
In previous section, we have proved the existence and uniqueness of local pathwise solutions to the SISM \eqref{1.1}-\eqref{1.3}  with initial data in the sufficiently regular spaces $\mathcal {Z}^{k',2}(D)$, where $k'=k+4$, $k>1+\frac{2}{p}$ and $p\geq2$, which is densely embedded into $\mathcal {Z}^{k,p}(D)$. In the following, by using a density and stability argument, we will show that the $W^{k,p}$-solutions to the SISM \eqref{1.1}-\eqref{1.3}  can be approximated by the ``smooth" local pathwise solutions constructed in Section 3.

To be more precise, for any $\mathcal {Z}^{k,p}(D)$-valued $\mathcal {F}_0$-measurable random variables $(u_S^0,u_T^0,\theta_S^0)$, we shall consider the following SISM with smooth initial data:
\begin{equation}\label{4.1}
\left\{
\begin{aligned}
&\mathrm{d}u_S^j+  P(u_S^j\cdot \nabla)   u_S^j\mathrm{d}t-P(u_T^j\widehat{x}) \mathrm{d}t- P(\theta _S^j\widehat{z}) \mathrm{d}t= P\sigma_1(u_S^j,u_T^j,\theta_S^j)\mathrm{d}\mathcal {W},\\
&\mathrm{d} u_T^j+ (u_S^j\cdot \nabla)   u_T^j\mathrm{d}t+u_S^j\cdot\widehat{x} \mathrm{d}t +z\mathrm{d}t=\sigma_2(u_S^j,u_T^j,\theta_S^j)\mathrm{d}\mathcal {W}, \\
&\mathrm{d}\theta_S^j+ (u_S^j\cdot \nabla ) \theta_S^j\mathrm{d}t+u_T ^j\mathrm{d}t  = \sigma_3(u_S^j,u_T^j,\theta_S^j)\mathrm{d}\mathcal {W},\\
&u_S^j(0)= J_{\frac{1}{j}}u_S^0,\quad u_T^j(0)=J_{\frac{1}{j}}u_T^0,\quad \theta_S^j(0)=J_{\frac{1}{j}}\theta_S^0,
\end{aligned}
\right.
\end{equation}
where $J_\epsilon$ denotes the smoothing operators, which can be constructed by a method suggested by Kato and Lai \cite{13}; see also \cite{41}. For example, we define
$$
J_{\frac{1}{j}} =PR'F_{\frac{1}{j}} R, \quad \mbox{for all}~j\geq 1,
$$
where $P$ is the Leray projection operator defined as before, $R$ is a linear continuous operator of extension of $D$ into $\mathbb{R}^2$ such that $\|Rf\|_{W^{k,p}(\mathbb{R}^2)}\leq C \|f\|_{W^{k,p}(D)}$ for $k\geq0$ and $p\geq 2$, $R'$ is the operator of restriction which is bounded from $W^{k,p}(\mathbb{R}^2)$ into $W^{k,p}(D)$, and $F_{1/j} $ is the Friedrichs mollifier in $\mathbb{R}^2$ defined as the multiplication by $e^{- |x|^2/j^2}$ for the Fourier transforms.

Due to the smoothing effect of the operator $J_{1/j}$, we know that
$$
(J_{\frac{1}{j}}u_S^0,J_{\frac{1}{j}}u_T^0,J_{\frac{1}{j}}\theta_S^0)\in [C^\infty(D)]^3\subset\mathcal {Z}^{k',2}(D)\hookrightarrow \mathcal {Z}^{k,p}(D),\quad \forall j\geq 1.
$$
One can conclude from Lemma \ref{lem:3.3} that the Cauchy problem \eqref{4.1} admits a sequence of local maximal pathwise solutions $ (u_S^j,u_T^j,\theta_S^j,\mathbbm{t}_j)_{j\geq 1}$ evolving continuously in $\mathcal {Z}^{k'-2,2}(D)$ which are bounded in $\mathcal {Z}^{k',2}(D)$. By the assumption of $k'$, it is clear that $\mathcal {Z}^{k',2}(D)$ is densely embedded into $\mathcal {Z}^{k,p}(D)$. In order to prove the existence of local pathwise solutions to the SISM \eqref{1.1}-\eqref{1.3} for the sharp smothness regime,  we shall further prove that $(u_S^j,u_T^j,\theta_S^j)_{j\geq 1}$ form a Cauchy sequence  in the Banach space $\mathcal {Z}^{k,p}(D)$, whose proof is based on the following basic result.

\begin{lemma} [Abstract Cauchy Theorem \cite{41}] \label{lem:4.1} Let $T>0$, we define the stopping times $\mathbbm{t}_{j,l,T}:=\mathbbm{t}_{j,T}\wedge \mathbbm{t}_{l,T}$, for any $j,l\geq1$, where
$$
\mathbbm{t}_{j,T}:=\inf\{t\geq0;\|f^j(t)\|_{\mathcal {Z}^{k,p}}\geq 2+\|f^j_0\|_{\mathcal {Z}^{k,p}}\}\wedge T,~~~\forall j\geq1.
$$
Assume that
\begin{eqnarray}\label{4.2}
 \lim_{l\rightarrow\infty}\sup_{j\geq l} \mathbb{E} \sup_{t\in [0,\mathbbm{t}_{j,l,T}]}\|f^j(t)-f^l(t)\|_{\mathcal {Z}^{k,p}}=0,
\end{eqnarray}
and
\begin{eqnarray}\label{4.3}
\lim_{\varpi\rightarrow0}\sup_{j\geq 1} \mathbb{P} \Bigg\{\sup_{t\in [0,\mathbbm{t}_{j,k,T}\wedge \varpi]}\|f^j(t)\|_{\mathcal {Z}^{k,p}}>1+\|f^j_0\|_{\mathcal {Z}^{k,p}}\Bigg\}=0.
\end{eqnarray}
Then there exits a stopping time $\mathbbm{t}$ with $\mathbb{P}\{0<\mathbbm{t}\leq T\}=1$, and a predictable process $f(\cdot)=f(\cdot\wedge \mathbbm{t})\in  C ([0,\mathbbm{t}];\mathcal {Z}^{k,p}(D))$ such that
$$
\sup_{t\in [0,\mathbbm{t}]} \|f^{j_l}(t)-f(t)\|_{\mathcal {Z}^{k,p}}\rightarrow 0, \quad \mathbb{P}\mbox{-a.s.},
$$
for some subsequence $j_l\rightarrow\infty$. Moreover, the bounds
$$
\|f^j(t)\|_{\mathcal {Z}^{k,p}}\leq 2+\sup_{j}\|f^j_0\|_{\mathcal {Z}^{k,p}},\quad \mathbb{P}\mbox{-a.s.},
$$
holds uniformly for any $t\in [0,\mathbbm{t}]$.
\end{lemma}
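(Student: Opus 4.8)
The plan is to reproduce the abstract Cauchy argument of \cite{41}: one upgrades the $L^1(\Omega)$-Cauchy control \eqref{4.2} to an almost sure uniform limit on a genuine stochastic interval, using \eqref{4.3} only to prevent that interval from collapsing to $\{0\}$. First I would use \eqref{4.2} to pick a sparse subsequence $(j_l)_{l\geq1}$ with $\sup_{l'\geq l}\mathbb{E}\sup_{[0,\mathbbm{t}_{j_l,j_{l'},T}]}\|f^{j_l}-f^{j_{l'}}\|_{\mathcal{Z}^{k,p}}\leq l^{-4}$; Chebyshev's inequality and the Borel--Cantelli lemma then give, for a.e. $\omega$, a finite random index $L_1(\omega)$ with $\sup_{[0,\mathbbm{t}_{j_l,j_{l+1},T}]}\|f^{j_l}-f^{j_{l+1}}\|_{\mathcal{Z}^{k,p}}\leq l^{-2}$ for all $l\geq L_1$, so that $\sum_{l\geq L_1}\sup_{[0,\mathbbm{t}_{j_l,j_{l+1},T}]}\|f^{j_l}-f^{j_{l+1}}\|_{\mathcal{Z}^{k,p}}$ is finite and, moreover, smaller than any prescribed $\delta>0$ once $L_1$ is taken large.

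Next comes the non-degeneracy of the interval, obtained by a bootstrap. Fix $\eta\in(0,1)$ and pick $\varpi=\varpi(\eta)\in(0,T)$ from \eqref{4.3} so that $\sup_j\mathbb{P}(\Xi_j^c)\leq\eta$, where $\Xi_j:=\{\sup_{[0,\mathbbm{t}_{j,T}\wedge\varpi]}\|f^j\|_{\mathcal{Z}^{k,p}}\leq 1+\|f^j_0\|_{\mathcal{Z}^{k,p}}\}$; continuity of the paths forces $\mathbbm{t}_{j,T}\geq\varpi$ on $\Xi_j$. On the event $G_\eta:=\{L_1\leq N_0\}\cap\Xi_{j_{N_0}}$, with $N_0=N_0(\eta)$ large (so that the tail sum of Step~1 is below $\tfrac14$ and the residual differences of the $\|f^j_0\|_{\mathcal{Z}^{k,p}}$ are small, using $f^j_0=J_{1/j}f_0\to f_0$ in $\mathcal{Z}^{k,p}$), an induction on $l\geq N_0$ shows $\mathbbm{t}_{j_l,T}\geq\varpi$ and $\|f^{j_l}(t)\|_{\mathcal{Z}^{k,p}}\leq 2+\|f^{j_l}_0\|_{\mathcal{Z}^{k,p}}$ for $t\in[0,\varpi]$: working on $[0,\varpi\wedge\mathbbm{t}_{j_l,T}]$ one has $t\leq\mathbbm{t}_{j_m,j_{m+1},T}$ for $N_0\leq m\leq l-1$, so the telescoping estimate gives $\|f^{j_l}(t)\|\leq\|f^{j_{N_0}}(t)\|+\tfrac14\leq 1+\|f^{j_{N_0}}_0\|+\tfrac14<2+\|f^{j_l}_0\|$, and continuity again upgrades this to $\mathbbm{t}_{j_l,T}\geq\varpi$. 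Consequently, on $G_\eta$ the restriction of the Step~1 bound to $[0,\varpi]$ shows $(f^{j_l})_{l\geq N_0}$ is Cauchy in $C([0,\varpi];\mathcal{Z}^{k,p}(D))$, hence converges there to some $f$, and $\liminf_l\mathbbm{t}_{j_l,T}\geq\varpi$ on $G_\eta$.

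Since $\mathbb{P}(G_\eta)\geq 1-\eta-o_{N_0}(1)$, I would then run this along a sequence $\eta_m\downarrow0$ and glue the resulting limits (consistency is free because the subsequence is fixed), producing an almost surely defined predictable process $f$ and showing that $\mathbbm{t}:=\liminf_l\mathbbm{t}_{j_l,T}$ satisfies $\mathbb{P}\{0<\mathbbm{t}\leq T\}=1$; being a $\liminf$ of stopping times for the completed, right-continuous filtration, $\mathbbm{t}$ is itself a stopping time. On $\{\mathbbm{t}\geq\varpi_m\}\supseteq G_{\eta_m}$ the convergence $f^{j_l}\to f$ is uniform on $[0,\mathbbm{t}]$, so $f=f(\cdot\wedge\mathbbm{t})\in C([0,\mathbbm{t}];\mathcal{Z}^{k,p}(D))$ as a uniform limit of continuous paths, and passing to the limit in $\|f^{j_l}(t)\|_{\mathcal{Z}^{k,p}}\leq 2+\sup_j\|f^j_0\|_{\mathcal{Z}^{k,p}}$ on $[0,\mathbbm{t}]$ yields the stated uniform bound.

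The main obstacle is precisely the coupling in the second and third steps: \eqref{4.3} delivers only a bound that is uniform in $j$ \emph{for a fixed $\varpi$}, and by itself this does not control $\inf_l\mathbbm{t}_{j_l,T}$ (a union bound over $l$ of the exceptional probabilities diverges). One must therefore feed the telescoping control of Step~1 into a self-improving argument that pins $\mathbbm{t}_{j_l,T}$ below by $\varpi$ \emph{simultaneously for the whole tail} of the subsequence on a large event, and then remove the residual probability $\eta$ by sending $\eta\downarrow0$ while using that $\liminf_l\mathbbm{t}_{j_l,T}$ is a bona fide stopping time; keeping track of the $j$-dependence of $\|f^j_0\|_{\mathcal{Z}^{k,p}}$ throughout is the bookkeeping cost of this step.
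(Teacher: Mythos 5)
The paper does not prove Lemma~\ref{lem:4.1}; it is imported wholesale from \cite{41}, and your proposal is a faithful reconstruction of that argument: Chebyshev plus Borel--Cantelli to turn the $L^1(\Omega)$ control \eqref{4.2} into an almost sure telescoping bound along a sparse subsequence, a self-improving induction driven by \eqref{4.3} to pin the stopping times $\mathbbm{t}_{j_l,T}$ from below by a fixed $\varpi$ on a large event, and an $\eta\downarrow0$ pass to upgrade to a genuine strictly positive stopping time. You also correctly diagnose the non-trivial point --- that \eqref{4.3} cannot simply be union-bounded over the subsequence --- which is exactly why the bootstrap is needed.

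One small imprecision worth recording: your bootstrap requires $\|f^{j_{N_0}}_0\|_{\mathcal{Z}^{k,p}}-\|f^{j_l}_0\|_{\mathcal{Z}^{k,p}}<3/4$ for all $l\geq N_0$, and you supply this by invoking $f^j_0=J_{1/j}f_0\to f_0$. That convergence is available in the paper's application (see the construction in \eqref{4.1}), but it is not among the stated hypotheses \eqref{4.2}--\eqref{4.3} of the abstract Lemma~\ref{lem:4.1}. As written, the lemma implicitly presupposes $\sup_j\|f^j_0\|_{\mathcal{Z}^{k,p}}<\infty$ (otherwise the concluding uniform bound is vacuous) but not that $(\|f^j_0\|_{\mathcal{Z}^{k,p}})_j$ is Cauchy; in \cite{41} the corresponding assumption is present, and a self-contained statement here should include it so that the induction you run is justified purely from the hypotheses of the lemma rather than from the specific form of the approximating data.
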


To prove Theorem \ref{1.3}, by making use of Lemma \ref{lem:3.3} and Lemma \ref{lem:4.1}, it suffices to verify the essential convergence \eqref{4.2} and \eqref{4.3} with respect to the functional $f^j=(u_S^j,u_T^j,\theta_S^j)$, which is defined up to a sequence of stopping time $(\mathbbm{t}_j)_{j\geq 1}$ by \eqref{4.1}. For technique reasons, by applying the decomposition method as that in the proof of Lemma \ref{lem:3.3}, one can first assume that
$$
\|(u_S^0,u_T^0,\theta_S^0)\|_{\mathcal {Z}^{k,p}}\leq M,
$$
for some deterministic $M>0$ uniformly. Moreover, we deduce from the property of $J_{1/j}$ that
\begin{equation}\label{4.4}
\begin{split}
 \sup_{j\geq 1}\|(J_{\frac{1}{j}}u_S^0,J_{\frac{1}{j}}u_T^0,J_{\frac{1}{j}}\theta_S^0)\|_{\mathcal {Z}^{k,p}}\leq C \|(u_S^0,u_T^0,\theta_S^0)\|_{\mathcal {Z}^{k,p}}\leq CM,
\end{split}
\end{equation}
which implies by the definition of $\mathbbm{t}_{j,l,T}$ that
\begin{equation}\label{4.5}
\begin{split}
 \|(u_S^j(t),u_T^j(t),\theta_S^j(t))\|_{\mathcal {Z}^{k,p}}\leq 2+ \|(u_S^{0,j},u_T^{0,j},\theta_S^{0,j})\|_{\mathcal {Z}^{k,p}}\leq 2+CM,
\end{split}
\end{equation}
for any $t\in [0,\mathbbm{t}_{j,l,T}]$, where $C>0$ is a constant depending only on $k,p$ and $D$.

\begin{lemma} \label{lem:4.2}
The convergence \eqref{4.2} holds for the sequence $(u_S^{j},u_T^{j},\theta_S^{j})_{j\geq 1}$ defined by (4.1).
\end{lemma}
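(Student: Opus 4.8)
The plan is to estimate the differences $U^{j,l}:=u_S^j-u_S^l$, $V^{j,l}:=u_T^j-u_T^l$, $Z^{j,l}:=\theta_S^j-\theta_S^l$ in the $\mathcal{Z}^{k,p}(D)$-norm, closely following the structure of the pathwise uniqueness computation in Lemma \ref{lem:3.1}, but now (i) without cut-off functions, since on the stopping interval $[0,\mathbbm{t}_{j,l,T}]$ the $\mathcal{Z}^{k,p}$-norms of both solutions are bounded by $2+CM$ via \eqref{4.5}, and (ii) keeping track of the nonzero initial discrepancy $(J_{1/j}-J_{1/l})(u_S^0,u_T^0,\theta_S^0)$. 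First I would subtract the two copies of system \eqref{4.1}, apply $\partial^\alpha$ for $|\alpha|\leq k$, and apply It\^o's formula in $L^p$ to $\mathrm{d}\|\partial^\alpha U^{j,l}\|_{L^p}^p$, $\mathrm{d}\|\partial^\alpha V^{j,l}\|_{L^p}^p$, $\mathrm{d}\|\partial^\alpha Z^{j,l}\|_{L^p}^p$, exactly as in \eqref{3.3}, \eqref{3.10}, \eqref{3.34}.

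The key mechanism is the usual cancellation: for the top-order convective contribution one writes $\partial^\alpha[(u_S^j\cdot\nabla)u_S^j]-\partial^\alpha[(u_S^l\cdot\nabla)u_S^l] = \partial^\alpha[(U^{j,l}\cdot\nabla)u_S^j] + \partial^\alpha[(u_S^l\cdot\nabla)U^{j,l}]$, then uses the divergence-free cancellation $(\partial^\alpha U^{j,l},(u_S^l\cdot\nabla)\partial^\alpha U^{j,l})_{L^2}=0$ together with the commutator estimate \eqref{2.7} and the Moser inequality; the remaining terms are controlled by $\|U^{j,l}\|_{X^{k,p}}^p$ times factors like $\|u_S^j\|_{X^{k+1,p}}$ — note one \emph{does} need the extra derivative $u_S^j\in X^{k+1,p}$ here, which is exactly why the smooth approximations were built in $\mathcal{Z}^{k',2}(D)\hookrightarrow \mathcal{Z}^{k+1,p}(D)$, and on $[0,\mathbbm{t}_{j,l,T}]$ this factor is bounded by $2+CM$. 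The linear coupling terms ($P(V^{j,l}\widehat{x})$, $P(Z^{j,l}\widehat{z})$, $U^{j,l}\cdot\widehat{x}$, $V^{j,l}$) are handled by Young's inequality as in \eqref{3.5}, \eqref{3.12}. The It\^o-correction and stochastic integral terms are estimated via the BDG inequality and the local Lipschitz condition (A1) on $\sigma_i$ — here the Lipschitz bound gives $\|\sigma_i(\Phi^j)-\sigma_i(\Phi^l)\|_{\mathbb{W}^{k,p}}\leq \varsigma(\cdots)\|\Phi^j-\Phi^l\|_{\mathcal{Z}^{k,p}}$ directly, with no cut-off difference to worry about. Summing over $|\alpha|\leq k$, adding the three estimates, absorbing the $\tfrac14$-terms on the left, and taking $\mathbb{E}\sup_{s\in[0,t\wedge\mathbbm{t}_{j,l,T}]}$ yields
\begin{equation*}
\mathbb{E}\sup_{s\in[0,t\wedge\mathbbm{t}_{j,l,T}]}\|(U^{j,l},V^{j,l},Z^{j,l})(s)\|_{\mathcal{Z}^{k,p}}^p
\leq C\,\mathbb{E}\|(J_{\frac1j}-J_{\frac1l})(u_S^0,u_T^0,\theta_S^0)\|_{\mathcal{Z}^{k,p}}^p
+ C\int_0^t \mathbb{E}\sup_{r\in[0,s\wedge\mathbbm{t}_{j,l,T}]}\|(U^{j,l},V^{j,l},Z^{j,l})(r)\|_{\mathcal{Z}^{k,p}}^p\,\mathrm{d}s,
\end{equation*}
where $C$ depends only on $k,p,T,D,M$ and the functions $\kappa,\varsigma$, but crucially \emph{not} on $j,l$.

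Applying the Gronwall inequality gives $\mathbb{E}\sup_{s\in[0,t\wedge\mathbbm{t}_{j,l,T}]}\|(U^{j,l},V^{j,l},Z^{j,l})(s)\|_{\mathcal{Z}^{k,p}}^p \leq C e^{CT}\,\mathbb{E}\|(J_{\frac1j}-J_{\frac1l})(u_S^0,u_T^0,\theta_S^0)\|_{\mathcal{Z}^{k,p}}^p$. Since $J_{1/j}g\to g$ in $\mathcal{Z}^{k,p}(D)$ as $j\to\infty$ for every $g\in\mathcal{Z}^{k,p}(D)$ (a standard property of the Friedrichs-mollifier construction of Kato–Lai \cite{13}), and $\|J_{1/j}g-J_{1/l}g\|_{\mathcal{Z}^{k,p}}\leq 2C\|g\|_{\mathcal{Z}^{k,p}}\leq 2C M$ uniformly, the dominated convergence theorem (in $\omega$) gives $\sup_{j\geq l}\mathbb{E}\|(J_{\frac1j}-J_{\frac1l})(u_S^0,u_T^0,\theta_S^0)\|_{\mathcal{Z}^{k,p}}^p\to 0$ as $l\to\infty$; then H\"older's inequality converts the $L^p$ bound into the $L^1$ statement \eqref{4.2}, completing the proof.

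The main obstacle is bookkeeping rather than anything conceptual: one must verify that every estimate used in the Lemma \ref{lem:3.1} computation survives when the solutions live only in the coarse space $\mathcal{Z}^{k,p}$ with continuity in $\mathcal{Z}^{k'-2,2}$ — in particular that the top-order convective term genuinely lands one derivative above $k$, so that the factor $\|u_S^j\|_{X^{k+1,p}}$ (not $\|u_S^j\|_{X^{k,p}}$) appears and must be controlled; this is precisely where the $\mathcal{Z}^{k',2}$-regularity of the approximations, together with the stopping time $\mathbbm{t}_{j,l,T}$ that freezes the $\mathcal{Z}^{k,p}$-norm at level $2+CM$, is used. A secondary technical point is that the It\^o formula in $L^p$ is applied to processes whose time-continuity is only known in a weaker topology, but this is handled exactly as in Section 3 via the stochastic Fubini theorem, so no new difficulty arises.
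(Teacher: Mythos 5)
There is a genuine and fatal gap in your argument, located exactly where you wave it away: you claim that the factor $\|u_S^j\|_{X^{k+1,p}}$ arising from the top-order convective term $\partial^\alpha\big[(U^{j,l}\cdot\nabla)u_S^j\big]$ is ``bounded by $2+CM$ on $[0,\mathbbm{t}_{j,l,T}]$.'' That is false. The stopping time $\mathbbm{t}_{j,l,T}$, by the definition in Lemma \ref{lem:4.1}, only freezes the $\mathcal{Z}^{k,p}$-norm at level $2+\|f^j_0\|_{\mathcal{Z}^{k,p}}$; it gives no control whatsoever on the $\mathcal{Z}^{k+1,p}$-norm. In fact the latter genuinely blows up in $j$: the Friedrichs-mollified data satisfy only $\|J_{1/j}u_S^0\|_{X^{k+1,p}}\lesssim j\,\|u_S^0\|_{X^{k,p}}$, so already at $t=0$ the quantity $\|u_S^j\|_{X^{k+1,p}}$ is of order $jM\to\infty$. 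Consequently the ``constant'' $C$ in your Gronwall inequality secretly depends on $j$, and the claimed bound $Ce^{CT}\mathbb{E}\|(J_{1/j}-J_{1/l})(u_S^0,u_T^0,\theta_S^0)\|_{\mathcal{Z}^{k,p}}^p$ does not tend to zero. This is the classical loss-of-derivative problem for quasilinear systems, and it is precisely why Lemma \ref{lem:4.2} is the technically delicate part of the section rather than a routine repeat of the uniqueness estimate.

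The paper's proof handles this by a two-tier Gronwall argument that you have not reproduced. First, the Moser estimate \eqref{4.9} is written so that the loss of one derivative on $u_S^j$ is paid for by a gain of one derivative on the difference: the offending product is $\|u_S^{j,l}\|_{W^{k-1,p}}^p\|u_S^j\|_{W^{k+1,p}}^p$ (lower norm on $U^{j,l}$, higher norm on $u_S^j$), not $\|u_S^{j,l}\|_{W^{k,p}}^p\|u_S^j\|_{W^{k+1,p}}^p$. This unclosed term is carried through to \eqref{4.26}. Second, the paper closes it by applying the It\^o product rule to $\|u_S^{j,l}\|_{W^{k-1,p}}^p\|u_S^j\|_{W^{k+1,p}}^p$ directly (equations \eqref{4.27}--\eqref{4.36}), obtaining an auxiliary Gronwall inequality for this product. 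Third, the initial datum of the product is sent to zero via the two opposite mollifier scalings $j\|u_S^{0,j}-u_S^0\|_{X^{k-1,p}}\to 0$ and $\|(u_S^j,u_T^j,\theta_S^j)(0)\|_{\mathcal{Z}^{k+1,p}}^p\leq Cj^p\|(u_S^j,u_T^j,\theta_S^j)(0)\|_{\mathcal{Z}^{k,p}}^p$, which trade the factor $j$ gained on the low-norm convergence against the factor $j$ lost on the high-norm bound (see \eqref{000} and \eqref{111}). None of this is present in your proposal, and without it the Cauchy property in $\mathcal{Z}^{k,p}$ cannot be established.
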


\begin{proof} Setting
$$
u_S^{j,l}=u_S^j-u_S^j,\quad u_T^{j,l}=u_T^j-u_T^l,\quad \theta_S^{j,l}=\theta_S^j-\theta_S^l,
$$
and
$$
u_S^{j,l}(0)=J_{\frac{1}{j}}u_S^0-J_{\frac{1}{l}}u_S^0,\quad
u_T^{j,l}(0)=J_{\frac{1}{j}}u_T^0-J_{\frac{1}{l}}u_T^0,\quad
\theta_S^{j,l}(0)=J_{\frac{1}{j}}\theta_S^0-J_{\frac{1}{l}}\theta_S^0,\quad \forall  j,l\geq 1.
$$
In view of \eqref{4.1}, we have
\begin{equation}\label{4.6}
\left\{
\begin{aligned}
&\mathrm{d}u_S^{j,l}+  P\left((u_S^{j,l}\cdot \nabla)   u_S^j+(u_S^l\cdot \nabla)   u_S^{j,l}\right)\mathrm{d}t-P(u_T^{j,l}\widehat{x}) \mathrm{d}t- P(\theta _S^{j,l}\widehat{z}) \mathrm{d}t = P\triangle \sigma ^{1,j,l}\mathrm{d}\mathcal {W},\\
&\mathrm{d} u_T^{j,l}+ \left((u_S^{j,l}\cdot \nabla)   u_T^j+(u_S^l\cdot \nabla)   u_T^{j,l}\right)\mathrm{d}t+u_S^{j,l}\cdot\widehat{x} \mathrm{d}t  =\triangle \sigma ^{2,j,l}\mathrm{d}\mathcal {W}, \\
&\mathrm{d}\theta_S^{j,l}+ \left((u_S^{j,l}\cdot \nabla ) \theta_S^j+(u_S^l\cdot \nabla ) \theta_S^{j,l}\right)\mathrm{d}t+u_T^{j,l}\mathrm{d}t = \triangle \sigma ^{3,j,l}\mathrm{d}\mathcal {W},
\end{aligned}
\right.
\end{equation}
with
$$
\triangle \sigma ^{i,j,l}=\sigma_i(u_S^j,u_T^j,\theta_S^j)-\sigma_i(u_S^l,u_T^l,\theta_S^l),\quad \forall j,l\geq 1,\quad i=1,2,3.
$$
Applying the It\^{o}'s formula in Banach space to $\|u_S^{j,l}\|_{X^{k,p}}^p=\sum_{|\alpha|\leq k}\| \partial^\alpha  u_S^{j,l}\|_{L^p}^p$, we find
\begin{equation}\label{4.7}
\begin{split}
\mathrm{d}\| \partial^\alpha u_S^{j,l}\|_{L^p}^p=&-p\int_D| \partial^\alpha u_S^{j,l}|^{p-2} \partial^\alpha u_S^{j,l}\cdot  \partial^\alpha P\left((u_S^{j,l}\cdot \nabla)   u_S^j+(u_S^l\cdot \nabla)   u_S^{j,l}\right)\mathrm{d}V\mathrm{d}t\\
&+p\int_D| \partial^\alpha u_S^{j,l}|^{p-2} \partial^\alpha u_S^{j,l}\cdot  \partial^\alpha P\left(u_T^{j,l}\widehat{x}+\theta _S^{j,l}\widehat{z}\right) \mathrm{d}V\mathrm{d}t\\
&+\sum_{n\geq 1}\frac{p}{2}\int_D| \partial^\alpha u_S^{j,l}|^{p-2}\left( \partial^\alpha P\triangle \sigma ^{1,j,l}_n\right)^2\mathrm{d}V\mathrm{d} t\\
&+\sum_{n\geq 1}\frac{p(p-2)}{2}\int_D| \partial^\alpha u_S^{j,l}|^{p-4}\left( \partial^\alpha u_S^{j,l}\cdot  \partial^\alpha P\triangle \sigma ^{1,j,l}_n\right)^2\mathrm{d}V\mathrm{d}t\\
&+\sum_{n\geq 1}p\int_D| \partial^\alpha u_S^{j,l}|^{p-2} \partial^\alpha u_S^{j,l}\cdot  \partial^\alpha P\triangle \sigma ^{1,j,l}_n\mathrm{d}V\mathrm{d}\beta_n \\
=& (I_1^\alpha(t)+I_2^\alpha(t)+I_3^\alpha(t)+I_4^\alpha(t))\mathrm{d} t+I_5^\alpha(t)\mathrm{d} \mathcal {W} ,
\end{split}
\end{equation}
where $\triangle \sigma ^{i,j,l}_n=\triangle \sigma ^{1,j,l}e_n=\sigma_i(u_S^j,u_T^j,\theta_S^j)e_n-\sigma_i(u_S^l,u_T^l,\theta_S^l)e_n$, $n\geq 1$, $i=1,2,3,$ and $(e_n)_{n\geq 1}$ is the complete orthonormal system in the separate Hilbert space $\mathfrak{A}$.

For $I_1^\alpha(t)$, we deduce from the H\"{o}lder inequality that
\begin{equation}\label{4.8}
\begin{split}
I_1^\alpha(t)\leq& C\| \partial^\alpha u_S^{j,l}\|_{L^p}^{p-1}\| \partial^\alpha P(u_S^{j,l}\cdot \nabla)   u_S^j\|_{L^p}\\
&+p\int_D| \partial^\alpha u_S^{j,l}|^{p-2} \partial^\alpha u_S^{j,l}\cdot  \partial^\alpha P (u_S^l\cdot \nabla)   u_S^{j,l}\mathrm{d}V.
\end{split}
\end{equation}
The first nonlinear term on the R.H.S of \eqref{4.8} can be estimated as
\begin{equation}\label{4.9}
\begin{split}
&\| \partial^\alpha u_S^{j,l}\|_{L^p}^{p-1}\| \partial^\alpha P(u_S^{j,l}\cdot \nabla)   u_S^j\|_{L^p}\\
&\quad \leq  C\| \partial^\alpha u_S^{j,l}\|_{L^p}^{p-1}\left(\|u_S^{j,l}\|_{L^\infty}\|\nabla u_S^j\|_{W^{k,p}}+\|u_S^{j,l}\|_{W^{k,p}}\|\nabla u_S^j\|_{L^\infty}\right)\\
&\quad \leq  C\|u_S^{j,l}\|_{W^{k,p}}^{p-1}\left(\|u_S^{j,l}\|_{W^{k-1,p}}\|u_S^j\|_{W^{k+1,p}}+\|u_S^{j,l}\|_{W^{k,p}}\| u_S^j\|_{W^{k,p}}\right)\\
&\quad \leq   C \|u_S^{j,l}\|_{W^{k-1,p}}^p\|u_S^j\|_{W^{k+1,p}}^p+C\|u_S^{j,l}\|_{W^{k,p}}^{p}(1+\| u_S^j\|_{W^{k,p}}).
\end{split}
\end{equation}
Using the commutator estimate, the second term on the R.H.S of \eqref{4.8} can be estimated as
\begin{equation}\label{4.10}
\begin{split}
&\int_D| \partial^\alpha u_S^{j,l}|^{p-2} \partial^\alpha u_S^{j,l}\cdot  \partial^\alpha P (u_S^l\cdot \nabla)   u_S^{j,l}\mathrm{d}V\\
&\quad   =\int_D| \partial^\alpha u_S^{j,l}|^{p-2} \partial^\alpha u_S^{j,l}\cdot  \partial^\alpha (I-P)(u_S^l\cdot \nabla)   u_S^{j,l}\mathrm{d}V\\
&\quad \quad +\int_D| \partial^\alpha u_S^{j,l}|^{p-2} \partial^\alpha u_S^{j,l}\cdot  \partial^\alpha (u_S^l\cdot \nabla)   u_S^{j,l}\mathrm{d}V\\
&\quad   \leq C\| \partial^\alpha u_S^{j,l}\|_{L^p}^{p-1} \left(\|[ \partial^\alpha ,u_S^l]\cdot \nabla   u_S^{j,l}\|_{L^p}+ \| \partial^\alpha (I-P)(u_S^l\cdot \nabla)   u_S^{j,l}\|_{L^p}\right)\\
&\quad   \leq C\| u_S^{j,l}\|_{W^{k,p}}^{p-1}\Big( \| u_S^l\|_{W^{k,p}}\| \nabla u_S^{j,l}\|_{L^\infty}+  \| \nabla u_S^l\|_{L^\infty} \|u_S^{j,l}\|_{W^{k,p}} \\
&\quad\quad\quad\quad + \|u_S^l\|_{W^{1,\infty}}\| u_S^{j,l}\|_{W^{k,p}}+\|u_S^l\|_{W^{k,p}}\|u_S^{j,l}\|_{W^{1,\infty}}\Big)\\
& \quad   \leq C\|u_S^l\|_{W^{k,p}}\|u_S^{j,l}\|_{W^{k,p}}^{p}.
\end{split}
\end{equation}
Plugging \eqref{4.9} and \eqref{4.10} into \eqref{4.8}, we get
\begin{equation}\label{4.11}
I_1^\alpha(t)\leq C\|u_S^{j,l}\|_{W^{k,p}}^{p}(1+\|u_S^l\|_{W^{k,p}}+\| u_S^j\|_{W^{k,p}})+C \|u_S^{j,l}\|_{W^{k-1,p}}^p\|u_S^j\|_{W^{k+1,p}}^p.
\end{equation}
For $I_2^\alpha(t)$, it follows from the Young inequality that
\begin{equation}\label{4.12}
\begin{split}
\mathbb{E}\sup_{s\in [0,t]}\bigg|\int_0^sI_2^\alpha(r)\mathrm{d} r\bigg|&\leq \mathbb{E}\int_0^t\int_D| \partial^\alpha u_S^{j,l}(r)|^{p-1}| \partial^\alpha P(u_T^{j,l}\widehat{x}+\theta _S^{j,l}\widehat{z})| \mathrm{d}V\mathrm{d}r\cr
& \leq \mathbb{E}\Bigg(\sup_{r\in [0,t]}\| \partial^\alpha u_S^{j,l}(r)\|_{L^p}^{p-1}\int_0^t\| \partial^\alpha P(u_T^{j,l}\widehat{x}+\theta _S^{j,l}\widehat{z})\|_{L^p} \mathrm{d}r\Bigg)\cr
& \leq \frac{1}{4}\mathbb{E} \sup_{r\in [0,t]}\|u_S^{j,l}(r)\|_{X^{k,p}}^p+C\mathbb{E}\int_0^t\left(\|u_T^{j,l}\|_{W^{k,p}}^p+\|\theta _S^{j,l}\|_{W^{k,p}}\right)\mathrm{d}r.
\end{split}
\end{equation}
For $I_3^\alpha(t)$, by using the locally Lipschitz condition in (A1), we have
\begin{equation}\label{4.13}
\begin{split}
 I_3^\alpha(t)+I_4^\alpha(t)\leq &C\| \partial^\alpha u_S^{j,l}\|^{p-2}_{L^p}\bigg\|\sum_{n\geq 1}( \partial^\alpha P\triangle \sigma ^{1,j,l}_n)^2\bigg\|_{L^{p/2}}^{2/p}\\
\leq & C\| u_S^{j,l}\|^{p-2}_{X^{k,p}}\|\sigma_1(u_S^j,u_T^j,\theta_S^j)-\sigma_1(u_S^l,u_T^l,\theta_S^l)
\|_{\mathbb{X}^{k,p}}^2\\
\leq &  C\| u_S^{j,l}\|^{p-2}_{X^{k,p}}\varsigma\Bigg( \sum_{\lambda\in\{j,l\}}(\| u_S^\lambda\|_{L^\infty}+\| u_T^\lambda\|_{L^\infty}+\| \theta_S^\lambda\|_{L^\infty})\Bigg)^2\\
 &\times
\left(\|u_S^{j,l}\|_{X^{k,p}}^2+\|u_T^{j,l}\|_{W^{k,p}}^2+\|\theta_S^{j,l}\|_{W^{k,p}}^2\right)\\
\leq &  C\varsigma\Bigg( \sum_{\lambda\in\{j,l\}}(\| u_S^\lambda\|_{L^\infty}+\| u_T^\lambda\|_{L^\infty}+\| \theta_S^\lambda\|_{L^\infty})\Bigg)^2\\
 &\times
\left(\|u_S^{j,l}\|_{X^{k,p}}^p+\|u_T^{j,l}\|_{W^{k,p}}^p+\|\theta_S^{j,l}\|_{W^{k,p}}^p\right).
\end{split}
\end{equation}
For $I_5^\alpha(t)$, by applying the BDG inequality, the Minkovski inequality and using the assumption (A1), one can deduce that for any stopping time $\mathbbm{t}\geq0$ almost surely,
\begin{equation}\label{4.14}
\begin{split}
\mathbb{E}\sup_{s\in [0,\mathbbm{t}]}\bigg|\int_0^sI_5^\alpha(r)\mathrm{d} \mathcal {W}_r\bigg|\leq& C\mathbb{E}\Bigg(\int_0^\mathbbm{t}\sum_{n\geq 1}\bigg(\int_D| \partial^\alpha u_S^{j,l}|^{p-2} \partial^\alpha u_S^{j,l}\cdot  \partial^\alpha P\triangle \sigma ^{1,j,l}_n\mathrm{d}V\bigg)^2\mathrm{d} t\Bigg)^{1/2}\\
\leq& C\mathbb{E}\Bigg(\int_0^\mathbbm{t}\bigg(\int_D| \partial^\alpha u_S^{j,l}|^{p-1} \bigg(\sum_{n\geq 1}( \partial^\alpha P\triangle \sigma ^{1,j,l}_n)^2\bigg)^{1/2}\mathrm{d}V\bigg)^2\mathrm{d} t\Bigg)^{1/2}\\
\leq &C\mathbb{E}\Bigg(\sup_{s\in [0,\mathbbm{t}]}\| \partial^\alpha u_S^{j,l}(s)\|_{L^p}^{2p-2} \\
& \times\int_0^\mathbbm{t} \|\sigma_1(u_S^j,u_T^j,\theta_S^j)-\sigma_1(u_S^l,u_T^l,\theta_S^l)
\|_{\mathbb{X}^{k,p}}^2\mathrm{d} t\Bigg)^{1/2}\\
\leq& \frac{1}{4}\mathbb{E}\sup_{s\in [0,\mathbbm{t}]}\| u_S^{j,l}(s)\|_{ X^{k,p}}^{p} \\
& +C\mathbb{E}\int_0^\mathbbm{t}\varsigma\Bigg( \sum_{\lambda\in\{j,l\}}(\| u_S^\lambda\|_{L^\infty}+\| u_T^\lambda\|_{L^\infty}+\| \theta_S^\lambda\|_{L^\infty})\Bigg)^2\\
 & \times
\left(\|u_S^{j,l}\|_{X^{k,p}}^p+\|u_T^{j,l}\|_{W^{k,p}}^p+\|\theta_S^{j,l}\|_{W^{k,p}}^p\right)\mathrm{d} r.
\end{split}
\end{equation}
Combining the estimates \eqref{4.11}-\eqref{4.14}, taking supremum over $[0,\mathbbm{t}_{j,l,T}]$ and then taking the expected value, it follows from the identity \eqref{4.7}, the Sobolev embedding $\mathcal {Z}^{k,p}\hookrightarrow \mathcal {Z}^{1,\infty}$ and the definition of the stopping time $\mathbbm{t}_{j,l,T}$ that
\begin{equation}\label{4.16}
\begin{split}
&  \mathbb{E}\sup_{s\in [0,\mathbbm{t}_{j,l,T}\wedge t]}\| u_S^{j,l}(s)\|_{ X^{k,p}}^{p}\\
&\quad \leq 2\mathbb{E}\| u_S^{j,l}(0)\|_{ X^{k,p}}^{p}+C\int_0^t \mathbb{E}\sup_{s\in [0,\mathbbm{t}_{j,l,T}\wedge r]}(\|u_S^{j,l}\|_{X^{k-1,p}}^p\|u_S^j\|_{X^{k+1,p}}^p)\mathrm{d}r\\
&\quad \quad  +  C\int_0^t\mathbb{E} \sup_{s\in [0,\mathbbm{t}_{j,l,T}\wedge r]} \|(u_S^{j,l}(s),u_T^{j,l}(s),\theta_S^{j,l}(s))\|_{\mathcal {Z}^{k,p}}^p\mathrm{d} r.
\end{split}
\end{equation}
Applying the It\^{o}'s formula in the Banach space to $\| \partial^\alpha  u_T^{j,l}\|_{L^p}^p=\|u_T^{j,l}\|_{W^{k,p}}^p$ with respect to the second component in \eqref{4.1}, we find
\begin{equation}\label{4.17}
\begin{split}
\mathrm{d}\| \partial^\alpha u_T^{j,l}\|_{L^p}^p=&-p\int_D| \partial^\alpha u_T^{j,l}|^{p-2} \partial^\alpha u_T^{j,l}   \partial^\alpha \left((u_S^{j,l}\cdot \nabla)   u_T^j+(u_S^l\cdot \nabla)   u_T^{j,l}+u_S^{j,l}\widehat{x}\right)\mathrm{d}V\mathrm{d}t\\
&+\sum_{n\geq 1}\frac{p(p-1)}{2}\int_D| \partial^\alpha u_T^{j,l}|^{p-2}( \partial^\alpha \triangle \sigma ^{1,j,l}_n)^2\mathrm{d}V\mathrm{d}t\\
&+\sum_{n\geq 1}p \int_D | \partial^\alpha u_T^{j,l}|^{p-2} \partial^\alpha u_T^{j,l}  \partial^\alpha \triangle \sigma ^{1,j,l}_n\mathrm{d}V\mathrm{d}\beta_n\\
=& (J_1^\alpha(t)+J_2^\alpha(t))\mathrm{d} t+J_3^\alpha(t)\mathrm{d} \mathcal {W} .
\end{split}
\end{equation}
For $J_1^\alpha(t)$, we get by the free divergence condition $\div u_S^l =0$ that
$$
p\int_D| \partial^\alpha u_T^{j,l}|^{p-2} \partial^\alpha u_T^{j,l} (u_S^l\cdot \nabla  \partial^\alpha  u_T^{j,l})\mathrm{d}V=\int_D u_S^l\cdot\nabla | \partial^\alpha u_T^{j,l}|^p\mathrm{d}V=0.
$$
By utilizing the H\"{o}lder inequality and  the commutator estimate, we get
\begin{equation}\label{4.18}
\begin{split}
J_1^\alpha(t)\leq& C\| \partial^\alpha u_T^{j,l}\|_{L^p}^{p-1}\left(\| \partial^\alpha (u_S^{j,l}\cdot \nabla) u_T^j\|_{L^p}+\|[ \partial^\alpha ,u_S^l]\cdot \nabla u_T^{j,l}\|_{L^p}+\| \partial^\alpha u_S^{j,l}\|_{L^p}\right)\\
\leq &C\|u_T^{j,l}\|_{W^{k,p}}^{p-1}\Big(\|u_S^{j,l}\|_{L^\infty} \|\nabla u_T^j\|_{W^{k,p}}+\|u_S^{j,l}\|_{W^{k,p}}\|\nabla u_T^j\|_{L^\infty}   \\
&+\|\nabla u_S^l\|_{L^\infty}\|\nabla u_T^{j,l}\|_{W^{k-1,p}}+ \| \partial^\alpha u_S^l\|_{L^p}\|\nabla u_T^{j,l}\|_{L^\infty}+\|u_S^{j,l}\|_{X^{k,p}}\Big)\\
\leq &C\|u_T^{j,l}\|_{W^{k,p}}^{p-1}\Big(\|u_S^{j,l}\|_{W^{k-1,p}} \| u_T^j\|_{W^{k+1,p}}+\|u_T^{j,l}\|_{W^{k,p}}\|u_T^j\|_{W^{k,p}} \\
&+\|u_S^l\|_{X^{k,p}}\|u_T^{j,l}\|_{W^{k,p}}+\|u_S^{j,l}\|_{X^{k,p}}\Big)\\
\leq &C\|u_T^{j,l}\|_{W^{k,p}}^p\left(\|u_T^j\|_{W^{k,p}}+\|u_S^l\|_{X^{k,p}}+1\right) +C\|u_S^{j,l}\|_{W^{k-1,p}}^p \| u_T^j\|_{W^{k+1,p}}^p.
\end{split}
\end{equation}
Thanks to the locally Lipschitz condition in (A1), we can estimate $J_2^\alpha(t)$ as
\begin{equation}\label{4.19}
\begin{split}
|J_2^\alpha(t)|\leq& C\| \partial^\alpha u_T^{j,l}\|_{L^p}^{p-2}   \|  \partial^\alpha \sigma_2(u_S^j,u_T^j,\theta_S^j)-\sigma_2(u_S^l,u_T^l,\theta_S^l)
\|_{\mathbb{W}^{k,p}}^2\\
\leq&C \| \partial^\alpha u_T^{j,l}\|_{L^p}^{p-2}C\varsigma\Bigg( \sum_{\lambda\in\{j,l\}}(\| u_S^\lambda\|_{L^\infty}+\| u_T^\lambda\|_{L^\infty}+\| \theta_S^\lambda\|_{L^\infty})\Bigg)^2\\
 &\times
\left(\|u_S^{j,l}\|_{X^{k,p}}^2+\|u_T^{j,l}\|_{W^{k,p}}^2+\|\theta_S^{j,l}\|_{W^{k,p}}^2\right)\\
\leq& C\varsigma\Bigg( \sum_{\lambda\in\{j,l\}}(\| u_S^\lambda\|_{L^\infty}+\| u_T^\lambda\|_{L^\infty}+\| \theta_S^\lambda\|_{L^\infty})\Bigg)^2\\
 &\times
\left(\|u_S^{j,l}\|_{X^{k,p}}^p+\|u_T^{j,l}\|_{W^{k,p}}^p+\|\theta_S^{j,l}\|_{W^{k,p}}^p\right).
\end{split}
\end{equation}
For the stochastic term $J_3^\alpha(t)$, by using the BDG inequality and estimating in a similar manner to \eqref{4.19}, we find that for any stopping time $\mathbbm{t}\geq0$ almost surely,
\begin{equation}\label{4.20}
\begin{split}
&\mathbb{E}\sup_{s\in [0,\mathbbm{t}]}\bigg|\int_0^sJ_3^\alpha(r)\mathrm{d} \mathcal {W}\bigg|\\
&\quad\leq C\mathbb{E}\Bigg(\sup_{s\in [0,\mathbbm{t}]}\| \partial^\alpha u_T^{j,l}(s)\|_{L^p}^{2p-2} \int_0^\mathbbm{t} \|\sigma_2(u_S^j,u_T^j,\theta_S^j)-\sigma_2(u_S^l,u_T^l,\theta_S^l)
\|_{\mathbb{X}^{k,p}}^2\mathrm{d} t\Bigg)^{1/2}\\
&\quad\leq \frac{1}{2}\mathbb{E}\sup_{s\in [0,\mathbbm{t}]}\| u_T^{j,l}(s)\|_{ X^{k,p}}^{p} +C\mathbb{E}\int_0^\mathbbm{t}\varsigma\Bigg( \sum_{\lambda\in\{j,l\}}(\| u_S^\lambda\|_{L^\infty}+\| u_T^\lambda\|_{L^\infty}+\| \theta_S^\lambda\|_{L^\infty})\Bigg)^2\\
 &\quad \quad\times
\left(\|u_S^{j,l}\|_{X^{k,p}}^p+\|u_T^{j,l}\|_{W^{k,p}}^p+\|\theta_S^{j,l}\|_{W^{k,p}}^p\right)\mathrm{d} r.
\end{split}
\end{equation}
Taking $ \mathbbm{t}=\mathbbm{t}_{j,l,T}$ and integrating the identity \eqref{4.17} over $[0,\mathbbm{t}_{j,l,T}]$. After plugging the estimates \eqref{4.18}-\eqref{4.20} into \eqref{4.17}, we deduce from the definition of $\mathbbm{t}_{j,l,T}$ that
\begin{equation}\label{4.21}
\begin{split}
&\mathbb{E}\sup_{s\in [0,\mathbbm{t}_{j,l,T}\wedge t]}\| u_T^{j,l}(s)\|_{W^{k,p}}^p\\
&\quad \leq  2\| u_T^{j,l}(0)\|_{W^{k,p}}^p+C\int_0^ {\mathbbm{t}_{j,l,T}} \mathbb{E}(\|u_T^{j,l}\|_{W^{k-1,p}}^p \| u_T^j\|_{W^{k+1,p}}^p) \mathrm{d}r\\
&\quad\quad+C\int_0^ {\mathbbm{t}_{j,l,T}}\mathbb{E}\left(\|u_S^{j,l}\|_{X^{k,p}}^p+\|u_T^{j,l}\|_{W^{k,p}}^p+\|\theta_S^{j,l}\|_{W^{k,p}}^p\right)\mathrm{d} r\\
&\quad \leq  2\| u_T^{j,l}(0)\|_{W^{k,p}}^p+C\int_0^t\mathbb{E}\sup_{s\in [0,\mathbbm{t}_{j,l,T}\wedge r]}\left(\|u_S^{j,l}\|_{W^{k-1,p}}^p \| u_T^j\|_{W^{k+1,p}}^p\right) \mathrm{d}r\\
&\quad\quad+C\int_0^t\mathbb{E}\sup_{s\in [0,\mathbbm{t}_{j,l,T}\wedge r]}\|(u_S^{j,l},u_T^{j,l},\theta_S^{j,l})(s)\|_{\mathcal {Z}^{k,p}}^p \mathrm{d} r.
\end{split}
\end{equation}
Applying the It\^{o}'s formula in Banach space to  $\| \partial^\alpha  \theta_S^{j,l}\|_{L^p}^p=\|\theta_S^{j,l}\|_{W^{k,p}}^p$ with respect to the second scalar equation in \eqref{4.1}, we find
\begin{equation}\label{4.22}
\begin{split}
 \mathrm{d}\| \partial^\alpha \theta_S^{j,l}(t)\|_{L^p}^p=& -p \int_D| \partial^\alpha \theta_S^{j,l}|^{p-2} \partial^\alpha \theta_S^{j,l}   \partial^\alpha \left((u_S^{j,l}\cdot \nabla ) \theta_S^j +(u_S^l\cdot \nabla ) \theta_S^{j,l}+u_T^{j,l}\right)\mathrm{d}V\mathrm{d}t\\
&+\sum_{n\geq 1}\frac{p(p-1)}{2} \int_D| \partial^\alpha \theta_S^{j,l}|^{p-2}( \partial^\alpha \triangle \sigma ^{3,j,l}_n)^2\mathrm{d}V\mathrm{d}t\\
&+\sum_{n\geq 1}p  \int_D | \partial^\alpha \theta_S^{j,l}|^{p-2} \partial^\alpha \theta_S^{j,l}  \partial^\alpha \triangle \sigma ^{3,j,l}_n\mathrm{d}V\mathrm{d}\beta_n \\
=& (K_1^\alpha(t)+K_2^\alpha(t))\mathrm{d} t +K_3^\alpha(t) \mathrm{d} \mathcal {W}.
\end{split}
\end{equation}
 For $K_1^\alpha(t)$, we have
\begin{equation}\label{4.22}
\begin{split}
&\mathbb{E}\sup_{s\in [0,\mathbbm{t}_{j,l,T}\wedge t]}\bigg|\int_0^s K_1^\alpha(r)\mathrm{d} r\bigg|\\
&\quad \leq C\mathbb{E}\int_0^{\mathbbm{t}_{j,l,T}\wedge t}\| \partial^\alpha \theta_S^{j,l}\|_{L^p}^{p-1}\| \partial^\alpha (u_S^{j,l}\cdot \nabla ) \theta_S^j +[ \partial^\alpha ,u_S^l]\cdot \nabla \theta_S^{j,l}+ \partial^\alpha u_T^{j,l}\|_{L^p}\mathrm{d}t\\
&\quad \leq C\mathbb{E}\int_0^{\mathbbm{t}_{j,l,T}\wedge t}\|\theta_S^{j,l}\|_{W^{k,p}}^{p-1}\Big(\|u_S^{j,l}\|_{L^\infty}\|\nabla \theta_S^{j}\|_{W^{k,p}} +\|\nabla \theta_S^{j}\|_{L^\infty}\|u_S^{j,l}\|_{W^{k,p}} \\
&\quad\quad+\|\nabla u_S^l \|_{L^\infty}\|\Lambda^{k-1}\nabla \theta_S^{j,l} \|_{L^\infty}+\|\Lambda^{k}u_S^l \|_{L^p}\|\nabla \theta_S^{j,l} \|_{L^\infty} +\| \partial^\alpha \theta_S^{j,l}\|_{L^p}\Big)\mathrm{d}t\\
&\quad \leq C\mathbb{E}\int_0^{\mathbbm{t}_{j,l,T}\wedge t}\Big( \|u_S^{j,l}\|_{W^{k-1,p}}^p\| \theta_S^{j}\|_{W^{k+1,p}}^p +\|\theta_S^{j,l}\|_{W^{k,p}}^{p} +  \|u_S^{j,l}\|_{W^{k,p}} ^p \Big)\mathrm{d}t.
\end{split}
\end{equation}
 For $K_2^\alpha(t)$, we have
\begin{equation}\label{4.23}
\begin{split}
&\mathbb{E}\sup_{s\in [0,\mathbbm{t}_{j,l,T}\wedge t]}\bigg| \int_0^s K_2^\alpha(r)\mathrm{d} r\bigg|\\
&\quad \leq C\mathbb{E} \sum_{n\geq 1} \int_0^{\mathbbm{t}_{j,l,T}\wedge t}\int_D| \partial^\alpha \theta_S^{j,l}|^{p-2}( \partial^\alpha \triangle \sigma ^{3,j,l}_n)^2\mathrm{d}V\mathrm{d}t\\
& \quad \leq C\mathbb{E} \int_0^{\mathbbm{t}_{j,l,T}\wedge t}      \| \partial^\alpha \theta_S^{j,l}\|_{L^p}^{p-2}\Bigg(\int_D \bigg( \sum_{n\geq 1}  ( \partial^\alpha \triangle \sigma ^{3,j,l}_n)^2\bigg)^{p/2}\mathrm{d} x  \Bigg)^{2/p} \mathrm{d}r\\
& \quad \leq C\mathbb{E} \int_0^{\mathbbm{t}_{j,l,T}\wedge t}      \| \partial^\alpha \theta_S^{j,l}\|_{L^p}^{p-2}  \|\sigma_3(u_S^j,u_T^j,\theta_S^j)-\sigma_3(u_S^l,u_T^l,\theta_S^l)
\|_{\mathbb{W}^{k,p}}^2        \mathrm{d}r\\
&\quad\leq \frac{1}{4}\mathbb{E}\sup_{s\in [0,\mathbbm{t}_{j,l,T}\wedge t]}\|\theta_S^{j,l}(s)\|_{ X^{k,p}}^{p} \\
&\quad\quad+C\mathbb{E}\int_0^{\mathbbm{t}_{j,l,T}\wedge t}
\left(\|u_S^{j,l}\|_{X^{k,p}}^p+\|u_T^{j,l}\|_{W^{k,p}}^p+\|\theta_S^{j,l}\|_{W^{k,p}}^p\right)\mathrm{d} r.
\end{split}
\end{equation}
By virtue of the BDG inequality, the term $K_3^\alpha(t)$ can be estimated by
\begin{equation}\label{4.24}
\begin{split}
&\mathbb{E}\sup_{s\in [0,\mathbbm{t}_{j,l,T}\wedge t]}\bigg| \int_0^s K_3^\alpha(r)\mathrm{d} r\bigg|\\
&\quad \leq C\mathbb{E}\Bigg(  \int_0^{\mathbbm{t}_{j,l,T}\wedge s} \sum_{n\geq 1}\bigg(\int_D| \partial^\alpha \theta_S^{j,l}|^{p-2} \partial^\alpha \theta_S^{j,l}  \partial^\alpha \triangle \sigma ^{3,j,l}_n\mathrm{d}V\bigg)^2\mathrm{d}r\Bigg)^{1/2}\\
 &\quad\leq  \frac{1}{4}\mathbb{E}\sup_{s\in [0,\mathbbm{t}_{j,l,T}\wedge t]}\| \theta_S^{j,l}(s)\|_{ W^{k,p}}^{p} +C \int_0^{\mathbbm{t}_{j,l,T}\wedge t}
 \mathbb{E}\sup_{s\in [0,\mathbbm{t}_{j,l,T}\wedge r]}\|(u_S^{j,l},u_T^{j,l},\theta_S^{j,l})\|_{\mathcal {Z}^{k,p}}^p \mathrm{d} r.
\end{split}
\end{equation}
By taking supremum over $[0,\mathbbm{t}_{j,l,T}\wedge t]$ and then the expected values to  \eqref{4.22}, we get from the estimates \eqref{4.22}-\eqref{4.24} that
\begin{equation}\label{4.25}
\begin{split}
\mathbb{E}\sup_{s\in [0,\mathbbm{t}_{j,l,T}\wedge t]}\|\theta_S^{j,l}(s)\|_{W^{k,p}}^p\leq& 2  \|\theta_S^{j,l}(0)\|_{W^{k,p}}^p+\int_0^t\mathbb{E}\sup_{s\in [0,\mathbbm{t}_{j,l,T}\wedge r]}(\|u_S^{j,l}\|_{W^{k-1,p}}^p\| \theta_S^{j}\|_{W^{k+1,p}}^p)  \mathrm{d} r\\
& +C\int_0^t
\mathbb{E} \sup_{s\in [0,\mathbbm{t}_{j,l,T}\wedge r]}\|(u_S^{j,l},u_T^{j,l},\theta_S^{j,l})(s)\|_{\mathcal {Z}^{k,p}}^p \mathrm{d} r.
\end{split}
\end{equation}
Putting the estimates \eqref{4.16}, \eqref{4.21} and \eqref{4.25} together, we get
\begin{equation*}
\begin{split}
&\mathbb{E}\sup_{s\in [0,\mathbbm{t}_{j,l,T}\wedge t]}\|(u_S^{j,l}(s),u_T^{j,l}(s),\theta_S^{j,l}(s))\|_{\mathcal {Z}^{k,p}}^p\\
&\quad\leq C \|(u_S^{j,l}(0),u_T^{j,l}(0),\theta_S^{j,l}(0))\|_{\mathcal {Z}^{k,p}}^p\\
&\quad\quad +C\int_0^t \mathbb{E}\sup_{s\in [0,\mathbbm{t}_{j,l,T}\wedge r]}\left(\|u_S^{j,l}(s)\|_{W^{k-1,p}}^p\|(u_S^j(s),u_T^j(s),\theta_S^j(s))\|_{\mathcal {Z}^{k,p}}^p\right)\mathrm{d} r  \\
&\quad\quad +C \int_0^t
 \mathbb{E}\sup_{s\in [0,\mathbbm{t}_{j,l,T}\wedge r]}\|(u_S^{j,l}(s),u_T^{j,l}(s),\theta_S^{j,l}(s))\|_{\mathcal {Z}^{k,p}}^p \mathrm{d} r,
\end{split}
\end{equation*}
which together with the Gronwall inequality leads to
\begin{equation}\label{4.26}
\begin{split}
&\mathbb{E}\sup_{s\in [0,\mathbbm{t}_{j,l,T}]}\|(u_S^{j,l}(s),u_T^{j,l}(s),\theta_S^{j,l}(s))\|_{\mathcal {Z}^{k,p}}^p\leq  C \|(u_S^{j,l}(0),u_T^{j,l}(0),\theta_S^{j,l}(0))\|_{\mathcal {Z}^{k,p}}^p\\
&\quad +   C\mathbb{E}\sup_{s\in [0,\mathbbm{t}_{j,l,T}]}\left(\|u_S^{j,l}(s)\|_{W^{k-1,p}}^p\|(u_S^j(s),u_T^j(s),\theta_S^j(s))\|_{\mathcal {Z}^{k,p}}^p\right).
\end{split}
\end{equation}
By virtue of the property of the mollifiers $J_{1/j}$, we have $\|J_{1/j} u_S^{0}-u_S^{0}\|_{X^{k,p}}\rightarrow 0$ as $j\rightarrow \infty$, which implies that
$$
\lim_{j\rightarrow\infty} \sup_{l\geq j}\mathbb{E}\|u_S^{j,l}(0)\|_{X^{k,p}}=\lim_{j\rightarrow\infty} \sup_{l\geq j}\mathbb{E}\|J_{1/j}  u_0- J_{1/l} u_0\|_{X^{k,p}}=0.
$$
Similarly, we also have $\lim_{j\rightarrow\infty} \sup_{l\geq j}\mathbb{E}\|u_T^{j,l}(0)\|_{W^{k,p}}=0$ and $\lim_{j\rightarrow\infty} \sup_{l\geq j}\mathbb{E}\|\theta_S^{j,l}(0)\|_{W^{k,p}}=0$. As a consequence, we get
$$
\lim_{j\rightarrow\infty} \sup_{l\geq j}\mathbb{E}\|(u_S^{j,l}(0),u_T^{j,l}(0),\theta_S^{j,l}(0))\|_{\mathcal {Z}^{k,p}}^p=0.
$$
Therefore, in order to verify the convergence \eqref{4.2}, we have to derive some estimates for the following term:
\begin{equation}\label{333}
\begin{split}
\mathbb{E}\sup_{s\in [0,\mathbbm{t}_{j,l,T}]}\left(\|(u_S^{j,l},u_T^{j,l},\theta_S^{j,l})\|_{\mathcal {Z}^{k-1,p}}^p
\|(u_S^j,u_T^j,\theta_S^j)\|_{\mathcal {Z}^{k+1,p}}^p\right).
\end{split}
\end{equation}
To make the notation less cumbersome, we shall consider the equality \eqref{4.7} with
$k$ replaced by $k-1$, the equality \eqref{2.3} with $k$ replaced by $k+1$ and $u_S$ replaced by $u_S^j$, that is, we take
\begin{equation*}
\begin{split}
 \mathrm{d}\| u_S^j\|_{W^{k+1,p}}^p= \sum_{|\alpha|\leq k+1}\Big(({L_1^\alpha}' +{L_2^\alpha}' +{L_3^\alpha}' )\mathrm{d} t +{L_4^\alpha}' \mathrm{d} \mathcal {W}\Big),
\end{split}
\end{equation*}
and
\begin{equation*}
\begin{split}
\mathrm{d}\|u_S^{j,l}\|_{W^{k-1,p}}^p=\sum_{|\beta|\leq k-1}\Big((I_1^\beta+I_2^\beta+I_3^\beta+I_4^\beta)\mathrm{d} t+I_5^\beta \mathrm{d} \mathcal {W}\Big),
\end{split}
\end{equation*}
where $'$ denotes the derivative with respect to $t$. An application of the It\^{o}'s product rule leads to
\begin{equation}\label{4.27}
\begin{split}
 &\mathrm{d}(\|u_S^{j,l}\|_{W^{k-1,p}}^p\| u_S^{j}\|_{W^{k+1,p}}^p) \\
 &\quad = \sum_{|\alpha|\leq k+1} \|u_S^{j,l}\|_{W^{k-1,p}}^p({L_1^\alpha}' +{L_2^\alpha}' +{L_3^\alpha}' )\mathrm{d} t +\sum_{|\alpha|\leq k+1}\|u_S^{j,l}\|_{W^{k-1,p}}^p{L_4^\alpha}'\mathrm{d} \mathcal {W}\\
 &\quad\quad
+\sum_{|\beta|\leq k-1}\| u_S^{j}\|_{W^{k+1,p}}^p(I_1^\beta+I_2^\beta+I_3^\beta+I_4^\beta)\mathrm{d} t+\sum_{|\beta|\leq k-1}\| u_S^{j}\|_{W^{k+1,p}}^pI_5^\beta
 \mathrm{d} \mathcal {W}\\
 &\quad\quad + G^{j,l}(t)\mathrm{d} t,
\end{split}
\end{equation}
where the laet term $G^{j,l}(t)$ is given by
\begin{equation*}
\begin{split}
G^{j,l}(t):=&p^2\sum_{n\geq 1} \Bigg(\sum_{|\beta|\leq k-1}\int_D|\partial^\beta u_S^{j,l}|^{p-2}
\partial^\beta u_S^{j,l}\cdot \partial^\beta P\triangle \sigma ^{1,j,l}_n\mathrm{d}V \Bigg)\\
& \quad \quad \quad \quad \quad\quad \quad \quad\quad\times
\Bigg (\sum_{|\alpha|\leq k+1}\int_D|\partial^\alpha u_S^j|^{p-2} \partial^\alpha u_S^j\cdot \partial^\alpha P \sigma _{1}e_n\mathrm{d}V\Bigg).
\end{split}
\end{equation*}
In view of the estimates carried out in Section 3 and taking advantage of the condition (A1), we immediately infer  that over the interval $[0,\mathbbm{t}_{j,l,T}\wedge t]$,
\begin{equation}\label{4.28}
\begin{split}
 &\|u_S^{j,l}\|_{W^{k-1,p}}^p({L_1^\alpha}'+{L_2^\alpha}'+{L_3^\alpha}')\\
 &\quad \leq  \|\nabla u_S\| _{L^\infty}\| u_S \|_{X^{k+1,p}}^{ p}\|u_S^{j,l}\|_{W^{k-1,p}}^p+\kappa(\|u_S,u_T,\theta_S\|_{L^\infty})^2\|u_S^{j,l}\|_{W^{k-1,p}}^p
 \\
 &\quad\quad+(1+\kappa(\|u_S^j,u_T^j,\theta_S^j\|_{L^\infty})^2)\|u_S^{j,l}\|_{W^{k-1,p}}^p(\|u_S\|_{X^{k+1,p}}^p+\|u_T\|_{W^{k+1,p}}^p
 +\|\theta_S\|_{W^{k+1,p}}^p)\\
 & \quad \leq  C\|u_S^{j,l}\|_{W^{k-1,p}}^p\| u_S^j \|_{X^{k+1,p}}^{ p}
 \\
 &\quad\quad+C\|u_S^{j,l}\|_{W^{k-1,p}}^p(1+\|u_S^j\|_{X^{k+1,p}}^p+\|u_T^j\|_{W^{k+1,p}}^p
 +\|\theta_S\|_{W^{k+1,p}}^p).
\end{split}
\end{equation}
For the stochastic term associated  to ${L^\alpha_4}'$, we have
\begin{equation}\label{4.29}
\begin{split}
 &\mathbb{E}\sup_{s\in \mathbbm{t}_{j,l,T}\wedge t}\bigg|\int_0^s\|u_S^{j,l}(r)\|_{W^{k-1,p}}^p{L_4^\alpha}'\mathrm{d} \mathcal {W}\bigg|\\
 &\quad \leq C\mathbb{E}\Bigg(\int_0^{\mathbbm{t}_{j,l,T}\wedge t}\|u_S^{j,l}(r)\|_{X^{k-1,p}}^{2p}\\
 &\quad\quad  \times \sum_{n\geq1}\bigg(\int_D|\Lambda^{k+1}u_S^j
  |^{p-2}\Lambda^{k+1}u_S^j \cdot\Lambda^{k+1}P\sigma_1(u_S^j,u_T^j,\theta_S^j)e_n \mathrm{d}V\bigg)^{2}\mathrm{d} r\Bigg)^{1/2}\\
&\quad \leq C\mathbb{E}\bigg(\int_0^{\mathbbm{t}_{j,l,T}\wedge t}\|u_S^{j,l}(r)\|_{X^{k-1,p}}^{2p}
 (\|u_S^j\|_{X^{k+1,p}}^{2p}
  +\|u_T^j\|_{W^{k+1,p}}^{2p}
 +\|\theta_S\|_{W^{k+1,p}}^{2p})\mathrm{d} r\bigg)^{1/2}\\
&\quad \leq \frac{1}{5}\mathbb{E}\sup_{s\in [0,\mathbbm{t}_{j,l,T}\wedge t]} (\|u_S^{j,l}(s)\|_{W^{k-1,p}}^{p}\|(u_S^j,u_T^j,\theta_S^j)(s)\|_{\mathcal {Z}^{k+1,p}}^{p})\\
&\quad \quad +C\mathbb{E}\int_0^{\mathbbm{t}_{j,l,T}\wedge t}
 \|u_S^{j,l}(s)\|_{W^{k-1,p}}^{p} \|(u_S^j,u_T^j,\theta_S^j)(s)\|_{\mathcal {Z}^{k+1,p}}^{p}\mathrm{d} r .
 \end{split}
\end{equation}
For the stochastic term related to $I_5^\beta$, we get from the BDG inequality that
\begin{equation}\label{4.30}
\begin{split}
 &\mathbb{E}\sup_{s\in \mathbbm{t}_{j,l,T}\wedge t}\bigg|\int_0^s\|u_S^j(r)\|_{X^{k+1,p}}^pI_5^\beta\mathrm{d} \mathcal {W}\bigg|\\
 &\quad \leq C\mathbb{E}\bigg(\int_0^{\mathbbm{t}_{j,l,T}\wedge t}\|u_S^j\|_{X^{k+1,p}}^{2p}\|u_S^{j,l}
  \|_{X^{k-1,p}}^{2p-2}\\
  &\quad\quad\quad\quad\quad\times  \|P\sigma_1(u_S^j,u_T^j,\theta_S^j)-P\sigma_1(u_S^l,u_T^l,\theta_S^l)\|_{\mathbb{X}^{k-1,p}}^{2}\mathrm{d} r\bigg)^{1/2}\\
&\quad \leq  \frac{1}{4}\mathbb{E}\sup_{s\in [0,\mathbbm{t}_{j,l,T}\wedge t]} (\|u_S^{j,l}\|_{X^{k-1,p}}^{p}\|u_S^j\|_{X^{k+1,p}}^{p}) +C\mathbb{E}\int_0^{\mathbbm{t}_{j,l,T}\wedge t}
\|u_S^{j,l}\|_{X^{k-1,p}}^{p}\|u_S^j\|_{X^{k+1,p}}^{p}\mathrm{d} r.
 \end{split}
\end{equation}
Utilizing the H\"{o}lder inequality and the Minkowski inequality as well as the locally Lipschitz condition (A1),
 one can estimate the term $G^{j,l}(t)$ over $[0,\mathbbm{t}_{j,l,T}\wedge t]$ as
\begin{equation}\label{4.31}
\begin{split}
  |G^{j,l}|\leq& C\sum_{|\beta|\leq k-1}\bigg\|(\int_D|\partial^\beta u_S^{j,l}|^{p-2}
\partial^\beta u_S^{j,l}\cdot \partial^\beta P\triangle \sigma ^{1,j,l}_n\mathrm{d}V)_{n\geq1}\bigg\|_{l^2}\\
&\times \sum_{|\alpha|\leq k+1}
 \bigg\|(\int_D|\partial^\alpha u_S^j|^{p-2}\partial^\alpha u_S^j\cdot \partial^\alpha P \sigma _{1}e_n\mathrm{d}V)_{n\geq1}\bigg\|_{l^2}\\
 \leq&C\sum_{|\beta|\leq k-1}\int_D|\partial^\beta u_S^{j,l}|^{p-1}\|(\partial^\beta P\triangle \sigma ^{1,j,l}_n)_{n\geq1}\|_{l^2}\mathrm{d}V
\sum_{|\alpha|\leq k+1}\int_D|\partial^\alpha u_S^{j,l}|^{p-1}\|(\partial^\alpha P\sigma _{1}e_n)_{n\geq1}\|_{l^2}\mathrm{d}V\\
 \leq&C\sum_{|\beta|\leq k-1}\|\partial^\beta u_S^{j,l}\|^{p-1}_{L^p}\Big\|\|(\partial^\beta P\triangle \sigma ^{1,j,l}_n)_{n\geq1}\|_{l^2}\Big\|_{L^p}
\sum_{|\alpha|\leq k+1}\|\partial^\alpha u_S^j\|^{p-1}_{L^p}\Big\|\|(\partial^\alpha P\sigma _{1}e_n)_{n\geq1}\|_{l^2}\Big\|_{L^p}\\
\leq & C\|u_S^{j,l}\|^{p-1}_{X^{k-1,p}}\|u_S^j\|^{p-1}_{X^{k+1,p}}\Big(\|u_S^{j,l}\|_{X^{k-1,p}}+\|u_T^{j,l}\|_{W^{k-1,p}}+\|\theta_S^{j,l}\|_{W^{k-1,p}}\Big)
\\
&\times\Big(1+\|u_S^j\|_{X^{k+1,p}}+\|u_T^j\|_{W^{k+1,p}}+\|\theta_S\|_{W^{k+1,p}}\Big)\\
\leq & C\|(u_S^{j,l},u_T^{j,l},\theta_S^{j,l})\|_{\mathcal {Z}^{k-1,p}}^p
 (1+\|(u_S^j,u_T^j,\theta_S^j)\|_{\mathcal {Z}^{k+1,p}}^p),
\end{split}
\end{equation}
where the last two inequalities used the definition of the stopping time $\mathbbm{t}_{j,l,T}$ in Lemma \ref{lem:4.1}.

The drift term $I_1^\beta$ can be treated by following the decomposition method in \cite{41} (pp. 119), and one can deduce that for any $s\in [0,\mathbbm{t}_{j,l,T}\wedge t]$,
\begin{equation}\label{4.32}
\begin{split}
  \| u_S^{j}\|_{W^{k+1,p}}^p|I_1^\beta|\leq&  C \|u_S^{j,l}\|^{p }_{X^{k-1,p}}\| u_S^{j}\|_{X^{k+1,p}}^p(\|u_S^j\|^{p}_{X^{k,p}}+\|u_S^l\|^{p}_{X^{k,p}})\\
   \leq&  C \|u_S^{j,l}\|^{p }_{X^{k-1,p}}\| u_S^{j}\|_{X^{k+1,p}}^p .
\end{split}
\end{equation}
For $I_2^\beta$, we have
\begin{equation}\label{4.33}
\begin{split}
 \| u_S^{j}\|_{W^{k+1,p}}^p|I_2^\beta|\leq& p\| u_S^{j}\|_{W^{k+1,p}}^p \int_D|\partial^\beta u_S^{j,l}|^{p-1}|\partial^\beta P(u_T^{j,l}\widehat{x}+\theta _S^{j,l}\widehat{z}) |\mathrm{d}V\\
\leq& C\| u_S^{j}\|_{W^{k+1,p}}^p \|\partial^\beta u_S^{j,l}\|^{p-1}_{L^p}\|\partial^\beta P(u_T^{j,l}\widehat{x}
+\theta _S^{j,l}\widehat{z}) \|_{L^p}\\
\leq& C\| u_S^{j}\|_{W^{k+1,p}}^p \|u_S^{j,l}\|^{p-1}_{X^{k-1,p}}(\| u_T^{j,l} \|_{W^{k-1,p}}+
\|\theta _S^{j,l} \|_{W^{k-1,p}})\\
\leq& C  \| u_S^{j}\|_{W^{k+1,p}}^p\|(u_S^{j,l},u_T^{j,l},\theta_S^{j,l})\| _{\mathcal {Z}^{k-1,p}}^p,
\end{split}
\end{equation}
over the interval $[0,\mathbbm{t}_{j,l,T}\wedge t]$.

For $I_3^\beta$, we have for any $s\in [0,\mathbbm{t}_{j,l,T}\wedge t]$ that
\begin{equation}\label{4.34}
\begin{split}
 \| u_S^{j}\|_{X^{k+1,p}}^p|I_3^\beta+I_4^\beta|\leq&  C \| u_S^{j}\|_{X^{k+1,p}}^p\|\partial^\beta u_S^{j,l}\|_{L^p}^{p-2}
 \|\partial^\beta P\triangle \sigma ^{1,j,l}\|_{\mathbb{X}^{0,p}}^2\\
 \leq&  C \| u_S^{j}\|_{X^{k+1,p}}^p\| u_S^{j,l}\|_{X^{k-1,p}}^{p-2}
 \|\sigma_1(u_S^j,u_T^j,\theta_S^j)-\sigma_1(u_S^l,u_T^l,\theta_S^l)\|_{\mathbb{X}^{k-1,p}}^2\\
 \leq&  C\varsigma\Bigg( \sum_{\lambda\in\{j,l\}} (\| u_S^\lambda\|_{L^\infty}+\| u_T^\lambda\|_{L^\infty}+\| \theta_S^\lambda\|_{L^\infty})\Bigg)^2\| u_S^{j}\|_{X^{k+1,p}}^p\\
 &\times
\| u_S^{j,l}\|_{X^{k-1,p}}^{p-2}\left(\|u_S^{j,l}\|_{X^{k-1,p}}^2+\|u_T^{j,l}\|_{W^{k-1,p}}^2
+\|\theta_S^{j,l}\|_{W^{k-1,p}}^2\right)\\
 \leq&  C\| u_S^{j}\|_{X^{k+1,p}}^p
\|(u_S^{j,l},u_T^{j,l},\theta_S^{j,l})\| _{\mathcal {Z}^{k-1,p}}^p.
\end{split}
\end{equation}
Integrating the equality \eqref{4.27} with respect to $t$ and then taking the supremum over $[0,\mathbbm{t}_{j,l,T}\wedge t]$. After taking the expected values and applying the estimates \eqref{4.28}-\eqref{4.34}, we get
\begin{equation}\label{4.35}
\begin{split}
 & \mathbb{E}\sup_{s\in [0,\mathbbm{t}_{j,l,T}\wedge t]}\|u_S^{j,l}(s)\|_{X^{k-1,p}}^p
 \| u_S^{j}(s)\|_{X^{k+1,p}}^p \\
 &\quad\leq \mathbb{E}(\|u_S^{j,l}(0)\|_{X^{k-1,p}}^p\| u_S^{j}(0)\|_{X^{k+1,p}}^p) \\
 &\quad\quad+C\int_0^{\mathbbm{t}_{j,l,T}\wedge t}\|(u_S^{j,l},u_T^{j,l},\theta_S^{j,l})(s)\| _{\mathcal {Z}^{k-1,p}}^p
 (1+\|(u_S^j,u_T^j,\theta_S^j)(s)\|_{\mathcal {Z}^{k+1,p}}^{p})\mathrm{d} s.
 \end{split}
\end{equation}

In order to close the above inequality, we have to obtain estimates for the other terms involved in
$\|(u_S^{j,l},u_T^{j,l},\theta_S^{j,l})(s)\| _{\mathcal {Z}^{k-1,p}}^p
 \|(u_S^j,u_T^j,\theta_S^j)(s)\|_{\mathcal {Z}^{k+1,p}}^{p}$
Thanks to the equalities \eqref{2.28}, \eqref{2.34}, \eqref{4.16} and \eqref{4.21}, by replacing $k$ with $k-1$ and $k+1$ correspondingly, one can derive similar estimates for the other terms without any additional difficulties, so we shall omit the details to save the space. Finally, by summing up all these estimates we obtain
\begin{equation*}
\begin{split}
 & \mathbb{E}\sup_{s\in [0,\mathbbm{t}_{j,l,T}\wedge t]}(\|(u_S^{j,l},u_T^{j,l},\theta_S^{j,l})(s)\| _{\mathcal {Z}^{k-1,p}}^p
 \|(u_S^j,u_T^j,\theta_S^j)(s)\|_{\mathcal {Z}^{k+1,p}}^{p}) \\
 &\quad\leq C\mathbb{E}(\|(u_S^{j,l},u_T^{j,l},\theta_S^{j,l})(0)\| _{\mathcal {Z}^{k-1,p}}^p
 \|(u_S^j,u_T^j,\theta_S^j)(0)\|_{\mathcal {Z}^{k+1,p}}^{p}) \\
 &\quad\quad+C\int_0^t\Bigg(\mathbb{E}\sup_{s\in [0,\mathbbm{t}_{j,l,T}\wedge r]}(\|(u_S^{j,l},u_T^{j,l},\theta_S^{j,l})(s)
 \| _{\mathcal {Z}^{k-1,p}}^p\|(u_S^j,u_T^j,\theta_S^j)(s)\|_{\mathcal {Z}^{k+1,p}}^{p})
 \\
 &\quad\quad +\mathbb{E}\sup_{s\in [0,\mathbbm{t}_{j,l,T}\wedge r]}\|(u_S^{j,l},u_T^{j,l},\theta_S^{j,l})(s)\| _{\mathcal {Z}^{k-1,p}}^p\Bigg)\mathrm{d} r,
 \end{split}
\end{equation*}
which together with the Gronwall inequality lead to
\begin{equation}\label{4.36}
\begin{split}
 & \mathbb{E}\sup_{s\in [0,\mathbbm{t}_{j,l,T} ]}\Big(\|(u_S^{j,l},u_T^{j,l},\theta_S^{j,l})(s)\| _{\mathcal {Z}^{k-1,p}}^p
 \|(u_S^j,u_T^j,\theta_S^j)(s)\|_{\mathcal {Z}^{k+1,p}}^{p}\Big) \\
 &\quad\leq C\mathbb{E}\Big(\|(u_S^{j,l},u_T^{j,l},\theta_S^{j,l})(0)\| _{\mathcal {Z}^{k-1,p}}^p
 \|(u_S^j,u_T^j,\theta_S^j)(0)\|_{\mathcal {Z}^{k+1,p}}^{p}\Big) \\
 &\quad\quad+C \mathbb{E}\sup_{s\in [0,\mathbbm{t}_{j,l,T} ]}\|(u_S^{j,l},u_T^{j,l},\theta_S^{j,l})(s)\| _{\mathcal {Z}^{k-1,p}}^p
 . \end{split}
\end{equation}
Due to the properties of the smoothing operator, we have the convergence
$$
j\|u_S^{0,j}-u_S^0\|_{X^{k-1,p}}\rightarrow 0,\quad j\|u_T^{0,j}-u_T^0\|_{W^{k-1,p}}\rightarrow 0,
 \quad j\|\theta_S^{0,j}-\theta_S^0\|_{W^{k-1,p}}\rightarrow 0,
$$
as $j\rightarrow\infty$, and the bound
$$
\|(u_S^j,u_T^j,\theta_S^j)(0)\|_{\mathcal {Z}^{k+1,p}}^p \leq C j^p \|(u_S^j,u_T^j,\theta_S^j)(0)\|_{\mathcal {Z}^{k,p}}^p,
$$
for all $j\geq 1$. Using the last two properties, we have
\begin{equation} \label{000}
\begin{split}
&\lim_{j\rightarrow\infty}\sup_{l\geq j}\mathbb{E}(\|(u_S^{j,l},u_T^{j,l},\theta_S^{j,l})(0)\| _{\mathcal {Z}^{k-1,p}}^p
 \|(u_S^j,u_T^j,\theta_S^j)(0)\|_{\mathcal {Z}^{k+1,p}}^{p})\\
&\quad \leq C\|(u_S^j,u_T^j,\theta_S^j)(0)\|_{\mathcal {Z}^{k,p}}^p\\
&\quad\quad\times
\lim_{j\rightarrow\infty}\sup_{l\geq j}\mathbb{E}\bigg(j^p\|(u_S^{0,j}-u_S^0,u_T^{0,j}-u_T^0,\theta_S^{0,j}-\theta_S^0) \|_{\mathcal {Z}^{k-1,p}}^{p}
\\
&\quad\quad\quad\quad \quad\quad\quad+\frac{l^p}{j^p}\cdot j^p\|(u_S^{0,l}-u_S^0,u_T^{0,l}-u_T^0,\theta_S^{0,l}-\theta_S^0)
 \|_{\mathcal {Z}^{k-1,p}}^{p}\bigg)=0.
  \end{split}
\end{equation}
For the second term on the R.H.S of  \eqref{4.36}, we can use the similar estimates as those in the proof of pathwise uniqueness to show that
$$
\mathbb{E}\sup_{s\in [0,\mathbbm{t}_{j,l,T} ]}\|(u_S^{j,l},u_T^{j,l},\theta_S^{j,l})(s)\| _{\mathcal {Z}^{k-1,p}}^p\leq C\|(u_S^{j,l},u_T^{j,l},\theta_S^{j,l})(0)\| _{\mathcal {Z}^{k-1,p}}^p,
$$
where $C$ is a positive constant independent of $j$ and $l$. Moreover, since the approximate initial datum $(J_{\frac{1}{j}}u^0_S,J_{\frac{1}{j}}u^0_T,J_{\frac{1}{j}}\theta^0_S)_{j\geq1}$ formulate a Cauchy sequence in $\mathcal {Z}^{k,p}(D)$, we get  from the last inequality that
\begin{equation}\label{111}
\begin{split}
 &\lim_{j\rightarrow\infty}\sup_{l\geq j} \mathbb{E}\sup_{s\in [0,\mathbbm{t}_{j,l,T} ]}
 \|(u_S^{j,l},u_T^{j,l},\theta_S^{j,l})(s)\| _{\mathcal {Z}^{k-1,p}}^p \\
 &\quad \leq \lim_{j\rightarrow\infty}\sup_{l\geq j} \mathbb{E} \|(u_S^{j,l},u_T^{j,l},\theta_S^{j,l})(0)\| _{\mathcal {Z}^{k-1,p}}^p
 =0 .
 \end{split}
\end{equation}
With the estimates \eqref{000} and \eqref{111} at hand, we can conclude from \eqref{4.36} that
\begin{equation*}
\begin{split}
\lim_{j\rightarrow\infty}\sup_{l\geq j}  \mathbb{E}\sup_{s\in [0,\mathbbm{t}_{j,l,T} ]}(\|(u_S^{j,l},u_T^{j,l},\theta_S^{j,l})(s)\| _{\mathcal {Z}^{k-1,p}}^p
 \|(u_S^j,u_T^j,\theta_S^j)(s)\|_{\mathcal {Z}^{k+1,p}}^{p})=0,
   \end{split}
\end{equation*}
which implies the convergence \eqref{4.2}. The proof of Lemma \ref{lem:4.2} is now completed.
\end{proof}

\begin{lemma}\label{lem:4.3}
The convergence \eqref{4.3} holds for the sequence $(u_S^{j},u_T^{j},\theta_S^{j})_{j\geq 1}$ defined by (4.1).
\end{lemma}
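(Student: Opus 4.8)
The plan is to reduce \eqref{4.3} to a Chebyshev-type estimate on the martingale part of $\|f^j(t)\|_{\mathcal{Z}^{k,p}}^p$, exploiting that on the stochastic interval $[0,\mathbbm{t}_{j,k,T}]$ the norm $\|f^j(t)\|_{\mathcal{Z}^{k,p}}$ --- hence, through the embedding $\mathcal{Z}^{k,p}(D)\hookrightarrow\mathcal{Z}^{1,\infty}(D)$, also $\|f^j(t)\|_{\mathcal{Z}^{1,\infty}}$ --- is controlled by a deterministic constant independent of $j$. As in the reduction preceding Lemma \ref{lem:4.2}, by the decomposition argument one may assume $\|(u_S^0,u_T^0,\theta_S^0)\|_{\mathcal{Z}^{k,p}}\leq M$ almost surely for a deterministic $M>0$; then \eqref{4.4} gives $\sup_{j\geq1}\|f_0^j\|_{\mathcal{Z}^{k,p}}\leq CM$, and since $\mathbbm{t}_{j,k,T}\leq\mathbbm{t}_{j,T}$, the very definition of $\mathbbm{t}_{j,T}$ yields
\begin{equation*}
\|f^j(t)\|_{\mathcal{Z}^{k,p}}\leq 2+CM,\qquad \|f^j(t)\|_{\mathcal{Z}^{1,\infty}}\leq R_0:=\widehat{c}\,(2+CM),\qquad t\in[0,\mathbbm{t}_{j,k,T}],\ \ j\geq1.
\end{equation*}

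First I would apply the It\^o formula to $\|f^j(t)\|_{\mathcal{Z}^{k,p}}^p=\sum_{|\alpha|\leq k}\big(\|\partial^\alpha u_S^j\|_{L^p}^p+\|\partial^\alpha u_T^j\|_{L^p}^p+\|\partial^\alpha\theta_S^j\|_{L^p}^p\big)$, which is legitimate since $f^j$ is $\mathcal{Z}^{k',2}(D)\hookrightarrow\mathcal{Z}^{k+1,p}(D)$-valued, and then repeat verbatim the bookkeeping of the proof of Lemma \ref{lem:2.2} (the cut-off functions now being absent, and the stopping time $\mathbbm{t}_{j,k,T}$ playing their role). This produces a decomposition $\mathrm{d}\|f^j\|_{\mathcal{Z}^{k,p}}^p=\mathcal{A}^j(t)\,\mathrm{d}t+\mathrm{d}\mathcal{M}^j(t)$ in which, on $[0,\mathbbm{t}_{j,k,T}]$, the commutator estimate \eqref{2.7}, the Moser-type estimates and assumption (A1) give the drift bound
\begin{equation*}
|\mathcal{A}^j(t)|\leq C\big(1+\|f^j\|_{\mathcal{Z}^{1,\infty}}+\kappa(\|f^j\|_{\mathcal{Z}^{0,\infty}})^2\big)\big(1+\|f^j\|_{\mathcal{Z}^{k,p}}^p\big)\leq\Lambda
\end{equation*}
together with the quadratic-variation bound $\mathrm{d}\langle\mathcal{M}^j\rangle_t\leq C\|f^j\|_{\mathcal{Z}^{k,p}}^{2p-2}\sum_{i=1}^3\|\sigma_i(f^j)\|_{\mathbb{W}^{k,p}}^2\,\mathrm{d}t\leq\Lambda^2\,\mathrm{d}t$, where the constant $\Lambda=\Lambda(R_0,M,\kappa,k,p,D)$ does \emph{not} depend on $j$. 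Hence for $t\in[0,\mathbbm{t}_{j,k,T}\wedge\varpi]$,
\begin{equation*}
\|f^j(t)\|_{\mathcal{Z}^{k,p}}^p\leq \|f_0^j\|_{\mathcal{Z}^{k,p}}^p+\Lambda\varpi+\sup_{s\in[0,\mathbbm{t}_{j,k,T}\wedge\varpi]}|\mathcal{M}^j(s)|.
\end{equation*}

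Next I would use the elementary inequality $(1+a)^p-a^p\geq1$, valid for $a\geq0$ and $p\geq1$: on the event $E_j(\varpi):=\{\sup_{[0,\mathbbm{t}_{j,k,T}\wedge\varpi]}\|f^j(t)\|_{\mathcal{Z}^{k,p}}>1+\|f_0^j\|_{\mathcal{Z}^{k,p}}\}$ the last display forces $\sup_{[0,\mathbbm{t}_{j,k,T}\wedge\varpi]}|\mathcal{M}^j|>1-\Lambda\varpi$, so that $E_j(\varpi)\subset\{\sup_{[0,\mathbbm{t}_{j,k,T}\wedge\varpi]}|\mathcal{M}^j|>\tfrac12\}$ whenever $\varpi\leq\varpi_0:=1/(2\Lambda)$. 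The Chebyshev inequality, the BDG inequality \eqref{1.7}, and the bound $\langle\mathcal{M}^j\rangle_{\mathbbm{t}_{j,k,T}\wedge\varpi}\leq\Lambda^2\varpi$ then give, for any fixed $q\geq2$ and all $\varpi\leq\varpi_0$,
\begin{equation*}
\mathbb{P}\big(E_j(\varpi)\big)\leq 2^q\,\mathbb{E}\sup_{s\in[0,\mathbbm{t}_{j,k,T}\wedge\varpi]}|\mathcal{M}^j(s)|^q\leq C\,\mathbb{E}\big(\langle\mathcal{M}^j\rangle_{\mathbbm{t}_{j,k,T}\wedge\varpi}\big)^{q/2}\leq C\Lambda^q\,\varpi^{q/2},
\end{equation*}
the moments being finite because the stopped quadratic variation is deterministically bounded. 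Since $C\Lambda^q$ is independent of $j$, taking $\sup_{j\geq1}$ and then letting $\varpi\to0$ yields $\lim_{\varpi\to0}\sup_{j\geq1}\mathbb{P}(E_j(\varpi))=0$, which is precisely \eqref{4.3}.

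The only genuine difficulty --- the step I expect to require the most care --- is the \emph{$j$-independence of $\Lambda$}: one must verify that the $j$-uniform bounds $\sup_j\|f_0^j\|_{\mathcal{Z}^{k,p}}\leq CM$ and $\|f^j\|_{\mathcal{Z}^{1,\infty}}\leq R_0$ on $[0,\mathbbm{t}_{j,k,T}]$ propagate, through \eqref{2.7}, the Moser-type estimates and (A1), into drift and quadratic-variation bounds depending only on $R_0$, $M$, $\kappa(R_0)$, $k$, $p$, $q$ and $D$. This is exactly the structure already exhibited in the proofs of Lemma \ref{lem:2.2} and Lemma \ref{lem:4.2}, so no new inequality is needed; one merely records the bounds with the stopping time $\mathbbm{t}_{j,k,T}$ substituted for the cut-off functions.
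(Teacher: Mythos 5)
Your proof is correct and follows the same fundamental strategy as the paper: exploit the $j$-uniform bounds on $\|f^j\|_{\mathcal{Z}^{k,p}}$ (hence $\|f^j\|_{\mathcal{Z}^{1,\infty}}$) coming from the definition of $\mathbbm{t}_{j,k,T}$ and the mollified initial data to bound the drift and quadratic variation uniformly in $j$, then control the bad event by Chebyshev together with BDG. The paper instead splits the event directly into two pieces $T_1(r)$ (drift integrals exceed $\tfrac12$) and $T_2(r)$ (martingale parts exceed $\tfrac12$), applies Chebyshev to each, and shows both are $O(r)$ uniformly in $j$; your route via $\|f^j\|_{\mathcal{Z}^{k,p}}^p$, the algebraic inequality $(1+a)^p-a^p\geq 1$, and the resulting inclusion $E_j(\varpi)\subset\{\sup|\mathcal{M}^j|>\tfrac12\}$ for $\varpi\leq 1/(2\Lambda)$ is a cleaner way to pass from the It\^o identity for $\|\cdot\|^p$ to the threshold $1+\|f^j_0\|_{\mathcal{Z}^{k,p}}$, avoiding the paper's somewhat loose step of writing the estimate directly for the norm rather than its $p$-th power. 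The final rate $C\Lambda^q\varpi^{q/2}$ (with $q=2$ giving $O(\varpi)$) matches the paper's. The one point worth stating more explicitly is the justification that the quadratic-variation density is bounded by $\Lambda^2$: this needs assumption (A1) with the embedding $\mathcal{Z}^{k,p}\hookrightarrow\mathcal{Z}^{0,\infty}$ and the uniform bound $\|f^j\|_{\mathcal{Z}^{k,p}}\leq 2+CM$ on $[0,\mathbbm{t}_{j,k,T}]$, exactly as you indicate, so this is only a matter of spelling out, not a gap.
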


\begin{proof}
By virtue of the inequalities as we established in Section 2, we have
\begin{equation*}
\begin{split}
& \sup_{s\in [0,\mathbbm{t}_{j,l,T}\wedge r]} \|(u_S^j,u_T^j,\theta_S^j)(s)\|_{\mathcal {Z}^{k,p}}\\
 &\quad
 \leq\|(u_S^{0,j},u_T^{0,j},\theta_S^{0,j})\|_{\mathcal {Z}^{k,p}}+\sum_{|\alpha|\leq k}\Bigg(\int_0^{\mathbbm{t}_{j,l,T}\wedge S} |{L_1^\alpha}'+{L_1^\alpha}'+{L_1^\alpha}'|\mathrm{d}s\\
 &\quad  \quad+ \int_0^{\mathbbm{t}_{j,l,T}\wedge r} |{J_1^\alpha}'+{J_2^\alpha}'+{J_3^\alpha}'+{J_4^\alpha}'|\mathrm{d}s
  + \int_0^{\mathbbm{t}_{j,l,T}\wedge r} |N_1^\alpha +N_2^\alpha+N_3^\alpha|\mathrm{d}s\Bigg)
 \\
 &\quad  \quad +\sum_{|\alpha|\leq k}\Bigg(\sup_{s\in [0,\mathbbm{t}_{j,l,T}\wedge r]}\bigg|\int_0^s {L_4^\alpha}'\mathrm{d} \mathcal {W}\bigg|+ \sup_{s\in [0,\mathbbm{t}_{j,l,T}\wedge S]}\bigg|\int_0^s {J_5^\alpha}'\mathrm{d} \mathcal {W}\bigg|
 \\
 & \quad  \quad  + \sup_{s\in [0,\mathbbm{t}_{j,l,T}\wedge r]}\bigg|\int_0^s N_4^\alpha\mathrm{d} \mathcal {W}\bigg|\Bigg),
 \end{split}
\end{equation*}
for any $j \geq 1$ and $r>0$. Thanks to above inequality, we can estimates the probability of the event in \eqref{4.3} as follows
\begin{equation}\label{4.38}
\begin{split}
&\mathbb{P} \Bigg\{\sup_{s\in [0,\mathbbm{t}_{j,k,T}\wedge r]}\|(u_S^j,u_T^j,\theta_S^j)(s)\|_{\mathcal {Z}^{k,p}}
> \|(u_S^{0,j},u_T^{0,j},\theta_S^{0,j})\|_{\mathcal {Z}^{k,p}}+1\Bigg\}\leq T_1(r)+T_2(r),
 \end{split}
\end{equation}
where
\begin{equation*}
\begin{split}
T_1(r):= &\mathbb{P} \Bigg\{\sum_{|\alpha|\leq k}\Bigg(\int_0^{\mathbbm{t}_{j,l,T}\wedge r} |{L_1^\alpha}'+{L_2^\alpha}'+{L_3^\alpha}'|\mathrm{d}s+ \int_0^{\mathbbm{t}_{j,l,T}\wedge r} |{J_1^\alpha}'+{J_2^\alpha}'+{J_3^\alpha}'+{J_4^\alpha}'|\mathrm{d}s\\
 & +  \int_0^{\mathbbm{t}_{j,l,T}\wedge r} |N_1^\alpha +N_2^\alpha+N_3^\alpha|\mathrm{d}s\Bigg) > \frac{1}{2}\Bigg\},
  \end{split}
\end{equation*}
and
\begin{equation*}
\begin{split}
 T_2(r):= & \mathbb{P} \Bigg\{\sum_{|\alpha|\leq k}\Bigg(\sup_{s\in [0,\mathbbm{t}_{j,l,T}\wedge r]}\bigg|\int_0^s {L_4^\alpha}'\mathrm{d} \mathcal {W}\bigg|+ \sup_{s\in [0,\mathbbm{t}_{j,l,T}\wedge S]}\bigg|\int_0^s {J_5^\alpha}'\mathrm{d} \mathcal {W}\bigg|\\
 &  + \sup_{s\in [0,\mathbbm{t}_{j,l,T}\wedge r]}\bigg|\int_0^s N_4^\alpha\mathrm{d} \mathcal {W}\bigg|\Bigg) > \frac{1}{2}\Bigg\} .
 \end{split}
\end{equation*}
For $T_1(r)$, we apply the estimates in \eqref{2.27}, \eqref{2.29}, \eqref{2.40} (using the definition of $\mathbbm{t}_{j,k,T}$ in this case), and then the Chebyshev inequality, we find
\begin{equation}\label{4.39}
\begin{split}
T_1(r) &\leq \mathbb{P} \bigg\{\int_0^{\mathbbm{t}_{j,l,T}\wedge r}
\Big(1+\|(\nabla u_S^j,\nabla u_T^j,\nabla\theta_S^j)\|_{\mathcal {Z}^{0,\infty}}
+\kappa(\|(u_S^j,u_T^j,\theta_S^j)\|_{\mathcal {Z}^{0,\infty}})^2\Big)\\
&\quad  \quad\quad  \quad \times \|(u_S^j,u_T^j,\theta_S^j)(s)\|_{\mathcal {Z}^{k,p}}^p \mathrm{d}s
> \frac{1}{2}\bigg\}\\
&\leq C\mathbb{E}\int_0^{\mathbbm{t}_{j,l,T}\wedge r}
\Big(1+\|(\nabla u_S^j,\nabla u_T^j,\nabla\theta_S^j)\|_{\mathcal {Z}^{0,\infty}}
+\kappa(\|(u_S^j,u_T^j,\theta_S^j)\|_{\mathcal {Z}^{0,\infty}})^2\Big)\\
&\quad  \quad\quad  \quad \times \|(u_S^j,u_T^j,\theta_S^j)(s)\|_{\mathcal {Z}^{k,p}}^p \mathrm{d}s\\
&\leq C\mathbb{E}\int_0^{\mathbbm{t}_{j,l,T}\wedge r}
\|(u_S^j,u_T^j,\theta_S^j)(s)\|_{\mathcal {Z}^{k,p}}^p \mathrm{d}s\\
&\leq C(2+CM)^pr,
 \end{split}
\end{equation}
where the positive constant $C$ is independent of $j$ and $r$. For $T_2(r)$, by using the BDG inequality, the Minkowski inequality and assumption (A1), we have
\begin{equation}\label{4.40}
\begin{split}
T_2(r) &\leq  \mathbb{P} \Bigg\{\sum_{|\alpha|\leq k}\sup_{s\in [0,\mathbbm{t}_{j,l,T}\wedge r]}
\bigg|\int_0^s {L_4^\alpha}'\mathrm{d} \mathcal {W}\bigg|> \frac{1}{6}\Bigg\}
+\mathbb{P} \Bigg\{\sum_{|\alpha|\leq k}\sup_{s\in [0,\mathbbm{t}_{j,l,T}\wedge S]}\bigg|  \int_0^s {J_5^\alpha}'\mathrm{d} \mathcal {W}\bigg|
> \frac{1}{6}\Bigg\}
 \\
 & \quad  +\mathbb{P} \Bigg\{\sum_{|\alpha|\leq k}\sup_{s\in [0,\mathbbm{t}_{j,l,T}\wedge r]}
 \bigg|\int_0^s N_4^\alpha\mathrm{d} \mathcal {W}\bigg| > \frac{1}{6}\bigg\}\\
 &\leq  36 \mathbb{E}\sum_{|\alpha|\leq k}\Bigg(
\bigg|\int_0^{\mathbbm{t}_{j,l,T}\wedge r} {L_4^\alpha}'\mathrm{d} \mathcal {W}\bigg|^2
+
\bigg|\int_0^{\mathbbm{t}_{j,l,T}\wedge r} {J_5^\alpha}'\mathrm{d} \mathcal {W}\bigg|^2
  +
\bigg|\int_0^{\mathbbm{t}_{j,l,T}\wedge r} N_4^\alpha\mathrm{d} \mathcal {W}\bigg|^2\Bigg)\\
& \leq  C \sum_{|\alpha|\leq k}\Bigg(\mathbb{E}
\int_0^{\mathbbm{t}_{j,l,T}\wedge r}\sum_{l\geq 1}\bigg(\int_D| \partial^\alpha u_S^j |^{ p-2} \partial^\alpha u_S^j \cdot  \partial^\alpha P\sigma_1(u_S^j,u_T^j,\theta_S^j)e_l \mathrm{d}V\bigg)^2\mathrm{d}t\\
&\quad\quad\quad\quad + \mathbb{E}
\int_0^{\mathbbm{t}_{j,l,T}\wedge r}\sum_{l\geq 1}\bigg(\int_D| \partial^\alpha u_T^j |^{ p-1}|  \partial^\alpha \sigma_2(u_S^j,u_T^j,\theta_S^j)e_l| \mathrm{d}V\bigg)^2\mathrm{d}t\\
&\quad \quad\quad\quad+ \mathbb{E}
\int_0^{\mathbbm{t}_{j,l,T}\wedge r}\sum_{l\geq 1}\bigg(\int_D| \partial^\alpha \theta_S^j |^{ p-1}|  \partial^\alpha \sigma_3(u_S^j,u_T^j,\theta_S^j)e_l| \mathrm{d}V\bigg)^2\mathrm{d}t\Bigg)\\
& \leq C \mathbb{E}
\int_0^{\mathbbm{t}_{j,l,T}\wedge r} \kappa(\|(u_S^j,u_T^j,\theta_S^j)\|_{\mathcal {Z}^{0,\infty}})^2\Big(1+\|(u_S^j,u_T^j,\theta_S^j)(s)\|_{\mathcal {Z}^{k,p}}^{2p}\Big) \mathrm{d}t\\
&\leq C \mathbb{E}
\int_0^{\mathbbm{t}_{j,l,T}\wedge r} \kappa(2+CM)^2\left(1+(2+CM)^{2p}\right) \mathrm{d}t\\
& \leq C \kappa(2+CM)^2\left(1+(2+CM)^{2p}\right)r.
 \end{split}
\end{equation}
where the positive constant $C$ is independent of $j$ and $r$, and the last inequality used the Sobolev embedding $\mathcal {Z}^{k,p}(D)\hookrightarrow\mathcal {Z}^{0,\infty}(D)$ and  the uniform bounds \eqref{4.4}-\eqref{4.5}. Therefore, the estimates \eqref{4.39}-\eqref{4.40} implies that
$$
T_1(r)+T_2(r)\rightarrow 0  \quad \mbox{as}~~r\rightarrow 0,
$$
which proves the desired convergence \eqref{4.3}. This completes the proof of Lemma \ref{lem:4.3}.
\end{proof}

Based on the above results, we can now give the proof of the first main result Theorem \ref{thm:1.2}.
\begin{proof}[\textbf{Proof of Theorem 1.2 (Local pathwise solution)}]
Assume that the uniform bound \eqref{4.5} holds for some deterministic constant $M>0$.
Thanks to the Lemma \ref{lem:4.2} and \ref{lem:4.3}, one can now conclude from Lemma \ref{lem:4.1} that there is
a strictly positive stopping time $\mathbbm{t}$ almost surely, a subsequence
$(u_S^{j_l},u_T^{j_l},\theta_S^{j_l})_{l\geq 1}$ of $(u_S^{j},u_T^{j},\theta_S^{j})_{j\geq 1}$
and a predictable process $(u_S,u_T,\theta_S)$ such that
\begin{equation}\label{4.41}
\begin{split}
(u_S^{j_l},u_T^{j_l},\theta_S^{j_l})_{l\geq 1} \longrightarrow (u_S,u_T,\theta_S)\quad \mbox{in}\quad
C([0,\mathbbm{t});\mathcal {Z}^{k,p}(D) ),\quad {as}~l\rightarrow\infty,\quad \mathbb{P}\mbox{-a.s.},
 \end{split}
\end{equation}
and
\begin{equation}\label{4.42}
\begin{split}
\sup_{t\in [0,\mathbbm{t})} \|(u_S,u_T,\theta_S)(t)\|_{\mathcal {Z}^{k,p}}\leq C<\infty,\quad \mathbb{P}\mbox{-a.s.}.
 \end{split}
\end{equation}
Since $(u_S^{j},u_T^{j},\theta_S^{j})$ are continuous $\mathcal {F}_t$-adapted processes with values
in $\mathcal {Z}^{k,p}(D)$, we infer that the processes $(u_S^{j},u_T^{j},\theta_S^{j})$ are $\mathcal {F}_t$-predictable.
Moreover, as the pointwise limits preserve the measurability, we thus conclude from \eqref{4.41} that the
limit process $(u_S,u_T,\theta_S)$ is also $\mathcal {F}_t$-predictable.
One may verify that $(u_S,u_T,\theta_S,\mathbbm{t})$ is a local pathwise solution
to the SISM \eqref{1.5} in the sense of Definition \ref{def:1.1}. Moreover, one can remove the restriction \eqref{4.5} by applying the
cutting argument as that in the proof of Lemma \ref{lem:3.3}. Finally, we can extend the local case to the maximal pathwise solutions
by a standard argument (cf. \cite{48}).

The proof of Theorem \ref{thm:1.2} is now completed.
\end{proof}

\section{Global pathwise solutions}

In this section, we are aiming at seeking for global pathwise solutions for the ISM perturbed by linear multiplicative stochastic forcing \eqref{1.8}.

Recalling that the potential temperature in ISM is defined by  (cf. \cite{1,2})
$$
\theta(x,y,z,t)=\theta(x,z,t)+(y-y_0)s,\quad s\in \mathbb{R},
$$
which varies linearly on the $y$-direction. In this section, we shall assume technically that $s=0$, that is, we do not consider the variation of temperature in $y$-direction. As we mentioned before, how to remove this assumption is a very challenging problem. To be more precise, we consider the following Cauchy problem:
\begin{equation}\label{5.1}
\left\{
\begin{aligned}
&\mathrm{d}u_S+  P(u_S\cdot \nabla)   u_S\mathrm{d}t-P(u_T\widehat{x}) \mathrm{d}t
- P(\theta _S\widehat{z}) \mathrm{d}t= \alpha u_S \mathrm{d}W,\\
&\mathrm{d} u_T+ (u_S\cdot \nabla)   u_T\mathrm{d}t+u_S\cdot\widehat{x} \mathrm{d}t =\alpha u_T \mathrm{d}W, \\
&\mathrm{d}\theta_S+ (u_S\cdot \nabla ) \theta_S\mathrm{d}t = \alpha\theta_S \mathrm{d}W\\
&u_S(0,x)=u_S^0,~~ u_T(0,x)=u_T^0,~~\theta_S(0,x)=\theta_S^0,
\end{aligned}
\right.
\end{equation}
where $\alpha \in \mathbb{R}$ is a real number, and $W$ is an one-dimension Brownian motion. As a special case of Theorem \ref{thm:1.2}, we conclude that the system \eqref{5.1} admits a local pathwise solution $(u_S, u_T, \theta_S,\mathbbm{t})$.

By virtue of the special structure of the system \eqref{5.1}, one may transform it into a system of random PDEs. To this end, we consider the stochastic process $\tilde{\alpha}(t)=e^{-\alpha W_t}$ and introduce new variables
\begin{equation}\label{5.2}
\begin{split}
\tilde{u}_S= \tilde{\alpha}(t) u_S,
\quad \tilde{u}_T= \tilde{\alpha}(t) u_T,
\quad \tilde{\theta}_S= \tilde{\alpha}(t) \theta_S.
 \end{split}
\end{equation}
Direct calculation shows that
$$
\mathrm{d} \tilde{\alpha}(t)=  \frac{1}{2}\alpha^2 \tilde{\alpha}(t) \mathrm{d}t - \alpha \tilde{\alpha}(t)\mathrm{d} W_t.
$$
An application of the It\^{o}'s product rule leads to
\begin{equation*}
\begin{split}
\mathrm{d}\tilde{u}_S=&\mathrm{d} (\tilde{\alpha}(t) u_S)
= u_S\mathrm{d}\tilde{\alpha}(t)+ \tilde{\alpha}(t) \mathrm{d}u_S+\mathrm{d} \tilde{\alpha}(t)\mathrm{d} u_S\\
=  & u_S\Big(\frac{1}{2}\alpha^2 \tilde{\alpha}(t) \mathrm{d}t - \alpha \tilde{\alpha}(t)\mathrm{d} W_t\Big)+\tilde{\alpha}(t) \Big(-P(u_S\cdot \nabla)   u_S\mathrm{d}t+P(u_T\widehat{x}) \mathrm{d}t\\
& + P(\theta _S\widehat{z}) \mathrm{d}t+\alpha u_S \mathrm{d}W\Big)
-  \alpha^2 \tilde{\alpha}(t)u_S \mathrm{d}t\\
=& -\tilde{\alpha}(t)P(u_S\cdot \nabla) u_S\mathrm{d}t+\tilde{\alpha}(t)P(u_T\widehat{x}) \mathrm{d}t
+ \tilde{\alpha}(t)P(\theta _S\widehat{z}) \mathrm{d}t-  \frac{1}{2}\alpha^2 \tilde{\alpha}(t)u_S \mathrm{d}t\\
=& -\tilde{\alpha}^{-1} P(\tilde{u}_S\cdot \nabla) \tilde{u}_S\mathrm{d}t+ P(\tilde{u}_T\widehat{x}) \mathrm{d}t
+   P(\tilde{\theta} _S\widehat{z}) \mathrm{d}t-  \frac{1}{2}\alpha^2  \tilde{u}_S \mathrm{d}t.
 \end{split}
\end{equation*}
By taking the similar calculation we deduce that
\begin{equation*}
\begin{split}
\mathrm{d} \tilde{u}_T=-\tilde{\alpha}(t) (u_S\cdot \nabla)   u_T\mathrm{d}t
-\tilde{\alpha}(t)u_S\cdot\widehat{x} \mathrm{d}t -  \frac{1}{2}\alpha^2  \tilde{u}_T \mathrm{d}t,
 \end{split}
\end{equation*}
and
\begin{equation*}
\begin{split}
\mathrm{d}\tilde{\theta}_S=-\tilde{\alpha}(t)(u_S\cdot \nabla ) \theta_S\mathrm{d}t
 -  \frac{1}{2}\alpha^2  \tilde{\theta}_S \mathrm{d}t.
 \end{split}
\end{equation*}
Therefore, the system \eqref{5.1} can be formulated as
\begin{equation}\label{5.3}
\left\{
\begin{aligned}
&\partial_t\tilde{u}_S+\frac{1}{2}\alpha^2  \tilde{u}_S+  \tilde{\alpha}^{-1} P(\tilde{u}_S\cdot \nabla)   \tilde{u}_S
- P(\tilde{u}_T\widehat{x})
-  P(\tilde{\theta} _S\widehat{z}) = 0,\\
&\partial_t\tilde{u}_T+\frac{1}{2}\alpha^2  \tilde{u}_T+ \tilde{\alpha}^{-1} (\tilde{u}_S\cdot \nabla)   \tilde{u}_T
+ \tilde{u}_S\cdot\widehat{x} =0, \\
&\partial_t\tilde{\theta}_S+\frac{1}{2}\alpha^2  \tilde{\theta}_S+ \tilde{\alpha}^{-1} (\tilde{u}_S\cdot \nabla ) \tilde{\theta}_S
 = 0,\\
&\tilde{u}_S(0,x)=u_S^0,~~ \tilde{u}_T(0,x)=u_T^0,~~\tilde{\theta}_S(0,x)=\theta_S^0.
\end{aligned}
\right.
\end{equation}
System (5.3) can be regarded as a damping perturbed ISM with random coefficients. In the following, we shall establish the global result by introducing suitable stopping times and carefully analysing the nonlinear estimates.

\begin{proof} [\textbf{Proof of Theorem \ref{thm:1.3} (Global pathwise solution)}.] The proof will be divided into several steps.

\emph{\textbf{Step 1 (Estimation for $\|(\tilde{u}_S ,\tilde{u}_T ,\tilde{\theta}_S )\|_{\mathcal {Z}^{k,p}}$).}}  We first apply  the operator $ \partial^\alpha $ to the first equation of \eqref{5.3} to get
\begin{equation}\label{5.4}
\begin{split}
&\partial_t \partial^\alpha \tilde{u}_S+\frac{1}{2}\alpha^2  \partial^\alpha \tilde{u}_S+ \tilde{\alpha}^{-1}  \partial^\alpha P(\tilde{u}_S\cdot \nabla)   \tilde{u}_S
-  \partial^\alpha P(\tilde{u}_T\widehat{x})  -   \partial^\alpha P(\tilde{\theta} _S\widehat{z}) = 0.
 \end{split}
\end{equation}
Multiplying \eqref{5.4} by $ \partial^\alpha \tilde{u}_S| \partial^\alpha \tilde{u}_S|^{p-2}$ and integrating on $D$, we get
\begin{equation*}
\begin{split}
&\frac{1}{p}\frac{\mathrm{d}}{\mathrm{d} t}\| \partial^\alpha \tilde{u}_S(t) \|_{L^p}^p+\frac{\alpha^2}{2}  \| \partial^\alpha \tilde{u}_S\|_{L^p}^p
+ \tilde{\alpha}^{-1} \int_D| \partial^\alpha \tilde{u}_S|^{p-2} \partial^\alpha \tilde{u}_S\cdot \partial^\alpha P(\tilde{u}_S\cdot \nabla)   \tilde{u}_S\mathrm{d}V\mathrm{d}t
\\
&\quad- \int_D| \partial^\alpha \tilde{u}_S|^{p-2} \partial^\alpha \tilde{u}_S\cdot \partial^\alpha P(\tilde{u}_T\widehat{x})\mathrm{d}V \mathrm{d}t
\\
&\quad- \int_D| \partial^\alpha \tilde{u}_S|^{p-2} \partial^\alpha \tilde{u}_S\cdot \partial^\alpha P(\tilde{\theta} _S\widehat{z})\mathrm{d}V \mathrm{d}t= 0.
 \end{split}
\end{equation*}
It then follows from the estimates \eqref{2.6}, \eqref{2.8} and the H\"{o}lder inequality  that
\begin{equation}\label{5.5}
\begin{split}
&\frac{\mathrm{d}}{\mathrm{d} t}\| \partial^\alpha \tilde{u}_S(t) \|_{L^p}^p+\frac{\alpha^2}{2}  \| \partial^\alpha \tilde{u}_S\|_{L^p}^p\\
&\quad \leq   C\tilde{\alpha}^{-1}  \|\nabla \tilde{u}_S\| _{L^\infty}\| \partial^\alpha  \tilde{u}_S \|_{L^p}^{ p}
+ C (\| \partial^\alpha  \tilde{u}_S \|_{L^p}^{ p}+ \| \partial^\alpha  \tilde{u}_T \|_{L^p}^{ p} )
\\
&\quad\quad+ C (\| \partial^\alpha  \tilde{u}_S \|_{L^p}^{ p}
+ \| \partial^\alpha  \tilde{\theta}_S\|_{L^p}^{ p} )\\
&\quad\leq  C(1+\tilde{\alpha}^{-1} \|\nabla \tilde{u}_S\| _{L^\infty}) (\| \partial^\alpha  \tilde{u}_S \|_{L^p}^{ p}+
 \| \partial^\alpha  \tilde{u}_T \|_{L^p}^{ p}+ \| \partial^\alpha  \tilde{\theta}_S\|_{L^p}^{ p} ).
\end{split}
\end{equation}
Second, by applying the operator $ \partial^\alpha $ to the second equation of \eqref{5.3}, multiplying the resulted equation by
 $ \partial^\alpha \tilde{u}_T| \partial^\alpha \tilde{u}_T|^{p-2}$
and then integrating by parts on $D$, we get
\begin{equation}\label{5.6}
\begin{split}
&\frac{1}{p}\frac{\mathrm{d}}{\mathrm{d} t}\| \partial^\alpha \tilde{u}_T(t) \|_{L^p}^p +\frac{\alpha^2}{2}  \| \partial^\alpha \tilde{u}_T\|_{L^p}^p\\
&
\quad\leq \tilde{\alpha}^{-1} \bigg|\int_D| \partial^\alpha \tilde{u}_T|^{p-2}
 \partial^\alpha \tilde{u}_T[ \partial^\alpha ,\tilde{u}_S]\cdot \nabla \tilde{u}_T\mathrm{d}V \bigg|
+ \bigg|\int_D| \partial^\alpha \tilde{u}_T|^{p-2}
 \partial^\alpha \tilde{u}_T \partial^\alpha (\tilde{u}_S\cdot\widehat{x})\mathrm{d}V  \bigg|\\
  &\quad\leq \tilde{\alpha}^{-1}   \| \partial^\alpha  \tilde{u}_T \|_{L^p}^{ p-1}\|[ \partial^\alpha ,\tilde{u}_S]\cdot \nabla \tilde{u}_T\|_{L^p} + \| \partial^\alpha  \tilde{u}_T \|_{L^p}^{ p-1}\| \partial^\alpha (\tilde{u}_S\cdot\widehat{x})\|_{L^p} \\
&\quad\leq C\tilde{\alpha}^{-1} \| \partial^\alpha  \tilde{u}_T \|_{L^p}^{ p-1}(\|\nabla \tilde{u}_S\|_{L^\infty}\|\Lambda^{k-1} \nabla\tilde{u}_T \|_{L^p}+\|\Lambda^{k}\tilde{u}_S \|_{L^p}\|\nabla \tilde{u}_T\|_{L^\infty} )\\
&\quad\quad +  \| \partial^\alpha  \tilde{u}_T \|_{L^p} ^p+\| \partial^\alpha  \tilde{u}_S \|_{L^p}^p \\
&\quad\leq C(1+\tilde{\alpha}^{-1}(\|\nabla \tilde{u}_S\|_{L^\infty}+\|\nabla \tilde{u}_T\|_{L^\infty}) ) (\| \partial^\alpha  \tilde{u}_S \|_{L^p}^p+\| \partial^\alpha  \tilde{u}_T \|_{L^p}^p).
 \end{split}
\end{equation}
Finally, by applying the operator $ \partial^\alpha $ to the third equation of \eqref{5.3}, multiplying the resulted equation by
 $ \partial^\alpha \tilde{\theta}_S| \partial^\alpha \tilde{\theta}_S|^{p-2}$
and then integrating on $D$. Thanks to the free divergence condition for $\tilde{u}_S$ and the commutator estimate, we have
\begin{equation}\label{5.7}
\begin{split}
&\frac{1}{p}\frac{\mathrm{d}}{\mathrm{d} t}\| \partial^\alpha \tilde{\theta}_S(t) \|_{L^p}^p +\frac{\alpha^2}{2}  \| \partial^\alpha \tilde{\theta}_S\|_{L^p}^p\\
&\quad\leq \tilde{\alpha}^{-1} \bigg|\int_D| \partial^\alpha \tilde{\theta}_S|^{p-2}
 \partial^\alpha \tilde{\theta}_S[ \partial^\alpha ,\tilde{u}_S]\cdot \nabla \tilde{\theta}_S\mathrm{d}V \bigg|
+  \bigg|\int_D| \partial^\alpha \tilde{\theta}_S|^{p-2}
 \partial^\alpha \tilde{\theta}_S\tilde{u}_T\mathrm{d}V\bigg|    \\
 &\quad\leq   C\tilde{\alpha}^{-1} \| \partial^\alpha  \tilde{\theta}_S \|_{L^p}^{p-1}(\|\nabla \tilde{u}_S\|_{L^\infty}\|\Lambda^{k-1} \nabla\tilde{\theta}_S \|_{L^p}+\|\Lambda^{k}\tilde{u}_S \|_{L^p}\|\nabla \tilde{\theta}_S\|_{L^\infty} )\\
 &\quad\quad+ \|\Lambda^{k}\tilde{\theta}_S \|_{L^p}^p+\|\Lambda^{k}\tilde{u}_T \|_{L^p}^p\\
&\quad\leq C (1+\tilde{\alpha}^{-1}(\|\nabla \tilde{\theta}_S\|_{L^\infty}+\|\nabla \tilde{u}_S\|_{L^\infty})) (\|\Lambda^{k}\tilde{u}_S \|_{L^p}+\|\Lambda^{k}\tilde{\theta}_S \|_{L^p}).
 \end{split}
\end{equation}
Combining the inequalities \eqref{5.5}-\eqref{5.7}, we deduce that
\begin{equation}\label{5.8}
\begin{split}
 &\frac{1}{p}\frac{\mathrm{d}}{\mathrm{d} t} \|(\tilde{u}_S ,\tilde{u}_T ,\tilde{\theta}_S )(t)\|_{\mathcal {Z}^{k,p}}^p+\frac{\alpha^2}{2}\|(\tilde{u}_S ,\tilde{u}_T ,\tilde{\theta}_S )(t)\|_{\mathcal {Z}^{k,p}}^p \\
 &\quad\leq  C_1((1+\tilde{\alpha}^{-1}(\|\nabla \tilde{u}_S\|_{L^\infty}+\|\nabla \tilde{\theta}_S\|_{L^\infty}+\|\nabla \tilde{u}_T\|_{L^\infty}))\|(\tilde{u}_S ,\tilde{u}_T ,\tilde{\theta}_S )(t)\|_{\mathcal {Z}^{k,p}}^p,
 \end{split}
\end{equation}
which implies that
\begin{equation}\label{5.9}
\begin{split}
 &\frac{\mathrm{d}}{\mathrm{d} t} \|(\tilde{u}_S ,\tilde{u}_T ,\tilde{\theta}_S )(t)\|_{\mathcal {Z}^{k,p}}+\frac{\alpha^2}{2}\|(\tilde{u}_S ,\tilde{u}_T ,\tilde{\theta}_S )(t)\|_{\mathcal {Z}^{k,p}} \\
 &\quad\leq  C_1(1+\tilde{\alpha}^{-1}(\|\nabla \tilde{u}_S\|_{L^\infty}+\|\nabla \tilde{\theta}_S\|_{L^\infty}+\|\nabla \tilde{u}_T\|_{L^\infty}))\|(\tilde{u}_S ,\tilde{u}_T ,\tilde{\theta}_S )(t)\|_{\mathcal {Z}^{k,p}}.
 \end{split}
\end{equation}
Note that in order to derive a bound for  $\|(\tilde{u}_S ,\tilde{u}_T ,\tilde{\theta}_S )(t)\|_{\mathcal {Z}^{k,p}}$, one have to derive properly estimates for the gradients $\nabla\tilde{u}_S$, $\nabla\tilde{u}_T $ and $\nabla\tilde{\theta}_S$.

\emph{\textbf{Step 2 (Estimation for $\|(\omega_S ,\nabla\tilde{u}_T ,\nabla\tilde{\theta}_S )(t)\|_{\mathcal {Z}^{0,\infty}}$).}}  In order to treat the norm $\|\nabla \tilde{u}_S\|_{L^\infty}$ on the R.H.S of \eqref{5.9}, we use the Beale-Kato-Majda-type inequality to obtain
\begin{equation}\label{5.10}
\begin{split}
\|\nabla\tilde{u}_S\|_{L^{\infty}}\leq C_2\|\tilde{u}_S\|_{L^2}+C_2 \|\mbox{curl}~ \tilde{u}_S\|_{L^{\infty}}\bigg(1+\log^+\bigg(\frac{\|\tilde{u}_S\|_{W^{k,p}}}{\|\mbox{curl}~\tilde{u}_S\|_{L^{\infty}}}\bigg)   \bigg),
 \end{split}
\end{equation}
which leads us to consider the equation satisfied by the vorticity $\omega_S=\mbox{curl}~\tilde{u}_S$ and the $L^2$-estimate for $\tilde{u}_S$. To this end, we apply the vorticity operator $\nabla^\bot\cdot=(-\partial_z,\partial_x)\cdot$ to the first equation of \eqref{5.3} to deduce that
\begin{equation}\label{5.11}
\begin{split}
\partial_t \omega_S+\frac{1}{2}\alpha^2  \omega_S+  \tilde{\alpha}^{-1} P(\tilde{u}_S\cdot \nabla)   \omega_S
- P\partial_x\tilde{u}_T +P \partial_z\tilde{\theta} _S  = 0.
 \end{split}
\end{equation}
Multiplying both sides of \eqref{5.11} by $\omega_S|\omega_S|^{p-2}$; after integration by parts on $D$ and using the identity $(P(\tilde{u}_S\cdot \nabla) \omega_S,\omega_S|\omega_S|^{p-2})_{L^2}=0$, we get
\begin{equation}\label{5.12}
\begin{split}
\frac{1}{p}\frac{\mathrm{d}}{\mathrm{d} t} \|\omega_S\|_{L^p}^p+\frac{1}{2}\alpha^2  \|\omega_S\|_{L^p}^p\leq&  \int_D \left||\omega_S|^{p-2}\omega_S(P\partial_x\tilde{u}_T +P \partial_z\tilde{\theta} _S)\right| \mathrm{d} V\\
\leq& C(\|\omega_S\|_{L^p}^p+\|\nabla \tilde{u}_T\|_{L^p}^p+\|\nabla\tilde{\theta}_S\|_{L^p}^p).
 \end{split}
\end{equation}
To estimate the term $\|\nabla \tilde{u}_T\|_{L^p}$ involved in \eqref{5.9}, we apply the operator to $\nabla^\bot$ to the second equation of \eqref{5.3} to obtain
\begin{equation}\label{5.13}
\begin{split}
&\partial_t\nabla^\bot\tilde{u}_T+\frac{1}{2}\alpha^2 \nabla^\bot \tilde{u}_T+ \tilde{\alpha}^{-1} \tilde{u}_S\cdot \nabla \nabla^\bot \tilde{u}_T
+ \nabla^\bot(\tilde{u}_S\cdot\widehat{x}) =\tilde{\alpha}^{-1} \nabla\tilde{u}_S\cdot \nabla    ^\bot\tilde{u}_T.
 \end{split}
\end{equation}
Multiplying both sides of \eqref{5.13} by $\nabla^\bot\tilde{u}_T|\nabla^\bot\tilde{u}_T|^{p-2}$, integrating on $D$ and using the Biot-Savart law for the vorticity $\omega_S$, we deduce that
\begin{equation}\label{5.14}
\begin{split}
&\frac{1}{p}\frac{\mathrm{d}}{\mathrm{d} t} \|\nabla \tilde{u}_T\|_{L^p}^p+\frac{1}{2}\alpha^2  \|\nabla \tilde{u}_T\|_{L^p}^p\\
&\quad \leq \int_D |\nabla^\bot(\tilde{u}_S\cdot\widehat{x})-\tilde{\alpha}^{-1} \nabla\tilde{u}_S\cdot \nabla ^\bot   \tilde{u}_T ||\nabla^\bot\tilde{u}_T|^{p-1}\mathrm{d}  V\\
& \quad \leq\|\omega_S\|_{L^p}^p+\tilde{\alpha}^{-1} \|\nabla\tilde{u}_S\|_{L^\infty} \|\nabla \tilde{u}_T\|_{L^p}^p + \|\nabla \tilde{u}_T\|_{L^p}^p.
 \end{split}
\end{equation}
To estimate the term $\|\nabla \tilde{\theta}_S\|_{L^p}$ involved in \eqref{5.9}, we apply the operator $\nabla^\bot$ to the third equation of \eqref{5.3} to obtain
\begin{equation}\label{5.15}
\begin{split}
\partial_t\nabla^\bot\tilde{\theta}_S+\frac{1}{2}\alpha^2  \nabla^\bot\tilde{\theta}_S+ \tilde{\alpha}^{-1}  \tilde{u}_S\cdot \nabla   \nabla^\bot\tilde{\theta}_S= \tilde{\alpha}^{-1}  \nabla\tilde{u}_S\cdot \nabla^\bot\tilde{\theta}_S.
 \end{split}
\end{equation}
Multiplying both sides of \eqref{5.15} by $\nabla^\bot\tilde{\theta}_S|\nabla^\bot\tilde{\theta}_S|^{p-2}$; after integration by parts and using the divergence-free nature of $\tilde{u}_S$, one can deduce that
\begin{equation}\label{5.16}
\begin{split}
\frac{1}{p}\frac{\mathrm{d}}{\mathrm{d} t} \|\nabla \tilde{\theta}_S\|_{L^p}^p+\frac{1}{2}\alpha^2  \|\nabla \tilde{\theta}_S\|_{L^p}^p &\leq  \tilde{\alpha}^{-1}\bigg|\int_D |\nabla^\bot\tilde{\theta}_S|^{p-2}\nabla^\bot\tilde{\theta}_S  \nabla\tilde{u}_S \nabla^\bot\tilde{\theta}_S   \mathrm{d} V\bigg| \\
&\leq  \tilde{\alpha}^{-1}\|\nabla\tilde{u}_S\|_{L^\infty} \|\nabla\tilde{\theta}_S\|_{L^p}^p .
 \end{split}
\end{equation}
Putting the estimates \eqref{5.12}, \eqref{5.14} and \eqref{5.16} together leads to
\begin{equation*}
\begin{split}
&\frac{1}{p}\frac{\mathrm{d}}{\mathrm{d} t} (\|\omega_S\|_{L^p}^p+ \|\nabla \tilde{u}_T\|_{L^p}^p+\|\nabla \tilde{\theta}_S\|_{L^p}^p)+\frac{\alpha^2 }{2}  (\|\omega_S\|_{L^p}^p+ \|\nabla \tilde{u}_T\|_{L^p}^p+\|\nabla \tilde{\theta}_S\|_{L^p}^p)\\
&\quad \leq C (1+\tilde{\alpha}^{-1}\|\nabla\tilde{u}_S\|_{L^\infty} ) (\|\omega_S\|_{L^p}^p+\|\nabla \tilde{u}_T\|_{L^p}^p+\|\nabla\tilde{\theta}_S\|_{L^p}^p).
 \end{split}
\end{equation*}
Thanks to the equivalence $(a+b+c)^p\approx a^p+b^p+c^p$, for any $a,b,c >0$ and positive integer $p$, the last inequality implies that
\begin{equation*}
\begin{split}
&\frac{\mathrm{d}}{\mathrm{d} t} \|(\omega_S,\nabla \tilde{u}_T,\nabla \tilde{\theta}_S)\|_{L^p}+\frac{\alpha^2 }{2 } \|(\omega_S,\nabla \tilde{u}_T,\nabla \tilde{\theta}_S)\|_{L^p}\\
&\quad \leq C_3 (1+\tilde{\alpha}^{-1}\|\nabla\tilde{u}_S\|_{L^\infty} ) \|(\omega_S,\nabla \tilde{u}_T,\nabla \tilde{\theta}_S)\|_{L^p}.
 \end{split}
\end{equation*}
Sending $p\rightarrow\infty$ in above inequality, we obtain
\begin{equation}\label{5.17}
\begin{split}
&\frac{\mathrm{d}}{\mathrm{d} t} \|(\omega_S,\nabla \tilde{u}_T,\nabla \tilde{\theta}_S)\|_{L^\infty}+\frac{\alpha^2 }{2 } \|(\omega_S,\nabla \tilde{u}_T,\nabla \tilde{\theta}_S)\|_{L^\infty}\\
&\quad \leq C_3 (1+\tilde{\alpha}^{-1}\|\nabla\tilde{u}_S\|_{L^\infty} ) \|(\omega_S,\nabla \tilde{u}_T,\nabla \tilde{\theta}_S)\|_{L^\infty}.
 \end{split}
\end{equation}
Finally, let us deal with the $L^2$-estimation appeared in \eqref{5.9}. Due to the cancellation property $(\tilde{\alpha}^{-1} P(\tilde{u}_S\cdot \nabla)   \tilde{u}_S,\tilde{u}_S)=0$, after multiplying the first equation \eqref{5.3} by $\tilde{u}_S$ and integrating on $D$, we have
\begin{equation}\label{5.18}
\begin{split}
\frac{\mathrm{d}}{\mathrm{d} t}\|\tilde{u}_S \|_{L^2}^2 +\frac{\alpha^2}{2}\|\tilde{u}_S \|_{L^2}^2
\leq& \bigg|\int_D \tilde{u}_S P(\tilde{u}_T\widehat{x}) \mathrm{d} V\bigg|
+ \bigg|\int_D\tilde{u}_S P(\tilde{\theta} _S\widehat{z})\mathrm{d} V \bigg|\\
\leq& C( \|\tilde{u}_S \|_{L^2}\|\tilde{u}_T \|_{L^2}
+ \|\tilde{u}_S \|_{L^2}\|\tilde{\theta}_S \|_{L^2})\\
\leq& C(\|\tilde{u}_S \|_{L^2}^2
+\|\tilde{u}_T \|_{L^2}^2+\|\tilde{\theta}_S \|_{L^2}^2).
 \end{split}
\end{equation}
Similarly, by using the properties $(\tilde{\alpha}^{-1} P(\tilde{u}_S\cdot \nabla)   \tilde{u}_T,\tilde{u}_T)=0$ and $(\tilde{\alpha}^{-1} P(\tilde{u}_S\cdot \nabla)   \tilde{\theta}_S,\tilde{\theta}_S)=0$, one can deduce that
\begin{equation}\label{5.19}
\begin{split}
\frac{\mathrm{d}}{\mathrm{d} t}\|\tilde{u}_T \|_{L^2}^2 +\frac{\alpha^2}{2}\|\tilde{u}_T \|_{L^2}^2
&\leq C(\|\tilde{u}_S \|_{L^2}^2+\|\tilde{u}_T \|_{L^2}^2),\\
\frac{\mathrm{d}}{\mathrm{d} t}\|\tilde{\theta}_S \|_{L^2}^2 +\frac{\alpha^2}{2}\|\tilde{\theta}_S \|_{L^2}^2
&\leq 0.
 \end{split}
\end{equation}
Therefore, we get by putting the estimates \eqref{5.18}-\eqref{5.19} together
\begin{equation*}
\begin{split}
&\frac{\mathrm{d}}{\mathrm{d} t}(\|\tilde{u}_S \|_{L^2}^2+\|\tilde{u}_T \|_{L^2}^2+\|\tilde{\theta}_S \|_{L^2}^2) +\frac{\alpha^2}{2}(\|\tilde{u}_S \|_{L^2}^2+\|\tilde{u}_T \|_{L^2}^2+\|\tilde{\theta}_S \|_{L^2}^2)\\
&
\quad \leq C (\|\tilde{u}_S \|_{L^2}^2+\|\tilde{u}_T \|_{L^2}^2+\|\tilde{\theta}_S \|_{L^2}^2),
 \end{split}
\end{equation*}
where the positive constant $C$ depends only on $p$ and $D$. By choosing  $|\alpha|$ large enough such that $\frac{\alpha^2}{2}-C\geq \frac{\alpha^2}{4}$, one can use the Gronwall inequality to last estimates to get
\begin{equation}\label{5.20}
\begin{split}
 \|\tilde{u}_S \|_{L^2}^2+\|\tilde{u}_T \|_{L^2}^2+\|\tilde{\theta}_S \|_{L^2}^2\leq e^{-\frac{\alpha^2 t}{4}}(\|u_S^0 \|_{L^2}^2+\|u_T^0 \|_{L^2}^2+\|\theta_S ^0\|_{L^2}^2),
 \end{split}
\end{equation}
for all $t>0$.

For simplicity, we also fix the Sobolev embedding constant $C_3$ as
\begin{equation}\label{5.21}
\begin{split}
\|(\omega_S,\nabla \tilde{u}_T,\nabla \tilde{\theta}_S)\|_{\mathcal {Z}^{0,2}}+\|(\tilde{u}_S, \tilde{u}_T, \tilde{\theta}_S)\|_{\mathcal {Z}^{1,\infty}}\leq C_3 \|(\tilde{u}_S, \tilde{u}_T, \tilde{\theta}_S)\|_{\mathcal {Z}^{k,p}}.
 \end{split}
\end{equation}
and set $\tilde{C}$ as
$$
\tilde{C}:= 1+C_1+C_3+2C_1C_2+C_1C_2C_3.
$$

\emph{\textbf{Step 3 (Bound for $\|(u_S, u_T, \theta_S)(t)\|_{\mathcal {Z}^{k,p}}$).}} Let us introduce two processes
\begin{equation*}
\begin{split}
\psi(t):= \|(\omega_S,\nabla \tilde{u}_T,\nabla \tilde{\theta}_S)(t)\|_{L^\infty}\exp\{\frac{\alpha^2 t}{8}\},
 \end{split}
\end{equation*}
and
\begin{equation*}
\begin{split}
\varphi(t):= \|(\tilde{u}_S, \tilde{u}_T, \tilde{\theta}_S)(t)\|_{\mathcal {Z}^{k,p}}\exp\{\frac{\alpha^2 t}{8}\}.
 \end{split}
\end{equation*}
Moreover, in order to exploiting the damping in \eqref{5.9}, we consider the stopping time
\begin{equation*}
\begin{split}
 \tilde{\mathbbm{t}}:=&\inf_{t\geq0}\bigg\{t;~~ 1+\tilde{\alpha}^{-1}(\|\tilde{u}_S\|_{X^{k,p}}+\| \tilde{\theta}_S\|_{X^{k,p}}+\|\tilde{u}_T\|_{X^{k,p}})\geq \frac{|\alpha|}{8\tilde{C}}\bigg\}\\
=&\inf_{t\geq0}\bigg\{t;~~ 1+ \|u_S\|_{X^{k,p}}+\|u_T\|_{X^{k,p}}+\| \theta_S\|_{X^{k,p}}\geq \frac{|\alpha|}{8\tilde{C}}\bigg\}.
 \end{split}
\end{equation*}
Due  to the Sobolev embedding theorem, we have for any $t\in [0,\tilde{\mathbbm{t}}]$
\begin{equation}\label{5.22}
\begin{split}
1+\tilde{\alpha}^{-1}\|(\omega_S,\nabla \tilde{u}_T,\nabla \tilde{\theta}_S)\|_{L^\infty} \leq 1+ \tilde{\alpha}^{-1}\|(\tilde{u}_S, \tilde{u}_T, \tilde{\theta}_S)\|_{\mathcal {Z}^{1,\infty}}\leq \frac{|\alpha|}{8}.
 \end{split}
\end{equation}
Thereby it follows from the estimates \eqref{5.9} and \eqref{5.22} that
\begin{equation}\label{5.23}
\begin{split}
\frac{\mathrm{d}}{\mathrm{d} t} \|(\omega_S,\nabla \tilde{u}_T,\nabla \tilde{\theta}_S)\|_{L^\infty}+(\frac{ \alpha^2 }{2}-\frac{|\alpha|}{8}) \|(\omega_S,\nabla \tilde{u}_T,\nabla \tilde{\theta}_S)\|_{L^\infty}\leq 0,
 \end{split}
\end{equation}
for all $t\in [0,\tilde{\mathbbm{t}}]$.
After choosing $|\alpha|>1$ large enough such that $\frac{ \alpha^2 }{2}-\frac{|\alpha|}{8}\geq \frac{ \alpha^2 }{8}$, one can apply the Gronwall lemma to \eqref{5.23} and use the fact of $\tilde{\alpha}^{-1}(0)=1$ to get
\begin{equation}\label{5.24}
\begin{split}
 \psi(t)\leq \psi(0)= \|(\omega_S^0,\nabla u_T^0,\nabla \theta_S^0)\|_{L^\infty}\leq \frac{|\alpha| }{8}-1\leq \frac{\alpha^2 }{8}.
 \end{split}
\end{equation}
By virtue of the condition
$$
\|(u_S^0, u_T^0, \theta_S^0)\|_{\mathcal {Z}^{k,p}}\leq \frac{\alpha^2}{8\tilde{C}},
$$
and  applying the estimates \eqref{5.22}-\eqref{5.24}, we obtain
\begin{equation}\label{5.25}
\begin{split}
 \frac{\mathrm{d} \varphi}{\mathrm{d} t}\leq&C_1\bigg(1+\tilde{\alpha}^{-1}\|\nabla \tilde{\theta}_S\|_{L^\infty}+\tilde{\alpha}^{-1}\|\nabla \tilde{u}_T\|_{L^\infty}\\
 &+ C_2\tilde{\alpha}^{-1} \|\omega_S\|_{L^{\infty}}+ C_2\tilde{\alpha}^{-1}\|\tilde{u}_S\|_{L^2}+C_2\tilde{\alpha}^{-1} \|\omega_S\|_{L^{\infty}}\log^+\bigg(\frac{\varphi}{\psi}\bigg)  \bigg)\varphi\\
\leq&\tilde{C}\bigg (\frac{|\alpha|}{8} +\tilde{\alpha}^{-1}\exp\{-\frac{\alpha^2 t}{8}\}  \psi  +\tilde{\alpha}^{-1}\exp\{-\frac{\alpha^2 t}{8}\}\frac{\alpha^2}{8 }\\
&+\tilde{\alpha}^{-1} \exp\{-\frac{\alpha^2 t}{8}\}\psi\log^+\bigg(\frac{\varphi}{\psi}\bigg)  \bigg)\varphi.
 \end{split}
\end{equation}
To get a better understanding of \eqref{5.5}, we first introduce the following transformations:
$$
\tilde{\varphi}(t)=\exp\{-\frac{\tilde{C}|\alpha|t}{8}\}\varphi(t), \quad \tilde{\psi}(t)=\exp\{-\frac{\tilde{C}|\alpha|t}{8}\}\psi(t).
$$
and choose  $|\alpha|>1$ sufficiently large such that
\begin{equation*}
\begin{split}
\frac{\alpha^2}{8}-\frac{\tilde{C}|\alpha|}{8}\geq \frac{\alpha^2}{16}.
 \end{split}
\end{equation*}
Then, it follows from \eqref{5.25} and the definition of $\tilde{\alpha}(t)$ that
\begin{equation}\label{5.26}
\begin{split}
 \frac{\mathrm{d}\tilde{ \varphi}}{\mathrm{d} t}&\leq \tilde{C}\tilde{\alpha}^{-1}\exp\{-\frac{\alpha^2 t}{8}\} \bigg ( \frac{\alpha^2}{8 }\exp\{-\frac{\tilde{C}|\alpha|t}{8}\} + \tilde{\psi}  + \tilde{ \psi}\log^+\bigg(\frac{\tilde{\varphi}}{\tilde{\psi}}\bigg)  \bigg)\varphi\\
 &\leq \tilde{C}\exp\{\alpha W_t-\frac{\alpha^2 t}{16}\} \bigg ( \frac{\alpha^2}{8} + \tilde{ \psi}  +  \tilde{ \psi} \log^+\bigg(\frac{\tilde{\varphi}}{\tilde{\psi}}\bigg)  \bigg)\tilde{\varphi}.
 \end{split}
\end{equation}
Considering the function
$$
F(\tilde{ \psi}):=\frac{\alpha^2}{8} + \tilde{ \psi}  +  \tilde{ \psi} \log^+(\frac{\tilde{\varphi}}{\tilde{\psi}}),
$$
which is actually involved  on the R.H.S. of \eqref{5.26}. It is clear that the variable of functional  $F(t)$ has the constrains $0<\tilde{ \psi}(t)\leq \psi\leq \frac{\alpha^2}{8}$ and $\tilde{\psi}< \tilde{C}\tilde{\varphi}$. We shall distinguish two cases to provide an estimation for $F(t)$.

\underline{Case 1:}  If $0<\tilde{\psi}< \tilde{\varphi}$, then $\frac{\tilde{\varphi}}{\tilde{\psi}}>1$ and we have
\begin{equation*}
\begin{split}
\tilde{\psi}\log^+(\frac{\tilde{\varphi}}{\tilde{\psi}})
=\tilde{\psi}\log(\frac{\tilde{\varphi}}{\tilde{\psi}})&=\tilde{\psi}\log( \tilde{\varphi} )-\mathbb{I}_{\tilde{\psi}\in(0,1]}\tilde{\psi}\log( \tilde{\psi} )-\mathbb{I}_{\tilde{\psi}\in[1,\infty)}\tilde{\psi}\log( \tilde{\psi} )\\
&\leq \tilde{\psi}\log( \tilde{\varphi} )-\mathbb{I}_{\tilde{\psi}\in(0,1]}\tilde{\psi}\log( \tilde{\psi} ) \\
&\leq \tilde{\psi}\log( \tilde{\varphi} )+ \frac{1}{e},
 \end{split}
\end{equation*}
which implies that
$$
F(\tilde{ \psi})\leq \frac{\alpha^2}{8}+\tilde{\psi} +\tilde{\psi}\log( \tilde{\varphi} )+ \frac{1}{e}\leq \frac{\alpha^2}{4}+ \tilde{C} +\tilde{\psi}\log( \tilde{\varphi} ).
$$

\underline{Case 2:} If $ \tilde{\psi}\geq\tilde{\varphi}$, then $\frac{\tilde{\psi}}{\tilde{C} }\leq\tilde{\varphi}\leq \tilde{\psi} $, which indicates that $\log^+(\frac{\tilde{\varphi}}{\tilde{\psi}})=0$ and  $F(\tilde{ \psi}) =\frac{\alpha^2}{8} + \tilde{ \psi} $. Since $x^2e^{-x}\leq 4e^{-2}< 1$ for all $x\geq1$, we have
$$
 \tilde{\psi}\log( \tilde{\varphi} )\geq \tilde{\psi}\log(\frac{\tilde{\psi}}{\tilde{C} })\geq -\frac{\tilde{C}^2}{e^{\tilde{C}}}\geq - \frac{4}{e^2},\quad \mbox{for all}~\tilde{\varphi}> 0.
$$
Thereby,
$$
F(\tilde{ \psi}) =\frac{\alpha^2}{8} + \tilde{ \psi} \leq \frac{\alpha^2}{4} \leq \frac{\alpha^2}{4}+\frac{4}{e^2}+\tilde{\psi}\log( \tilde{\varphi} )\leq \frac{\alpha^2}{4}+\tilde{C}+\tilde{\psi}\log( \tilde{\varphi} ).
$$

Taking the last two conclusions into consideration, the R.H.S. of \eqref{5.26} can be estimated by
\begin{equation}\label{5.27}
\begin{split}
 \frac{\mathrm{d}\tilde{ \varphi}}{\mathrm{d} t}\leq \tilde{C}\exp\{\alpha W_t-\frac{\alpha^2 t}{16}\} \bigg (\frac{\alpha^2}{4}+\tilde{C}+\tilde{\psi}\log( \tilde{\varphi} )\bigg)\tilde{\varphi}.
 \end{split}
\end{equation}
Furthermore, by virtue of the identity $\mathrm{d}(\log\tilde{ \varphi})= \tilde{ \varphi}^{-1}\mathrm{d}\tilde{ \varphi}$ and the decomposition $\exp\{\alpha W_t-\frac{\alpha^2 t}{16}\} =\exp\{\alpha W_t-\frac{\alpha^2 t}{32}\} \exp\{-\frac{\alpha^2 t}{32}\} $, we deduce  from \eqref{5.27} that
\begin{equation}\label{5.28}
\begin{split}
 \frac{\mathrm{d}(\log\tilde{ \varphi})}{\mathrm{d} t}\leq  \tilde{C}r(\frac{\alpha^2}{4}+\tilde{C})\exp\{-\frac{\alpha^2 t}{32}\}+\tilde{C}r\exp\{-\frac{\alpha^2 t}{32}\}\tilde{\psi}\log( \tilde{\varphi} ) ,\quad \forall t\in [0,\mathbbm{t}_r],
 \end{split}
\end{equation}
where $\tilde{\mathbbm{t}}_r$ is a stopping time defined by
$$
\tilde{\mathbbm{t}}_r:=\inf\bigg\{t\geq0; ~\exp\{\alpha W_t-\frac{\alpha^2 t}{32}\}\geq r\bigg\},\quad \forall r\geq 1.
$$
Solving the inequality \eqref{5.28} and using the fact of $0<\tilde{ \psi}(t)\leq \frac{\alpha^2}{8}$, we obtain
\begin{equation*}
\begin{split}
 \log(\tilde{ \varphi}(t))
 \leq& \log(\tilde{ \varphi}(0))\exp\bigg\{ \int_0^t \tilde{C}r\exp\{-\frac{\alpha^2 s}{32}\}\tilde{\psi}  \mathrm{d}s\bigg\} \\
 &+\int_0^t\tilde{C}r(\frac{\alpha^2}{4}+\tilde{C})\exp\{-\frac{\alpha^2 \tau}{32}\}     \exp\bigg\{ \int_\tau^t \tilde{C}r\exp\{-\frac{\alpha^2 s}{32}\}\tilde{\psi}  \mathrm{d}s\bigg\} \mathrm{d} \tau\\
\leq & \log(\tilde{ \varphi}(0))\exp\{4 \tilde{C}r\} +  \tilde{C}r\exp\{ 4\tilde{C}r\}\bigg(8 +\frac{32\tilde{C}}{\alpha^2}\bigg) ,
 \end{split}
\end{equation*}
which leads to
\begin{equation}\label{5.29}
\begin{split}
\tilde{ \varphi}(t)
\leq  \tilde{ \varphi}(0)\Big(1+ \tilde{ \varphi}(0)^{\exp\{4 \tilde{C}r\}-1} \Big) \exp\bigg\{\tilde{C}r\exp\{ 4\tilde{C}r\}\bigg(8 +\frac{32\tilde{C}}{\alpha^2}\bigg)\bigg\}.
 \end{split}
\end{equation}
In order to derive a bound for $\tilde{ \varphi}$ which depends only on $\alpha$ and $r$, we assume that $|\alpha|$ is sufficiently large such that $|\alpha|> 16\tilde{C}$ and
\begin{equation}\label{5.30}
\begin{split}
\tilde{ \varphi}(0)=\varphi(0)\leq \frac{|\alpha|}{16\tilde{C}A(|\alpha|,r)}:=\tilde{A}(|\alpha|,r),
 \end{split}
\end{equation}
where
$$
A(|\alpha|,r):= 2r \Bigg(1+\bigg(\frac{|\alpha|}{16\tilde{C}}\bigg)^{1-\frac{1}{2(\exp\{4\tilde{C}r\}-1)}}\Bigg)\exp\bigg\{\tilde{C}r\exp\{ 4\tilde{C}r\}\bigg(8 +\frac{32\tilde{C}}{\alpha^2}\bigg)\bigg\}.
$$
From the definition of $A(|\alpha|,r)$, it is clear that
$$
1<\tilde{A}(|\alpha|,r)\leq \frac{|\alpha|}{16\tilde{C}},
$$
and
\begin{equation*}
\begin{split}
 \lim_{r\rightarrow+\infty} \tilde{A}(|\alpha|,r)&=0,\quad \mbox{for fixed}~ \alpha ;\\
 \lim_{|\alpha|\rightarrow\infty} \tilde{A}(|\alpha|,r)&=\infty,\quad \mbox{for fixed}~ r ;\\
 \lim_{|\alpha|\rightarrow0} \tilde{A}(|\alpha|,r)&=0,\quad \mbox{for fixed}~ r .
 \end{split}
\end{equation*}
Observing that, if the parameter $|\alpha|$ is large enough such that $|\alpha|>16\tilde{C}$, then we have
\begin{equation}\label{5.31}
\begin{split}
16\tilde{C}A(|\alpha|,r) &\geq 16\tilde{C} \Bigg(1+\bigg(\frac{|\alpha|}{16\tilde{C}}\bigg)^{1-\frac{1}{2(\exp\{4\tilde{C}r\}-1)}}\Bigg)\\
&\geq ( 16\tilde{C} )^{ \frac{1}{2(\exp\{4\tilde{C}r\}-1)}} |\alpha| ^{1-\frac{1}{2(\exp\{4\tilde{C}r\}-1)}}.
 \end{split}
\end{equation}
Moreover, by using the fact of $ \exp\{4\tilde{C}r\}>2$ we get
\begin{equation}\label{5.32}
\begin{split}
\left(\frac{|\alpha|}{ ( 16\tilde{C} )^{ \frac{1}{2(\exp\{4\tilde{C}r\}-1)}} |\alpha| ^{1-\frac{1}{2(\exp\{4\tilde{C}r\}-1)}}}\right)^{\exp\{4\tilde{C}r\}-1}&=\sqrt{\frac{|\alpha|}{16\tilde{C}}}\\
&\leq \bigg( \frac{|\alpha|}{16\tilde{C}} \bigg)^{1-\frac{1}{2(\exp\{4\tilde{C}r\}-1)}}.
\end{split}
\end{equation}
Thereby, we deduce from \eqref{5.31} and  \eqref{5.32} that
\begin{equation}\label{5.33}
\begin{split}
 \bigg(\frac{|\alpha|}{16\tilde{C}A(\alpha,r)}\bigg)^{\exp\{4 \tilde{C}r\}-1}  \leq \bigg( \frac{|\alpha|}{16\tilde{C}} \bigg)^{1-\frac{1}{2(\exp\{4\tilde{C}r\}-1)}}.
 \end{split}
\end{equation}
Combining the estimates \eqref{5.29}-\eqref{5.30} and \eqref{5.33}, we obtain
\begin{equation*}
\begin{split}
\tilde{ \varphi}(t)&\leq \tilde{ \varphi}(0)\Bigg(1+ \bigg(\frac{|\alpha|}{16\tilde{C}A(\alpha,r)}\bigg)^{\exp\{4 \tilde{C}r\}-1} \Bigg) \exp\bigg\{\tilde{C}r\exp\{ 4\tilde{C}r\}\bigg(8 +\frac{32\tilde{C}}{\alpha^2}\bigg)\bigg\}\\
&\leq  \tilde{ \varphi}(0) \frac{A(\alpha,r)}{2r}\\
&\leq \frac{|\alpha|}{16\tilde{C}A(\alpha,r)} \frac{A(\alpha,r)}{2r}=\frac{|\alpha|}{32r\tilde{C}}.
 \end{split}
\end{equation*}
Recalling the definitions for $\tilde{ \varphi},\varphi$ and the stopping time $\tilde{\mathbbm{t}}_r$, the last inequality implies that
\begin{equation}\label{5.34}
\begin{split}
 \|(u_S, u_T, \theta_S)(t)\|_{\mathcal {Z}^{k,p}}&\leq \exp\{\alpha W_t+\frac{\tilde{C}|\alpha| t}{8}-\frac{\alpha^2 t}{8}\}\tilde{ \varphi}(t)\\
 &\leq  \frac{|\alpha|}{32r\tilde{C}}\exp\{\alpha W_t -\frac{\alpha^2 t}{16}\}  \\
 &= \frac{|\alpha|}{32r\tilde{C}}\exp\{\alpha W_t-\frac{\alpha^2t}{32}\} \exp\{ -\frac{\alpha^2t}{32}\}\\
 & \leq \frac{|\alpha|}{32 \tilde{C}},\quad \forall t\in[0,\tilde{\mathbbm{t}}\wedge\tilde{\mathbbm{t}}_r],
 \end{split}
\end{equation}
where the sufficiently large $|\alpha|$ is chosen as above.

\emph{\textbf{Step 4 (Global existence).}} Thanks to the estimate \eqref{5.34}, we have $\tilde{\mathbbm{t}}\wedge \tilde{\mathbbm{t}}_r=\tilde{\mathbbm{t}}_r\leq \tilde{\mathbbm{t}}$, and
$$
\sup_{t\in [0, \tilde{\mathbbm{t}}_r]}\|(u_S, u_T, \theta_S)(t)\|_{\mathcal {Z}^{1,\infty}}\leq \tilde{C}  \sup_{t\in [0, \tilde{\mathbbm{t}}_r]}\|(u_S, u_T, \theta_S)(t)\|_{\mathcal {Z}^{k,p}}\leq \frac{|\alpha|}{32}.
$$
which implies that $\tilde{\mathbbm{t}}_r\leq\mathbbm{t}$, where $\mathbbm{t}$ is the maximum existence time for the local pathwise solution $(u_S,u_T,\theta_S)$.   To obtain an estimate for $\mathbb{P}\{\tilde{\mathbbm{t}}_r=\infty\}$, we first note that on the event $\{\tilde{\mathbbm{t}}_r=\infty\}$,
$$
\Lambda(t):=\exp\{\alpha W_t-\frac{\alpha^2t}{32}\}\leq r, \quad \mbox{for all}~  t\geq 0,
$$
and $\Lambda(t)$ solves the following SDE
\begin{equation*}
\begin{split}
 \Lambda(t) =1+\frac{15\alpha ^2}{32}\int_0^t\Lambda(s) \mathrm{d}s+ \alpha \int_0^t \Lambda(t)\mathrm{d}W_s.
 \end{split}
\end{equation*}
Then, since $\Lambda(t)>0$ for all $t\in \mathbb{R}$, we can apply the  It\^{o}'s formula to $(\Lambda(t))^{\frac{1}{16}}$, and integrate the resulted equality over $[0,t\wedge \tilde{\mathbbm{t}}_r]$ to get
\begin{equation}\label{5.35}
\begin{split}
(\Lambda(t\wedge \tilde{\mathbbm{t}}_r))^{\frac{1}{16}}=1+ \int_0^{t\wedge \tilde{\mathbbm{t}}_r}(\Lambda(s))^{\frac{1}{16}}\mathrm{d}W_s.
 \end{split}
\end{equation}
We get by taking the expectation to \eqref{5.35} to obtain
\begin{equation}\label{5.36}
\begin{split}
\mathbb{E}\left[(\Lambda(t\wedge \tilde{\mathbbm{t}}_r))^{\frac{1}{16}}\right]=1.
 \end{split}
\end{equation}
Observing that if $j\leq\tilde{\mathbbm{t}}_r$, then we get from the definition of $\tilde{\mathbbm{t}}_r$ that $\Lambda(j\wedge \tilde{\mathbbm{t}}_r)=\Lambda(j)\geq r$. By virtue of \eqref{5.36} and the continuity of the measure, we have
\begin{equation*}
\begin{split}
\mathbb{P}\{\mathbbm{t}=\infty\}&\geq \mathbb{P}\{\tilde{\mathbbm{t}}_r=\infty\}=\mathbb{P}\left\{\bigcap_{j\geq1}\{\tilde{\mathbbm{t}}_r>j\right\}\\
&=\lim_{j\rightarrow\infty}\mathbb{P}\{\tilde{\mathbbm{t}}_r>j\}\\
&\geq1-\lim_{j\rightarrow\infty}\mathbb{P}\{ \Lambda(j\wedge \tilde{\mathbbm{t}}_r)\geq r\}\\
&\geq1-\lim_{j\rightarrow\infty}\mathbb{E}\left(\frac{(\Lambda(j\wedge \tilde{\mathbbm{t}}_r))^{\frac{1}{16}}}{r^{\frac{1}{16}}}\right)=1-  \frac{1}{r^{\frac{1}{16}}},
 \end{split}
\end{equation*}
which implies that the pathwise solution $(u_S,u_T,\theta_S)$ exists globally.

The proof of the Theorem \ref{thm:1.3} is now completed.
\end{proof}

%\begin{thebibliography}{11}
\bibliographystyle{apa}
\bibliography{zhangl}

\end{document}